\definecolor{electricultramarine}{rgb}{0.25, 0.0, 1.0}
\definecolor{airforceblue}{rgb}{0.36, 0.54, 0.66}
\definecolor{nicegreen}{rgb}{0.0, 0.50, 0.0}
\theoremstyle{plain}
\newtheorem{thm}{Theorem}[section]
\newtheorem{theorem}[thm]{Theorem}
\newtheorem{prop}[thm]{Proposition}
\newtheorem{proposition}[thm]{Proposition}
\newtheorem{proposition-definition}[thm]{Proposition-Definition}
\newtheorem{corollary}[thm]{Corollary}
\newtheorem{lemma}[thm]{Lemma}
\newtheorem{lemma-definition}[thm]{Lemma-Definition}
\theoremstyle{remark}
\newtheorem{rmk}[thm]{Remark}
\newtheorem{remark}[thm]{Remark}
\theoremstyle{definition}
\newtheorem{defin}[thm]{Definition}
\newtheorem{definition}[thm]{Definition}
\newtheorem{example}[thm]{Example}
\newtheorem{construction}[thm]{Construction}
\newcommand\C{\mathbb{C}}
\newcommand\G{\mathbb{G}}
\newcommand\N{\mathbb{N}}
\renewcommand\P{\mathbb{P}}
\newcommand\Q{\mathbb{Q}}
\newcommand\R{\mathbb{R}}
\newcommand\Z{\mathbb{Z}}
\newcommand\cA{\mathcal{A}}
\newcommand\cB{\mathcal{B}}
\newcommand\cC{\mathcal{C}}
\newcommand\cD{\mathcal{D}}
\newcommand\cE{\mathcal{E}}
\newcommand\cF{\mathcal{F}}
\newcommand\cG{\mathcal{G}}
\newcommand\cH{\mathcal{H}}
\newcommand\cM{\mathcal{M}}
\newcommand\cN{\mathcal{N}}
\newcommand\cO{\mathcal{O}}
\newcommand\cP{\mathcal{P}}
\newcommand\cR{\mathcal{R}}
\newcommand\cS{\mathcal{S}}
\newcommand\cU{\mathcal{U}}
\newcommand\cY{\mathcal{Y}}
\newcommand\base{B}
\newcommand\La{\Lambda}
\renewcommand\flat{\mathcal{U}}
\newcommand\Sh{\operatorname{Sh}}
\newcommand\Mod{\operatorname{Mod}}
\newcommand\cMod{\operatorname{cMod}}
\newcommand\End{\operatorname{End}}
\newcommand\Aut{\operatorname{Aut}}
\newcommand\Hom{\operatorname{Hom}}
\newcommand\Ob{\operatorname{Ob}}
\newcommand\Nov{\operatorname{Nov}}
\newcommand\Per{\operatorname{Per}}
\newcommand\sh{\operatorname{sh}}
\newcommand\Fuk{\operatorname{Fuk}}
\newcommand\cFuk{\operatorname{cFuk}}
\newcommand\LFuk{\operatorname{Fuk}}
\newcommand\cLFuk{\operatorname{cFuk}}
\newcommand\WFuk{\operatorname{WFuk}}
\newcommand\RFuk{\operatorname{RFuk}}
\newcommand\colim{\operatorname{colim}}
\newcommand\Prop{\operatorname{Mod^{fd}}}
\newcommand\id{\operatorname{id}}
\newcommand\Gr{\operatorname{Gr}}
\newcommand\Reeb{R_{\alpha^{\circ}}}
\newcommand\Ch{\operatorname{Ch}}
\newcommand\cCh{\operatorname{cCh}}
\newcommand\cTw{\operatorname{cTw}}
\newcommand\Tw{\operatorname{Tw}}
\newcommand\qAinf{\operatorname{qA_{\infty}}}
\newcommand\cAinf{\operatorname{cA_{\infty}}}
\newcommand\Ainf{\operatorname{A_{\infty}}}
\newcommand\Cone{\operatorname{Cone}}
\newcommand\cont{\operatorname{cont}}
\newcommand\unit{\operatorname{unit}}
\newcommand\inp{\operatorname{in}}
\newcommand\outp{\operatorname{out}}
\newcommand\Crit{\operatorname{Crit}}
\newcommand\module{\mathfrak{N}}
\title{On Fukaya categories and prequantization bundles}
\author{Tatsuki Kuwagaki, Adrian Petr, Vivek Shende}
\begin{document}
 
\maketitle

\begin{abstract}
    We show: the Floer homology over the Novikov ring of (nonexact!) rational Lagrangians in an (nonexact!) integral symplectic manifold can be computed in terms of exact Lagrangians in an exact filling of the prequantization bundle. 

    As a consequence, we give a Fukaya-sheaf correspondence for rational (nonexact!) Lagrangians in Weinstein manifolds, as conjectured by Ike and the first-named author.  We also show that bounding cochains for immersed rational Lagrangians transform naturally under Legendrian isotopy, as conjectured by Akaho and Joyce.

    As an illustration, we show that quantum cohomology of the complex projective line -- which requires the counting of one holomorphic sphere -- can be recovered from purely sheaf-theoretic calculations. 
\end{abstract}

\thispagestyle{empty}

\newpage 
\tableofcontents

\newpage

\section{Introduction}
The symplectic geometer studying 
Lagrangian Floer theory in (noncompact, exact) Weinstein symplectic manifolds has many tools ready to hand:  Lefschetz fibrations \cite{Sei08}; surgery methods \cite{BEE12, EL21}; and  gluing / microsheaf methods \cite{GPS3, GPS2, GPS20}.  Indeed, the celebrated calculations of Fukaya categories of compact symplectic manifolds typically proceed by applying said methods to the (Weinstein) complement of a sufficiently positive divisor, then studying how the result is deformed by disks passing through the divisor \cite{Seidel-genustwo, Seidel-quartic,  Sheridan-CY, Sheridan-Fano, GHHPS}.  

In the present article, we develop a new method to reduce calculations of Lagrangian Floer theory in compact symplectic manifolds to calculations in Weinstein manifolds. Our setup is the following. Let $V$ be a contact manifold, equipped both with a Weinstein filling $W$, and the structure of a prequantization bundle over some symplectic manifold $\base$:  
\begin{equation} \label{setup} \partial_\infty W = V \qquad \qquad V \xrightarrow{\pi} \base. \end{equation}
That is, $\base$ is the symplectic reduction of $W$ along a contact level.  
In the spirit of ``quantization commutes with reduction'' \cite{Guillemin-Sternberg-quantization-reduction}, we will show that Lagrangian Floer theory in $\base$ can be computed in terms of Lagrangian Floer theory in $W$, at least under certain monotonicity hypotheses on $\base$.

As our general results in this direction are nontrivial even to state, let us first mention two consequences.  
The first concerns a sheaf-Fukaya correspondence for {\em non-exact} Lagrangians in cotangent bundles and, more generally, Weinstein manifolds.  Recall that the wrapped Fukaya category of a Weinstein manifold was shown in \cite{GPS3} to be equivalent to the category of microsheaves on the skeleton.  It was explained in \cite{shende-hitchinfiber} that nonexact Lagrangians can be incorporated into this correspondence if one works over the Novikov {\em field}; doing so, however, loses important information, see Remark \ref{rem: hitchin} below for a compelling example. 
Meanwhile, it has been understood that the Novikov {\em ring}  naturally appears in sheaf theory as the endomorphisms of the convolution monoidal unit for the $\R^\delta$-equivariant positively microsupported sheaves on $\R$ \cite{Kuwagaki-WKB,Kuwagaki-Almost}.  This identification leads to a notion of sheaves with possibly non-conic, nonexact microsupport.  It was shown in \cite{Ike-Kuwagaki-Novikov} that sheaves with (in this new sense) Lagrangian microsupport are characterized by an $A_\infty$ structure structurally similar to the Fukaya category, and conjectured that in fact there is a sheaf/Fukaya correspondence in this setting.  
(Let us mention that these works develop along a line of inquiry with roots in \cite{Tamarkin, Viterbo}.) 

From our general results, we will obtain a resolution of this conjecture for compact rational Lagrangians: 

\begin{theorem} 
\label{intro-1 thm: R-equivariant sheaves comparison} (Theorem \ref{intro thm: R-equivariant sheaves comparison})
    Let $W$ be a Weinstein manifold with $c_1(W)=0$ and let $L_1,..., L_n$ be compact, mutually rational, transverse Lagrangians in $W$. Then the Novikov ring  Fukaya category on objects $L_1,...,L_n$ is almost fully faithfully embedded into category of nonconic microsheaves on $W$.  The image is characterized as the objects microsupported on the $L_i$ with finite rank microstalks. 
\end{theorem}

Our results also allow some sheaf-theoretic calculations in non-exact symplectic manifolds.  Here we highlight one remarkable illustration: 

\begin{theorem}
    The quantum cohomology of $\C \P^1$ (in other words, the self Floer cohomology of the diagonal in $\C \P^1\times \C \P^1$) can be recovered from a purely sheaf-theoretic computation.
\end{theorem}

We now return to the general context of \eqref{setup} to discuss our constructions.   
Fix $\La \subset V$ a (possibly disconnected) compact Legendrian; consider its image $\pi_{| \La} \subset B$.  We are  interested in the category $\Fuk(\pi_{| \La})$, whose objects are Lagrangians (with local systems, bounding cochains, etc.) supported in $\pi_{| \La}$, as defined  by \cite{FOOO} in case $\pi$ restricts to an embedding on each component of $\Lambda$, and by \cite{AJ10} in general.  

Another structure of interest is the Chekanov-Eliashberg differential graded algebra $\cA_\La$ \cite{Eli98, Chekanov-dga, EES05}.  $\cA_\La$ enjoys two (related) relations to Lagrangian Floer theory in the filling $W$:  to  infinitesimally wrapped  Floer homology for Lagrangians ending on $\La$  \cite{EGH00, Ekh08}, and  to partially wrapped Floer theory with stop at $\La$ \cite{EL21}.  
It is the latter which is more directly of interest to us, and correspondingly we take $\cA_\La$ to have coefficients in the based loop space of $\La$.   
We denote by $\Prop(\cA_\La)$ the category of finite-dimensional modules over $\cA_\La$.  

When $V$ is a jet bundle (rather than a prequantization bundle), then $\cA_\La$ concerns only positive Reeb chords while $\Fuk(\pi_{| \La})$ involves both positive and negative Reeb chords (see e.g. last paragraph of \cite{AJ10}). It is well known to experts that this can be elaborated to a map $\Prop(\cA_\La) \to \Fuk(\pi_{| \La})$, essentially by taking the quotient by the negative Reeb chords, which however is very far from being full and faithful. In fact, already when $\dim \base = 2$ and $\base$ is exact, said map has a very rich structure, sufficient to encode the Legendrian skein relations \cite{Haiden-skein}. 
Here, however, we are interested in going the reverse direction, and recovering $\Fuk(\pi_{| \La})$ from contact geometry in the prequantization bundle $V$. 

We will {\em not} attempt to recover $\Fuk(\pi_{| \La})$ directly from $\Prop(\cA_\La)$ alone. 
Instead, we proceed as follows. The Reeb flow for the prequantization bundle contact form generates an action of $\R / \Z$ on $V$.
Note that $\frac{1}{n} \Z \cdot \La$ is the union of $n$ copies of $\La$, and there is an inclusion $\frac{1}{m} \Z \cdot \La \subset \frac{1}{n} \Z \cdot \La$ when $m | n$, and correspondingly a morphism $\Prop(\cA_{\frac{1}{m} \Z \cdot \La}) \to \Prop(\cA_{\frac{1}{n} \Z \cdot \La})$.

We will write $\frac{1}{n} \vec \Z$ for the symmetric monoidal category whose objects are elements of $\frac{1}{n} \vec \Z$, monoidal structure is from addition, and morphisms $a \to b$ if $a < b$.  
There are evident maps 
$\frac{1}{m} \vec \Z \to \frac{1}{n} \vec \Z$ when $m | n$.    
We will say a category has an action of $\frac{1}{n} \vec \Z  / \Z$ if it has an action of $\frac{1}{n} \vec \Z$ together with a trivialization of the action of the (not full) subcategory $\Z \subset  \frac{1}{n} \vec \Z$.  We write similarly $\vec \Q$ and $\vec \Q / \Z$. 

\begin{theorem}\label{intro thm: Q mod Z structure}
    Assume that the action spectrum of $\La$ is contained in $\Z \cup (\R \setminus \Q)$.
    Then the Reeb flow contactomorphisms and continuation maps determine an action of $ \frac{1}{n} \vec \Z / \Z$ on $\Prop(\cA_{\frac{1}{n} \Z \cdot \La})$, compatibly with the morphisms $\Prop(\cA_{\frac{1}{m} \Z \cdot \La}) \to \Prop(\cA_{\frac{1}{n} \Z \cdot \La})$ when $m | n$.
\end{theorem}

In general, given a sequence of categories as above, the limit, in this case $\varinjlim  \Prop(\cA_{\frac{1}{n} \Z \cdot \La})$, has a $\vec \Q / \Z$-action. A $\vec \Q / \Z$-category $\mathcal{C}$ can be subjected to an orbit category construction to produce a $\Z[\Q_{\ge 0}]$-linear category $\mathcal{C}[\vec \Q / \Z]$ (see Lemma \ref{lem: pog-quotient orbit category}).  From the density of $\Q \subset \R$, we obtain a completion process relating $\Z[\Q_{\ge 0}]$-modules to $\Z[\R_{\ge 0}]$-modules (Lemma \ref{lemma from Q to R}).     
These algebraic ingredients
allow us to formulate our fundamental comparison result: 

\begin{theorem} \label{intro thm: fukaya from augmentations}
    Assume that $\base$ is monotone with minimal Chern number different from $1$, and that $\pi_{| \La}$ has double and transverse multiple points. 
    Assume moreover that the action spectrum of $\La$ is contained in $\Z \cup (\R \setminus \Q)$.
    Then there is an $\Ainf$-equivalence
    $$\Fuk(\pi_{| \La}) \simeq \Nov \otimes_{\Z[\R_{\ge 0}]}\overline{(\varinjlim  \Prop(\cA_{\frac{1}{n} \Z \cdot \La}))[\vec \Q/\Z]}$$
    where $\Nov$ is the Novikov ring.
\end{theorem}

\begin{remark}
    The assumption on $\base$ allows us to define $\cA_\La$ without having to consider discs with interior punctures asymptotic to contractible Reeb orbits or, in other words, using the trivial algebra map as an augmentation of the orbit algebra. See the discussion in the beginning of Section \ref{section augmentation category}.
\end{remark}

The proof has two remaining steps. 
We show in Section \ref{section precompleted Fukaya category} that $\La \subset V$ (i.e. the choice of a Legendrian lift of the Lagrangian immersion $\pi_{|\La}$) allows us to define a sequence of $\frac{1}{n} \vec \Z /\Z$ categories $\cC_n$ such that $\Fuk(\pi_{| \La}) \cong \Nov \otimes_{\Z[\R_{\ge 0}]} \overline{(\varinjlim  \cC_n)[\vec \Q/\Z]}$.
Finally, 
in Section \ref{comparison section}, we construct a functorial equivalence of each term: 
$\cC_n \simeq \Prop(\cA_{\frac{1}{n} \Z \cdot \La})$. 

\begin{remark}
    We prove versions of the above statements at the level of curved $\Ainf$-categories. 
    This allows us to prove \cite[Conjecture 13.19]{AJ10} of Akaho-Joyce in the monotone case.
    Moreover, the result simplifies greatly when $\pi_{| \La}$ is an embedding.
    See Theorem \ref{thm cFuk=cAug} and Corollaries thereafter.
\end{remark}

Let $\mu sh(W, \La)$ be the category of microsheaves on the relative skeleton, as constructed in \cite{KashiwaraSchapira, Shende-microlocal, Nadler-Shende}, and let $\mu_{\La}$ be the direct sum of microstalk functors at all components of the given Legendrian. 
Using the correspondence theorems (1) between Chekanov-Eliashberg modules and partially wrapped Fukaya categories \cite{EL21} and (2) between partially wrapped Fukaya categories and microsheaves \cite{GPS3}, we deduce:     

\begin{corollary}\label{intro corollary: fukaya from microsheaves}
    Assume that $W$ is topologically simple, i.e. $c_1(TW)=0$ and the map $\pi_1(V) \to \pi_1(W)$ is injective  (the latter always holds if $\dim W \geq 6)$.  
    Then there is an $\Ainf$-equivalence 
    $$\Fuk(\pi_{| \La}) \simeq \Nov \otimes_{\Z[\R_{\ge 0}]} \overline{(\varinjlim \Prop (\End_{\mu sh(W, \frac{1}{n} \Z /\Z  \cdot \La)}(\mu_{\frac{1}{n} \Z /\Z  \cdot \La})))[\vec \Q/\Z]}.$$
\end{corollary}
While the right hand side is rather involved, we emphasize that it is defined entirely from sheaf theory.  Corollary \ref{intro corollary: fukaya from microsheaves} is proven in 
Section \ref{proofs of intro theorems}. 

In case $\base$ is also a Weinstein manifold and hence admits its own microsheaf category (in particular, if $\base = T^*M$ is a cotangent bundle), we derive Theorem \ref{intro-1 thm: R-equivariant sheaves comparison} by establishing a version of Theorem \ref{intro thm: fukaya from augmentations} in sheaf theory and using  the comparison theorem of \cite{GPS3}.

\begin{remark}[Hitchin fibration] \label{rem: hitchin}
    Let $\pi\colon \cM\rightarrow B$ be a Hitchin fibration, which is a holomorphic integrable system. Then the base space has an affine manifold structure. Over each rational point $p$ the fiber $\pi^{-1}(p)$ is a rational Lagrangian.  Because said Lagrangian is holomorphic,  it is unobstructed for a generic complex structure among those coming from the hyperk\"ahler structure of $\cM$ \cite{solomon-verbitsky}.
    
    In \cite{shende-hitchinfiber}, it was explained that, despite the nonexactness of Hitchin fibers, the results of \cite{GPS2, GPS3} could nevertheless be used to construct a corresponding microsheaf on the skeleton of the moduli of Higgs bundles, defined over the Novikov field. Here we obtain a much stronger result: a sheaf quantization of the fiber {\em over the Novikov ring}.  
    This additional strength has the following significance.  One expects the microsheaf described in \cite{shende-hitchinfiber} to be (the restriction of) a Hecke eigensheaf, essentially because Hitchin fibers are eigen-sets for the singular supports of the Hecke transforms.  But it is not clear how to use this eigen-set property either in the Fukaya category (because the microlocalization of the Hecke correspondence is not smooth) or for the Novikov field microsheaf (because passing to the field inverts all Hamiltonian isotopies, and in particular forgets the precise geometry of the Hitchin fiber).  By contrast,  sheaf quantization over the Novikov ring suffers from neither of these disabilities. 
\end{remark}

{\bf Acknowledgments.}
We learned from Tobias Ekholm, who attributes it to Dominic Joyce, the idea that Lagrangian Floer homology should be some kind of limit `as $\hbar \to 0$' of the Legendrian Floer homology on the prequantization bundle.   We thank Noémie Legout for helpful discussions. We also would like to thank Wenyuan Li for pointing out some subtleties around Tamarkin categories for Weinstein manifolds.

V.S. and A. P. are supported by  Villum Fonden Villum Investigator grant 37814, Novo Nordisk Foundation grant NNF20OC0066298, and Danish National Research Foundation grant DNRF157. T.~K.\ is supported by JSPS KAKENHI Grant Numbers JP22K13912, 23H01068, and JP20H01794.

\section{Partially ordered groups}
\label{section pogs}

A partially ordered group (pog) is a group $G$ equipped with a partial order preserved by left multiplication $(b \le c \implies ab \le ac)$.  
We will often denote partially ordered sets or groups with an overhead arrow: $\vec G$.  
These have appeared in the literature at least since  \cite{Cartan-pog, Clifford-pog}, and have previously been of
use in contact geometry \cite{Eliashberg-Polterovich-pog}. There is a textbook:  \cite{Glass-pogbook}. 

A partial order on a group is  characterized by the {\em cone of positive elements}:
$$ G_+ := \{ g\,|\, 1_G \le g\}$$
Note $ G_+$ is a monoid in which only the unit is invertible.  Conversely, given a group $G$ and such monoid $G_+ \subset G$, we may define a partial order on $G$ 
by taking $a \le b$ if $a^{-1} b \in G_+$.  

\begin{example}
    The addition structure on $\vec \R = (\R, \le)$ defines a pog, for which $\vec \R_+$ is the non-negative real numbers. 
    For our purposes, we are interested in $(\vec \R, +)$ and its sub-pogs.  
\end{example}

\begin{example}
    A non-abelian example: if $\cB_n$ is the braid group on $n$ strands, we may use the positive braid monoid $\cP_n$ to define a partial order on $\cB_n$, where $a \le b$ if one can write $b = ap$ with $p$ a positive braid.
\end{example}

Just as we may regard a poset $\vec P$ as a category with a unique morphism $p \to q$ if $p \le q$, we may regard a pog as a monoidal category. 
In all applications we are interested in the pog $\vec{\R}$
consisting of the real numbers with order $a \le b$ and groups structure addition, and its sub-pogs $\vec{\Q}$, $\vec{\Z}$, etc.

We throughout this section take coefficients in $\Z$, but the results hold with $\Z$ replaced by an arbitrary commutative ring $k$.

\subsection{Modules over posets and pogs}

For a poset $\vec P$, we write $\vec{P}-mod$ for the category of functors $\Hom(\vec P, \Z-mod)$  (either for the abelian or dg derived category of $\Z$-modules, as appropriate), 
and similarly by $mod-\vec{P}$ we mean $\Hom(\vec{P}^{op}, \Z-mod)$.  Of course $\Z-mod$ can be replaced by another appropriate category to consider modules valued in said category.

We write $\Z[\vec P]:=\bigoplus_{p\leq q}\Z(p\rightarrow q)$ for the ring generated by the morphisms of $P$, subject to the relation that multiplication of composable morphisms give the corresponding morphism, and that multiplication of not-composable morphisms gives zero.  There is an  equivalence between $\vec P-mod$ and modules for $\Z[\vec P]$ with the property $M = \prod_{p \in P} 1_p M$,
given by sending a functor $F$ to $\prod_{p \in P} F(p)$. 

Suppose now $\vec P$ is in fact a pog.
The monoid-ring $\Z[ P_+]$ admits a homomorphism
\begin{eqnarray*}
    \Z[ P_+]  & \to & \Aut(\id_{\Z[\vec P]})\cong \Aut(\id_{\vec P-mod}) \\
    \rho & \mapsto &  \prod_{p\in P}\left( \rho\colon 1_pM\rightarrow 1_{\rho+p}M \right)
\end{eqnarray*}
where $\id_{\Z[\vec P]}$ is the identity functor of the category of $\Z[P_+]$-modules of the form $M=\prod_{p\in P}1_p M$.
Here, $\rho$ is some map $\rho: 0 \to q$, and so $p+\rho$ is the corresponding map $p \to p+q$.  
We view $\Z[P_+]$ as a $P$-graded ring.

\begin{lemma} \label{lem: poset module}
    Let $\vec P$ be a pog.  Then
    there is an equivalence of categories between $\vec P-mod$
    and $P$-graded $\Z[ P_+]$ modules, given on objects by $F \mapsto \prod_{p \in P} F(p)$. 
\end{lemma}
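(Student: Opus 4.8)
The plan is to build on the equivalence recorded above between $\vec{P}-mod$ and the category of $\Z[\vec P]$-modules of the ``unital'' form $M=\prod_{p\in P}1_pM$, so that the only thing left to prove is that such modules are the same as $P$-graded $\Z[P_+]$-modules; this is the step where the pog structure (rather than a bare poset) enters. First I would record the structure of the category algebra $\Z[\vec P]$: because $\Hom_{\vec P}(p,q)$ is a single element when $q=p+g$ for some $g\in P_+$ and is empty otherwise, a $\Z$-basis of $\Z[\vec P]$ is given by symbols $(p,g)$ with $p\in P$, $g\in P_+$, where $(p,g)$ is the morphism $p\to p+g$; the identity morphisms $1_p=(p,0)$ are orthogonal idempotents summing formally to the identity, and composition reads $(p',g')\cdot(p,g)=\delta_{p',\,p+g}\,(p,g+g')$. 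Thus $\Z[\vec P]$ is assembled from the monoid-ring $\Z[P_+]$ and the set $P$ (on which $P_+$ acts by translation $g\colon p\mapsto p+g$): the operator through which an element $g\in P_+$ acts on a unital module, via the homomorphism $\Z[P_+]\to\Aut(\id_{\vec{P}-mod})$ constructed above, is exactly $\sum_{p\in P}(p,g)$, and together with the idempotents $1_p$ these generate the whole $\Z[\vec P]$-action.

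Next I would exhibit the two functors. The forward one is the stated $F\mapsto M:=\prod_pF(p)$: this $M$ carries its $\Z[\vec P]$-action, restriction along $\Z[P_+]\to\Aut(\id_{\vec{P}-mod})$ makes it a $\Z[P_+]$-module, the product decomposition is the $P$-grading, and the compatibility $g\cdot 1_pM\subseteq 1_{p+g}M$ holds by construction, so $M$ is a $P$-graded $\Z[P_+]$-module, functorially in $F$. For the quasi-inverse, from a $P$-graded $\Z[P_+]$-module $M=\prod_pM_p$ I would set $F(p):=M_p$ and, for the unique arrow $p\to q=p+g$ of $\vec P$, let $F(p\to q)$ be the action of $g\in P_+$, which lands in $M_q$ precisely by the grading condition; functoriality $F(q\to r)\circ F(p\to q)=F(p\to r)$ is just multiplicativity of the $\Z[P_+]$-action. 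The one point here that needs genuine care is that the operator realising $g\in P_+$ is the infinite sum $\sum_p(p,g)$: this is well defined on the \emph{product} $\prod_pM_p$ because each component of the output receives only one summand, which is exactly why the product (and not the direct sum) is the correct target; tracking this, together with fixing left/right conventions so that ``$P$-graded $\Z[P_+]$-module'' is phrased correctly in the non-abelian case, is the main (and essentially the only) obstacle. In the abelian case $\vec P=\vec\Q,\vec\R,\vec\Z,\dots$ used in the rest of the paper it is immediate.

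Finally I would check that the two functors are mutually quasi-inverse. On objects the round trips are identities once one unwinds $1_pM=M_p$ and the identity $(p,g)=1_{p+g}\circ(\text{action of }g)\circ 1_p$ in $\End(M)$. On morphisms, a natural transformation $F\Rightarrow F'$ is by definition a family $F(p)\to F'(p)$ commuting with every $F(p\to q)$, which is the same thing as a grading-preserving, $\Z[P_+]$-linear map $\prod_pF(p)\to\prod_pF'(p)$ (commuting with the $1_p$ and with the $g\in P_+$ is equivalent to commuting with all $(p,g)$, hence with all of $\Z[\vec P]$); so the functor is fully faithful, and essential surjectivity is the object-level statement above. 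Beyond the already-established category-algebra equivalence, the whole argument is then a bookkeeping check that passing to $P_+$ while remembering the grading loses no information.
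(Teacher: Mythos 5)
Your proof is correct; the paper in fact states this lemma without a proof, only remarking afterward that the group structure ensures morphisms out of any $p \in P$ ``look the same,'' and your argument supplies exactly the bookkeeping that remark gestures at. You correctly identify the basis of $\Z[\vec P]$, build both functors, and check the round trips and full faithfulness. One small imprecision worth flagging: the claim that the well-definedness of the formal sum $\sum_p(p,g)$ is ``exactly why the product (and not the direct sum) is the correct target'' does not hold up --- that operator acts just as well on $\bigoplus_p M_p$, since each homogeneous piece is moved to a single new component. The product appears simply because the paper's preceding paragraph set up the equivalence $\Hom(\vec P,\Z\text{-}mod)\simeq\{M \text{ over } \Z[\vec P]\mid M=\prod_p 1_pM\}$ with the product convention; either packaging gives an equivalence, and nothing in the rest of your argument depends on this choice, so it does not affect correctness.
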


Note that the definition of $\Hom(\vec P, \Z-\Mod)$ above does not involve the group structure, the role of which in the above statement is to ensure that morphisms out of any given element of $P$ is `independent of the element'.

\begin{corollary} \label{cor: poset equivariance}
  Let $\vec P$ be a pog.  Then there is an action of the group $P$ on $\vec P-mod$, and the isomorphism of
  Lemma \ref{lem: poset module} descends to an equivalence of
  $\vec P-mod^{P}$ with (not $P$-graded) $\Z[P_+]$ modules. 
\end{corollary}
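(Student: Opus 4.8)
The plan is to deduce the corollary from Lemma~\ref{lem: poset module} by identifying the $P$-action referenced in the statement with the action of $P$ on $P$-graded $\Z[P_+]$-modules given by shifting the grading, and then checking that $P$-equivariant objects for this action are the same as ungraded $\Z[P_+]$-modules.

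First I would make the group action explicit. The homomorphism $\Z[P_+] \to \Aut(\id_{\vec P-mod})$ displayed before Lemma~\ref{lem: poset module} sends $\rho\colon 0\to q$ to the natural transformation whose component at $p$ is the structure map $1_pM \to 1_{q+p}M$. The elements of $P$ that lift to $P_+$ only generate a submonoid, so to get an action of the whole group $P$ I would instead observe that for each $g \in P$ the translation functor $T_g\colon \vec P-mod \to \vec P-mod$ defined by $(T_g F)(p) = F(g+p)$ (with the evident action on morphisms, using that $g+(-)$ is an order isomorphism of $\vec P$) is an autoequivalence, that $T_g T_h \cong T_{g+h}$ canonically, and that this assembles into a $P$-action. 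Transporting $T_g$ across the equivalence of Lemma~\ref{lem: poset module} gives the functor on $P$-graded $\Z[P_+]$-modules that regrades by $N = \bigoplus_{p} N_p \mapsto \bigoplus_p N_{g+p}$; on the positive part $g \in P_+$ this recovers the natural transformation above, so the two descriptions agree.

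Next I would identify the equivariantization. A $P$-equivariant object of $\vec P-mod$ is an object $F$ together with compatible isomorphisms $F \xrightarrow{\sim} T_g F$ for all $g\in P$; under Lemma~\ref{lem: poset module} this is a $P$-graded $\Z[P_+]$-module $N = \bigoplus_p N_p$ equipped with a coherent system of grading-shift isomorphisms $N_p \xrightarrow{\sim} N_{g+p}$. Such a system is exactly the data of gluing all the graded pieces into one module $\bar N := N_0$ on which $\Z[P_+]$ still acts (the action of $\rho\colon 0\to q$ becomes an endomorphism of $\bar N$, no longer raising any grading), and conversely any ungraded $\Z[P_+]$-module $\bar N$ produces the constant-up-to-shift graded module $p \mapsto \bar N$ with the tautological shift isomorphisms. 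One checks these two constructions are mutually inverse and functorial, using that $P$ acts simply transitively on itself so that the coherence conditions are automatically satisfiable and the choice of ``basepoint'' $0 \in P$ does not matter up to canonical isomorphism. This yields the claimed equivalence $\vec P-mod^{P} \cong \Z[P_+]-mod$.

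The main obstacle I expect is bookkeeping rather than conceptual: in the derived/dg setting an equivariant structure is not just a family of isomorphisms but a coherent homotopy-fixed-point datum, so ``gluing the graded pieces'' must be done with enough coherence that $\bar N$ is well-defined as an object of the dg category and the functor is an equivalence of dg (or $\infty$-)categories, not merely of homotopy categories. I would handle this by phrasing the $P$-action as a functor $BP \to \mathrm{Cat}$ (or using the bar construction) and noting that, since $P$ is a group acting on $\vec P$ freely and transitively, the homotopy fixed points of $T$ on $\vec P-mod$ are computed by the single value at $0$ with its residual $\Z[P_+]$-action — essentially descent along the $P$-torsor $P \to \mathrm{pt}$. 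Modulo this coherence check, the corollary follows formally from Lemma~\ref{lem: poset module}.
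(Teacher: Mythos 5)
Your proposal is correct. The paper gives no proof of this corollary (treating it as immediate from Lemma \ref{lem: poset module}), and your argument is the natural filling-in: the $P$-action by the translation autoequivalences $T_g$, the identification of $P$-equivariant objects with $P$-graded modules carrying coherent shift isomorphisms, and the reduction to the graded piece at $0 \in P$ with its residual $\Z[P_+]$-module structure. You also correctly flag the real subtlety, namely homotopy coherence in the dg setting, and your proposed resolution is the right one: since $P$ acts freely and transitively on the objects of $\vec P$, the equivariantization is computed by descent, $(\vec P-mod)^{hP} \simeq \Hom(\vec P /_h P, \Z-mod)$, and the action groupoid $\vec P /_h P$ is equivalent to the one-object category with endomorphism monoid $P_+$ (up to a harmless handedness convention that disappears in the abelian case relevant to this paper), giving $\Z[P_+]-mod$. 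One small imprecision worth correcting: the $T_g$ are autoequivalences of $\vec P-mod$, not elements of $\Aut(\id)$, so restricting to $g \in P_+$ does not literally ``recover the natural transformation'' $\Z[P_+] \to \Aut(\id)$ from the paper; what relates the two structures is the natural transformation $\id \Rightarrow T_g$ (for $g \in P_+$) coming from the morphisms $p \to g+p$ in $\vec P$. This slip does not affect the substance or the validity of your argument.
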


More generally:

\begin{corollary} \label{cor: sub-poset equivariance}
  Let $\vec P$ be a pog, 
  and $P_0 \subset P$ a subgroup.  
  Then there is an action of the group $P_0$ on $\vec P-mod$, and the isomorphism of
  Lemma \ref{lem: poset module} descends to an equivalence of
  $\vec P-mod^{P_0}$ with $P/P_0$-graded $\Z[P_+]$ modules, where $P_+$ carries the induced $P/P_0$ grading.  We denote this category by $\vec P/P_0-mod$. 
\end{corollary}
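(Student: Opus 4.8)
The plan is to work everything through the equivalence of Lemma~\ref{lem: poset module} and then identify the $P_0$-equivariantization by constructing functors in both directions by hand. First I would pin down the action: for $p_0 \in P_0$, left translation $\tau_{p_0}\colon \vec P \to \vec P$, $q \mapsto p_0 + q$, is an order-isomorphism, precisely because the partial order on $P$ is preserved by left multiplication, so precomposition $F \mapsto F\circ\tau_{p_0}$ is an autoequivalence of $\vec P-mod = \Hom(\vec P, \Z-mod)$; since $\tau_{p_0}\circ\tau_{p_0'} = \tau_{p_0+p_0'}$ and $\tau_0 = \id$ hold strictly, $p_0 \mapsto \tau_{p_0}^*$ is a \emph{strict} action of the group $P_0$. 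Under $F \mapsto \prod_{p\in P} F(p)$ this action is intertwined with grading shift on $P$-graded $\Z[P_+]$-modules (the $p$-th graded piece of $\tau_{p_0}^*F$ is $F(p+p_0)$), so it suffices to treat $P$-graded $\Z[P_+]$-modules with the shift action. I would also note, as the statement tacitly uses, that one wants $P_0$ normal so that $P/P_0$ is a group and $\Z[P_+]$ inherits a $P/P_0$-grading — automatic in the abelian examples of interest.

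Next I would exhibit the equivalence $(P\text{-graded }\Z[P_+]\text{-mod})^{P_0} \simeq (P/P_0)\text{-graded }\Z[P_+]\text{-mod}$, with $\Z[P_+]$ on the right carrying the grading pushed forward along $\pi\colon P \twoheadrightarrow P/P_0$. One functor sends a $(P/P_0)$-graded module $N$ to the $P$-graded module $\pi^*N$ with $(\pi^*N)_p = N_{\bar p}$ and $\Z[P_+]$ acting through $\pi$, equipped with the $P_0$-equivariant structure whose comparison maps $(\pi^*N)_p = N_{\bar p} = N_{\overline{p+p_0}} = (\tau_{p_0}^*\pi^*N)_p$ are literal identities, hence satisfy the cocycle condition and every higher coherence on the nose. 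The inverse sends $(M,\{\phi_{p_0}\})$ to the $(P/P_0)$-graded module $N$ with $N_{\bar q} := \colim_{p\in\bar q} M_p$, the colimit over the $P_0$-torsor $\bar q \subset P$ with transition isomorphisms the $\phi_{p_0}$; since the transitions are invertible and $\bar q$ is a torsor, each $M_p \to N_{\bar q}$ is an isomorphism, and the $\Z[P_+]$-action on $M$ descends by $P_0$-equivariance to a $(P/P_0)$-graded action on $N$. I would then check that these functors are mutually quasi-inverse, naturally and exactly, that the whole chain is compatible with Lemma~\ref{lem: poset module} (so that the latter's isomorphism genuinely descends to it), and that taking $P_0$ trivial recovers Lemma~\ref{lem: poset module} while $P_0 = P$ recovers Corollary~\ref{cor: poset equivariance}.

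The step I expect to need the most care is the meaning of $(-)^{P_0}$ in the dg/$\infty$-categorical case, where it is homotopy fixed points — a limit over $BP_0$ — and not a naive invariants construction; the description above is valid because the action is realized by the strict, strictly composable, invertible functors $\tau_{p_0}^*$, so the cosimplicial object computing the limit is levelwise a product of copies of $\vec P-mod$ indexed by $P_0^{\times\bullet}$ and its totalization is identified, via the reindexing $p \leftrightarrow (\bar p, p_0)$, with $(P/P_0)$-graded $\Z[P_+]$-modules. Equivalently, and probably more cleanly, one observes that $\vec P-mod^{P_0} = \Hom(\vec P/\!/P_0,\ \Z-mod)$ for the homotopy quotient category $\vec P/\!/P_0$ and reruns the proof of Lemma~\ref{lem: poset module} verbatim with $\vec P/\!/P_0$ in place of $\vec P$: a morphism $p\to p'$ there is a $P_0$-orbit of morphisms of $\vec P$, and the ring of morphisms collapses accordingly to $\Z[P_+]$ with its $P/P_0$-grading. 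One cannot shortcut this by reapplying Lemma~\ref{lem: poset module} to a quotient pog, since $P/P_0$ generally admits no compatible order.
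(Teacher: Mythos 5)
The paper states this corollary (and the preceding Corollary~\ref{cor: poset equivariance}) without proof, so there is no in-paper argument to compare against; your proposal supplies a correct proof, and it is the argument the paper implicitly has in mind. The three ingredients you isolate are right: the $P_0$-action on $\Hom(\vec P, \Z\text{-mod})$ is precomposition with left translation (well-defined precisely because a pog order is preserved by left multiplication), under $F\mapsto\prod_p F(p)$ this becomes grading shift, and the cleanest identification of the homotopy fixed points is $\Hom(\vec P,\Z\text{-mod})^{P_0}\simeq\Hom(\vec P/\!\!/P_0,\Z\text{-mod})$, together with the explicit quasi-inverse pair you write down (pullback along $\pi\colon P\to P/P_0$ one way, the colimit over the contractible torsor groupoid the other way). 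Your remark that one cannot simply apply Lemma~\ref{lem: poset module} to a ``quotient pog'' is also the correct reason this deserves an argument.

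Two small cautions, both of which you essentially flag but which are worth stating cleanly. First, since the $P_0$-action on the grading set $P$ is by \emph{left} translation and the $\Z[P_+]$-action shifts by \emph{right} translation, the orbit set is $P_0\backslash P$ (right cosets), and it is on $P_0\backslash P$-graded modules that the $\Z[P_+]$-action descends for free; writing this as $P/P_0$ requires either $P_0$ normal or $P$ abelian, as is the case in all the paper's applications. The paper's later statements (Lemma-Definition~\ref{lem: pog-quotient orbit category}, Proposition~\ref{prop reconstruction}) do assume normality explicitly, so the hypothesis is just missing here. Second, $p_0\mapsto\tau_{p_0}^*$ is an anti-homomorphism in general (precomposition reverses composition order), so it is a strict group action only when $P_0$ is abelian; otherwise use $p_0\mapsto\tau_{p_0^{-1}}^*$. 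Neither point affects the correctness of the proof for the groups actually used in the paper.
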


We give the various module categories appearing in Lemma \ref{lem: poset module} and Corollaries \ref{cor: poset equivariance} and \ref{cor: sub-poset equivariance} the natural monoidal structures which are $\otimes_\Z$ on the underlying $\Z$-modules.  

Consider now $\vec P \subset \vec Q$ an inclusion of pogs.  The restriction of modules 
$\vec Q-mod \to \vec P-mod$ is monoidal.  Across Lemma \ref{lem: poset module}, the corresponding map from $Q$-graded $\Z[Q_+]$-modules to $P$-graded $\Z[P_+]$-modules is given as follows: take the $P\subset Q$-graded components, on which a $\Z[P_+]$ action remains well defined.  Fixing further a normal subgroup $P_0 \subset P$, the analogous statements apply to restriction of 
modules $\vec Q/P_0-mod \to \vec P/P_0 -mod$.

\begin{remark}
Let us describe some other functors 
$\vec Q-mod \to \vec P-mod$ which are not monoidal.
We say as inclusion $\vec P \subset \vec Q$ of posets  is {\em discrete} if it has left and right adjoints, which we denote 
$\lceil \cdot \rceil^{\vec P}$ and $\lfloor \cdot \rfloor_{\vec P}$, respectively.  These agree with the usual ceiling and floor functions in the case $\vec \Z \subset \vec \R$.  In this case, we could also define functors $\vec Q-mod$ to $\vec P-mod$
by extension of scalars ``$\otimes_{\vec Q} \vec P$''
along either of $\lceil \cdot \rceil^{\vec P}$ or $\lfloor \cdot \rfloor_{\vec P}$.  Across Lemma \ref{lem: poset module}, this has the effect of collapsing the $Q$ grading to a $P$ grading via $\lceil \cdot \rceil^{\vec P}$ or $\lfloor \cdot \rfloor_{\vec P}$, and restricting along  $\Z[P_+] \to \Z[Q_+]$. 

These are {\em not} monoidal: collapsing the grading by floor or ceiling function does not commute with tensor product, since e.g. $\lfloor \frac{1}{2} + \frac{1}{2} \rfloor \ne \lfloor \frac{1}{2} \rfloor + \lfloor \frac{1}{2} \rfloor$.
\end{remark}

\subsection{Orbit categories for pogs} \label{section: orbit categories}

We recall some constructions of orbit categories from \cite{Cibils-Marcos}; see also \cite{Bongartz-Gabriel, Gabriel-cover, Keller-orbit, Asashiba}.  

\begin{definition} \label{def: orbit category}
    Let $\mathcal{C}$ be a $\Z$-linear category, with a group action $G \to \Aut(\cC)$.  One writes $\mathcal{C}[G]$
    whose objects are the same as $\mathcal{C}$, and whose morphisms are $G$-graded: 
    $$\Hom_{\mathcal{C}[G]}(x, y) := \bigoplus_{g \in G} \Hom(x, gy)$$
    This has a composition defined via 
    $$\Hom(x, gy) \times \Hom(y, hz) = 
    \Hom(x, gy) \times \Hom(gy, ghz) \to 
    \Hom(x, ghz)$$
\end{definition}

Note that if $x = gy$ in $\cC$, then $x$ and $y$ become isomorphic in $\cC[G]$.
It is shown in \cite{Cibils-Marcos} that
$\cC[G]$ agrees with various historical definitions
of orbit category, when those made sense.

\begin{example} 
    Let $G$ be a group. Consider the trivial action $G\rightarrow \Aut(B\Z)$. Then $(B\Z)[G] = B(\Z[G])$. 
\end{example}

By construction, the Hom spaces of $\cC[G]$ are $G$-graded.  

\begin{definition}  \label{def: unorbit} 
    Let $\cD$ be any $\Z$-linear category with $G$-graded Hom spaces.  We write $\cD \# G$ for the category whose objects are pairs $(d, g)$ for $d \in \cD$ and $g \in G$, and such that 
    $\Hom((d, g), (c, h))$ is the $g^{-1} h$-graded piece of $\Hom(d, c)$.    
\end{definition}

The category $\cD \# G$ has an evident $G$ action, which acts on objects through the second factor.  
It is easy to see (and shown in \cite{Cibils-Marcos}) that $\cC \mapsto \cC[G]$ and $\cD \mapsto \cD\#G$ are inverse operations.  E.g. $B\Z[G] \# G$ is the category with $G$ objects where every Hom space is $\Z$, with the natural compositions.

\vspace{2mm}

The constructions admit the following additional structures for pogs.  

\begin{lemma-definition} \label{lem: pog orbit category}
    Let $\vec P$ be a pog acting on a category, $\vec P \to \Aut(\cC)$.  Then 
    $\cC[P]$ is enriched over $P$-graded $\Z[P_+]$ modules. ($P_+$ carries the tautological $P$ grading.)

    We denote the resulting structure as $\cC[\vec P]$. 
\end{lemma-definition}
\begin{proof}
    Given $p \in P$, $q \in \Z[P_+]$ and $h \in \Hom_{\cC}(x, py)$, we should exhibit
    some map 
    $\Hom_{\cC}(x, py) \to \Hom_{\cC}(x, qpy)$.  The map is
    the composition of $h$ with the map 
    $py \to qpy$ obtained from the map $1 \to q$ in $\vec P$.  
\end{proof}

\begin{lemma-definition} \label{lem: pog unorbit}
    Let $\vec P$ be a pog and let $\cD$
    be a category enriched over $P$-graded $\Z[P_+]$ modules.  Then the action
    $P \to \Aut(\cD\#P)$ naturally extends to 
    $\vec P \to \Aut(\cD\#P)$. 
    We denote the resulting structure as $\cD \# \vec P$.
\end{lemma-definition}
\begin{proof}
Given $(p \to q) \in \vec P$ and $d \in \cD$, we should exhibit a morphism $(d, p) \to (d, q)$ in $\cD \# P$, which is to say, an element of the $p^{-1} q$-graded piece of $\Hom_{\cD}(d, d)$.  Now if there is a map $p \to q$ in $\vec P$, then there's also a map $1 \to p^{-1} q$, which we may identify as an element $\tau$ of $\Z[P_+]$ in degree $p^{-1} q$.  Recalling that $\Hom_{\cD}(d, d)$ is by hypothesis a $P$-graded $\Z[P_+]$-module, the desired element is $\tau \cdot 1_d$. 
\end{proof}

The constructions $\cC \mapsto \cC[\vec P]$ and $\cD \mapsto \cD\#\vec P$ are inverse. 

We will need the following variants.

\begin{lemma-definition} \label{lem: pog-quotient orbit category}
    Let $\vec P$ be a pog acting on a category, $\vec P \to \Aut(\cC)$.  
    Assume $P_0 \subset P$ is the kernel of the underlying group action.   
    Then 
    $\cC[P/P_0]$ is enriched over $P/P_0$-graded $\Z[P_+]$ modules, where $P_+$ carries the grading induced from $P$.        
        We denote the resulting structure as $\cC[\vec P/P_0]$.
\end{lemma-definition}
\begin{proof}
    Observe that $\cC[P/P_0]$ is obtained from
    $\cC[P]$ by taking $P_0$ fixed points with respect to the natural $P_0$ action on Hom spaces, which collapses the $P$ grading to a $P/P_0$ grading.  Via this, we may inherit the $\Z[P_+]$ action from from Lemma \ref{lem: pog orbit category}. 
\end{proof}

\begin{lemma-definition} \label{lem: pog-quotient unorbit}
    Let $\vec P$ be a pog, $P_0 \subset P$
    a normal subgroup.  Let $\cD$ be a category enriched over $P/P_0$-graded $\Z[P_+]$ modules. 
    Then the $P/P_0$ action on $\cD\# P/P_0$ extends
    to a $\vec P$ action (with $P_0$ the kernel of the underlying group action). 
    We denote the resulting structure as $\cD \# \vec{P} / P_0$. 
\end{lemma-definition}
\begin{proof}
    Identical to the proof of Lemma \ref{lem: pog unorbit}, save that $P$-gradings are replaced with $P/P_0$-gradings. 
\end{proof}

The constructions $\cC \mapsto \cC[\vec P/P_0]$ and $\cD \mapsto \cD\#\vec P/P_0$ are inverse. 

\begin{rmk}\label{rmk structure on opposite category}
    If $\cC$ is a $(\vec{P}/P_0)$-category, i.e. a linear category together with a monoidal functor $\vec P \to \Aut (\cC), a \mapsto \tau_a$ sending any object in $P_0$ to $\id_{\cC}$, then $\cC^{op}$ and $\Mod(\cC)$ are $(\vec{P}/P_0)$-categories via
    \[\left( \vec{P}/P_0 \to \Aut(\cC^{op}), \, a \mapsto \tau_{a^{-1}} \right) \text{ and } \left( \vec{P}/P_0 \to \Aut(\Mod(\cC)), \, a \mapsto (- \circ \tau_{a^{-1}}) \right). \] 
    Moreover, the Yoneda embedding $\cC \hookrightarrow \Mod(\cC)$ is $(\vec{P}/P_0)$-equivariant.
\end{rmk}

\begin{rmk}\label{rmk filtration}
    If $\cC$ is a $(\vec{P}/P_0)$-category, then $\cC$ is enriched over $P_+$-filtered $\Z[P_+ \cap P_0]$-modules, with
    \[\cF^{\geq c} \cC (X, Y) := T_{c^{-1}, 1_P} \cdot \cC (X, \tau_{c^{-1}} Y) \]
    where $T_{a,b} : \cC(- , \tau_a(-)) \to \cC(- , \tau_b(-))$ is the natural transformation associated to the morphism in $\hom_{\vec P} (a, b)$ corresponding to the relation $a \leq b$.
\end{rmk}

\subsection{Change of enrichment and reconstruction}
\label{section change of enrichement and reconstruction}

Suppose given an inclusion of pogs $\vec P \to \vec Q$.  We have the restriction of modules map $\vec Q-mod \to \vec P-mod$; recall it is monoidal.  Thus if $\cD$ is a category enriched over $\vec Q-mod$, we may form the change of enrichment $\cD|_{\vec P}$ by applying the restriction of modules to all morphism spaces. 

\begin{lemma}\label{lemma change of enrichment}
    There is a natural functor $\cD|_{\vec P}\# \vec{P} \to \cD \# \vec Q$ commuting with the $\vec P$ actions on both sides. 
\end{lemma}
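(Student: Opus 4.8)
The plan is to unwind the two sides via the unorbit/orbit bijections established earlier and construct the functor on objects and morphisms directly, then check compatibility with the $\vec P$-actions. Recall that $\cD\#\vec Q$ has objects pairs $(d,q)$ with $d\in\cD$ and $q\in Q$, with $\Hom((d,q),(c,q'))$ the $q^{-1}q'$-graded piece of $\Hom_{\cD}(d,c)$ (a $\vec Q-mod$, i.e. a $Q$-graded $\Z[Q_+]$-module). On the other side, $\cD|_{\vec P}$ is $\cD$ with morphism spaces restricted to their $P\subset Q$-graded components (this is the monoidal restriction functor $\vec Q-mod\to\vec P-mod$ of the previous subsection), and $\cD|_{\vec P}\#\vec P$ has objects pairs $(d,p)$ with $p\in P$ and $\Hom((d,p),(c,p'))$ the $p^{-1}p'$-graded piece of $\Hom_{\cD|_{\vec P}}(d,c)$.

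First I would define the functor $\Phi$ on objects by $(d,p)\mapsto (d,p)$, using the inclusion $P\hookrightarrow Q$ on the grading label. On morphisms, I would observe that the $p^{-1}p'$-graded piece of $\Hom_{\cD|_{\vec P}}(d,c)$ is, by the very definition of the restriction of enrichment, the $p^{-1}p'$-graded piece of $\Hom_{\cD}(d,c)$ viewed inside $Q$ (since $p^{-1}p'\in P\subset Q$); thus it maps tautologically to $\Hom_{\cD\#\vec Q}((d,p),(c,p'))$, the $p^{-1}p'$-graded piece over $Q$. I would then check that this identification is compatible with composition — immediate, since composition in both orbit categories is induced from composition in $\cD$ together with the translation maps, and the grading arithmetic $p^{-1}p'\cdot p'^{-1}p''=p^{-1}p''$ is the same whether performed in $P$ or in $Q$ — and with units. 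So $\Phi$ is a well-defined $\Z$-linear functor.

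Next I would verify compatibility with the $\vec P$-actions. On $\cD|_{\vec P}\#\vec P$ the $\vec P$-action is the one from Lemma-Definition \ref{lem: pog unorbit}: $P$ acts on objects through the second factor, $(d,p)\mapsto(d,ap)$, and a morphism $a\to a'$ in $\vec P$ induces the natural transformation given by multiplication by the degree-$a^{-1}a'$ element $\tau$ of $\Z[P_+]$. On $\cD\#\vec Q$ the $\vec P$-action is the restriction along $\vec P\hookrightarrow\vec Q$ of the $\vec Q$-action from Lemma-Definition \ref{lem: pog unorbit}, which on objects is $(d,q)\mapsto(d,aq)$ and on morphisms is again multiplication by the degree-$a^{-1}a'$ element of $\Z[Q_+]$ — but that element is the image of $\tau\in\Z[P_+]$ under $\Z[P_+]\to\Z[Q_+]$, which is exactly what the restriction-of-enrichment functor does to natural transformations. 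Hence $\Phi$ strictly intertwines the two actions on objects and the action-induced natural transformations on morphisms agree, giving the desired commutation.

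The only genuinely delicate point — the main obstacle — is bookkeeping of the gradings and of the scalar action of the positive monoid: one must be careful that the $P$-graded $\Z[P_+]$-module structure that $\cD|_{\vec P}$ carries is literally the pullback of the $Q$-graded $\Z[Q_+]$-module structure under restriction (as the previous subsection asserts), so that "take the $p^{-1}p'$-graded piece" means the same thing on both sides and the $\Z[P_+]$-action is the restriction of the $\Z[Q_+]$-action along $\Z[P_+]\to\Z[Q_+]$. Once this is pinned down, naturality in $\cD$ and functoriality of all the constructions make the compatibility with $\vec P$-actions formal; if one wishes one can package $\Phi$ as a $2$-natural transformation between the two composite functors (restrict-then-unorbit versus unorbit) from $\vec Q$-enriched categories to $\vec P$-categories. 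I expect no homotopy-coherence subtleties here since everything is defined at the strict (dg or abelian) level, as in the rest of Section \ref{section pogs}.
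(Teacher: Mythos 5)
Correct. The paper supplies no proof of this lemma (the nearby Proposition in the same subsection is likewise dispatched with ``obvious from the explicit constructions''), and your unwinding is the intended argument: the functor is the inclusion on objects, tautological on morphisms because the $p^{-1}p'$-graded piece of $\Hom_{\cD|_{\vec P}}(d,c)$ is literally the $p^{-1}p'$-graded piece of $\Hom_{\cD}(d,c)$ since $p^{-1}p'\in P\subset Q$, and the $\vec P$-equivariance reduces to the compatibility of the translation natural transformations under $\Z[P_+]\to\Z[Q_+]$.
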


Suppose now that
$\vec P_1 \subset \vec P_2 \subset \cdots \subset \vec Q$ is a sequence of sub-pogs such that 
$\varinjlim \vec P_i = \vec Q$. We say the sequence $\vec P_i$ {\em exhausts} $\vec Q$. 

\begin{example}
    $\frac{1}{n} \vec \Z$ exhausts $\vec \Q$. 
\end{example}

Recall that by definition $\vec Q-mod = \Hom(\vec Q, \Z-mod)$. It is evident that $\varprojlim \vec P_i-mod = \vec Q-mod$.

\begin{proposition}
    Let $\vec Q$ be a pog, and $\vec P_i \subset \vec Q$ an exhaustion. 
    Suppose given $\cD$ a category enriched over $\vec Q-mod$. 
    Then 
    $$\varinjlim \cD|_{\vec P_i}\# \vec{P_i} \xrightarrow{\sim} \cD \# \vec Q$$
\end{proposition}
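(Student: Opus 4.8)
The plan is to produce the equivalence as a filtered colimit of the functors from Lemma \ref{lemma change of enrichment}, and to check that the colimit inverts the structure maps. First, for each $i$ we have the natural functor $\cD|_{\vec P_i}\#\vec P_i \to \cD\#\vec Q$ of Lemma \ref{lemma change of enrichment}, commuting with the $\vec P_i$ action; since $\vec P_i \subset \vec P_{i+1}$, these are compatible with the transition functors $\cD|_{\vec P_i}\#\vec P_i \to \cD|_{\vec P_{i+1}}\#\vec P_{i+1}$ (which themselves come from Lemma \ref{lemma change of enrichment} applied to the inclusion $\vec P_i \subset \vec P_{i+1}$, together with the identity $(\cD|_{\vec P_{i+1}})|_{\vec P_i} = \cD|_{\vec P_i}$). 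So we obtain a canonical functor $\varinjlim_i \cD|_{\vec P_i}\#\vec P_i \to \cD\#\vec Q$, and it remains to see it is an equivalence.

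Next I would analyze both sides objectwise. By Definition \ref{def: unorbit}, objects of $\cD\#\vec Q$ are pairs $(d, q)$ with $d \in \cD$ and $q \in Q$, and objects of $\cD|_{\vec P_i}\#\vec P_i$ are pairs $(d, p)$ with $p \in P_i$; in the colimit, an object is represented by such a pair for some $i$. Since $\varinjlim \vec P_i = \vec Q$ as groups, every $q \in Q$ lies in some $P_i$, so the functor is essentially surjective (in fact surjective on objects up to reindexing). For fully faithfulness, unwind the Hom: $\Hom_{\cD\#\vec Q}((d,q),(d',q'))$ is the $q^{-1}q'$-graded piece of $\Hom_\cD(d,d')$, a module living over $\vec Q-mod$; while $\Hom$ in $\cD|_{\vec P_i}\#\vec P_i$ between the same objects (for $q,q' \in P_i$) is the $q^{-1}q'$-graded piece of the restriction $\Hom_\cD(d,d')|_{\vec P_i}$. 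The key point is that colimits of Hom-spaces in the orbit-unorbit construction are computed by the colimit of the underlying graded module structures, and the restriction functors $\vec Q-mod \to \vec P_i-mod$ satisfy $\varprojlim_i \vec P_i-mod = \vec Q-mod$ (noted just before the Proposition), with the evaluation of a module at a fixed group element $q^{-1}q'$ being unchanged once $q^{-1}q' \in P_i$. Hence the transition maps on these particular graded pieces are eventually isomorphisms, and passing to the colimit recovers exactly the $\cD\#\vec Q$ Hom-space. One must also check the $\Z[P_{i,+}]$-enriched structure assembles to the $\Z[Q_+]$-enriched structure of $\cD\#\vec Q$, which again follows since $\varinjlim \Z[P_{i,+}] = \Z[Q_+]$ as $Q$-graded rings and the actions are compatible by construction in Lemma-Definition \ref{lem: pog unorbit}.

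The main obstacle is the bookkeeping around the colimit of enriched categories: one must be careful that $\varinjlim_i$ over a filtered diagram of categories-with-possibly-different-enrichments is formed correctly, i.e. first changing all enrichments to a common target (here $\vec Q-mod$, or ultimately plain $\Z$-modules, compatibly) and only then taking the colimit of Hom-spaces, and that this colimit of Hom-spaces indeed agrees with the naive one because the relevant transition maps are eventually isomorphisms on each fixed graded component. The subtlety is purely that filtered colimits commute with the finite limits / fixed-point operations implicit in Definition \ref{def: unorbit}; there is no geometric input. Once that is set up, essential surjectivity and full faithfulness are immediate from $\varinjlim P_i = Q$ and $\varprojlim \vec P_i-mod = \vec Q-mod$. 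I would also remark that the equivalence is $\vec Q$-equivariant: each finite stage carries a $\vec P_i$ action compatible with the maps to $\cD\#\vec Q$, and these assemble to the full $\vec Q$ action, so the $\vec Q$-category structure on $\cD\#\vec Q$ from Lemma-Definition \ref{lem: pog unorbit} is matched.
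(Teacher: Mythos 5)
Your proof is correct and is exactly what the paper leaves implicit: the paper's own proof reads, in its entirety, ``Obvious from the explicit constructions,'' and the content of that sentence is precisely your argument — essential surjectivity from $\varinjlim P_i = Q$, full faithfulness because the restriction of a $Q$-graded module to a $P_i$-graded one is evaluation-preserving on elements of $P_i$, so the transition maps on Hom-spaces are isomorphisms once the relevant grading degree lies in $P_i$. One small sharpening: you write that the transition maps are ``eventually isomorphisms,'' but in fact they are \emph{already} isomorphisms on the $q^{-1}q'$-graded piece as soon as $q^{-1}q' \in P_i$, since restriction of modules along $\vec P_i \subset \vec Q$ is literally selecting graded components; this makes the ``bookkeeping obstacle'' you flag about colimits of enriched categories essentially vacuous, as the diagram of Hom-spaces is eventually constant in each degree.
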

\begin{proof}
    Obvious from the explicit constructions.  
\end{proof}

\begin{corollary}
    $(\varinjlim \cD|_{\vec P_i}\# \vec{P_i})[\vec Q] \xrightarrow{\sim} \cD $
\end{corollary}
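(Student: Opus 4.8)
The plan is to deduce the Corollary formally from the preceding Proposition together with the fact, recorded just after Lemma-Definition \ref{lem: pog unorbit}, that $\cC \mapsto \cC[\vec Q]$ and $\cD \mapsto \cD \# \vec Q$ are mutually inverse constructions. The first task is to promote the equivalence $\varinjlim \cD|_{\vec P_i}\# \vec{P_i} \xrightarrow{\sim} \cD \# \vec Q$ of the Proposition to an equivalence compatible with the pog-actions on the two sides. The target carries the $\vec Q$-action of Lemma-Definition \ref{lem: pog unorbit}; the source carries the action of $\varinjlim \vec P_i = \vec Q$ assembled from the $\vec P_i$-actions on each $\cD|_{\vec P_i} \# \vec P_i$, using that the transition functors of Lemma \ref{lemma change of enrichment} commute with the $\vec P_i$-actions. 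One checks that the equivalence of the Proposition intertwines these, so that it is an equivalence of $\vec Q$-equivariant categories.

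Second, I would apply the orbit-category construction $(-)[\vec Q]$ to this equivalence. Since $(-)[\vec Q]$ is functorial in $\vec Q$-equivariant categories — it is built from the action on Hom-spaces as in Definition \ref{def: orbit category}, enriched as in Lemma-Definition \ref{lem: pog orbit category} — it carries the equivalence of the previous step to an equivalence
$$(\varinjlim \cD|_{\vec P_i}\# \vec{P_i})[\vec Q] \xrightarrow{\sim} (\cD \# \vec Q)[\vec Q].$$
Finally, invoking that $\cD \mapsto \cD \# \vec Q$ and $\cC \mapsto \cC[\vec Q]$ are inverse, we have $(\cD \# \vec Q)[\vec Q] \cong \cD$ as categories enriched over $\vec Q - mod$, and composing with the previous equivalence yields the claim (with its $\vec Q - mod$-enrichment recovered).

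The only step with genuine content — and hence the one I expect to require the most care — is the first: verifying that the equivalence furnished by the Proposition, which is proved there merely "by inspection of the explicit constructions", really respects the $\vec Q$-actions on the nose, rather than only up to a coherent system of natural isomorphisms. Concretely this means chasing, through the definitions of $\#$, of the filtered colimit, and of the change-of-enrichment maps of Lemma \ref{lemma change of enrichment}, the action of a morphism $p \to q$ of $\vec Q$ on both sides. Once one fixes an index $i$ with $p, q \in \vec P_i$, the two actions are given by the very same formula — namely multiplication by the element of $\Z[(P_i)_+]$ of degree $p^{-1}q$ on the relevant Hom-space, cf. the proof of Lemma-Definition \ref{lem: pog unorbit} — so the compatibility is immediate after this identification. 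Everything else is formal bookkeeping with the inverse pair $(-\,\#\,\vec Q,\ -[\vec Q])$ and the exactness of filtered colimits.
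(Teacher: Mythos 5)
Your proof is correct and matches the intended argument; the paper presents this Corollary without proof precisely because it is the immediate formal consequence you spell out. The only substantive point — that the Proposition's equivalence is $\vec Q$-equivariant, so that applying $(-)[\vec Q]$ and then the inverse relation $(\cD \# \vec Q)[\vec Q] \cong \cD$ is legitimate — is correctly identified and handled.
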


More generally,

\begin{proposition}\label{prop reconstruction}
Let $\vec Q$ be a pog, and $\vec P_i \subset \vec Q$ an exhaustion. 
Suppose all the $\vec P_i$ contain some normal subgroup $P_0$. 
Given $\cD$, a category enriched over $\vec Q/P_0-mod$: 
$$\varinjlim \cD|_{\vec P_i/P_0}\# \vec{P_i}/P_0 \xrightarrow{\sim} \cD \# \vec Q/P_0 \text{ and } (\varinjlim \cD|_{\vec P_i/P_0}\# \vec{P_i}/P_0)[\vec Q/P_0] \xrightarrow{\sim} \cD$$.
\end{proposition}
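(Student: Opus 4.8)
The plan is to deduce Proposition~\ref{prop reconstruction} from the already-established equivariant (non-quotient) statements by reducing the quotient situation to the situation without a quotient, via the device of taking $P_0$-fixed points on morphism spaces. Recall that, by the very constructions of Lemma-Definitions~\ref{lem: pog-quotient orbit category} and~\ref{lem: pog-quotient unorbit}, a category enriched over $P/P_0$-graded $\Z[P_+]$-modules is obtained from one enriched over $P$-graded $\Z[P_+]$-modules by applying the (lax monoidal, in fact monoidal on the relevant subcategories) functor ``take $P_0$-invariants/collapse the $P$-grading to a $P/P_0$-grading.'' So I would first record the compatibility: if $\cD$ is enriched over $\vec Q/P_0$-mod, the unorbit construction $\cD \# \vec Q/P_0$ has the same underlying objects and $\vec Q$-action as if we had first lifted $\cD$ to a $\vec Q$-mod-enriched category $\widetilde{\cD}$ (when such a lift exists), with morphism spaces then cut down by the $P_0$-fixed-point functor; concretely, for each pair of objects, $\Hom_{\cD\#\vec Q/P_0}((d,\bar p),(c,\bar q))$ is the $\bar p^{-1}\bar q$-graded piece, which is literally the $P_0$-invariant part of the corresponding $P$-graded data.

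Second, I would establish the first displayed equivalence $\varinjlim \cD|_{\vec P_i/P_0}\# \vec{P_i}/P_0 \xrightarrow{\sim} \cD \# \vec Q/P_0$ by the same ``obvious from the explicit constructions'' argument used in the preceding Proposition, now carried out grading-component by grading-component. On objects both sides have pairs $(d,\bar a)$ with $\bar a \in \varinjlim P_i/P_0 = Q/P_0$ (using that the $P_i$ exhaust $\vec Q$ and all contain $P_0$, so $Q/P_0 = \varinjlim P_i/P_0$), so the map on objects is a bijection in the colimit. On morphisms, the change-of-enrichment functor of Lemma~\ref{lemma change of enrichment} applied in the $P_0$-quotient setting (the restriction-of-modules functor $\vec Q/P_0\text{-mod}\to \vec P_i/P_0\text{-mod}$ is monoidal, as noted in the paragraph following Corollary~\ref{cor: sub-poset equivariance}) gives compatible functors $\cD|_{\vec P_i/P_0}\#\vec P_i/P_0 \to \cD\#\vec Q/P_0$, hence a functor out of the colimit; and a morphism $(d,\bar p)\to(c,\bar q)$ in $\cD\#\vec Q/P_0$ lives in the $\bar p^{-1}\bar q$-graded piece of $\Hom_{\cD}(d,c)$, which since $\bar p^{-1}\bar q \in Q/P_0 = \varinjlim P_i/P_0$ already appears at some finite stage $i$, giving essential surjectivity and full faithfulness of the colimit functor. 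The $\vec Q$-equivariance (with $P_0$ the kernel of the underlying group action) is built into both sides by Lemma-Definition~\ref{lem: pog-quotient unorbit} and is preserved by each transition functor, so it is preserved in the colimit.

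Third, for the second displayed equivalence I apply the orbit construction $(-)[\vec Q/P_0]$ to both sides of the first. Since $\cC\mapsto \cC[\vec P/P_0]$ and $\cD\mapsto \cD\#\vec P/P_0$ are mutually inverse (the sentence just after Lemma-Definition~\ref{lem: pog-quotient unorbit}), the right-hand side becomes $\bigl(\cD\#\vec Q/P_0\bigr)[\vec Q/P_0] \cong \cD$. For the left-hand side, I need that $(-)[\vec Q/P_0]$ commutes with the filtered colimit $\varinjlim_i$, i.e. $(\varinjlim_i \cE_i)[\vec Q/P_0] \simeq \varinjlim_i(\cE_i[\vec Q/P_0])$ where each $\cE_i$ carries its $\vec P_i/P_0 \subset \vec Q/P_0$ action; this is a direct check on the explicit model, since $[\vec Q/P_0]$ only reindexes and direct-sums Hom spaces over $Q/P_0$, and filtered colimits of linear categories commute with direct sums and with passing to objects. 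Combining, the left side is $\varinjlim_i (\cD|_{\vec P_i/P_0}\#\vec P_i/P_0)[\vec Q/P_0]$, which is what appears in the statement.

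The main obstacle I anticipate is purely bookkeeping rather than conceptual: making precise the interaction between the $P_0$-fixed-point (grading-collapse) operation, the change of enrichment along $\vec P_i/P_0 \hookrightarrow \vec Q/P_0$, and the filtered colimit, and checking that all three commute up to coherent equivalence — in particular that ``$P_0$-invariants of a $P$-graded $\Z[P_+]$-module, then restrict the grading to $P_i$'' agrees with ``restrict to $P_i$, then take $P_0$-invariants,'' and that this is compatible with the monoidal structures used to define the enrichments. Once that single compatibility diagram is in hand, everything else is the same ``obvious from the explicit constructions'' verification as in the non-quotient case, and the $P_0 = \{1\}$ specialization recovers the preceding Proposition and Corollary as a sanity check.
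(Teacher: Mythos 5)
Your proposal follows the same route the paper implicitly takes: Proposition~\ref{prop reconstruction} is stated with no proof of its own, as a ``more generally'' variant of the preceding non-quotient proposition whose proof is declared ``obvious from the explicit constructions,'' and you spell out exactly that verification in the $P_0$-quotient setting. Steps (1)--(2) are sound: the object sets and each fixed $\bar p^{-1}\bar q$-graded morphism component stabilize at a finite stage, so the colimit functor is an equivalence, and applying $(-)[\vec Q/P_0]$ to both sides of that equivalence and using that $[\vec Q/P_0]$ and $\#\vec Q/P_0$ are inverse gives the second display directly.

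One passage is confused though, and you should delete it: you do \emph{not} need ``$(-)[\vec Q/P_0]$ commutes with the filtered colimit,'' because the second display already asks about $(\varinjlim_i \cD|_{\vec P_i/P_0}\#\vec P_i/P_0)[\vec Q/P_0]$ with the orbit construction applied to the colimit, which is exactly what you get by hitting the first equivalence with $[\vec Q/P_0]$. Moreover the expression $\cE_i[\vec Q/P_0]$ that appears in your commutation claim is not well-formed, since each $\cE_i = \cD|_{\vec P_i/P_0}\#\vec P_i/P_0$ carries only a $\vec P_i/P_0$-action rather than a $\vec Q/P_0$-action; and your final sentence identifies the left side of the display with $\varinjlim_i(\cE_i[\vec Q/P_0])$, which is not the expression in the statement. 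None of this damages the substance, because the correct conclusion follows one line earlier from the inverse relationship, but as written the third step contains a detour that a referee would bounce. Similarly, the remark about lifting $\cD$ to a $\vec Q$-mod-enriched $\widetilde{\cD}$ is heuristic at best (such a lift need not exist) and should be cut, since the direct grading-by-grading check you perform afterwards never uses it.
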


\subsection{Dense inclusion of pogs}\label{section from Q to R}

We say that an inclusion $\vec Q \to \vec R$ of pogs is {\em limit-dense} if the inclusion preserves filtered limits and every object in $R$ is a filtered limit of objects in $Q$.  For example, the inclusion $\vec \Q \to \vec \R$ is limit-dense  (it is also colimit-dense).

We write $\Per(\vec P) \subset \Mod(\vec P)$ for the filtered-colimit preserving functors $\vec P ^{op} \to \Mod(\Z)$.  We term these `persistence modules'. 

\begin{lemma}\label{lemma from Q to R}
    Let $\vec Q \to \vec R$ be a limit-dense inclusion of pogs.
    The restriction functor $\Mod(\vec R^{op}) \to \Mod(\vec Q^{op})$ has a right adjoint $G \mapsto \overline{G}$ where, for $a \in R$, $\overline{G} (a) = \underset{b \in Q, \, b \geq a}{\varprojlim} \, G(b)$.
    Moreover, the latter define an adjoint equivalence between $\Per(\vec R^{op})$ and $\Per(\vec Q^{op})$.
\end{lemma}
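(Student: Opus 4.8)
\textbf{Proof plan for Lemma \ref{lemma from Q to R}.}

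The plan is to build the right adjoint explicitly by the formula given, verify the adjunction by a direct Yoneda-style computation, and then cut down to persistence modules where the unit and counit become isomorphisms. First I would make sense of the formula: for $G \in \Mod(\vec Q^{op})$ and $a \in R$, set $\overline{G}(a) := \varprojlim_{b \in Q,\, a \le b} G(b)$, the limit over the (cofiltered, by limit-density) poset of elements of $Q$ bounding $a$ from above. For $a \le a'$ in $R$ the bounding set of $a'$ is contained in that of $a$, inducing the restriction map $\overline{G}(a) \to \overline{G}(a')$, so $\overline{G} \in \Mod(\vec R^{op})$; functoriality in $G$ is clear. One must check that restriction $\mathrm{res}\colon \Mod(\vec R^{op}) \to \Mod(\vec Q^{op})$ together with $\overline{(-)}$ is an adjoint pair: a map $F \to \overline{G}$ of $\vec R^{op}$-modules is, componentwise at $a \in Q \subset R$, a compatible family $F(a) \to \varprojlim_{b \ge a} G(b)$; since $a$ itself is terminal among $\{b \in Q : a \le b\}$ — wait, it is initial in that poset, hence the limit receives a canonical map from $G(a)$ — one produces from any $F \to \overline{G}$ the composite $\mathrm{res}(F) \to \mathrm{res}(\overline{G}) \to G$ (the second arrow being projection to the $b = a$ factor), and conversely any $\mathrm{res}(F) \to G$ extends to $F \to \overline{G}$ because for $a' \in R$ and $b \in Q$ with $a' \le b$ we have the composite $F(a') \to F(b) \to G(b)$, and these are compatible over the cofiltered system. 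Checking the two triangle identities is routine bookkeeping with these formulas.

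Next I would restrict to persistence modules. I need: (i) if $G \in \Per(\vec Q^{op})$ then $\overline{G} \in \Per(\vec R^{op})$; and (ii) the unit $\mathrm{id} \to \overline{\mathrm{res}(-)}$ and counit $\mathrm{res}(\overline{(-)}) \to \mathrm{id}$ are isomorphisms on persistence modules. For (ii), the counit at $G \in \Per(\vec Q^{op})$ and $a \in Q$ is the projection $\varprojlim_{b \in Q,\, a \le b} G(b) \to G(a)$; since $a$ is the least element of $\{b \in Q : a \le b\}$, this cofiltered limit is just $G(a)$ and the map is an isomorphism — note this part does not even need the persistence hypothesis. For the unit at $F \in \Per(\vec R^{op})$ and $a \in R$, I must show $F(a) \xrightarrow{\ \sim\ } \varprojlim_{b \in Q,\, a \le b} F(b)$; this is precisely where limit-density enters: $a = \varprojlim_{b \in Q, a \le b} b$ in $\vec R$, and $F$ preserves filtered limits (a persistence module, being filtered-colimit preserving on $\vec R^{op}$, sends filtered limits in $\vec R$ to filtered limits — here cofiltered systems — in $\Mod(\Z)$), so $F(a) = F(\varprojlim b) = \varprojlim F(b)$. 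For (i), I would check that $\overline{G}$ sends filtered limits in $\vec R$ to limits: an element $c = \varprojlim_i a_i$ with $a_i \in R$; using that each $a_i$ is itself a cofiltered limit of $Q$-elements and that the bounding sets interleave, one reduces $\overline{G}(c) = \varprojlim_{b \in Q, c \le b} G(b)$ to $\varprojlim_i \varprojlim_{b \ge a_i} G(b) = \varprojlim_i \overline{G}(a_i)$ by a cofinality argument.

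The main obstacle I anticipate is the cofinality/interleaving argument underlying the unit isomorphism and part (i): one must argue carefully that for $a \in R$ the poset $\{b \in Q : a \le b\}$ is genuinely cofiltered and that it is cofinal inside the system presenting $a$ as a filtered limit of $Q$-objects, so that applying the (limit-preserving) persistence module $F$ commutes with passing to that limit. The subtlety is that ``$a$ is a filtered limit of objects of $Q$'' a priori refers to some indexing system, and one needs that the canonical system $\{b \in Q : a \le b\}^{op}$ computes the same limit — this should follow from limit-density plus the observation that any $Q$-object over $a$ factors through the canonical system, but it deserves to be spelled out. Once this cofinality point is settled, the remaining verifications (that $\overline{G}$ is a module, functoriality, the triangle identities, and the counit computation) are all formal.
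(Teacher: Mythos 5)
Your plan takes essentially the same approach as the paper's: define the right adjoint by the explicit Kan-extension formula, verify the adjunction by a direct bijection on $\hom$-sets, and then reduce to persistence modules, where the counit is automatically an isomorphism and the unit becomes one via limit-density. The paper's own proof records only the adjunction bijection and ends with ``the result follows,'' so your careful treatment of the equivalence --- the observation that the counit needs no hypothesis, the cofinality check that $\overline{G}\in\Per(\vec R^{op})$, and the implicit fact that $\iota^*$ preserves $\Per$ because the inclusion preserves filtered limits by hypothesis --- is a genuine elaboration of what the paper compresses. The one step deserving more care is the parenthetical in your unit argument: what is actually needed is that $F\in\Per(\vec R^{op})$ carries the cofiltered limit $a=\varprojlim\{b\in Q:a\le b\}$ in $\vec R$ to the limit $\varprojlim_{b\ge a,\,b\in Q}F(b)$ in $\Mod(\Z)$; your sentence asserts this (``sends filtered limits in $\vec R$ to filtered limits'') but the ``derivation'' from ``filtered-colimit preserving on $\vec R^{op}$'' blurs the orientation --- a functor that preserves filtered colimits does not thereby preserve cofiltered limits --- so you should state directly that the $\Per$ condition is a limit-preservation condition (for infima of downward-directed families), since that is what is load-bearing for the unit isomorphism.
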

\begin{proof}
    For $G \in \Mod(\vec Q^{op})$ and $b \in Q$, consider the natural isomorphism $\eta_b : \overline{G}(b) \xrightarrow{\sim} G(b)$. 
    It is straightforward to check that the map 
    \[\hom_{\Mod(\vec R^{op})} (F, \overline{G}) \to \hom_{\Mod(\vec Q^{op})} (F_{| \vec Q}, G), \quad (\psi_a)_{a \in R} \mapsto (\eta_b \circ \psi_b)_{b \in Q} \]
    has an inverse
    \[\hom_{\Mod(\vec Q^{op})} (F_{| \vec Q}, G) \to \hom_{\Mod(\vec R^{op})} (F, \overline{G}), \quad (\varphi_b)_{b \in Q} \mapsto (\overline{\varphi}_a)_{a \in R}, \]
    where $\overline{\varphi}_a : F(a) \to \overline{G}(a)$ is induced by the family of morphisms $(F(a) \xrightarrow{\varphi_b \circ F(a \leq b)} G(b))_{b \in Q}$.
    The result follows.
\end{proof}
 
\begin{definition}\label{definition from Q to R}
    Let $\vec Q \to \vec R$ be a limit-dense inclusion of pogs.
    If $\cC$ is a category enriched over $\Mod(\vec Q^{op})$, we denote by $\overline{\cC}$ the category obtained by applying the adjunction of Lemma \ref{lemma from Q to R} to hom spaces.
    The category $\overline{\cC}$ is enriched over $\Mod(\vec R^{op})$. 
\end{definition}

\subsection{Completion}\label{section completion}

The purpose of this section is to deal with the following sort of issue: 

\begin{example}
    Consider $\vec P = \vec \R$, $P_0 = \Z$. We consider $\module = \bigoplus_{\theta \in [0, 1)\cong S^1} t^{\theta}\Z[[t]]$ as a $\R/\Z(=S^1)$-graded $\Z[\R_+]$-module. Namely, the action is given by
    \begin{eqnarray*}
        \R_+\ni a\colon t^{\theta+n}\mapsto t^{\theta+a+n}.
    \end{eqnarray*}
    Then $\module$ is also a $S^1$-graded $\Z[\Z_+](=\Z[t])$-module, which is complete for the $t$-adic filtration (i.e. also a $S^1$-graded $\Z[[t]]$-module).  
    
    On the other hand, while $\module$ is a module over $\Z[\R_+]$, respecting the $S^1$ grading in the natural way, it is not complete for the induced filtration, i.e., the $\Z[\R_+]$ action does not extend to the action of the Novikov ring. For example, we have $\sum_{n\in \N}t^{n+\frac{1}{n}}\in \Nov$, but $\sum_{n\in \N}t^{n+\frac{1}{n}}:=\sum_{n\in \N}t^{n+\frac{1}{n}}\cdot t^0\not \in \module$. 
    Now consider the ideal $I=\left< t^1\right>\in \Z[\R_+]$. Then $\module/I^n\module\cong \Z[\R]/I^n$.
    Hence the completion is
    \begin{equation*}
        \widehat \module=\lim_{n\rightarrow \infty}\module/I^n\module\cong \lim_{n\rightarrow \infty}\Z[\R]/I^n\cong \Nov.
    \end{equation*}
\end{example}

Let $P$ be a directed pog, namely, for any $c,c'\in P$, there exists $c''\in P$ such that $c,c'\leq c''$ (note that the definition of pog in \cite{Clifford-pog} is directed pog). Since $P$ is directed, $\{c+P_+\}_{c\in P_+}$ forms a directed system of monoid ideals, and it defines a directed system of ideals $\{I_{c}\}_{c\in P_+}$ of $\Z[P_+]$.  Then one can define
\begin{equation*}
   \widehat{\Z[P_+]}:=\lim_{\substack{\longleftarrow\\ c\in P_+}}\Z[P_+]/I_c.
\end{equation*}

Let $P_0 \subset P$ be a normal subgroup.  We write $(P_0)_+ := P_0 \cap P_+$. 

Recall from Remark \ref{rmk filtration} that $\vec P/P_0$ categories are enriched over $\Z[(P_0)_+]$. 

\begin{lemma} \label{P0 complete setup}
    Let $\cC$ be a category enriched over $P/P_0$-graded $\Z[P_+]$-modules which are complete with respect to the filtration induced by the $\Z[(P_0)_+]$-action.  Then the category $\cC\# P/P_0$ is a $\vec P/P_0$-category where the $\Z[(P_0)_+]$ enrichment extends to a $\widehat{\Z[(P_0)_+]}$-enrichment, i.e. the Hom spaces are complete for the filtration induced by $(P_0)_+$.
\end{lemma}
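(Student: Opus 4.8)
The plan is to unwind the definitions from Lemma-Definition~\ref{lem: pog-quotient unorbit} and check that the $\Z[(P_0)_+]$-module structure on each Hom space of $\cC \# P/P_0$ is automatically complete once the Hom spaces of $\cC$ are. Recall that an object of $\cC \# P/P_0$ is a pair $(X, \bar p)$ with $X \in \cC$ and $\bar p \in P/P_0$, and that $\Hom_{\cC \# P/P_0}((X,\bar p),(Y,\bar q))$ is the $\bar p^{-1}\bar q$-graded piece of $\Hom_\cC(X,Y)$ (a $P/P_0$-graded $\Z[P_+]$-module). The $\vec P/P_0$-structure, by the proof of Lemma~\ref{lem: pog-quotient unorbit}, comes from multiplication by the degree-$(\bar p^{-1}\bar q)$ element $\tau \in \Z[P_+]$ attached to a relation $p \le q$; and by Remark~\ref{rmk filtration} the induced $(P_0)_+$-filtration on $\Hom_{\cC\# P/P_0}((X,\bar p),(Y,\bar q))$ is $\cF^{\ge c}(-) = T_{c^{-1},1_c}\cdot(-)$, i.e. multiplication by $t^c \in \Z[(P_0)_+]$ for $c \in (P_0)_+$.

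First I would make this filtration completely explicit: since the $\Z[(P_0)_+]$-action on $\Hom_\cC(X,Y)$ is the restriction of the $\Z[P_+]$-action and preserves the $P/P_0$-grading (elements of $(P_0)_+$ act trivially on the grading), the filtration $\{ c \cdot \Hom_\cC(X,Y) \}_{c \in (P_0)_+}$ restricts, grading piece by grading piece, to the filtration on each $\bar p^{-1}\bar q$-graded summand. Thus the filtration on the Hom space $\Hom_{\cC\#P/P_0}((X,\bar p),(Y,\bar q))$ is precisely the one obtained from the $(P_0)_+$-action on the corresponding graded piece of $\Hom_\cC(X,Y)$. Next I would invoke the completeness hypothesis: $\Hom_\cC(X,Y) = \varprojlim_{c \in (P_0)_+} \Hom_\cC(X,Y)/c\cdot\Hom_\cC(X,Y)$ (where the limit is over the directed system $(P_0)_+$, which is directed because $P$ is directed and $(P_0)_+ = P_0 \cap P_+$ inherits directedness). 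Since taking a fixed graded piece is an exact operation that commutes with the inverse limit (it is a direct summand as a $\Z$-module), each graded piece is itself complete, hence each Hom space of $\cC \# P/P_0$ is complete for its $(P_0)_+$-filtration.

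Having established completeness of the Hom spaces, the extension of the enrichment from $\Z[(P_0)_+]$ to $\widehat{\Z[(P_0)_+]}$ is then formal: an action of the completed ring on a complete filtered module is determined by, and exists for, any continuous action of the dense subring $\Z[(P_0)_+]$, by passing to the limit — one simply needs that composition in $\cC \# P/P_0$ is continuous (filtered) with respect to these filtrations, which follows because composition in $\cC$ is $\Z[P_+]$-bilinear and hence filtered for the $(P_0)_+$-action. I would also check that $\cC\# P/P_0$ carries the $\vec P/P_0$-structure, but this is exactly Lemma-Definition~\ref{lem: pog-quotient unorbit} applied verbatim, so there is nothing new. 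The one point requiring a little care — and the only place I expect any genuine (if mild) obstacle — is verifying that the $(P_0)_+$-indexed filtration really is cofinal/well-behaved enough that ``complete with respect to the $\Z[(P_0)_+]$-action'' in the hypothesis matches ``complete for the filtration induced by $(P_0)_+$'' in the conclusion, i.e. that directedness of $(P_0)_+$ is genuinely available; this is where the (implicit) hypothesis that $P$, and hence $(P_0)_+$, is directed gets used, and it should be flagged explicitly in the statement or proof.
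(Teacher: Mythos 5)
Your proposal is correct and takes essentially the same approach as the paper's own (very terse) proof: identify the Hom spaces of $\cC\#P/P_0$ with the graded pieces of $\Hom_\cC$, then argue that completeness of the whole implies completeness of each graded piece because each graded piece is a filtered retract (direct summand) of the whole. You fill in more detail than the paper, which simply asserts that each graded piece becomes a $\widehat{\Z[(P_0)_+]}$-module; and the directedness of $P$ that you flag is indeed needed, but it is assumed in the surrounding text of Section~\ref{section completion}, so this is a stylistic observation rather than a gap.
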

\begin{proof}
    Since $\cC$ is enriched over $P/P_0$-graded $\Z[P_+]$-modules, for any $c, c'\in \cC$, the hom-space has the form
    \begin{equation*}
        \Hom_{\cC}(c, c')=\bigoplus_{g\in P/P_0}\Hom_g(c,c')
    \end{equation*}
    where each $\Hom_g(c,c')$ is a $\Z[(P_0)_{+}]$-module. By the completeness assumption, $\Hom_g(c,c')$ is moreover a module over $\widehat{\Z[(P_0)_+]}$. Since each hom-space of $\cC\# P/P_0$ is given by $\Hom_g(c,c')$, we complete the proof.
\end{proof}

More generally, we have:
\begin{lemma}
    Let $Q$ be a subgroup satisfying $P_0\subset Q\subset P$. Let $\cC$ be a category enriched over $P/P_0$-graded $\Z[P_+]$-modules which are complete with respect to the filtration induced by the $\Z[(P_0)_+]$-action.

    Then the category $\cC\# (P/P_0)|_{\vec Q/P_0}$ is a $\vec Q/P_0$-category where the $\Z[(P_0)_+]$ enrichment extends to a $\widehat{\Z[(P_0)_+]}$-enrichment, i.e. the Hom spaces are complete for the filtration induced by $(P_0)_+$.

    Moreover, the category $\cC\# (P/P_0)|_{\vec Q/P_0}[\vec Q/P_0]$ is enriched over $Q/P_0$-graded $\Z[Q_+]$-modules which are complete with respect to the filtration induced by the $\Z[(P_0)_+]$-action.
\end{lemma}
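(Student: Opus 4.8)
The plan is to bootstrap off Lemma~\ref{P0 complete setup}, which is the special case $Q=P$. The only new points are that restricting the group from $P$ down to the intermediate subgroup $Q$ (with $P_0\subseteq Q$) changes neither the $\Z[(P_0)_+]$-enrichment nor the associated notion of completeness, and that the orbit-category construction only recombines graded summands of Hom spaces that are already known to be complete.

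First I would isolate two elementary facts about the completion with respect to the directed system of ideals $\{I_c\}_{c\in(P_0)_+}\subseteq\Z[(P_0)_+]$, so that ``complete'' means that the natural map $M\to\varprojlim_c M/I_cM$ is an isomorphism. \textbf{(i)} If $M$ is complete and $M=N\oplus N'$ as $\Z[(P_0)_+]$-modules with $I_cN\subseteq N$ and $I_cN'\subseteq N'$ for all $c$, then $N$ is complete, since then $M/I_cM=N/I_cN\oplus N'/I_cN'$ compatibly in $c$ and finite direct sums commute with $\varprojlim_c$. \textbf{(ii)} Consequently, if $M=\bigoplus_g M_g$ is a complete module graded by an abelian group on whose degree-$0$ part $\Z[(P_0)_+]$ acts (so that each $I_c$ preserves the grading), then for \emph{any} subset $S$ of the grading set, $\bigoplus_{g\in S}M_g$ is an $I_c$-stable direct summand of $M$, hence complete. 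It matters that we split off a summand here: an arbitrary direct sum of complete modules need not be complete, so this is the point at which care is required.

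With these in hand the first assertion is quick. By Lemma-Definition~\ref{lem: pog-quotient unorbit}, $\cC\# P/P_0$ is a $\vec P/P_0$-category whose Hom spaces are the graded pieces $\Hom_g(c,c')$ of $\Hom_\cC(c,c')$; these are complete by hypothesis together with \textbf{(ii)}, which is essentially Lemma~\ref{P0 complete setup}. Restricting the monoidal functor $\vec P\to\Aut(\cC\# P/P_0)$ along $\vec Q\hookrightarrow\vec P$ keeps $P_0$ as kernel (because $P_0\subseteq Q$), so it yields a $\vec Q/P_0$-category $\cC\#(P/P_0)|_{\vec Q/P_0}$ with the same underlying category and Hom spaces; its enrichment (Remark~\ref{rmk filtration}) is over $\Z[Q_+\cap P_0]$-modules, and $Q_+\cap P_0=(P_0)_+$ since $P_0\subseteq Q$, so these are the same complete modules as before, proving the claim. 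For the ``moreover'', Lemma-Definition~\ref{lem: pog-quotient orbit category} gives that $\cC\#(P/P_0)|_{\vec Q/P_0}[\vec Q/P_0]$ is enriched over $Q/P_0$-graded $\Z[Q_+]$-modules, the degree-$0$ part of $\Z[Q_+]$ (for the $Q/P_0$-grading) being again $\Z[(P_0)_+]$; unwinding Definition~\ref{def: orbit category}, each such Hom space is a direct sum, indexed by a subset of $P/P_0$, of graded pieces of a single $\Hom_\cC(c,c')$, compatibly with $\Z[(P_0)_+]$ acting in degree $0$. So \textbf{(ii)} again yields completeness.

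The step I expect to be the main obstacle — though it is really a matter of care rather than of difficulty — is precisely the one flagged in \textbf{(i)}--\textbf{(ii)}: completion does not commute with infinite direct sums, so all completeness statements must be obtained by exhibiting the relevant Hom spaces as $I_c$-stable direct summands of the complete modules $\Hom_\cC(c,c')$, and the hypothesis $P_0\subseteq Q$ must be used to identify $Q_+\cap P_0$ with $(P_0)_+$ so that passing between $P$ and $Q$ does not alter what ``complete'' means. Everything else is bookkeeping within the orbit / un-orbit and change-of-enrichment correspondences of Section~\ref{section pogs}.
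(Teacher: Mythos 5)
The paper states this lemma without proof, presenting it as a direct generalization of Lemma~\ref{P0 complete setup}. Your argument is correct and is exactly the expected extension: your facts~(i)--(ii) make precise the step that Lemma~\ref{P0 complete setup} asserts casually (``By the completeness assumption, $\Hom_g(c,c')$ is moreover a module over $\widehat{\Z[(P_0)_+]}$''), namely that a grading-preserving direct summand of a complete module is complete, and the ``moreover'' part then follows once you observe that the Hom space of the orbit category is the direct sum of the graded pieces of $\Hom_\cC(c,c')$ over the $Q/P_0$-coset $[g'-g]+Q/P_0\subset P/P_0$, which is again an $I_c$-stable summand. You also correctly isolate where $P_0\subset Q$ is used (so that $Q_+\cap P_0=(P_0)_+$ and the meaning of completeness is unchanged upon restriction). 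No gaps.
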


\begin{lemma-definition}\label{lemma-definition completion of category}
    In the setup of Lemma \ref{P0 complete setup}, 
    we write $\widehat \cC$ for the category obtained by completing Hom spaces with respect to the $P_+$ filtration.  The Hom spaces are no longer $P/P_0$-graded, and the category is enriched over $\widehat {\Z[P_+]}$.
\end{lemma-definition}

\subsection{Almost stuff}
Let us recall some standard notions of almost mathematics. 
Fix a local ring $R$ whose maximal ideal $\mathfrak{m}$ is satisfies $\mathfrak{m}^2=\mathfrak{m}$.  We refer to such data as an almost setup.
\begin{example}
    The Novikov ring $\Nov$ with its maximal ideal is an almost setup. 
\end{example}
\begin{defin} Let $(R, \mathfrak{m})$ be an almost setup. 
    \begin{enumerate}
        \item An $R$-module is \emph{almost zero} if $\mathfrak{m} M=0$.
        \item We denote by $\Mod(R)$, the derived category of $R$-modules. Take the quotient by the almost zero modules, we obtain the category $\Mod(R^a)$, the category of almost modules.
        \item A morphism in $\Mod(R)$ is said to be \emph{almost isomorphic} if it is isomorphic in $\Mod(R^a)$.
    \end{enumerate}
\end{defin}

We now consider categories enriched over $R$. 
\begin{defin} Let $F\colon \cC_1\rightarrow \cC_2$ be an $R$-linear functor between $R$-enriched categories.
    \begin{enumerate}
        \item We say $F$ is \emph{almost fully faithful} if each morphism between hom-space is almost isomorphism. In other words, the induced functor $F\otimes_{\Mod(R)}\Mod(R^a)\colon \cC_1\otimes_{\Mod(R)}\Mod(R^a)\rightarrow \cC_2\otimes_{\Mod(R)}\Mod(R^a)$ is fully faithful.
        \item We say $F$ is \emph{almost essentially surjective} if the following holds: For any $c\in \cC_2$, there exists an object in $c'$ such that the Yoneda modules $\cY(F(c'))$ and $\cY(c)$ are almost isomorphic.
        \item We say $F$ is an \emph{almost equivalence} if it satisfies the above two conditions.
    \end{enumerate}
\end{defin}

\begin{remark}\label{remark:Homotopyfullyfaithful}
    Let $F\colon \cC_1\to \cC_2$ be an almost fully faithful functor. Take the homotopy categories $H^0(F)\colon H^0(\cC_1)\rightarrow H^0(\cC_2)$. Now each hom-space is an $R$-module (not a complex). We put
    \begin{equation*}
        \Hom_{H^0(\cC_i)^a}(c_1, c_2):= \Hom_{H^0(\cC_i)}(c_1, c_2)/\left\{f\in \Hom_{H^0(\cC_i)}(c_1, c_2)| \mathfrak{m}\cdot f=0\right\}.
    \end{equation*}
    Then we get a functor and categories $H^0(F)^a\colon H^0(\cC_1)^a\rightarrow H^0(\cC_2)^a$. This functor is fully faithful in the usual sense. 
\end{remark}

\section{Curved $\Ainf$-categories and quotients}
\label{section cAinf-categories}

We will be working here with curved $\Ainf$ categories.  The purpose of the present section is to recall the basic definitions and in particular describe a quotient construction. 
We leave to the reader to check that the constructions in Section \ref{section pogs} can be generalized to this setting.

(Taking quotients of $\Ainf$-categories plays a role in the construction of wrapped Fukaya categories \cite{GPS20, GPS2}, and, relatedly, of the augmentation category in \cite{Cha19}.  We will want to perform similar constructions in the curved setting.)

\subsection{Basic definitions}

In the following, we either work over a field, or we work over $\Z$ and assume that the hom spaces are cofibrant as in \cite[Section 3.1]{GPS20}. 
Observe that hom spaces that appear in Floer theory are always of this kind.

\begin{defin}\label{definition category}	
    A $\cAinf$-category $\cC$ is the data of
    \begin{enumerate}
        \item a collection of objects $\Ob(\cC)$,
        \item for every objects $X,Y$, a graded  space
        of morphisms $\cC \left( X,Y \right)$ with a decreasing complete $\Z_{\geq 0}$-filtration $\cF^{\geq *} \cC$,
        \item for every object $X$, a degree $0$ element $e_X \in \cC \left( X,X \right)$, and
        \item a family of filtration preserving, degree $(2-d)$, linear maps
        \[\mu^d : \cC \left( X_0, X_1 \right) \otimes \dots \otimes \cC \left( X_{d-1}, X_d \right) \to \cC \left( X_0, X_d \right) \]
        indexed by the sequences of objects $\left( X_0, \dots, X_d \right)$, $d \geq 0$, such that $e_X$ is a strict unit, $\mu^0 \in \cF^{\geq 1} \cC$, and $\sum\limits_{0 \leq i \leq j \leq d} \pm \mu^{d-(j-i)+1} \circ \left( \mathbf{1}^i \otimes \mu^{j-i} \otimes \mathbf{1}^{d-j} \right) = 0$.
    \end{enumerate}
    We say that $\cC$ is a cDG-category if $\mu^d = 0$ for all $d \geq 3$. For a $\cAinf$-category $\cC$, replacing $\cC(X,Y)$ with its graded quotient
    associated to the filtration, we obtain a non-curved $\Ainf$-category $\Gr(\cC)$.	
\end{defin}

\begin{example}
    There is a cDG-category $\cCh$ defined as follows:
    \begin{enumerate}
        \item the objects of $\cCh$ are the pairs $(V,d)$, where $V$ is a graded vector space with a decreasing complete $\Z_{\geq 0}$-filtration $\cF^{\geq *} V$, and $d$ is a filtration preserving endomorphism with $(d\circ d)(\cF^{\geq i}V)\subset \cF^{\geq i+1}V$,
        \item the morphisms $(V_1, d_1) \to (V_2, d_2)$ are filtration preserving maps $f : V_1 \to V_2$, 
        \item $\mu^0(V, d) = d \circ d$, $\mu^1(f) = f \circ d_1 - d_2 \circ f$ for $f : (V_1, d_1) \to (V_2, d_2)$, and $\mu^2$ is the composition of maps.
    \end{enumerate}
\end{example}

\begin{defin}\label{definition functor}
    Let $\cC, \cD$ be two $\cAinf$-categories. 
    A pre-$\cAinf$-functor $\Phi : \cC \to \cD$ is the data of 
    \begin{enumerate}
        \item a map $\Phi : \Ob (\cC) \to \Ob (\cD)$, and
        \item a family of filtration preserving, degree $(1-d)$, linear maps 
        \[\Phi^d : \cC \left( X_0, X_1 \right) \otimes \dots \otimes \cC \left( X_{d-1}, X_d \right) \to \cD \left( \Phi X_0, \Phi X_d \right) \]
        indexed by the sequences of objects $\left( X_0, \dots, X_d \right)$, $d \geq 0$, such that $\Phi^0 \in \cF^{\geq 1} \cD$, $\Phi^1 \left( e_X \right) = e_{\Phi X}$, and $\Phi^d \left( \dots, e_X, \dots \right) = 0$ for $d \geq 2$.
    \end{enumerate}
    We say that $\Phi$ is strict if $\Phi^0 = 0$.
\end{defin}

\begin{remark}\label{rmk curvature of pre-functors}
    Pre-$\cAinf$-functors form a $\cAinf$-category where a morphism from $\Phi_1$ to $\Phi_2$ is a family of maps 
    \[T^d : \cC(X_0, X_1) \otimes \dots \otimes \cC(X_{d-1}, X_d) \to \cD(\Phi_0 X_0, \Phi_1 X_d) \]
    indexed by the sequences of objects $\left( X_0, \dots, X_d \right)$, $d \geq 0$. The curvature is given by
    \[\mu^0(\Phi)^d = \sum\limits_{0 \leq i \leq j \leq d} \pm \Phi^{d-(j-i)+1} \circ \left( \mathbf{1}^i \otimes \mu_{\cC}^{j-i} \otimes \mathbf{1}^{d-j} \right) + \sum\limits_{\substack{r \geq 0 \\ 0 = i_0 \leq \dots \leq i_r = d}} \pm \mu_{\cD}^r \circ \left( \Phi^{i_1-i_0} \otimes \dots \otimes \Phi^{i_r-i_{r-1}} \right). \]
    See for example \cite[Section 3]{DL18} (where what we call a pre-$\cAinf$-functor is called a $\qAinf$-functor).
\end{remark}

\begin{defin}
    A $\cAinf$-functor is a pre-$\cAinf$-functor $\Phi$ which is not curved, i.e. $\mu^0(\Phi) = 0$ (see Remark \ref{rmk curvature of pre-functors}).
    We say that a $\cAinf$-functor $\cC \to \cD$ is an equivalence if so is the induced $\Ainf$-functor $\Gr(\cC) \to \Gr(\cD)$. 
\end{defin}

\begin{defin}\label{definition flat functor}
    We define a functor $\flat$ from the category of $\cAinf$-categories, with morphisms the \emph{strict} $\cAinf$-functors, to the category of $\Ainf$-categories as follows
    \begin{enumerate}
        \item given a $\cAinf$-category $\cC$, $\flat(\cC)$ is the $\Ainf$-category whose objects are the objects $X$ of $\cC$ which are not curved, i.e. $\mu^0(X) = 0$ (and the morphisms are the same as in $\cC$),
        \item given a strict $\cAinf$-functor $\Phi : \cC \to \cD$, $\flat(\Phi)$ is the $\Ainf$-functor induced by $\Phi$ (this is well defined because $\Phi^0=0$).
    \end{enumerate}
\end{defin}

\begin{defin}\label{definition cMod and Mod}
    Let $\cC$ be a $\cAinf$-category. 
    We denote by $\cMod(\cC)$ the cDG-category of strict pre-$\cAinf$-functors from $\cC^{op}$ to $\cCh$.
    Moreover, we let $\Mod(\cC) := \flat(\cMod(\cC))$ be the DG-category of non-curved objects in $\cMod(\cC)$, i.e. the DG-category of strict $\cAinf$-functors from $\cC^{op}$ to $\cCh$.
\end{defin}

\begin{example}
    Given $Y \in \cC$, the Yoneda assignment $X \mapsto \cC(X, Y)$ defines an element of $\cMod(\cC)$, which is in $\Mod(\cC)$ if $\mu_{\cC}^0(Y) = 0$.  
\end{example}

\subsection{Twisted complexes and bounding cochains}

Let $\cC$ be a $\cAinf$-category.

\begin{defin}[Compare with {\cite[Section 3.2]{Aur14}}]\label{definition twisted complexes}

    We define the $\cAinf$-category $\cTw(\cC)$ as follows:
    \begin{enumerate}
        \item an object of $\cTw(\cC)$ is given by
        \begin{enumerate}
	   \item a finite family $\mathfrak{X} = \left( X_i, k_i \right)_{1 \leq i \leq N}$ where $X_i$ is an object of $\cC$ and $k_i$ is an integer,
	   \item a triangular matrix $\delta = \left( \delta_{i,j} \right)_{1 \leq i \leq j \leq N}$, where $\delta_{i,j}$ is a morphism in $\cC \left( X_i, X_j \right)$ of degree $\left( k_j - k_i + 1 \right)$, such that the diagonal entries of $\delta$ and the coefficients of $\sum_{j \geq 0} \mu_{\cC}^j \left( \delta^j \right)$ (in matrix notation) are in $\cF^{\geq 1} \cC$.
        \end{enumerate}
        \item a degree $s$ morphism from $(\mathfrak{X}, \delta)$ to $(\mathfrak{X}', \delta')$ is a matrix $\left( x_{i,j} \right)_{1 \leq i \leq N, 1 \leq j \leq N'}$, where $x_{i,j}$ is a morphism in $\cC \left( X_i, X_j \right)$ of degree $\left( s + k_j' - k_i \right)$,
        \item if $(x^0, \dots, x^{d-1}) \in \hom_{\cTw(\cC)} \left( (\mathfrak{X}_0, \delta_0), (\mathfrak{X}_1, \delta_1) \right) \times \dots \times \hom_{\cTw(\cC)} \left( (\mathfrak{X}_{d-1}, \delta_{d-1}), (\mathfrak{X}_d, \delta_d) \right)$, we set (in matrix notation)
        \[\mu_{\cTw(\cC)}^d \left( x^0, \dots, x^{d-1}  \right) := \sum \limits_{j_0, \dots, j_d \geq 0} \mu_{\cC} \left( \delta_0^{j_0}, x^0, \delta_1^{j_1}, x^1, \dots, \delta_{d-1}^{j_{d-1}}, x^{d-1}, \delta_d^{j_d} \right). \]
    \end{enumerate}
\end{defin}

\begin{defin}\label{definition bounding cochains}
    The $\Ainf$-category $\cC^{bc}$ of bounding cochains in $\cC$ is the full subcategory of $\cTw(\cC)$ with objects the non-curved twisted complexes of the form $((X, 0), b)$.
\end{defin}

\begin{lemma}\label{lemma bounding cochains generate flat twisted complexes}
    The $\Ainf$-category $\flat(\cTw(\cC))$ (see Definition \ref{definition flat functor}) is generated by $\cC^{bc}$.
\end{lemma}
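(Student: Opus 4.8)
The plan is to exhibit an explicit quasi-isomorphism by moving both categories into $\flat(\cTw(\cC))$ and comparing twisted complexes directly. First I would recall that any non-curved twisted complex $(\mathfrak{X}, \delta)$ with $\mathfrak{X} = (X_i, k_i)_{1 \leq i \leq N}$ sits inside $\flat(\cTw(\cC))$ by definition, since the non-curvedness condition $\sum_{j} \mu_{\cC}^j(\delta^j) = 0$ is exactly the requirement that $\mu^0$ of the twisted complex vanish. So the content of the lemma is that such an object is built, via iterated cones (i.e. via one-step extensions in $\cTw$), out of objects of the form $((X,0), b)$ — bounding cochains on shifts of objects of $\cC$ — together with the fact that $\cC^{bc}$ is closed under shifts inside $\flat(\cTw(\cC))$, which follows because $((X,k), b) \cong ((X,0), b)[k-0]$ and shifting is an operation available in $\cTw$; alternatively one absorbs the shift into the indexing and observes $((X,k),b)$ is itself literally an object of the stated form up to the degree bookkeeping. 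Thus I may assume all $k_i = 0$ after reindexing, or simply carry the shifts along.

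The main step is an induction on $N$, the number of summands. For $N = 1$ the object $((X_1, k_1), \delta_{1,1})$ is, after the shift, precisely a bounding cochain object, hence lies in $\cC^{bc}$. For the inductive step, given $(\mathfrak{X}, \delta)$ with $N$ summands, I would split off the last index: let $(\mathfrak{X}', \delta')$ be the twisted complex on $(X_i, k_i)_{1 \leq i \leq N-1}$ with $\delta'$ the corresponding upper-left block, and let $((X_N, k_N), \delta_{N,N})$ be the remaining diagonal object. One must first check $(\mathfrak{X}', \delta')$ is again a \emph{valid} (non-curved, when we're inside $\flat$) twisted complex — this is where the triangularity of $\delta$ is essential: the curvature coefficients $\sum_j \mu_{\cC}^j(\delta^j)$ in the upper-left block only involve entries $\delta_{i,j}$ with $i,j \leq N-1$, so the block equations are a sub-collection of the full equations and hold automatically; similarly $((X_N,k_N),\delta_{N,N})$ is non-curved because the $(N,N)$ entry of $\sum_j \mu_\cC^j(\delta^j)$ is just $\sum_j \mu_\cC^j(\delta_{N,N}^j)$. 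Then the off-diagonal column $(\delta_{i,N})_{i < N}$ defines a closed degree-one morphism — a \emph{twisted morphism} in the sense of the bar-type differential on $\hom_{\cTw}$ — from $(\mathfrak{X}', \delta')$ to $((X_N, k_N), \delta_{N,N})$ (closedness is again a consequence of the full Maurer--Cartan equations of $\delta$ restricted to the relevant matrix entries), and $(\mathfrak{X}, \delta)$ is the mapping cone of this morphism inside $\cTw(\cC)$. By induction $(\mathfrak{X}', \delta')$ lies in the triangulated subcategory generated by $\cC^{bc}$, and $((X_N,k_N),\delta_{N,N})$ is in $\cC^{bc}$ up to shift, so the cone is as well. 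Hence every object of $\flat(\cTw(\cC))$ is generated by $\cC^{bc}$, and since $\cC^{bc} \subset \flat(\cTw(\cC))$ this is exactly the assertion.

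The step I expect to be the main obstacle is making precise, in the curved $\Ainf$ setting, that ``$(\mathfrak{X},\delta)$ is the cone of the morphism determined by the off-diagonal block'' — i.e. checking that the cone construction in $\cTw(\cC)$ really does reassemble the matrix $\delta$ with the correct $\mu^0$ and that the closedness of the connecting morphism is literally the vanishing of the appropriate entries of $\sum_j \mu_\cC^j(\delta^j)$. This is a matter of carefully matching the combinatorics of the cone differential in $\cTw$ (with all its $\delta_0^{j_0}, \delta_1^{j_1}, \dots$ insertions) against the block decomposition of the Maurer--Cartan equation for the big $\delta$; it is bookkeeping rather than conceptual difficulty, but the filtration/convergence hypotheses must be tracked so that all the sums involved are well-defined, which is why the completeness of the filtration on hom spaces (and the $\cF^{\geq 1}$ conditions on diagonal entries and curvature coefficients) is being invoked. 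Everything else is formal once this cone identification is in hand.
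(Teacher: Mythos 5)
Your proof is correct, but it takes a genuinely different route from the paper's. The paper's entire proof is the observation that there is a tautological identification $\Tw(\cC^{bc}) = \flat(\cTw(\cC))$: an object of $\Tw(\cC^{bc})$ is a shifted family of bounding-cochain objects $((X_i,0),b_i)$ together with a twisted differential, and unwinding this data yields exactly a matrix $\delta$ with diagonal entries the $b_i$ and off-diagonal entries the twisted differential, with the non-curvature condition in $\cTw(\cC)$ packaging both the $b_i$ being bounding cochains and the Maurer--Cartan equation in $\cC^{bc}$; generation then follows from the standard fact that $\Tw(\cD)$ is generated by $\cD$ for any $\Ainf$-category $\cD$. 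Your argument instead reproves that standard generation fact by hand in the present setting: inducting on $N$, splitting off the last summand, checking the upper-left block of the Maurer--Cartan equation closes up on its own (which is exactly where you correctly invoke triangularity), verifying the off-diagonal column is a closed degree-one morphism, and reassembling via a cone. Your block computations check out -- the $(i,k)$ entries of $\sum_j\mu_\cC^j(\delta^j)$ for $i,k\le N-1$ involve only $\delta'$ by triangularity, the $(N,N)$ entry is $\sum_j\mu_\cC^j(\delta_{N,N}^j)$, and the $(i,N)$ entries for $i<N$ are precisely $\mu^1_{\cTw}$ of the off-diagonal column -- and the remaining work is, as you say, cone/degree bookkeeping. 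The paper's route is shorter and lower-risk because the tautological identification lets it defer the cone induction to a standard fact about $\Tw$; your route is more self-contained and makes the mechanism explicit, at the cost of more degree- and sign-tracking. One small caveat: you cannot literally ``assume all $k_i=0$ after reindexing,'' since the shifts are genuine data of a twisted complex -- but your fallback of carrying the shifts along is what you actually use, and that is fine.
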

\begin{proof}
    There is an identification $\Tw(\cC^{bc}) = \flat(\cTw(\cC))$.
\end{proof}

\subsection{Quotient of $\cAinf$-categories}
\label{section localization}

Let $\cC$ be a $\cAinf$-category, and let $\cA \subset \cC$ be a full subcategory.

\begin{defin}\label{definition quotient}
    The $\cAinf$-quotient $\cC / \cA$ is the $\cAinf$-category such that
    \begin{enumerate}
        \item the objects of $\cC / \cA$ are those of $\cC$,
        \item the space of morphisms from $X$ to $Y$ is
        \[\cC / \cA \left( X,Y \right) = \cC(X,Y) \oplus \bigoplus \limits_{\substack{p \geq 1 \\ A_1, \dots, A_p \in \cA} } \cC \left( X, A_1 \right)[1] \otimes \dots \otimes \cC \left( A_{p-1}, A_p \right) [1] \otimes \cC \left( A_p, Y \right) \]
        \item $\mu_{\cC/\cA}^0 = \mu_{\cC}^0$, and for $d \geq 1$, $\mu_{\cC/\cA}^d$ sends any sequence $\mathbf{x}_i = (x_i^0 , \dots , x_i^{p_i}) \in \cC / \cA ( X_i , X_{i+1} )$, $0 \leq i \leq d-1$, to 
        \[\mu_{\cC / \cA}^d \left( \mathbf{x}_0, \dots, \mathbf{x}_{d-1} \right) = \sum \limits_{\substack{0 \leq i \leq p_0, 1 \leq j \leq p_{d-1} \\ i \leq j \text{ if } d=1}} x_0^0 \otimes \dots \otimes x_0^i \otimes \mu_{\cC} \left( x_0^{i+1}, \dots, x_{d-1}^j \right) \otimes x_{d-1}^{j+1} \otimes \dots \otimes x_{d-1}^{p_{d-1}}. \]	
    \end{enumerate}
    Observe that there is a strict $\cAinf$-functor $\cC \to \cC / \cA$ given by the identity on objects and the inclusion on morphisms.
\end{defin}

\begin{lemma}[Compare with {\cite[Lemma 3.12]{GPS20}}]\label{lemma quotient module against quotient object}
    For every $\cM \in \cMod(\cC)$ and $A$ in $\cA$, $\Gr(_{\cA \backslash} \cM ( A ))$ is acyclic. 	
\end{lemma}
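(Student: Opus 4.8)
The plan is to compute $\Gr({}_{\cA\backslash}\cM(A))$ explicitly and exhibit an acyclic resolution structure, exactly as in the uncurved case of \cite[Lemma 3.12]{GPS20}. The point is that passing to the associated graded kills the curvature (by construction, $\mu^0 \in \cF^{\geq 1}$), so after applying $\Gr$ we are looking at an honest $\Ainf$-module over the honest $\Ainf$-category $\Gr(\cC)$, and the statement becomes the familiar fact that the bar-type complex ${}_{\cA\backslash}\cM(A)$ computing the ``module-theoretic cone'' on $\cM$ along $\cA$ is acyclic when evaluated on an object of $\cA$. So the first step is to observe that $\Gr$ is compatible with the formation of ${}_{\cA\backslash}(-)$: the graded pieces of the filtration on ${}_{\cA\backslash}\cM(X)$ are built from tensor products of graded pieces of $\cC(X,A_1),\dots,\cC(A_p,Y)$ and $\cM(A_p)$, so $\Gr({}_{\cA\backslash}\cM)(X) = {}_{\Gr(\cA)\backslash}\Gr(\cM)(X)$ as a complex with differential $\Gr(\mu^1)$.

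Next I would set up the standard contracting homotopy. Write $N := \Gr(\cM)$, $\cD := \Gr(\cC)$, $\cB := \Gr(\cA)$; we must show ${}_{\cB\backslash}N(A)$ is acyclic for $A \in \cB$. The complex is
\[
N(A)\ \oplus\ \bigoplus_{p\geq 1}\ \bigoplus_{A_1,\dots,A_p\in\cB} \cD(A,A_1)[1]\otimes\cdots\otimes\cD(A_{p-1},A_p)[1]\otimes N(A_p),
\]
and the differential is the sum of the internal $\mu^1$-differentials, the $\Ainf$ module structure maps of $N$ applied to the last few tensor factors, and the $\Ainf$ composition maps $\mu^d_{\cD}$ applied to consecutive tensor factors. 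The homotopy $h$ inserts the unit: $h$ sends a tensor $x^0\otimes\cdots$ in the $X_0 = A$ slot to $e_A\otimes x^0\otimes\cdots$, i.e. it prepends the identity morphism $e_A\in\cD(A,A)$ as a new leading tensor factor (raising $p$ by one), and sends $N(A)\subset {}_{\cB\backslash}N(A)$ to $e_A\otimes(-)\in \cD(A,A)[1]\otimes N(A)$. Computing $h\mu^1 + \mu^1 h$: because $e_A$ is a strict unit for $\Ainf$ in $\Gr(\cC)$ — $\mu^2(e_A,-) = \mathrm{id}$, $\mu^2(-,e_A)=\mathrm{id}$, and $\mu^d(\dots,e_A,\dots)=0$ for $d\ne 2$ — all the terms cancel in pairs except the two that produce the identity, giving $h\mu^1+\mu^1 h = \mathrm{id}$ up to sign. (One must be careful with the degree shifts $[1]$ and the Koszul signs; this is where I would be most careful, but it is the same computation as in the cited references, merely transported across $\Gr$.)

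The main obstacle, then, is not conceptual but bookkeeping: verifying that the filtration on $\cC(X,Y)$ interacts correctly with all the tensor factors so that $\Gr({}_{\cA\backslash}\cM) = {}_{\Gr(\cA)\backslash}\Gr(\cM)$ on the nose (using that the filtrations are complete and the hom-spaces cofibrant, so $\Gr$ commutes with the relevant $\otimes$ and $\bigoplus$), and that the curvature term $\mu^0$ genuinely drops out — it lands in $\cF^{\geq 1}$, hence acts by zero on associated graded, so it contributes nothing to $\Gr(\mu^1)$ and in particular ${}_{\cB\backslash}N$ really is a complex. Once that identification is in place, the unit homotopy argument above finishes the proof. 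I expect the write-up to consist of one paragraph making the $\Gr$-compatibility precise and one paragraph exhibiting $h$ and checking $h\mu^1+\mu^1 h = \mathrm{id}$, with the sign-tracking relegated to ``a direct computation, identical to \cite[Lemma 3.12]{GPS20}.''
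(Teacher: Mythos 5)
Your proposal is correct and rests on the same unit trick the paper uses: the paper's one-line observation that $[e_A] = \mu^1_{\Gr_0(\cC/\cA)}[e_A\otimes e_A]$, i.e.\ that the identity of $A$ is exact in $\Gr(\cC/\cA)$, is precisely what produces your contracting homotopy $h(y) = e_A\otimes y$ once one feeds it through the $\Gr(\cC/\cA)$-module structure of $\Gr({}_{\cA\backslash}\cM)$. The two write-ups differ only in how much is left implicit --- you spell out the contraction and the compatibility of $\Gr$ with the bar construction, while the paper records the exactness of $[e_A]$ and leaves the deduction of acyclicity to the reader.
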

\begin{proof}
    This follows from $\mu_{\Gr_0(\cC / \cA)}^1 [e_A \otimes e_A] = \left[ \mu_{\cC / \cA}^1 (e_A \otimes e_A) \right]_{\Gr_0(\cC / \cA)} = [e_A]_{\Gr_0(\cC / \cA)}$.
\end{proof}

\begin{lemma}[Compare with {\cite[Lemma 3.13]{GPS20}}]\label{lemma quasi-iso between a module and its localization}
    Let $\cM \in \cMod(\cC)$. If $\Gr(\cM(A))$ is acyclic for every $A$ in $\cA$, then the inclusion $\Gr(\cM(X)) \hookrightarrow \! \Gr(_{\cA \backslash}\cM(X))$ is a quasi-isomorphism for every object $X$.
\end{lemma}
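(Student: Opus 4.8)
The statement is the curved analogue of \cite[Lemma 3.13]{GPS20}, so the plan is to adapt that argument, being careful about the filtration and the passage to the associated graded $\Ainf$-category. First I would reduce to the uncurved setting: since everything in sight ($\cM$, $_{\cA\backslash}\cM$, and the inclusion) is filtration-preserving, applying $\Gr$ yields modules over $\Gr(\cC)$ and over $\Gr(\cC/\cA) = \Gr(\cC)/\Gr(\cA)$ (the quotient construction commutes with $\Gr$ by inspection of Definition \ref{definition quotient}), and the map $\Gr(\cM(X)) \hookrightarrow \Gr(_{\cA\backslash}\cM(X))$ is identified with the inclusion of the $p=0$ summand. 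So it suffices to prove the corresponding uncurved statement, which is exactly \cite[Lemma 3.13]{GPS20}; I would either cite it directly or reproduce the short argument.

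The core of that argument is a filtration/spectral-sequence (or telescope) computation on the complement $\bigoplus_{p\geq 1} \cC(X,A_1)[1]\otimes\cdots\otimes\cM(A_p)$. One filters $_{\cA\backslash}\cM(X)$ by the number $p$ of interposed objects of $\cA$; the associated graded of $\mu^1$ has two kinds of terms, those that act within a fixed bar-length $p$ (applying $\mu_{\cM}$ or $\mu_{\cC}$ without merging adjacent factors) and those that drop $p$ by merging. The key input is Lemma \ref{lemma quotient module against quotient object} together with the hypothesis that $\Gr(\cM(A))$ is acyclic for $A\in\cA$: this makes each $\Gr(\cC(A_{p-1},A_p)[1]\otimes\cM(A_p))$-type building block acyclic, so that the whole $p\geq 1$ part of $\Gr(_{\cA\backslash}\cM(X))$ is acyclic, whence the inclusion of the $p=0$ part is a quasi-isomorphism. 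Concretely I would run the increasing filtration by $p$, observe the $E_1$-page computes homology of the "internal" differential (the one not merging bars), note that the $p\geq 1$ rows of $E_1$ vanish because a tensor product of complexes one of whose factors — the last, $\Gr(\cM(A_p))$, or an intermediate $\Gr(\cC(A_i,A_{i+1})[1])$ viewed through the bar differential as in Lemma \ref{lemma quotient module against quotient object} — is acyclic, and conclude. A boundedness/convergence remark is needed since the sum over $p$ is infinite; here one uses that we are computing cohomology in each fixed degree, where only finitely many $p$ contribute, or equivalently that the filtration is exhaustive and bounded below and the complex is a direct sum, so the spectral sequence converges.

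The main obstacle I anticipate is bookkeeping rather than conceptual: making precise that the differential on $\Gr(_{\cA\backslash}\cM(X))$ splits compatibly with the $p$-filtration so that the $E_1$ acyclicity statement is literally the tensor-product-with-acyclic-factor statement, and handling the degree-wise finiteness needed for convergence of the spectral sequence over an infinite direct sum. A secondary point to check is that the cofibrancy hypothesis on hom-spaces (from \cite[Section 3.1]{GPS20}, assumed at the start of Section \ref{section cAinf-categories}) is what licenses "tensoring with an acyclic complex is acyclic" over $\Z$; over a field this is automatic. Once these are in place the conclusion is immediate, and in fact I would simply present it as: "filter by bar length, apply Lemma \ref{lemma quotient module against quotient object}, and the argument is identical to \cite[Lemma 3.13]{GPS20}."
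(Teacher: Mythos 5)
Your proposal is substantively the same as the paper's proof: filter the complement $\bigoplus_{p\geq 1}\Gr(\cC(X,A_1)[1]\otimes\cdots\otimes\cM(A_p))$ by bar length $p$, pass to the associated graded of that filtration, and use that the last tensor factor $\Gr(\cM(A_p))$ is acyclic to kill each graded piece; the spectral-sequence packaging versus the paper's explicit contracting homotopy $h$ on $\cF^{\geq i_\ell}\cM(A_\ell)$ is a stylistic difference, and you correctly identify that the cofibrancy hypothesis of Section~\ref{section cAinf-categories} is what licenses ``tensor with an acyclic complex is acyclic'' over $\Z$. Two slips to correct in the writeup, though neither breaks the argument: Lemma~\ref{lemma quotient module against quotient object} is not an input here --- what you use is precisely the \emph{hypothesis} that $\Gr(\cM(A))$ is acyclic, whereas Lemma~\ref{lemma quotient module against quotient object} asserts acyclicity of the different object $\Gr({}_{\cA\backslash}\cM(A))$ and is invoked elsewhere (e.g.\ in the proof of Lemma~\ref{lemma computation hom in cAug}); and the intermediate factors $\Gr(\cC(A_i,A_{i+1})[1])$ are of course not acyclic --- only the terminal $\Gr(\cM(A_p))$ is, and that alone gives acyclicity of the tensor product. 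The paper also refines the $p$-filtration by the $\cF$-multidegree $(i_0,\ldots,i_\ell)$ before constructing the homotopy, a bookkeeping step your spectral-sequence formulation absorbs into the $E_1$-page computation.
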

\begin{proof}
    We have to show that the chain complex 
    \[\Gamma := \bigoplus_{\substack{p \geq 1 \\ A_1, \dots, A_p \in \cA}} \Gr \left( \cC(X, A_1)[1] \otimes \dots \otimes \cC(A_{p-1}, A_p) [1] \otimes \cM(A_p) \right) \]
    is acyclic.
    Consider the length filtration
    \[\cG^{\leq \ell} \Gamma := \bigoplus_{\substack{1 \leq p \leq \ell \\ A_1, \dots, A_p \in \cA}} \Gr \left( \cC(X, A_1)[1] \otimes \dots \otimes \cC(A_{p-1}, A_p) [1] \otimes \cM(A_p) \right) \]
    and observe that it is enough to show that the associated graded of this filtration is acyclic.
    Therefore, it is enough to show that given objects $A_1, \dots, A_{\ell}$ in $\cA$ and non-negative integers $k, i_0, \dots, i_{\ell}$ with $i_0+\dots+i_{\ell} = k$, the quotient $C /D$ is acyclic, where 
    \[C = \cF^{\geq i_0} \cC(X, A_1) [1] \otimes \dots \otimes \cF^{\geq i_{\ell-1}} \cC(A_{\ell-1}, A_{\ell})[1] \otimes \cF^{\geq i_{\ell}} \cM(A_{\ell}) \]
    and 
    \[D = \bigoplus_{\substack{j_0+\dots+j_{\ell} = k+1 \\ j_r \geq i_r}} \cF^{\geq j_0} \cC(X, A_1)[1] \otimes \dots \otimes \cF^{\geq j_{\ell-1}} \cC(A_{\ell-1}, A_{\ell}) [1] \otimes \cF^{\geq j_{\ell}} \cM(A_{\ell}). \]
    Since $\Gr_{i_{\ell}} \cM (A_{\ell})$ is acyclic (and projective), there is a map $h : \cF^{\geq i_{\ell}} \cM (A_{\ell}) \to \cF^{\geq i_{\ell}} \cM (A_{\ell})$ of degree $(-1)$ such that $h (\cF^{\geq i_{\ell}+1} \cM (A_{\ell})) \subset \cF^{\geq i_{\ell}+1} \cM (A_{\ell})$ and $x - (h \circ \mu_{\cM}^1 + \mu_{\cM}^1 \circ h)(x) \in \cF^{\geq i_{\ell}+1} \cM (A_{\ell})$ for every $x \in \cF^{\geq i_{\ell}} \cM (A_{\ell})$.
    Then the map 
    \[C \to C, \quad x^0 \otimes x^1 \otimes \dots \otimes x^{\ell-1} \otimes x^{\ell} \mapsto x^0 \otimes x^1 \otimes \dots \otimes x^{\ell-1} \otimes h(x^{\ell}) \]
    induces a homotopy between $\id_{C/D}$ and $0$.
\end{proof}

\begin{lemma}\label{lemma quotient commutes with flat functor}
    If $\cA$ contains only non-curved objects, i.e. if $\cA \subset \flat(\cC)$ (see Definition \ref{definition flat functor}), then there is a tautological $\Ainf$-equivalence $\flat \left( \cC / \cA \right) = \flat(\cC) / \cA$.
\end{lemma}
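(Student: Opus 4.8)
The plan is to check that the claimed equivalence is simply the identity on objects and morphisms, so that everything reduces to unwinding Definitions \ref{definition quotient} and \ref{definition flat functor}; this is why the statement calls it ``tautological''.

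First I would compare objects. By Definition \ref{definition quotient}, the category $\cC/\cA$ has the same objects as $\cC$ and satisfies $\mu^0_{\cC/\cA} = \mu^0_{\cC}$, so an object of $\cC/\cA$ is non-curved exactly when it is non-curved in $\cC$; hence $\Ob(\flat(\cC/\cA))$ is the set of non-curved objects of $\cC$. On the other side, $\flat(\cC)/\cA$ is defined precisely because the hypothesis $\cA \subset \flat(\cC)$ makes $\cA$ a full subcategory of $\flat(\cC)$, and its object set is likewise the set of non-curved objects of $\cC$, since forming a quotient does not change the underlying set of objects. So the two sides have the same objects.

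Next I would compare morphism spaces and structure maps. For non-curved $X, Y$, Definition \ref{definition flat functor} tells us that $\flat$ leaves morphism spaces untouched, so $\flat(\cC/\cA)(X,Y) = (\cC/\cA)(X,Y)$ and $\flat(\cC)(Z,Z') = \cC(Z,Z')$ for all non-curved $Z,Z'$. Since every object of $\cA$ is non-curved, for $A_1, \dots, A_p \in \cA$ the tensor factors $\flat(\cC)(X,A_1)[1]\otimes\dots\otimes\flat(\cC)(A_p,Y)$ coincide with $\cC(X,A_1)[1]\otimes\dots\otimes\cC(A_p,Y)$; summing over $p$ and applying Definition \ref{definition quotient} to both quotients yields $(\flat(\cC)/\cA)(X,Y) = (\cC/\cA)(X,Y) = \flat(\cC/\cA)(X,Y)$ term by term. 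The operations $\mu^d$ on either quotient are given by the single formula of Definition \ref{definition quotient}, built from $\mu_{\cC}$, respectively $\mu_{\flat(\cC)}$, applied to sequences of morphisms among the (non-curved) objects $X_0,\dots,X_d$ and the interpolating $A_j \in \cA$; as $\mu_{\flat(\cC)}$ is by construction the restriction of $\mu_{\cC}$ to such sequences, the two formulas agree, and the units $e_X$ agree too. Thus the identity on objects and morphisms is a strict isomorphism $\flat(\cC/\cA) \cong \flat(\cC)/\cA$; both sides are non-curved (the left by construction, the right because $\mu^0_{\flat(\cC)/\cA} = \mu^0_{\flat(\cC)} = 0$), so it is in particular an $\Ainf$-equivalence.

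I do not expect a genuine obstacle here: the argument is pure bookkeeping. The single point to be careful about is that curvature is detected in exactly the same way on both sides, which relies on the equality $\mu^0_{\cC/\cA} = \mu^0_{\cC}$ built into Definition \ref{definition quotient} together with the hypothesis $\cA \subset \flat(\cC)$ — which both makes $\flat(\cC)/\cA$ well defined and guarantees that no curved object ever appears as a tensor factor in the morphism complexes of either quotient.
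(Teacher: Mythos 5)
Your proof is correct and takes the same approach as the paper, whose entire argument is the one-line observation that the two categories coincide because $\mu^0_{\cC/\cA} = \mu^0_{\cC}$. You have simply spelled out the bookkeeping on objects, morphism spaces, and operations that the paper leaves implicit.
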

\begin{proof}
    The two categories are the same, since $\mu_{\cC / \cA}^0 = \mu_{\cC}^0$.
\end{proof}

\paragraph{Localization}

Let $M$ be a set of degree $0$ morphisms in $\cC$ such that $\mu_{\cC}^1(m) \in \cF^{\geq 1} \cC$ for every $m \in M$. 

\begin{defin}\label{definition localization}
    For $m : X_1 \to X_2$ in $M$, we set
    \[\Cone (m) := \left( \left( \left( X_1, 1 \right), \left( X_2, 0 \right) \right), \left( 
    \begin{smallmatrix}
    0 & m \\
    0 & 0
    \end{smallmatrix} 
    \right) \right) \in \cTw(\cC) . \]
    We denote by $\mathrm{Cone} (M)$ the full subcategory of $\cTw(\cC)$ with these objects.
    The localization $\cC \left[ M^{-1} \right]$ of $\cC$ at $M$ is the full subcategory of $\cTw(\cC) / \Cone (M)$ whose objects are those of $\cC$.	
\end{defin}

The definitions we made imply the following.

\begin{lemma}\label{lemma perfect modules commutes with localization}
    There is a $\cAinf$-equivalence $\cTw(\cC) / \Cone(M) \simeq \cTw(\cC[M^{-1}])$.
\end{lemma}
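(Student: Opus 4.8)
The plan is to produce an explicit $\cAinf$-functor in each direction and check it is an equivalence on the associated graded, which by definition suffices. Recall that $\cC[M^{-1}]$ was defined as the full subcategory of $\cTw(\cC)/\Cone(M)$ on the objects of $\cC$; so there is a tautological fully faithful inclusion $\cC[M^{-1}] \hookrightarrow \cTw(\cC)/\Cone(M)$, and applying $\cTw(-)$ to it (functorially, since $\cTw$ is a functor for $\cAinf$-functors and in particular for inclusions of full subcategories) gives a functor $\cTw(\cC[M^{-1}]) \to \cTw(\cTw(\cC)/\Cone(M))$. The first step is therefore to identify the target with $\cTw(\cC)/\Cone(M)$ itself. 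For this I would invoke the standard facts that $\cTw$ is idempotent up to $\cAinf$-equivalence, i.e. $\cTw(\cTw(\cD)) \simeq \cTw(\cD)$, and that $\cTw$ commutes with the quotient construction of Definition \ref{definition quotient} in the sense that $\cTw(\cD/\cA) \simeq \cTw(\cD)/\cA$ when $\cA$ is a full subcategory (both objects having the same underlying objects and a termwise-compatible $\mu$). Combining these with $\cA = \Cone(M) \subset \cTw(\cC)$ yields $\cTw(\cTw(\cC)/\Cone(M)) \simeq \cTw(\cTw(\cC))/\Cone(M) \simeq \cTw(\cC)/\Cone(M)$.

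With that identification in hand, the composite $\cTw(\cC[M^{-1}]) \to \cTw(\cC)/\Cone(M)$ is the candidate equivalence. To check it is one, I would pass to $\Gr$ and argue fully faithfulness and essential surjectivity there. Essential surjectivity: every object of $\cTw(\cC)/\Cone(M)$ is, already in $\cTw(\cC)$, a twisted complex built out of objects of $\cC$ and of $\Cone(M)$; but each $\Cone(m)$ is itself a two-term twisted complex on objects of $\cC$, so iterating, every object of $\cTw(\cC)/\Cone(M)$ is $\cAinf$-isomorphic (even in $\cTw(\cC)$, a fortiori in the quotient) to a twisted complex on objects of $\cC[M^{-1}]$ — i.e. it lies in the essential image of $\cTw(\cC[M^{-1}])$. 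Full faithfulness: since both sides are $\cTw$ of a category, and the inclusion $\cC[M^{-1}] \hookrightarrow \cTw(\cC)/\Cone(M)$ is fully faithful by construction of $\cC[M^{-1}]$ as a \emph{full} subcategory, the induced map on twisted complexes is fully faithful — morphism complexes in $\cTw$ are built functorially from morphism complexes in the base, so a fully faithful inclusion of the base induces a fully faithful functor on $\cTw$. (Here I am using that $\Cone(M)$ contains only non-curved objects, so that Lemma \ref{lemma quotient commutes with flat functor} and the compatibility of $\cTw$ with $\flat$ cause no curvature obstruction.)

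The main obstacle I anticipate is the bookkeeping in the two structural facts I want to lean on: that $\cTw$ commutes with the quotient $\cC/\cA$ and that $\cTw\circ\cTw \simeq \cTw$. Neither is deep, but both require writing down the comparison $\cAinf$-functors and checking the $\mu^d$-compatibility against the somewhat involved formulas of Definitions \ref{definition twisted complexes} and \ref{definition quotient} — in particular tracking how the curvature-in-$\cF^{\geq 1}$ conditions on twisted differentials interact with the extra tensor factors $\cC(X,A_1)[1]\otimes\cdots$ coming from the quotient. The cleanest route is probably to observe that $\cTw(\cD)/\cA$ and $\cTw(\cD/\cA)$ literally have the same objects (a twisted complex on $\cD$, since the quotient leaves objects untouched) and the same morphism spaces after unwinding the direct-sum definitions, and that under this identification the two $\mu^d$'s agree term by term — this is the analogue for $\cTw$ of the one-line proof of Lemma \ref{lemma quotient commutes with flat functor}. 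Given that, everything else is formal, and I would state the remaining verifications as routine.
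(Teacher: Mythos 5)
The paper gives no proof of this lemma (it is stated after ``The definitions we made imply the following.''), so I will assess your argument on its own terms.

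Your overall strategy is sound, and the reduction to the two structural facts --- $\cTw\circ\cTw\simeq\cTw$, and $\cTw$ commuting with the quotient --- is a legitimate route. But there is a real flaw in the last paragraph, where you try to make the second fact into a ``one-line proof.'' You claim that $\cTw(\cD)/\cA$ and $\cTw(\cD/\cA)$ ``literally have the same objects (a twisted complex on $\cD$, since the quotient leaves objects untouched).'' That is incorrect. While $\cD/\cA$ has the same objects as $\cD$, a twisted complex on $\cD/\cA$ is a family $(X_i,k_i)$ together with a twisting matrix $\delta$ whose entries live in the \emph{morphism spaces of $\cD/\cA$}, which are strictly larger than those of $\cD$: they contain the ``long'' summands $\cD(X_i,A_1)[1]\otimes\cdots\otimes\cD(A_p,X_j)$ with $A_k\in\cA$. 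So $\cTw(\cD/\cA)$ has strictly more objects than $\cTw(\cD)/\cA$. What \emph{is} true, by distributing the tensor over the direct-sum decomposition of the quotient hom-spaces, is that the morphism complexes of the two categories literally agree on the common objects (those with twisting in $\cD$), so the inclusion $\cTw(\cD)/\cA\hookrightarrow\cTw(\cD/\cA)$ is strictly fully faithful. Essential surjectivity --- which is the whole content --- is the Drinfeld pretriangulation argument: given $(\mathfrak X,\delta)$ with $\delta$ passing through $A_1,\dots,A_p\in\cA$, enlarge the family by adjoining appropriately shifted copies of the $A_k$ and produce a new twisted complex $(\mathfrak X',\delta')$ with $\delta'$ valued in $\cD$, quasi-isomorphic to the original because the $A_k$ are zero objects in $\cTw(\cD)/\cA$ (Lemma \ref{lemma quotient module against quotient object}). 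This is genuine work and cannot be replaced by ``same objects, same $\mu$.''

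Two smaller remarks. First, your route through $\cTw(\cTw(\cC)/\Cone(M))$ is more circuitous than needed: the quotient functor $\cC\hookrightarrow\cC[M^{-1}]$ already induces $\cTw(\cC)\to\cTw(\cC[M^{-1}])$ sending $\Cone(M)$ to zero objects, hence descends directly to $\cTw(\cC)/\Cone(M)\to\cTw(\cC[M^{-1}])$; full faithfulness is then the (genuinely tautological) matching of morphism complexes, and essential surjectivity is exactly the Drinfeld unfolding above. Second, your essential surjectivity paragraph for the composite $\cTw(\cC[M^{-1}])\to\cTw(\cC)/\Cone(M)$ is actually trivial (every object of $\cTw(\cC)/\Cone(M)$ is literally a twisted complex on $\cC\subset\cC[M^{-1}]$ with $\cC$-valued twisting, so lies in the image); the nontrivial surjectivity was already absorbed into the identification $\cTw(\cTw(\cC)/\Cone(M))\simeq\cTw(\cC)/\Cone(M)$. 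So the content is in the right place in your argument, but the justification offered for that step is the one that doesn't work.

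Finally, you should say a word about why this all goes through in the curved/filtered setting of Section \ref{section cAinf-categories}: the Drinfeld unfolding produces a new $\delta'$ whose diagonal and curvature must again land in $\cF^{\geq 1}$, and this needs the (routine, but not free) check that the construction preserves the filtration; your aside about $\flat$ and Lemma \ref{lemma quotient commutes with flat functor} is in the right spirit but does not by itself address this.
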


\section{Elementary symplectic geometry of prequantization bundles}\label{section prequantization bundles}

Here we fix some notations and hypotheses for the geometric setup of the remainder of the article.  

\vspace{2mm}

\begin{definition}\label{definition prequantization bundle}
    A \emph{prequantization bundle} is the data of a symplectic manifold $(\base, \omega)$, a contact manifold $(V,\xi)$ with a contact form $\alpha^{\circ}$, and a circle bundle $V \xrightarrow{\pi} \base$ such that
    \begin{enumerate}
        \item $e(V \xrightarrow{\pi} \base) = [\omega]$ in $H^2(\base; \R)$,
        \item $\pi^* \omega = d \alpha^{\circ}$,
        \item the $\alpha^{\circ}$-Reeb flow generates the circle action on $V$ and is $1$-periodic.
    \end{enumerate}	
\end{definition}

We use the superscript on `$\alpha^\circ$' to remind ourselves that this is a circle-invariant contact form; in particular, the Reeb flow is degenerate. We will later use `$\alpha$' for a form with non-degenerate Reeb flow.

\begin{remark}
    Given a symplectic manifold $(\base, \omega)$, there exist a prequantization bundle over it if and only if $\langle \omega, H_2(\base) \rangle \subset \Z$.
\end{remark}

\begin{definition}\label{definition admissible}
    We say a 
    prequantization bundle $(V, \alpha^{\circ}) \to (\base, \omega)$ is {\em admissible} if the following holds:
    \begin{enumerate}
        \item $(\base, \omega)$ is monotone, i.e. there exists $\tau > 0$ such that $c_1(T \base) = \tau [\omega]$,
        \item $c_1(\xi)= 0$, and there exists a non-degenerate contact form $\alpha$ on $(V, \xi)$ such that the contact homology grading $|\gamma|$ of any contractible $\alpha$-Reeb orbit satisfies $|\gamma| \geq 2$.
    \end{enumerate}
    We say a compact Legendrian $\La$ in an admissible prequantization bundle is {\em admissible} if the following holds:
    \begin{enumerate}
        \item $\pi_{|\La}$ 
        has double and transverse multiple points,
        \item the action spectrum of $\La$ (i.e. the set of lengths of $\alpha^{\circ}$-Reeb chords of $\La$) is contained in $\Z \cup (\R \setminus \Q)$.  
    \end{enumerate} 
\end{definition}

In the following, we fix a prequantization bundle $(V, \alpha^{\circ}) \xrightarrow{\pi} (\base, \omega)$ and let $a_{\base} \in \Z_{\geq 0}$ be the generator of $\langle \omega, H_2(\base) \rangle$.

\begin{lemma}\label{lemma fundamental group}
    Let $\gamma$ be a simple fiber of $V \to \base$.
    A $N$-fold cover of $\gamma$ is contractible if and only if $N \in a_{\base} \Z_{\geq 1}$.
\end{lemma}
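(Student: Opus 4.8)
The plan is to translate the homotopy-theoretic statement about fibers into a computation in the long exact sequence of homotopy groups associated to $S^1 \to V \xrightarrow{\pi} \base$, combined with the definition of $k_\base$ as the generator of $\langle \omega, H_2(\base)\rangle$. First I would recall that, since $\pi$ is a circle bundle, we have an exact sequence of groups
\[
\pi_2(\base) \xrightarrow{\partial} \pi_1(S^1) = \Z \to \pi_1(V) \to \pi_1(\base) \to 1,
\]
and that a class $n \in \Z = \pi_1(\text{fiber})$ maps to zero in $\pi_1(V)$ — i.e. the $n$-fold cover of $\gamma$ is contractible in $V$ — if and only if $n$ lies in the image of the connecting map $\partial : \pi_2(\base) \to \Z$. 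So the content of the lemma is the identification of $\im(\partial) \subset \Z$ with $k_\base \Z$ (noting $k_\base \Z_{\ge 1}$ versus $k_\base \Z$ is immaterial since $\partial$ is a subgroup, hence symmetric, and the statement is about $N \ge 1$; the cover of a fiber by its inverse orientation gives the negative class).

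Next I would identify the connecting homomorphism $\partial$ with the Euler class pairing. For a principal $S^1$-bundle with Euler class $e(V) \in H^2(\base;\Z)$, the map $\pi_2(\base) \to \Z$ sending a sphere $f : S^2 \to \base$ to the obstruction to lifting it is precisely $f \mapsto \langle e(V), f_*[S^2]\rangle$, where we use the Hurewicz map $\pi_2(\base) \to H_2(\base;\Z)$. By axiom (a) of a prequantization bundle, $e(V) = [\omega]$ in $H^2(\base;\R)$; since $e(V)$ is an integral class and the statement concerns the pairing against the image of $\pi_2(\base) \to H_2(\base;\Z)$, we get $\langle e(V), f_*[S^2]\rangle = \langle \omega, f_*[S^2]\rangle$. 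Therefore $\im(\partial)$ is the subgroup of $\Z$ generated by $\{\langle \omega, h \rangle : h \in \im(\pi_2(\base) \to H_2(\base;\Z))\}$.

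Finally I would argue that this subgroup equals $k_\base \Z$. One inclusion is immediate: every spherical class pairs with $\omega$ in $\langle \omega, H_2(\base)\rangle = k_\base \Z$. For the reverse, I need that $k_\base$ itself — the generator of $\langle \omega, H_2(\base)\rangle$ — is realized by a \emph{spherical} homology class. Here I would use that $k_\base$ is by definition a generator of the image of the pairing $\langle \omega, -\rangle$ on all of $H_2(\base;\Z)$; a priori one only knows it is attained on some integral class, not necessarily spherical. I expect this to be the main obstacle, and the honest resolution is that in this context $\langle \omega, H_2(\base)\rangle$ should be read as $\langle \omega, \text{image of } \pi_2(\base)\rangle$ — equivalently, only spherical classes matter for the existence of the prequantization bundle and for the periodicity of the Reeb flow, since the construction of $V$ depends on $\omega$ only through its pullback to the universal cover / its values on $\pi_2$. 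With that reading (which I would make explicit, perhaps as a remark on the convention for $k_\base$, consistent with the Remark following the definition of prequantization bundle where only $H_2$ is mentioned but what is used is the spherical pairing), the two subgroups coincide and the lemma follows. If instead one insists on the literal $H_2$, one would invoke that $\base$ may be taken simply connected in the applications (the paper's "topologically simple" hypothesis), so that Hurewicz makes $\pi_2 \to H_2$ surjective and the subtlety disappears.
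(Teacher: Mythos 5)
Your proposal follows essentially the same route as the paper's (very terse) proof: both invoke the long exact sequence of homotopy groups for $S^1 \to V \to \base$ and identify the connecting map $\pi_2(\base) \to \pi_1(S^1) = \Z$ with the pairing $\langle [\omega], - \rangle$. The subtlety you raise — that the image of the connecting map is $\langle \omega, \mathrm{im}(\pi_2(\base) \to H_2(\base))\rangle$, which need only be a subgroup of $\langle \omega, H_2(\base)\rangle = k_\base \Z$ unless $\base$ is simply connected or one reads $k_\base$ as the generator of the spherical pairing — is a genuine gap that the paper's one-line proof also leaves implicit, and your discussion of how to resolve it is correct.
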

\begin{proof}
    This follows from the exact sequence $\pi_2(\base)  \xrightarrow{\langle [\omega], - \rangle} \Z = \pi_1(S^1)  \to \pi_1 (V)$. 
\end{proof}

\begin{lemma}\label{lemma chern classes}
    We have $c_1(\xi) = 0$ if and only if $c_1(T \base) = \tau [\omega]$ for some $\tau \in \R$.
\end{lemma}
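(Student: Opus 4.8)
The plan is to identify $c_1(\xi)$ with $\pi^* c_1(T\base)$ and then compute the kernel of $\pi^*$ in degree two via the Gysin sequence.

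First I would observe that the contact distribution $\xi = \ker \alpha^\circ$ is isomorphic, as a symplectic vector bundle, to $\pi^* T\base$. Indeed, the differential $d\pi$ restricts to a fiberwise isomorphism $\xi \xrightarrow{\sim} \pi^* T\base$ (its kernel is the Reeb line, which is transverse to $\xi$), and by the defining condition $\pi^*\omega = d\alpha^\circ$ this isomorphism intertwines the conformal symplectic structure $d\alpha^\circ|_\xi$ on $\xi$ with $\pi^*\omega$. Choosing an almost complex structure on $\base$ compatible with $\omega$ and pulling it back, $d\pi$ becomes a complex vector bundle isomorphism, so $c_1(\xi) = \pi^* c_1(T\base)$ in $H^2(V;\R)$.

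Next I would invoke the Gysin exact sequence (with real coefficients) of the oriented circle bundle $\pi : V \to \base$:
\[ H^0(\base;\R) \xrightarrow{\ \cup e\ } H^2(\base;\R) \xrightarrow{\ \pi^*\ } H^2(V;\R), \]
where $e = [\omega]$ is the (real) Euler class. Exactness says $\ker \pi^* = \operatorname{im}(\cup\, [\omega]) = \R\cdot[\omega]$. Combining with the previous step, $c_1(\xi) = 0$ in $H^2(V;\R)$ if and only if $c_1(T\base) \in \R\cdot[\omega]$, i.e.\ $c_1(T\base) = \lambda[\omega]$ for some $\lambda \in \R$, which is exactly the claim.

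I do not anticipate a serious obstacle here: the only point requiring care is the first step, namely that $c_1(\xi)$ is well defined (compatible complex structures on a symplectic bundle form a contractible space) and that $d\pi$ genuinely gives a symplectic—hence, up to homotopy, complex—bundle isomorphism $\xi \cong \pi^* T\base$; after that the Gysin sequence does all the work. (The degenerate edge case $[\omega] = 0$ in $H^2(\base;\R)$ is harmless: both sides then reduce to $c_1(T\base)=0$.)
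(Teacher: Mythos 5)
Your proposal is correct and matches the paper's proof: both identify $c_1(\xi) = \pi^* c_1(T\base)$ via $\xi \cong \pi^* T\base$ and then invoke the Gysin sequence with $e(V\to\base)=[\omega]$ to compute $\ker\pi^*$ in degree two. You simply supply more detail on why the bundle identification holds.
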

\begin{proof}
    This follows from the Gysin exact sequence, since $c_1(\xi) = c_1(\pi^* T \base) = \pi^* c_1(T \base)$ and $e(V \xrightarrow{\pi} \base) = [\omega]$.
\end{proof}

\begin{lemma}\label{lemma indices}
    Assume that $c_1(T \base) = \tau [\omega]$. 
    For every $L, \varepsilon \in \R_{>0}$ there exists a positive Morse function $f$ on $\base$ such that
    \begin{enumerate}
	\item $f$ is $\varepsilon$-close to $1$ in $C^2$-norm,
	\item the $(f \alpha^{\circ})$-Reeb orbits of action less than $L$ are multiple covers of the fibers over critical points of $f$ (and are non-degenerate),
	\item the Conley-Zehnder index of a contractible $(f \alpha^{\circ})$-Reeb orbit $\gamma$ of action less than $L$ is a well defined integer given by $\mathrm{CZ} (\gamma) = 2 \tau N_{\gamma} - \frac{\dim(\base)}{2} + \mathrm{ind}_f(\pi(\gamma))$, where $N_{\gamma} \in a_{\base} \Z_{\geq 1}$ is the degree of $\gamma$ along the fiber.
    \end{enumerate}
    In particular, the contact homology grading $|\gamma|$ of a contractible $(f \alpha^{\circ})$-Reeb orbit $\gamma$ of action less than $L$ satisfies 
    \[|\gamma| = \mathrm{CZ} (\gamma) + \frac{\dim(\base)}{2} - 2 = 2 ( \tau N_{\gamma} - 1) + \mathrm{ind}_f(\pi(\gamma)) \geq 2 ( \tau a_\base - 1). \]
\end{lemma}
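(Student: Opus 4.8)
The plan is to produce the non-degenerate form by the standard Morse--Bott perturbation of the circle-invariant form, $\alpha_f := (f\circ\pi)\,\alpha^{\circ}$ for a positive Morse function $f$ on $\base$ that is $C^2$-close to $1$, and then to run the prequantization-bundle index computation (due in essence to Bourgeois). First I would recall that the Reeb vector field of $\alpha_f$ is $\tfrac{1}{f\circ\pi}\big(R_{\alpha^{\circ}} + \widetilde{X_f}\big)$, where $\widetilde{X_f}$ is the $\alpha^{\circ}$-horizontal lift of the Hamiltonian vector field $X_f$ of $f$ on $(\base,\omega)$; hence the Reeb flow covers the Hamiltonian flow of $f$, and closed $\alpha_f$-Reeb orbits project to closed orbits of $X_f$, i.e.\ to constant orbits over $\Crit(f)$ together with the non-constant periodic orbits of $X_f$. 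Since $\|DX_f\| = O(\|f-1\|_{C^2})$, non-constant periodic orbits of $X_f$ have period bounded below by a quantity tending to $+\infty$ as $\|f-1\|_{C^2}\to 0$; so, given $L$, choosing $\varepsilon$ small enough forces every $\alpha_f$-Reeb orbit of action $<L$ to project to a point of $\Crit(f)$, i.e.\ to be a cover of a fibre over a critical point, which gives (1)--(2). Non-degeneracy of such an orbit follows because the linearized return map on $\xi$ along the $N$-fold fibre cover $\gamma$ over $p\in\Crit(f)$ is conjugate to $\exp\!\big(\pm N f(p)\,J_0\,\mathrm{Hess}_p f\big)$; as $f$ is Morse, $\mathrm{Hess}_p f$ is invertible, and for $\varepsilon$ small (only finitely many $N$ occur below action $L$) the exponent is small enough that $1$ is not an eigenvalue.

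For the Conley--Zehnder formula I would use that along a fibre $\xi = \ker\alpha^{\circ}$ is canonically identified, via the connection, with $\pi^*T\base$ (this is the trivialization of $\xi$ over $\gamma$ given by the $1$-periodic circle action, in which the linearized $\alpha^{\circ}$-Reeb flow is the identity and the linearized $\alpha_f$-Reeb flow is the short path $t\mapsto\exp(\pm t J_0\,\mathrm{Hess}_p f)$). In this connection trivialization the index contribution of that short path is $\mathrm{ind}_f(\pi(\gamma)) - \tfrac{\dim\base}{2}$, with the sign and normalization dictated by the Morse--Bott perturbation conventions. Passing to the trivialization from a capping disk $u$ of $\gamma$ --- which exists because $\gamma$ is contractible, is the relevant reference since $c_1(\xi)=0$ in $H^2(V;\R)$ (equivalently $c_1(T\base)=\lambda[\omega]$ by Lemma~\ref{lemma chern classes}, which is exactly why the index is capping-independent and an honest integer) --- adds $2$ times the relative first Chern number measuring the defect between the two trivializations. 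Since $\xi\cong\pi^*T\base$ along the fibre and $c_1(T\base)=\lambda[\omega]$ we get $c_1(\xi)=\lambda\,[d\alpha^{\circ}]$, so this defect equals $\lambda\int_u d\alpha^{\circ} = \lambda\int_\gamma\alpha^{\circ} = \lambda N$ (the fibre action, normalized so a simple fibre has $\alpha^{\circ}$-period $1$), and $N = N_\gamma \in k_\base\Z_{\geq 1}$ by Lemma~\ref{lemma fundamental group}. Adding the contributions gives $\mathrm{CZ}(\gamma) = 2\lambda N_\gamma - \tfrac{\dim\base}{2} + \mathrm{ind}_f(\pi(\gamma))$, and the ``in particular'' is then immediate from the stated definition $|\gamma| = \mathrm{CZ}(\gamma) + \tfrac{\dim\base}{2} - 2$.

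The main obstacle is the Conley--Zehnder computation, and within it the bookkeeping of the two constants: the sign/normalization of the short-path (Morse) term $\mathrm{ind}_f(\pi(\gamma)) - \tfrac{\dim\base}{2}$, where one must be careful how the endpoint crossing of the Robbin--Salamon/Morse--Bott path is resolved, and the identification of the relative first Chern number with $\lambda N_\gamma$, which rests on the symplectic isomorphism $\xi\cong\pi^*T\base$ along fibres together with the compatibilities $e(V)=[\omega]$, $\pi^*\omega = d\alpha^{\circ}$, and the $1$-periodicity/action normalization. The remaining dynamical input --- period lower bounds for $X_f$, $C^2$-density of positive Morse functions near $1$, and non-degeneracy of the fibre covers --- is standard and I would simply cite it; the index computation itself is essentially Bourgeois's Morse--Bott computation for prequantization bundles, which I would adapt, taking care that the grading conventions are the ones fixed in Definition~\ref{definition admissible} and used elsewhere in the paper.
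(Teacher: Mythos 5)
Your proposal correctly reconstructs the Morse--Bott perturbation argument (perturbing $\alpha^\circ$ by $f\circ\pi$, isolating fibre covers over $\Crit(f)$ below a given action, and computing $\mathrm{CZ}$ as the sum of the short-path Morse contribution $\mathrm{ind}_f - \tfrac{\dim\base}{2}$ and the Chern defect $2\lambda N_\gamma$ between the connection and capping-disk trivializations). This is precisely the content the paper delegates to \cite[Lemma 2.4 and Section 3]{VK15} without further comment, so you are taking the same route, just spelling it out.
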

\begin{proof}
    This computation can be found in \cite[Section 3.1]{DL19}.	
\end{proof}

\begin{prop}\label{prop admissibility}
    Assume that $(\base,\omega)$ is 
    monotone and $1 \notin \langle c_1 (T \base), H_2(\base) \rangle$. 
    Then the prequantization bundle $(V, \alpha^{\circ}) \to (B, \omega)$ is admissible. 
\end{prop}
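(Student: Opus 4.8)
The plan is to verify the two conditions of Definition \ref{definition admissible} for the prequantization bundle, and then separately handle the Maslov statement. For condition (1), I would invoke Lemma \ref{lemma chern classes}: monotonicity of $(\base, \omega)$ says precisely that $c_1(T\base) = \lambda [\omega]$ for some $\lambda \in \R$ (in fact $\lambda > 0$), which by that lemma is equivalent to $c_1(\xi) = 0$ in $H^2(V; \R)$. So condition (1) is immediate.

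For condition (2), I need a non-degenerate contact form $\alpha$ on $(V, \xi)$ for which every contractible $\alpha$-Reeb orbit $\gamma$ has contact homology grading $|\gamma| \geq 2$. The natural candidate is $\alpha = f\alpha^\circ$ for a suitable positive Morse function $f$ on $\base$, as produced by Lemma \ref{lemma indices}. That lemma applies since $c_1(T\base) = \lambda[\omega]$, and it gives, for action below any fixed level $L$, the formula
\[ |\gamma| = 2(\lambda N_\gamma - 1) + \mathrm{ind}_f(\pi(\gamma)), \]
where $N_\gamma \in k_\base \Z_{\geq 1}$ is the fiber degree. The key arithmetic point is this: $\lambda N_\gamma = \langle c_1(T\base), u\rangle$ where $u$ is (a multiple of) the fiber class pushed into $H_2(\base)$; more precisely, since $N_\gamma$ is a positive multiple of $k_\base$ and $\lambda k_\base = \langle c_1(T\base), A\rangle$ for $A$ a generator of $\langle \omega, H_2(\base)\rangle^{-1}$-normalized class, the hypothesis $1 \notin \langle c_1(T\base), H_2(\base)\rangle$ forces $\lambda N_\gamma$ to be an integer that is never equal to $1$, hence $\lambda N_\gamma \geq 2$ (it is a positive integer, being a positive multiple of $\lambda k_\base \in \Z_{>0}$, and $\neq 1$). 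Therefore $|\gamma| \geq 2(2-1) + \mathrm{ind}_f(\pi(\gamma)) = 2 + \mathrm{ind}_f(\pi(\gamma)) \geq 2$. Since $L$ was arbitrary and the estimate is uniform, all contractible $\alpha$-Reeb orbits satisfy $|\gamma| \geq 2$. One subtlety to nail down: I should check $\lambda k_\base$ is genuinely a \emph{positive integer} and not merely rational — this follows because $\langle c_1(T\base), H_2(\base)\rangle = \lambda \langle \omega, H_2(\base)\rangle = \lambda k_\base \Z$ is a subgroup of $\Z$ (Chern numbers are integers), so $\lambda k_\base \in \Z_{>0}$, and the hypothesis says it is $\neq 1$, hence $\geq 2$.

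For the Maslov statement: suppose $\pi_{|\La} : \La \to \base$ is monotone, meaning $\mu_{\pi_{|\La}}(u) = 2\lambda\langle \omega, u\rangle$ for all relevant $u \in H_2(\base, \pi(\La))$. I want to conclude the Maslov class of the Legendrian $\La \subset V$ is $0$. The Maslov class of $\La$ measures the obstruction to trivializing the relevant determinant/phase along loops in $\La$; since $c_1(\xi) = 0$ on $V$, it is determined by the Maslov class of the Lagrangian projection together with the "area" contributions coming from the circle-bundle structure, and these precisely cancel under the monotonicity relation $\mu = 2\lambda\langle\omega, -\rangle$ because the relation $e(V \to \base) = [\omega]$ ties the fiberwise winding to the symplectic area. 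Concretely, a loop in $\La$ bounding a disk $u$ in $V$ projects to a disk in $\base$; the Maslov index of the Legendrian equals $\mu_{\pi_{|\La}}(\pi_* u) - 2\lambda \langle \omega, \pi_* u\rangle$ after accounting for the $c_1(\xi)=0$ trivialization, which vanishes by monotonicity of the immersion.

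\textbf{Main obstacle.} I expect the genuinely delicate step to be the bookkeeping in the Maslov-class argument: making precise the identification of the Legendrian Maslov class with the combination $\mu_{\pi_{|\La}} - 2\lambda\langle\omega,-\rangle$ on $H_2(\base, \pi(\La))$, including the correct normalization of $\lambda$ and the role of the Euler class $e(V\to\base) = [\omega]$ in relating fiber winding to area. The admissibility verification, by contrast, is a relatively mechanical consequence of Lemmas \ref{lemma chern classes} and \ref{lemma indices} together with the integrality-of-Chern-numbers observation above.
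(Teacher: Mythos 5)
Your verification of admissibility follows the paper's route and, if anything, is more explicit: you spell out the integrality bookkeeping (that $\lambda k_{\base} \in \Z_{>0}$ with $\lambda k_{\base} \neq 1$ by hypothesis, hence $\lambda N_{\gamma} \geq 2$ for every contractible orbit) which the paper compresses into ``follows from Lemmas~\ref{lemma fundamental group}, \ref{lemma chern classes} and \ref{lemma indices}.'' That part is fine.

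The Maslov-class argument has a genuine gap, and you yourself flagged it as the step you were unsure of. You posit the relation $\mu_{\La}(v) = \mu_{\pi_{|\La}}(\pi_*v) - 2\lambda\langle\omega, \pi_*v\rangle$, from which vanishing would follow formally from monotonicity alone. That is not the correct relation. Since $\xi \cong \pi^*T\base$ and a trivialization of $\det_{\C}\xi$ over a disc $v$ restricts to one for $\det_{\C} T\base$ over the projected disc $\pi_*v$, the relation is simply $\mu_{\La}(v) = \mu_{\pi_{|\La}}(\pi_*v)$ with no correction term. The crucial step you are missing is what then makes $\mu_{\pi_{|\La}}(\pi_*v)$ vanish: after monotonicity gives $\mu_{\pi_{|\La}}(\pi_*v) = 2\lambda\langle\omega, \pi_*v\rangle$, one computes
\[\langle\omega, \pi_*v\rangle = \langle\pi^*\omega, v\rangle = \langle d\alpha^{\circ}, v\rangle = \langle \alpha^{\circ}, \partial v\rangle = 0,\]
where the last equality uses Stokes together with the fact that $\partial v$ lies in the Legendrian $\La$, on which $\alpha^{\circ}$ vanishes identically. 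Your posited formula, had it been true, would have rendered this Stokes/Legendrian argument unnecessary and made the conclusion hold for any lift of a monotone Lagrangian, not just a Legendrian one; the fact that the Legendrian hypothesis is doing no work in your argument is the sign that the formula is off. So the proposal proves admissibility correctly but does not actually establish the Maslov vanishing.
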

\begin{proof}
    Admissibility of the prequantization bundle follows from Lemmas \ref{lemma fundamental group}, \ref{lemma chern classes} and \ref{lemma indices}.
\end{proof}

\section{Legendrian invariants}
\label{section augmentation category}

To a Legendrian $\La$, one can associate a ``positive augmentation category'', which computes the category of finite-dimensional modules over the Chekanov-Eliashberg algebra $\cA_{\La}$.  
This category was introduced first for Legendrians in $\mathbf{R}^3$ by somewhat ad-hoc methods \cite{NRS+20, CNS19} ; a general definition was given in \cite{Cha19} using localization of categories. 
Here we denote this category by $\LFuk(S\La)$, and we define a curved version $\cLFuk(S\La)$ of it as the localization of a curved $\Ainf$-category.

We will work in the following setting. Fix a contact manifold $(V, \xi)$ with a non-degenerate contact form $\alpha$.  
We will assume: 
\begin{enumerate}
    \item $c_1(\xi)=0$ in $H^2(V; \R)$,
    \item for every contractible $\alpha$-Reeb orbit $\gamma$, the contact homology grading $|\gamma|$ of $\gamma$ satisfies $|\gamma| \geq 2$.
\end{enumerate}
Moreover, we fix a compact Legendrian $\La \subset V$ which is chord generic with respect to $\alpha$.

\begin{remark}\label{rmk assumptions on V}
    Recall that, in general, the Legendrian invariants should be defined using discs with interior punctures asymptotic to Reeb orbits of $V$, or, in other words, over the DG-algebra $\cA(V)$ generated by Reeb orbits.
    The above assumptions allow us to define the Legendrian invariants without having to consider discs with interior punctures asymptotic to Reeb orbits.
    More precisely, the first assumption ensures the Conley-Zehnder index is well defined as an integer; it is required to state the second assumption. The second assumption ensures that the trivial algebra map $\cA(V) \to \Z$ (which sends any Reeb orbit to $0$) is an augmentation.
    Observe that these assumptions are standard in Legendrian Floer theory, see for example \cite[Appendix B.2]{Ekh08} or \cite[Section 3.1]{DR16bis}. 
\end{remark}

\begin{remark}\label{rmk admissible satisfy assumptions}
    The above assumptions are satisfied when $V$ is the total space of an admissible prequantization bundle $V \xrightarrow{\pi} \base$ (see Definition \ref{definition admissible}).
    According to Proposition \ref{prop admissibility}, $V \xrightarrow{\pi} \base$ is admissible if $B$ is monotone with minimal Chern number different from $1$.
\end{remark}

Finally, we choose a generic almost complex structure on $\xi$. We denote by $J_{\alpha}$ the almost complex structure on $SV$ which restricts to $J$ on $\xi$ and such that $J_{\alpha} \partial_t = R_{\alpha}$ (where we use $\alpha$ to identify $SV$ and $\R_t \times V$).

\subsection{Definition}\label{section definition augmentation category}

Fix $\eta > 0$ smaller than the length of the smallest Reeb chord of $\La$, and set $\La^{-\eta} := \varphi_{R_{\alpha}}^{- \eta} (\La)$.
Pick a positive Morse function $f_1 : \La^{-\eta} \to \mathbf{R}$ with a unique minimum on each connected component $\La_i^{-\eta}$ of $\La^{-\eta}$.
We denote by $\cU$ a small open neighborhood of $f_1$ in $C^{\infty} \left( \La^{-\eta} \right)$ with the following property: for every $f$ in $\cU$, $f$ is positive, Morse, and its critical points and gradient trajectories are in bijection with those of $f_1$. 
Now pick a family $\left( f_n \right)_{n \geq 2}$ of functions in $\cU$.
Then choose a decreasing sequence $\left( \delta_k \right)_{k \geq 1}$ of small enough positive numbers so that the family
\[h_n = \sum_{k=1}^{n} \delta_k f_k, \quad n \in \Z_{\geq 0}, \]
has the following properties (these can be easily achieved by induction on $n$): 
\begin{enumerate}
    \item for every $m<n$, $\left( h_n - h_m \right)$ lies in the cone over $\cU$, 
    \item the maxima of $h_n$ are smaller than $\eta$.
\end{enumerate}
Finally, let $\La_i^{-\eta}(n)$ be the Legendrian corresponding to the one-jet of $(h_n)_{| \La_i^{-\eta}}$ in a Weinstein neighborhood of $\La_i^{-\eta} = \La_i^{-\eta}(0)$.

\begin{defin}\label{definition qAug}
    Let $\cR(\La_i^{-\eta}(m), \La_j^{-\eta}(n))$ denotes the set of $\alpha$-Reeb chords from $\La_i^{-\eta}(m)$ to $\La_j^{-\eta}(n)$.
    We consider the $\cAinf$-category $\cO$ defined as follows:
    \begin{enumerate}
	\item An object of $\cO$ is a Legendrian $\La_j^{-\eta}(n)$ with a local system $M \in \Prop(C_{-*}(\Omega \La_j))$ on $\La_j$, where $\La_j$ is a connected component of $\La$ and $n \in \Z_{\geq 0}$.
    We usually forget the local system $M_j$ in the notation of an object.
	\item Given local systems $M_0 \in \Prop(C_{-*}(\Omega \La_i))$ and $M_1 \in \Prop(C_{-*}(\Omega \La_j))$, we have
    \begin{align*}
        \cO & \left( \La_i^{-\eta}(m), \La_j^{-\eta}(n) \right) \\
        & = \left\{
        \begin{array}{ll}
            \prod\limits_{\gamma \in \cR(\La_i^{-\eta}(m), \La_j^{-\eta}(n))} \hom_{\Z}(M_0, M_1) & \text{if } m < n \\
            \Z \oplus \prod\limits_{\gamma \in \cR(\La_j^{-\eta}(n))} \hom_{\Z}(M_0, M_0) & \text{if } \La_i^{-\eta}(m) = \La_j^{-\eta}(n) \text{ and } M_0=M_1 \\
	    0 & \text{otherwise}.
        \end{array}
        \right. 
    \end{align*}
        \item The operations are defined using $J_{\alpha}$-holomorphic punctured discs in $SV$ with one positive puncture.
    \end{enumerate}
\end{defin}

Since $\La_i$ is chord-generic with respect to $\alpha$, we can choose the perturbations $\La_j^{-\eta}(n)$ so that there are trivial isomorphisms of $\cAinf$-algebras
\[\phi_j^n : \cO \left( \La_j^{-\eta}(n), \La_j^{-\eta}(n) \right) \xrightarrow{\sim} \cO \left( \La_j^{-\eta}(n+1), \La_j^{-\eta}(n+1) \right). \]

\begin{lemma}\label{lemma minima}
    Fix a connected component $\La_j$ of $\La$, and $n \in \Z_{\geq 0}$.
    Then the minimum of $\left( h_{n+1} - h_n \right)$ on $\La_j^{-\eta}$ defines a degree 0 morphism $\gamma_j^n \in \cO \left( \La_j^{-\eta}(n), \La_j^{-\eta}(n+1) \right)$ satisfying the following 
    \begin{enumerate}
        \item $\mu_{\cO}^2(-, \gamma_j^n) : \cO(\La_i^{- \eta} (m), \La_j^{-\eta}(n)) \to \cO(\La_i^{- \eta} (m), \La_j^{-\eta}(n+1))$ is a quasi-isomorphism if $m < n$,
        \item $\mu_{\cO}^2(-, \gamma_j^n) = \mu_{\cO}^2(\gamma_j^n, \phi_j^n(-))$ on $\cO \left( \La_j^{-\eta}(n), \La_j^{-\eta}(n) \right)$,
        \item $\mu_{\cO}^d(\dots, \gamma_j^n, \dots) = 0$ for $d \ne 2$.
    \end{enumerate} 
\end{lemma}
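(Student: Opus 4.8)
The statement concerns the degree-$0$ morphism $\gamma_j^n \in \cO(\La_j^{-\eta}(n), \La_j^{-\eta}(n+1))$ defined by the minima of $(h_{n+1}-h_n)$ on $\La_j^{-\eta}$. The plan is to identify, holomorphic-curve count by holomorphic-curve count, the operations involving $\gamma_j^n$ with the corresponding operations in a \emph{cotangent bundle} picture, where everything reduces to Morse theory. Concretely, I would localize near $\La_j^{-\eta}$: by construction $\La_j^{-\eta}(n)$ and $\La_j^{-\eta}(n+1)$ are the one-jet lifts of $h_n|_{\La_j^{-\eta}}$ and $h_{n+1}|_{\La_j^{-\eta}}$ in a fixed Weinstein neighborhood of $\La_j^{-\eta}$, and $(h_{n+1}-h_n)$ lies in the cone over $\cU$, hence is $C^2$-small and Morse with critical points (and gradient flow lines) in bijection with those of $f_1$. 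The Reeb chords from $\La_j^{-\eta}(n)$ to $\La_j^{-\eta}(n+1)$ are therefore in bijection with the critical points of $(h_{n+1}-h_n)$, with $\gamma_j^n$ the (unique, on each component) minimum, which has degree $0$. This is the standard picture relating the Chekanov--Eliashberg DGA of a $1$-jet perturbation to Morse/Floer theory on the Legendrian (à la Ekholm, Ekholm--Etnyre--Sabloff, or the sheaf-theoretic translation).

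\textbf{Key steps.} First, for item (3): I would invoke the fact that $\gamma_j^n$ is represented by a \emph{minimum} of a $C^2$-small Morse function, together with the degree/action constraints. A rigid holomorphic disc with a boundary input at $\gamma_j^n$ and $d \ne 2$ would have to have, after the usual SFT/Morse-flow-tree degeneration (as in Ekholm's work), a Morse flow tree with a semi-infinite edge flowing out of the minimum; since the minimum is a \emph{source} for the negative gradient flow, such an edge cannot exist unless the tree is the trivial one, which corresponds to $d=2$. Equivalently, for dimension reasons: inserting $\gamma_j^n$ (degree $0$) into $\mu^d$ gives something of the same degree, and the only rigid configurations are the ones accounted for by the continuation/triangle product, i.e. $d=2$. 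Second, for item (2): on $\cO(\La_j^{-\eta}(n),\La_j^{-\eta}(n))$, the identity $\mu_{\cO}^2(-,\gamma_j^n) = \mu_{\cO}^2(\gamma_j^n,\phi_j^n(-))$ expresses compatibility of the product with $\gamma_j^n$ with the trivial continuation isomorphism $\phi_j^n$; I would prove it by a cobordism argument — interpolating the two ways of composing corresponds to a one-parameter family of moduli spaces whose boundary gives exactly these two terms (plus terms that vanish by item (3) and degree reasons), because $\phi_j^n$ is itself induced by the canonical $C^2$-small interpolation between the two perturbation data. Third, for item (1): the map $\mu_{\cO}^2(-,\gamma_j^n): \cO(\La_i^{-\eta}(m),\La_j^{-\eta}(n)) \to \cO(\La_i^{-\eta}(m),\La_j^{-\eta}(n+1))$ with $m < n$ is a continuation-type map. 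I would identify it with the composition-with-minimum map in the Morse model, and show it is a quasi-isomorphism by the standard argument that it is a \emph{continuation map} in the directed system: it is chain-homotopy-inverse to the map induced by pushing $\La_j^{-\eta}(n+1)$ back down, because composing the two interpolations is homotopic (through $C^2$-small data) to the identity. Concretely one filters both complexes by action and observes that $\mu^2(-,\gamma_j^n)$ is an isomorphism on associated graded (pairing of a chord with the unique minimum contributes the identity, the higher-action corrections being strictly filtration-increasing), whence a quasi-isomorphism.

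\textbf{Main obstacle.} The conceptual content is routine Floer theory, but the genuine difficulty is bookkeeping the holomorphic-curve degenerations \emph{in the curved, non-exact setting} — keeping track of the complete filtration $\cF^{\geq \bullet}$ (by the Novikov/area weighting) and ensuring that all the "extra" terms in the cobordism arguments for (2) and the quasi-isomorphism in (1) either vanish on the nose by degree reasons or lie in higher filtration and thus do not obstruct the conclusions. In particular, for (1) the claim is a quasi-isomorphism of the \emph{filtered} complexes, so I must argue not just on the nose but that $\mu^2(-,\gamma_j^n)$ respects the filtration and induces an iso on $\Gr$, which is where the smallness of $\delta_{n+1}$ (hence of $h_{n+1}-h_n$) relative to the minimal chord length and to the earlier $\delta_k$'s is used. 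A secondary point to be careful about is that $\gamma_j^n$ is a sum over the minima of the \emph{connected} component $\La_j^{-\eta}$ (one minimum per component by the choice of $f_1$, so actually a single generator), and that all the above compatibilities are with respect to the trivial isomorphisms $\phi_j^n$ fixed just before the lemma; establishing (2) and (3) is really fixing the correct normalization of $\gamma_j^n$ and $\phi_j^n$ simultaneously so that these hold, which I would do by choosing the $C^2$-small interpolating data coherently.
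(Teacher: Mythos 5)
Your approach is essentially the paper's: reduce to Morse theory via the Morse--Bott/flow-tree correspondence (you cite Ekholm, EES; the paper cites \cite[Theorem~3.6]{EES09}), then use that $\gamma_j^n$ is the \emph{minimum} of $(h_{n+1}-h_n)$ to force any rigid configuration with $\gamma_j^n$ as an asymptotic to be trivial. Where you diverge is in organization: the paper derives all three items simultaneously from the single observation that, after the EES09 degeneration, the only rigid configurations with a puncture at $\gamma_j^n$ are trivial cylinders with a marked point on the boundary. Item (3) is immediate ($d=2$ is the only possible valence); items (1) and (2) follow because $\mu^2(-,\gamma_j^n)$ and $\mu^2(\gamma_j^n,\phi_j^n(-))$ are then both forced to equal the tautological identification of Reeb chords (an isomorphism of filtered complexes on the nose, not merely a quasi-isomorphism). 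By contrast you split the argument into three independent sub-proofs, and your argument for item (2) (a one-parameter interpolating cobordism) and your filtered/associated-graded argument for item (1) introduce machinery the paper does not need. These would likely work, but they re-derive by hand what the unified statement already gives, and the cobordism argument for (2) in particular would need to be made precise.

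One small slip worth flagging: you write that ``the minimum is a \emph{source} for the negative gradient flow,'' but a minimum is a \emph{sink} for $-\nabla f$; the point is that no flow-tree edge can exit the minimum under the negative gradient, so the tree attached at $\gamma_j^n$ must be trivial. Your conclusion is correct, the justification as written is backwards.
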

\begin{proof}
    This follows from \cite[Theorem 3.6]{EES09}. Indeed, the latter implies that a rigid pseudo-holomorphic disk negatively asymptotic to $\gamma_j^n$ at one puncture corresponds to a rigid pseudo-holomorphic disk with one puncture less and a gradient flow line of $(h_{n+1} - h_n)$ converging to $\gamma_j^n$ attached on its boundary.
    Since $\gamma_j^n$ is the minimum of $(h_{n+1} - h_n)$, the only possible rigid configurations are trivial cylinders with a marked point on the boundary.
    This gives the desired result.   
\end{proof}

\begin{defin}\label{definition morphisms for localization}
    We denote by $M$ be the set of closed, degree 0, morphisms in $\cO$ given by the minima $\gamma_j^n$ of the functions $\left( h_{n+1} - h_n \right)$ on $\La_j^{-\eta}$.
    According to Lemma \ref{lemma minima}, each $\gamma_j^n$ defines a closed morphism in $\cO^{bc} \left( (\La_j^{-\eta}(n),b), (\La_j^{-\eta}(n+1), \phi_j^n(b)) \right)$ whenever $(\La_j^{-\eta}(n), b) \in \cO^{bc}$.    
    We denote by $N$ the set of these morphisms in $\cO^{bc}$.
\end{defin}

\begin{defin}\label{definition augmentation category}
    We define 
    \begin{enumerate}
        \item the $\cAinf$-category $\cLFuk(S\La)$ as the full subcategory of $\cO \left[ M^{-1} \right]$ whose objects are the Legendrians $\La_i^{-\eta}$ (together with local systems),
        \item the $\Ainf$-category $\LFuk(S\La)$ as the full subcategory of $\cO^{bc} \left[ N^{-1} \right]$ whose objects are the pairs $(\La_i^{-\eta}, b_i)$.
    \end{enumerate}
\end{defin}

\subsection{Some properties}

\begin{prop}\label{prop independence on choices}
    The categories $\cLFuk(S\La)$ and $\LFuk(S\La)$ are independent on the choices made in the construction, i.e. the number $\eta > 0$ and the Morse functions used to perturb $\La^{- \eta}$.
    Moreover, these categories are invariant under Legendrian isotopies of $\La$.
\end{prop}
\begin{proof}
    The strategy to prove independence on choices of categories defined by localization is standard, see \cite[Proposition 3.39]{GPS20}. The proof in our context is exactly the same as the one of \cite[Proposition 3.3.2]{Cha19}.
\end{proof}

\begin{prop}\label{prop stop removal map for Aug} 
    If $\La \subset \La'$ are two Legendrians in $V$, then there are canonical embeddings $\cLFuk(S\La) \hookrightarrow \cLFuk(S\La')$ and $\LFuk(S\La) \hookrightarrow \LFuk(S\La')$.
\end{prop}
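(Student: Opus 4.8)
The plan is to produce the functor $\Aug(\La) \hookrightarrow \Aug(\La')$ by tracking the inclusion of Legendrians through every layer of the construction of Definition \ref{definition augmentation category}: the auxiliary $\cAinf$-category $\cO$, the passage to bounding cochains $\cO^{bc}$, the localization at the minima morphisms, and the final passage to full subcategories. First I would set up compatible perturbation data: choose $\eta' > 0$ smaller than the shortest Reeb chord of $\La'$ (hence also valid for $\La$), and build the functions $h_n$ on $(\La')^{-\eta'}$ so that their restrictions to the components coming from $\La$ are exactly the functions used to define $\cO$ for $\La$. With such coherent choices there is a tautological strict inclusion of $\cAinf$-categories $\cO_{\La} \hookrightarrow \cO_{\La'}$, given on objects by $\La_j^{-\eta}(n) \mapsto \La_j^{-\eta'}(n)$ and on morphisms by the inclusion of the sub-span of Reeb chords with both endpoints on $\La$ — this is well defined because a $J_\alpha$-holomorphic punctured disc in $SV$ with boundary on perturbations of components of $\La$ stays on those components, so the structure maps $\mu^d$ restrict. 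By Lemma \ref{lemma independence on choices} the resulting categories do not depend on these auxiliary choices, so it is harmless to make them coherently.

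Next I would propagate this inclusion through the formal constructions of Section \ref{section cAinf-categories}. The inclusion $\cO_\La \hookrightarrow \cO_{\La'}$ being strict, it induces $\cO_\La^{bc} \hookrightarrow \cO_{\La'}^{bc}$ (a bounding cochain $b$ on $(\La_i^{-\eta},0)$ is still a bounding cochain when viewed in $\cO_{\La'}$, since $\mu^0$ and the relevant $\mu^d$ are computed by the same discs). Under this inclusion the localizing set $N$ for $\La$ maps into the localizing set $N'$ for $\La'$, because the minima morphisms $\gamma_j^n$ of $(h_{n+1}-h_n)$ on the $\La$-components are literally among those for $\La'$ by our coherent choice of the $h_n$. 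Applying Definition \ref{definition induced functor between quotients} (functoriality of the $\cAinf$-quotient for functors carrying $\cA$ into $\cB$) together with Lemma \ref{lemma perfect modules commutes with localization}, I get an induced $\Ainf$-functor $\cO_\La^{bc}[N^{-1}] \to \cO_{\La'}^{bc}[(N')^{-1}]$. Restricting to the full subcategories on the objects $(\La_i^{-\eta}, b_i)$ — which land among the objects $(\La_i^{-\eta'}, b_i)$ defining $\Aug(\La')$ — yields the desired functor $\Aug(\La) \to \Aug(\La')$. Canonicity follows from Lemma \ref{lemma independence on choices}: any two systems of coherent choices are connected by the same invariance argument, so the functor is well defined up to the coherent equivalences already present.

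The main obstacle I expect is the very first step: arranging genuinely \emph{strict} compatibility of the perturbation schemes for $\La$ and $\La'$, and checking that the moduli spaces of punctured discs used to define $\mu^d_{\cO_\La}$ are honestly identified with the corresponding sub-moduli inside those for $\cO_{\La'}$ (as opposed to merely quasi-isomorphic after a further homotopy). Concretely one must know that no holomorphic disc with boundary on the $\La$-perturbed components can acquire a boundary component on a component of $\La' \setminus \La$, which is automatic since $\La$-components and $\La'$-components are disjoint Legendrians and the discs have connected boundary lying on a single Legendrian (up to the jumps at Reeb chords, whose endpoints are all on $\La$). Once this geometric input is in hand, everything downstream is the formal functoriality of quotients and localizations recorded in Section \ref{section cAinf-categories}, plus the bookkeeping that full subcategories are carried to full subcategories. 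A secondary, purely cosmetic point is matching the two choices of $\eta$; this is handled by the $\eta$-independence already asserted in Lemma \ref{lemma independence on choices}, so one simply uses the smaller $\eta$ throughout.
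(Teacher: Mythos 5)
Your proposal is correct and is essentially an expanded version of the paper's own two-sentence proof, which simply fixes $\eta$ smaller than the shortest Reeb chord of $\La'$ and invokes ``the inclusion at the level of objects and morphisms.'' The details you fill in — strict inclusion $\cO_{\La}\hookrightarrow\cO_{\La'}$ from coherent perturbation data, the observation that discs with all asymptotics on $\La$-components never see $\La'\setminus\La$, propagation through $\cO^{bc}$ and localization via Definition~\ref{definition induced functor between quotients}, and independence of choices via Lemma~\ref{lemma independence on choices} — are exactly what the paper leaves implicit.
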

\begin{proof}
    Fix any $\eta > 0$ smaller than the length of the smallest Reeb chord of $\La'$.  Then we may use the inclusion at the level of objects and morphisms to define the desired functors. 
\end{proof}

\begin{prop}\label{prop aug computes augmentations of the algebra}
    Let $\cA_{\La}$ be Chekanov-Eliashberg DG-category of $\La$ with loop space coefficients.
    Then there is an $\Ainf$-equivalence $\Psi : \LFuk(S\La) \xrightarrow{\sim} \Prop(\cA_{\La})$.
\end{prop}
\begin{proof}
    This is essentially contained in \cite{EL21} when considering only $1$-dimensional modules (in this case, the left hand side is nothing but the positive augmentation category of $\La$). The proof is a combination of two known facts:
    \begin{enumerate}
        \item Morse complexes with local systems coefficients computes morphisms between $C_{-*}(\Omega \La)$-modules,
        \item The category of bounding cochains for the curved $\Ainf$-algebra generated by Reeb chords of $\La$ is equivalent to the category of augmentations for the Chekanov-Eliashberg DG-algebra of $\La$ with coefficients in $\Z$.
    \end{enumerate}
\end{proof}

\paragraph{Relation between $\cLFuk(S\La)$ and $\LFuk(S\La)$}

\begin{lemma}\label{lemma computation hom in cAug}
    There are natural quasi-isomorphisms
    \[\Gr \hom_{\cO \left[ M^{-1} \right]} (\La_i^{- \eta}, \La_j^{- \eta}) \xrightarrow{\sim} \underset{n}{\colim} \Gr \hom_{\cO \left[ M^{-1} \right]} (\La_i^{- \eta}, \La_j^{-\eta}(n)) \xleftarrow{\sim} \underset{n}{\colim} \Gr \cO(\La_i^{- \eta}, \La_j^{-\eta}(n)) \]
    where the colimits are defined using multiplication by $\gamma_j^n$.
\end{lemma}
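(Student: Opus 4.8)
The plan is to establish the two quasi-isomorphisms of Lemma \ref{lemma computation hom in cAug} separately, treating the right-hand map first since it is the more structural one.

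\textbf{Step 1: The right-hand map.} The category $\cO[M^{-1}]$ is by definition the full subcategory of $\cTw(\cO)/\Cone(M)$ on the objects of $\cO$; recall from Lemma \ref{lemma perfect modules commutes with localization} that $\cTw(\cO)/\Cone(M) \simeq \cTw(\cO[M^{-1}])$. The morphisms $\gamma_j^n \in M$ were arranged in Lemma \ref{lemma minima} to be closed degree-$0$ morphisms such that $\mu_{\cO}^2(-,\gamma_j^n)$ is a quasi-isomorphism on $\cO(\La_i^{-\eta}(m),\La_j^{-\eta}(n))$ for $m<n$. Hence each object $\La_j^{-\eta}(n)$ already becomes isomorphic in $\cO[M^{-1}]$ to $\La_j^{-\eta}(n')$ for any $n'>n$ via the composite of the (now invertible) morphisms $\gamma_j^n, \gamma_j^{n+1}, \dots$, and in particular all the $\La_j^{-\eta}(n)$ become isomorphic to $\La_j^{-\eta}=\La_j^{-\eta}(0)$. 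Concretely, in $\cTw(\cO)/\Cone(\gamma_j^n)$ the cone of $\gamma_j^n$ is contractible, which is exactly what allows the canonical map $\hom(\La_i^{-\eta},\La_j^{-\eta}(n)) \to \hom(\La_i^{-\eta},\La_j^{-\eta}(n+1))$ (post-composition with $\gamma_j^n$) to be realized inside $\cO[M^{-1}]$, and by the quasi-isomorphism statement in Lemma \ref{lemma minima}(1) the induced map on $\Gr$ is a quasi-isomorphism for $m<n$. Passing to the colimit over $n$ (which at each finite stage factors through the $\Gr\cO$-level map by Lemma \ref{lemma minima}(1), and the colimit of quasi-isomorphisms of complexes of projectives is a quasi-isomorphism) gives the right-hand arrow. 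One should write out that the quotient-by-$\Cone(M)$ construction of Definition \ref{definition quotient} only adds terms $\cO(X,A_1)[1]\otimes\cdots$ with $A_k \in \Cone(M)$, so on the associated graded the comparison with the naive telescope $\colim_n \Gr\cO(\La_i^{-\eta},\La_j^{-\eta}(n))$ is controlled by a length filtration exactly as in Lemma \ref{lemma quasi-iso between a module and its localization}.

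\textbf{Step 2: The left-hand map.} This is the statement that the inclusion $\Gr\hom_{\cO[M^{-1}]}(\La_i^{-\eta},\La_j^{-\eta}) \to \colim_n \Gr\hom_{\cO[M^{-1}]}(\La_i^{-\eta},\La_j^{-\eta}(n))$ is a quasi-isomorphism, i.e.\ that mapping into the source object $\La_j^{-\eta}$ agrees with the colimit of mapping into its (isomorphic) perturbations $\La_j^{-\eta}(n)$. Since $\La_j^{-\eta}(n)\cong\La_j^{-\eta}$ in $\cO[M^{-1}]$ for every $n$ via the invertible composites of the $\gamma$'s built in Step 1, each individual map $\hom_{\cO[M^{-1}]}(\La_i^{-\eta},\La_j^{-\eta}) \to \hom_{\cO[M^{-1}]}(\La_i^{-\eta},\La_j^{-\eta}(n))$ is already a quasi-isomorphism (composition with an isomorphism), and the colimit of the (compatible) system of such is again a quasi-isomorphism. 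The only thing to check is that the structure maps of the colimit system are compatible with these isomorphisms, which follows from associativity of $\mu^2$ up to the usual $\Ainf$-homotopies together with $\mu^d(\dots,\gamma_j^n,\dots)=0$ for $d\neq 2$ from Lemma \ref{lemma minima}(3), so that composition with $\gamma_j^n$ is a strict chain map on the nose.

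\textbf{Main obstacle.} The genuine work is bookkeeping the quotient construction of Definition \ref{definition quotient} at the level of associated graded: $\hom_{\cO[M^{-1}]}$ is not literally $\cO$-morphisms but a direct sum over chains of objects in $\Cone(M)$, and one must run the length-filtration / acyclicity argument (as in Lemmas \ref{lemma quotient module against quotient object} and \ref{lemma quasi-iso between a module and its localization}, whose proofs use that $\Gr$ of a cone on a quasi-isomorphism is acyclic) to see that only the ``leading'' telescope term survives on homology. I expect this—verifying that the extra summands are acyclic after $\Gr$ and commute with the colimit—to be the technical heart; everything else is formal manipulation of isomorphisms in the localized category. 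A secondary subtlety is ensuring the colimit is filtered and that $\Gr$ commutes with it, which holds since all complexes in sight are built from the cofibrant/projective hom-spaces assumed throughout Section \ref{section cAinf-categories}.
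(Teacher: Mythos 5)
Your proposal identifies the right tools (Lemmas \ref{lemma minima}, \ref{lemma quotient module against quotient object}, \ref{lemma quasi-iso between a module and its localization}, the length filtration) but there is a concrete error in Step 1 and the real argument is not actually carried out.

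In Step 1 you assert that passing to the colimit is justified because ``at each finite stage [the map] factors through the $\Gr\cO$-level map by Lemma \ref{lemma minima}(1), and the colimit of quasi-isomorphisms\ldots is a quasi-isomorphism.'' This is false: the individual maps $\Gr\cO(\La_i^{-\eta},\La_j^{-\eta}(n)) \to \Gr\hom_{\cO[M^{-1}]}(\La_i^{-\eta},\La_j^{-\eta}(n))$ are not quasi-isomorphisms for any fixed $n$ --- the localization changes the hom-complex by adding the long direct sum of Definition~\ref{definition quotient}, and that added piece is not acyclic at a single $n$. Equality only emerges after the colimit, and the mechanism that makes it emerge is precisely what must be proved. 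The paper handles this by introducing the homotopy colimit module
\[\cN = \Cone\Bigl(\bigoplus_n \cO(-,\La_j^{-\eta}(n)) \xrightarrow{\bigoplus_n \id_n - \mu^2_\cO(-,\gamma_j^n)} \bigoplus_n \cO(-,\La_j^{-\eta}(n))\Bigr),\]
checking that $\Gr\cN(\Cone(c))$ is acyclic for all $c \in M$, and invoking Lemma~\ref{lemma quasi-iso between a module and its localization} applied to $\cN$ to conclude that $\Gr\cN(\La_i^{-\eta}) \hookrightarrow \Gr({}_{\Cone(M)\backslash}\cN(\La_i^{-\eta}))$ is a quasi-isomorphism; both sides are then identified with the two colimits. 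You never construct $\cN$, never verify the acyclicity hypothesis, and your suggestion to ``run the length-filtration argument exactly as in Lemma \ref{lemma quasi-iso between a module and its localization}'' leaves the entire content of the lemma as a remark.

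For the left-hand arrow, your reasoning ``composition with an isomorphism'' is circular at the level of detail needed: the claim that each $\gamma_j^n$ acts invertibly on $\Gr\hom_{\cO[M^{-1}]}$ is exactly what is being proved. The paper's argument is to identify the mapping cone of $\mu^2(-,\gamma_j^n)$ with $\Gr\bigl({}_{\Cone(M)\backslash}\cM(\Cone(\gamma_j^n))\bigr)$ for the module $\cM = \hom_{\cTw(\cO)}(\La_i^{-\eta},-)$, and invoke the acyclicity of Lemma~\ref{lemma quotient module against quotient object}. That step uses the quotient/module formalism in an essential way and is not a formality about composition with invertibles. In short: your diagnosis of which lemmas to use is correct, but the argument as written contains a false quasi-isomorphism claim and defers all of the actual verification.
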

\begin{proof}
    The proof is basically the same as in \cite[Lemma 3.37]{GPS20}. We reproduce it here in our context for the reader.
    
    First we show that the leftmost arrow is a quasi-isomorphism.
    This follows from the computation
    \begin{align*}
        \Cone & \left( \Gr \hom_{\cO \left[ M^{-1} \right]} (\La_i^{- \eta}, \La_j^{-\eta}(n)) \xrightarrow{\mu^2(-, \gamma_j^n)} \Gr \hom_{\cO \left[ M^{-1} \right]} (\La_i^{- \eta}, \La_j^{-\eta}(n+1)) \right) \\
        & = \Gr \Cone \left( \hom_{\cO \left[ M^{-1} \right]} (\La_i^{- \eta}, \La_j^{-\eta}(n)) \xrightarrow{\mu^2(-, \gamma_j^n)} \hom_{\cO \left[ M^{-1} \right]} (\La_i^{- \eta}, \La_j^{-\eta}(n+1)) \right) \\
        & = \Gr \left( _{\Cone(M) \backslash} \cM (\Cone(\gamma_j^n)) \right) \simeq 0,
    \end{align*}
    where $\cM = \hom_{\cTw(\cO)}(\La_i^{-\eta}, -)$, and the last equivalence holds according to Lemma \ref{lemma quotient module against quotient object}.

    Now we show that the rightmost arrow is a quasi-isomorphism. Consider the homotopy colimit $\cN$ of $(\cO ( -, \La_j^{- \eta}(n)))_n$ in $\cMod(\cO)$, i.e.
    \[\cN = \Cone \left( \bigoplus_n \cO ( -, \La_j^{- \eta}(n)) \xrightarrow{\bigoplus_n \id_n - \mu_{\cO}^2(-, \gamma_j^n)} \bigoplus_n \cO ( -, \La_j^{- \eta}(n)) \right). \]
    Then $\Gr \cN(\Cone(c))$ is acyclic for every $c \in M$. 
    According to Lemma \ref{lemma quasi-iso between a module and its localization}, the inclusion $\Gr \cN (\La_i^{-\eta}) \to \Gr \left( _{\Cone(M) \backslash} \cN (\La_i^{-\eta}) \right)$ is therefore a quasi-isomorphism.
    The result follows since there are compatible quasi-isomorphisms
    \[\left\{
    \begin{array}{cll}
    \Gr \cN (\La_i^{-\eta}) & \simeq & \underset{n}{\colim} \Gr \cO(\La_i^{- \eta}, \La_j^{-\eta}(n)) \\
    \Gr \left( _{\Cone(M) \backslash} \cN (\La_i^{-\eta}) \right) & \simeq & \underset{n}{\colim} \Gr \hom_{\cO \left[ M^{-1} \right]} (\La_i^{- \eta}, \La_j^{-\eta}(n)).
    \end{array}
    \right. \]
\end{proof}

\begin{lemma}\label{lemma computation hom in Aug}
    There are natural quasi-isomorphisms
    \[\Gr \hom_{\cO^{bc} \left[ N^{-1} \right]} ((\La_i^{- \eta}, b_i), (\La_j^{- \eta}, b_j)) \xrightarrow{\sim} \underset{n}{\colim} \Gr \hom_{\cO^{bc} \left[ N^{-1} \right]} ((\La_i^{- \eta}, b_i), (\La_j^{-\eta}(n), \phi_j^n(b_j)) \]
    and 
    \[\underset{n}{\colim} \Gr \hom_{\cO^{bc} \left[ N^{-1} \right]} ((\La_i^{- \eta}, b_i), (\La_j^{-\eta}(n), \phi_j^n(b_j)) \xleftarrow{\sim} \underset{n}{\colim} \Gr \cO^{bc} ((\La_i^{- \eta}, b_i), (\La_j^{-\eta}(n), \phi_j^n(b_j)) \]
    where the colimits are defined using multiplication by $\gamma_j^n$.
    Moreover, there is a tautological identification
    \[\Gr \cO^{bc} ((\La_i^{- \eta}, b_i), (\La_j^{-\eta}(n), \phi_j^n(b_j)) = \Gr \cO (\La_i^{- \eta}, \La_j^{-\eta}(n)). \]
\end{lemma}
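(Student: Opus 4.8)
The plan is to transcribe the proof of Lemma~\ref{lemma computation hom in cAug}, replacing $\cO$ by $\cO^{bc}$, the localizing set $M$ by $N$, and each target $\La_j^{-\eta}(n)$ by the bounding-cochain object $(\La_j^{-\eta}(n),\phi_j^n(b_j))$. The three general inputs — Lemma~\ref{lemma quotient module against quotient object}, Lemma~\ref{lemma quasi-iso between a module and its localization}, and the quotient/localization formalism of Section~\ref{section localization} — all apply to an arbitrary $\cAinf$-category, hence to $\cO^{bc}$; and by Lemma~\ref{lemma minima} together with Definition~\ref{definition morphisms for localization} each $\gamma_j^n$ is a closed degree-$0$ morphism of $\cO^{bc}$, so the colimits in the statement are formed along honest structure maps $\mu^2(-,\gamma_j^n)$ and the cone/homotopy-colimit constructions below make sense.

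I would first dispose of the tautological identification. By Definitions~\ref{definition twisted complexes} and~\ref{definition bounding cochains}, the space $\cO^{bc}((\La_i^{-\eta},b_i),(\La_j^{-\eta}(n),\phi_j^n(b_j)))$ is, as a filtered graded vector space, literally $\cO(\La_i^{-\eta},\La_j^{-\eta}(n))$; the only difference from $\cO$ lies in the $\cAinf$-operations, which acquire insertions of the bounding cochains $b_i$ and $\phi_j^n(b_j)$, and these lie in $\cF^{\geq 1}$ by the definition of a bounding cochain. Passing to the associated graded therefore strips away every such insertion, yielding the claimed equality of chain complexes, compatibly in $n$ with $\mu^2(-,\gamma_j^n)$. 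In particular the right-hand colimit in the statement coincides with $\colim_n \Gr \cO(\La_i^{-\eta},\La_j^{-\eta}(n))$, the object already studied in Lemma~\ref{lemma computation hom in cAug}.

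For the first quasi-isomorphism I would set $\cM := \hom_{\cTw(\cO^{bc})}((\La_i^{-\eta},b_i),-)$ and, exactly as in the previous proof, identify (after $\Gr$) the cone of $\mu^2(-,\gamma_j^n)$ between consecutive $\Gr \hom_{\cO^{bc}[N^{-1}]}$-terms with $\Gr\bigl({}_{\Cone(N)\backslash}\cM(\Cone(\gamma_j^n))\bigr)$, which is acyclic by Lemma~\ref{lemma quotient module against quotient object} applied to the full subcategory $\Cone(N)\subset\cTw(\cO^{bc})$; taking the colimit over $n$ then gives the claim. For the second quasi-isomorphism I would form the homotopy colimit $\cN$ of $(\cO^{bc}(-,(\La_j^{-\eta}(n),\phi_j^n(b_j))))_n$ in $\cMod(\cO^{bc})$ along the maps $\mu^2(-,\gamma_j^n)$, observe that $\Gr\cN(\Cone(c))$ is acyclic for every $c\in N$, and apply Lemma~\ref{lemma quasi-iso between a module and its localization}; this makes $\Gr\cN(\La_i^{-\eta},b_i)\to\Gr\bigl({}_{\Cone(N)\backslash}\cN(\La_i^{-\eta},b_i)\bigr)$ a quasi-isomorphism, and its source and target are identified with the two colimits in question.

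I do not anticipate a serious obstacle. The single point requiring care is uniformity: one must check that all bounding-cochain corrections sit in $\cF^{\geq 1}$ (so that they vanish on every associated graded appearing in the argument) and that the $\cAinf$-isomorphisms $\phi_j^n$ are filtration-preserving (so that the two colimit systems genuinely match up), both of which are immediate from the construction of $\cO$ and of the $\phi_j^n$ in Section~\ref{section definition augmentation category}. Granting this, the argument is a line-by-line copy of the proof of Lemma~\ref{lemma computation hom in cAug}.
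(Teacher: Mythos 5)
Your proof is correct and follows exactly the route the paper itself takes: the two quasi-isomorphisms are obtained by repeating the argument of Lemma~\ref{lemma computation hom in cAug} with $\cO$, $M$ replaced by $\cO^{bc}$, $N$, and the tautological identification follows because the bounding cochains lie in $\cF^{\geq 1}\cO$, so their insertions disappear on passing to the associated graded. The paper states this more tersely (deferring to \cite[Lemma 3.37]{GPS20} and Lemma~\ref{lemma computation hom in cAug}), but you have simply written out the same argument in full.
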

\begin{proof}
    The proof of the two first quasi-isomorphisms is the same as in \cite[Lemma 3.37]{GPS20} (see also the proof of Lemma \ref{lemma computation hom in cAug}).

    The second claim follows since the bounding cochains are in $\cF^{\geq 1} \cO$, so that the chain complexes on both sides have same underlying vector space and same differential.
\end{proof}

\begin{prop}\label{proposition boundig cochains on cAug = Aug}
    The category of bounding cochains on $\cLFuk(S\La)$ is Morita equivalent to $\LFuk(S\La)$.
\end{prop}
\begin{proof}
    We want to prove that $\Tw(\cO[M^{-1}]^{bc}) \simeq \Tw(\cO^{bc}[N^{-1}])$ (see \cite[Section A.4]{GPS3}). 
    Observe that $\Tw(\cO^{bc} [N^{-1}]) = \Tw(\cO^{bc}) / \Cone(N)$.
    Therefore, according to Lemma \ref{lemma bounding cochains generate flat twisted complexes}, we have to show that $\flat(\cTw(\cO[M^{-1}])) \simeq \flat(\cTw(\cO)) / \Cone(N)$ where $\flat$ is the functor introduced in Definition \ref{definition flat functor}.
    Using Lemma \ref{lemma perfect modules commutes with localization} and \ref{lemma quotient commutes with flat functor}, this amounts to prove that
    \[\flat(\cTw(\cO) / \Cone(M)) \simeq \flat(\cTw(\cO) / \Cone(N)).  \]
    Now observe that the categories on both sides have same objects. Moreover, using Lemmas \ref{lemma computation hom in cAug} and \ref{lemma computation hom in Aug}, it is straightforward to define a functor from one side to the other which induces a quasi-equivalence between the associated graded.
    The result follows.    
\end{proof}

\subsection{Proof of Theorem \ref{intro thm: Q mod Z structure}}
\label{section G-structure on augmentation category}

In the following, we assume that $V$ is the total space of a prequantization bundle.
Recall that, in this setting, the $\alpha^{\circ}$-Reeb flow induces an action of $S^1 = \R / \Z$ on $V$.
We also assume that the action spectrum of $\La$ (i.e. the set of lengths of the $\alpha^{\circ}$-Reeb chords of $\La$) is contained in $\Z \cup (\R \setminus \Q)$.
Let $G$ be a discrete subgroup of $\R$ containing $\Z$ (so that $G / \Z$ is a subgroup of $S^1$).
We explain why the prequantization bundle setting induces $(\vec G / \Z)$-actions (see Section \ref{section pogs}) on $\LFuk(S(G \cdot \La))$ (note that a related phenomena was considered by the second author in \cite{Pet22bis}).
To do it in the most natural way, we choose to introduce the Fukaya category of the symplectization $SV$.

According to \cite{Leg21} (see also \cite{DRS21}), there is a well defined $\Ainf$-category $\RFuk(SV)$ associated to the symplectization $SV$. An object of $\RFuk(SV)$ is a connected cylindrical exact Lagrangian cobordism $\Sigma$ in $SV$ together with an augmentation $\cA_{\partial_{-\infty} \Sigma} \to \Z$.
It is straightforward to generalize the latter to define a partially wrapped version $\RFuk(SV, \La)$\footnote{The notation $\RFuk$ stands for ``Rabinowitz-Fukaya"}, where an object is a cylindrical exact Lagrangian cobordism $\Sigma$ in $SV$ such that $(\partial_{+ \infty} \Sigma) \cap \La = \emptyset$, together with a finite-dimensional module over $\cA_{\partial_{-\infty} \Sigma}$.
Then there is an $\Ainf$-embedding
\[\LFuk(S\La) \hookrightarrow \RFuk(SV, \La)^{op}, \, (\La_i^{-\eta}, \varepsilon) \mapsto (S \La_i^{-\eta}, \varepsilon). \]

\begin{proof}[Proof of Theorem \ref{intro thm: Q mod Z structure}]
    By definition, there is a monoidal functor from $\vec G$ to $\Aut(\RFuk(SV, G \cdot \La)^{op})$ such that an element $c \in G$ is sent to (the functor induced by) $\varphi_{\Reeb}^c$, and the morphism in $\hom_{\vec{G}}(c_0, c_1)$ is sent to the continuation map $\varphi_{\Reeb}^{c_0} \Rightarrow \varphi_{\Reeb}^{c_1}$ induced by the positive isotopy $(\varphi_{\Reeb}^t)_{c_0 \leq t \leq c_1}$.
    Since the action of $\Z$ on $\RFuk(SV, G \cdot \La)^{op}$ is trivial (up to shift) on objects, we get (see Remark \ref{rmk structure on opposite category}) a $(\vec G / \Z)$-action on $\Mod(\RFuk(SV, G \cdot \La)^{op})$.
    Then the composition of $\LFuk(S\La) \hookrightarrow \RFuk(SV, \La)^{op}$ and the Yoneda functor gives an embedding $\LFuk(S\La) \hookrightarrow  \Mod(\RFuk(SV, G \cdot \La)^{op})$.
    Since $G / \Z$ preserves this subcategory of $\Mod(\RFuk(SV, G \cdot \La)^{op})$, we get a $(\vec G / \Z)$-action on $\LFuk(S\La)$.
    This completes the proof. 
\end{proof}

\subsection{Relation with partially wrapped Fukaya category}\label{section aug and partially wrapped fukaya}

Let $W$ be a Weinstein manifold whose boundary at infinity is $V$. 
We fix an embedding $S^+V := ( \R_{\geq 0} \times V, e^t \alpha ) \hookrightarrow W$ covering a neighborhood of infinity, and we denote by $\WFuk^+(W, \La)$ the full subcategory of $\WFuk(W, \La)$ whose objects are contained in the image of $S^+V$.
Observe that $\WFuk^+(W, \La)$ is split-generated by the union $D_{\La}$ of the linking discs at given points on $\La$. Indeed, by stop removal the quotient of $\WFuk^+(W, \La)$ by $D_{\La}$ is $\WFuk^+(W)$, which is trivial since $S^+V$ does not intersect the skeleton of $W$.

\begin{definition}\label{definition topologically simple}
    We say that $W$ is \emph{topologically simple} if $c_1(TW)=0$ and the map $\pi_1 (V) \to \pi_1(W)$ is injective.
\end{definition}

\begin{rmk}\label{rmk topologically simple}
    Recall that we made assumptions on $V$ so that our Legendrian invariants are defined using discs with no interior puncture, or, in other words, using the augmentation $\varepsilon_0 : \cA(V) \to \Z$ which is the trivial algebra map (see Remark \ref{rmk assumptions on V}).
    However, the partially wrapped Fukaya category of $W$ is typically related to Legendrian invariants defined using discs with interior punctures asymptotic to Reeb orbits of $V$ which are contractible in $W$, or, in other words, using the augmentation $\varepsilon_W : \cA(V) \to \Z$ induced by $W$ (see \cite{EL21}).
    The topologically simple condition on $W$ ensures that $\varepsilon_W = \varepsilon_0$.
\end{rmk}

\begin{rmk}\label{rmk condition for topologically simple}
    If $\dim(W) \geq 6$, then the map $\pi_1 (V) \to \pi_1(W)$ is injective (see for example \cite[Proposition 2.3]{Zho20}).
\end{rmk}

We have the following key result, which essentially follows from \cite{EL21}.

\begin{prop}\label{prop Koszul dual}
    If $W$ is topologically simple, then there is an $\Ainf$-equivalence
    \[\LFuk(S\La) \xrightarrow{\sim} \Prop(\WFuk^+(W, \La)^{op}). \]
\end{prop}
\begin{proof}
    Let $\cA_\La$ be the Chekanov-Eliashberg DG-category of $\La$ with based loop space coefficients (where the differential only involves discs with no interior puncture).
    According to Proposition \ref{prop aug computes augmentations of the algebra}, there is an equivalence $\LFuk(S\La) \hookrightarrow \Prop \left( \cA_\La \right)$.
    Now denote by $\WFuk(D_{\La})$ the full subcategory of $\WFuk(W, \La)$ whose objects are supported on $D_{\La}$.
    Since $W$ is assumed to be topologically simple, we can apply results of \cite{EL21} to compare $\cA_\La$ and $\WFuk(D_{\La})$ (see Remark \ref{rmk topologically simple}).
    According to \cite[Conjecture 3]{EL21} (proved in \cite[Section B.3]{EL21} and \cite{AE21}), 
    there is an $\Ainf$-equivalence $\WFuk(D_{\La})^{op} \xrightarrow{\sim} \cA_\La$.
    Therefore we get an equivalence $\LFuk(S\La) \xrightarrow{\sim} \Prop \left( \WFuk(D_{\La})^{op} \right)$.
    The result follows since $D_{\La}$ split-generates $\WFuk^+(W, \La)$.
\end{proof}

\paragraph{Equivariance}

In the following, we assume that $V$ is the total space of a prequantization bundle and that the action spectrum of $\La$ is contained in $\Z \cup (\R \setminus \Q)$.
Let $G$ be a discrete subgroup of $\R$ containing $\Z$.
We explain why the prequantization bundle setting induces a $(\vec G / \Z)$-action on $\Mod(\WFuk^+(W, G \cdot \La)^{op})$ so that the embedding of Proposition $\ref{prop Koszul dual}$ is $\vec G$-equivariant.
As in subsection \ref{section G-structure on augmentation category}, we use the Fukaya category of the symplectization $SV$.

By definition, there is an inclusion $\WFuk^+(W, \La) \subset \WFuk(SV, \La)$. 
Moreover, the equivalence of Proposition \ref{prop Koszul dual} is the functor
\[\LFuk(S(G \cdot \La)) \to \Prop \left( \WFuk^+(W, G \cdot \La)^{op} \right), \, (\La_i^{a-\eta}, \varepsilon) \mapsto \hom_{\RFuk(SV, G \cdot \La)^{op}} \left( -, (S \La_i^{a-\eta}, \varepsilon) \right). \]
Recall that we defined (in subsection \ref{section G-structure on augmentation category}) a $(\vec G / \Z)$-action on $\Mod(\RFuk(SV, G \cdot \La)^{op})$, and that the $(\vec G / \Z)$-action on $\LFuk(S(G \cdot \La))$ is induced by the embedding $\LFuk(S(G \cdot \La)) \hookrightarrow  \Mod(\RFuk(SV, G \cdot \La)^{op})$.
On the other hand, the induction map (see \cite[Section A.1]{GPS3}) induced by $\WFuk^+(W, G \cdot \La)^{op} \hookrightarrow \RFuk(SV, G \cdot \La)^{op}$ gives an embedding 
\[\Mod(\WFuk^+(W, G \cdot \La)^{op}) \hookrightarrow \Mod(\RFuk(SV, G \cdot \La)^{op}). \]
Since $\Z$ preserves this subcategory, we get a $(\vec G / \Z)$-action on $\Mod(\WFuk^+(W, G \cdot \La)^{op})$ satisfying the following.

\begin{prop}\label{prop equivariance of Aug in Mod}
    Assume that $W$ is topologically simple.
    Then Proposition \ref{prop Koszul dual} defines $\vec G$-equivariant $\Ainf$-equivalences $\LFuk(S(G \cdot \La)) \xrightarrow{\sim}  \Prop(\WFuk^+(W, G \cdot \La)^{op})$ for every discrete subgroup $G$ of $\R$ containing $\Z$. 
    Moreover, these are compatible with the functors 
    \[\LFuk(S(G \cdot \La)) \hookrightarrow \LFuk(S(G' \cdot \La)), \quad \Prop \left( \WFuk^+(W, G \cdot \La)^{op} \right) \hookrightarrow \Prop \left( \WFuk^+(W, G' \cdot \La)^{op} \right) \]
    when $G \subset G'$ (the leftmost arrow is the one of Proposition \ref{prop stop removal map for Aug}, while the rightmost arrow is the pullback along stop removal). 
\end{prop}

\begin{rmk}\label{rmk relation with partially wrapped fukaya}
    Exactly as for $\RFuk(SV, G \cdot \La)^{op}$, there is a monoidal functor from $\vec G$ to $\Aut(\WFuk(W, G \cdot \La)^{op})$ such that $c \in G$ is sent to (the functor induced by) $\varphi_{X_H}^c$ (where $H = e^t$ on $S^+V$), and the morphism in $\hom_{\vec{G}}(c_0, c_1)$ is sent to the continuation map $\varphi_{X_H}^{c_0} \Rightarrow \varphi_{X_H}^{c_1}$ induced by the positive isotopy $(\varphi_{X_H}^t)_{c_0 \leq t \leq c_1}$.
    Moreover, the transfer functor (see \cite{Leg21}) 
    \[b : \WFuk(W, G \cdot \La)^{op} \to \RFuk(SV, G \cdot \La)^{op}, \quad L \mapsto (S \partial_{\infty} L, \varepsilon_L) \]
    (where $\varepsilon_L$ is the augmentation of $\cA(\partial_{\infty}L)$ induced by $L$) is $\vec G$-equivariant.
    Since the induction map $\Mod(\WFuk^+(W, G \cdot \La)^{op}) \hookrightarrow \Mod(\WFuk(W, G \cdot \La)^{op})$ is the composition 
    \[\Mod(\WFuk^+(W, G \cdot \La)^{op}) \hookrightarrow \Mod(\RFuk(SV, G \cdot \La)^{op}) \xrightarrow{b^*} \Mod(\WFuk(W, G \cdot \La)^{op}), \]
    it is $\vec G$-equivariant for the $\vec G$-structure we defined on $\Mod(\WFuk^+(W, G \cdot \La)^{op})$.
\end{rmk}

\section{Legendrian lifts and $S^1$-gradings}\label{section precompleted Fukaya category}

We fix an admissible Legendrian $\La$ in an admissible prequantization bundle $(V, \alpha^\circ) \xrightarrow{\pi} (\base, \omega)$ (see Definition \ref{definition admissible}). 
We write $\cFuk(\pi_{|\La})$ for the Fukaya $\cAinf$-category over the Novikov ring $\Nov$ (see \cite{AJ10} or \cite[Section 3]{Fuk17}). An object in $\cFuk(\pi_{|\La})$ is a Lagrangian immersion $\pi_{|\La_j}$, where $\La_j$ is a connected component of $\La$, together with a local system $M_j \in \Prop(C_{-*}(\Omega \La_j))$.
We often forget the local system from the notation of an object.

Here we explain that $\La \subset V$ (i.e. the choice of a Legendrian lift of the Lagrangian immersion $\pi_{|\La}$) allows us to define a $\cAinf$-category $\cFuk^{\circ}(\pi_{|\La})$ enriched over $S^1$-graded $\Z[\R_{\geq 0}]$-modules (i.e. over $\Mod(\vec \R^{op} / \Z)$), with the property that $\widehat{\cFuk^{\circ}(\pi_{|\La})} = \cFuk(\pi_{|\La})$ (see Lemma-Definition \ref{lemma-definition completion of category} for the notation).

\vspace{2mm}

Given local systems $M_0 \in \Prop(C_{-*}(\Omega \La_i))$ and $M_1 \in \Prop(C_{-*}(\Omega \La_j))$, we have
\begin{align*}
        &\hom_{\cFuk(\pi_{|\La})} \left( \pi_{|\La_i}, \pi_{|\La_j} \right) \\
        & = \left\{
        \begin{array}{ll}
            \Nov \underset{\Z}{\otimes} \left( \bigoplus\limits_{(x^0, x^1) \in \La_i \underset{\base}{\times} \La_j} \hom_{\Z}(M_0, M_1) \right) & \text{if } i \ne j \\
            \Nov \underset{\Z}{\otimes} \left( \hom_{\Mod(C_{-*}(\Omega \La_j))}(M_0, M_1) \bigoplus \bigoplus\limits_{(x^0, x^1) \in \La_j \underset{\base}{\times} \La_j, \, x^0 \ne x^1} \hom_{\Z}(M_0, M_1) \right) & \text{if } i=j.
        \end{array}
        \right. 
\end{align*}
In particular, $\hom_{\cFuk(\pi_{|\La})}(\pi_{|\La_i}, \pi_{|\La_j})$ is always of the form $\Nov \otimes \, R (\pi_{|\La_i}, \pi_{|\La_j})$.

\begin{rmk}
    We expect the Fukaya category to be defined for more general local systems (i.e. not necessarily finite dimensional), as in \cite{BDHO24}.
\end{rmk}

\begin{definition}\label{definition length map}
    We define a map $\ell_{\La} : \bigsqcup_{\La_i, \La_j} R(\pi_{|\La_i}, \pi_{|\La_j}) \longrightarrow S^1$ which is $0$ on the space $\hom_{\Mod(C_{-*}(\Omega \La_j))}(M_0, M_1)$ and, if $x$ is in a summand indexed by $(x^0, x^1) \in \La_i \underset{\base}{\times} \La_j$, then $\ell_{\La}(x)$ is the length of a $\alpha^{\circ}$-Reeb chord from $x^0$ to $x^1$ (the lengths of two $\alpha^{\circ}$-Reeb chords with same endpoints differ by an integer).  
\end{definition}

\begin{lemma}\label{lemma grading on Fuk}
    If $\langle \mu_{\cFuk(\pi_{|\La})}^d (x_d, \dots, x_1), t^c \otimes x_0 \rangle \ne 0$, then $[c] = \ell_{\La} (x_0) - \sum_{k=1}^d \ell_{\La} (x_k)$.
\end{lemma}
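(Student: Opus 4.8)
The plan is to deduce this from the analytic comparison between pseudo-holomorphic discs in $\base$ and in $SV$ established in Section \ref{section pseudo-holomorphic discs}, together with the topological relation of Lemma \ref{lemma relation on boundary lift}. Recall that a nonzero contribution of $t^c \otimes x_0$ to $\mu_{\cFuk(\pi_{|\La})}^d(x_d, \dots, x_1)$ is produced by a rigid $J$-holomorphic disc $u : \Delta \to \base$ with boundary on the Lagrangian immersions $\pi_{|\La_i}$, with inputs (negative punctures) at $x_1, \dots, x_d$ and output (positive puncture) at $x_0$, and the Novikov exponent $c$ of this disc is its symplectic area $\int_\Delta u^* \omega$.

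First I would fix such a disc $u$ and choose the boundary lift $\nu : \partial \Delta \to \La \subset V$ determined by the chosen paths of the Lagrangians; at each puncture the two one-sided limits $\nu_{k,0}, \nu_{k,1}$ are the endpoints of the $\alpha^\circ$-Reeb chord representing $x_k$, so the chord length can be taken to be $\ell_k := \ell_{\La}(x_k) \in \R_{\geq 0}$ (well-defined modulo $\Z$, and here we pick the representatives coming from the geometric chords, cf. the construction of $\ell_{\La}$). Then Lemma \ref{lemma relation on boundary lift} gives an integer $K$ with
\[\ell_0 - \sum_{k=1}^d \ell_k = \int_{\Delta} u^* \omega - \int_{\partial \Delta} \nu^* \alpha^{\circ} + K.\]
The term $\int_{\partial \Delta} \nu^* \alpha^{\circ}$ vanishes because $\nu$ takes values in the Legendrian $\La$, so $\nu^*\alpha^\circ = 0$ identically on $\partial\Delta$. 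Hence $\ell_0 - \sum_k \ell_k = \int_\Delta u^*\omega + K = c + K$, which is exactly the claimed equality $[c] = \ell_{\La}(x_0) - \sum_{k=1}^d \ell_{\La}(x_k)$ after passing to $S^1 = \R/\Z$. (For the pieces of the morphism complexes coming from the diagonal contributions $O_i$, one has $\ell_{\La} = 0$ by definition and the corresponding disc degenerates to a Morse/gradient configuration contributing trivial area mod $\Z$, so the formula holds there as well.)

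The only genuine subtlety — and the step I expect to require the most care — is bookkeeping the difference between the analytic chord-lengths $\ell_k$ appearing in Section \ref{section pseudo-holomorphic discs} (which are honest nonnegative reals attached to a specific Reeb chord) and the $S^1$-valued invariant $\ell_{\La}(x_k)$, together with verifying that the relevant moduli space really is the one governed by Propositions \ref{prop relation pseudo-holomorphic discs} and \ref{prop relation pseudo-holomorphic discs mixed case}, so that the presence of a contributing disc in $\base$ is equivalent to the existence of its lift to $SV$ with the prescribed asymptotics; the area constraint in those propositions is precisely $\ell_0 - \sum_k \ell_k = \int_\Delta u^*\omega$, i.e. the integer $K$ above must vanish for a genuine lift to exist. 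Thus either one argues directly via Lemma \ref{lemma relation on boundary lift} with the $\alpha^\circ$ term killed by the Legendrian condition, or, equivalently, one invokes Proposition \ref{prop relation pseudo-holomorphic discs mixed case} to say that the disc in $\base$ underlying a nonzero $\mu^d$-coefficient lifts to $SV$ exactly when the stated relation holds, and reads off the congruence. Everything else is a routine unwinding of the definition of $\ell_{\La}$ and of the Novikov-exponent grading on $\cFuk(\pi_{|\La})$.
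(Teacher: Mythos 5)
Your proof is correct and follows essentially the same route as the paper: read off the Novikov exponent $c$ as the symplectic area of a contributing disc, take the boundary lift $\nu$ into $\La$, apply Lemma \ref{lemma relation on boundary lift}, and pass to $\R/\Z$. The one thing the paper leaves implicit and you spell out — that the Legendrian condition forces $\int_{\partial\Delta}\nu^*\alpha^\circ = 0$ — is exactly the point needed to drop that term, so your write-up is if anything slightly more complete; do note, though, that the integer $K$ is \emph{not} required to vanish for this lemma (it only needs to vanish when one wants a genuine lift to $SV$ as in Proposition \ref{prop relation pseudo-holomorphic discs}), so the alternative argument you sketch via Proposition \ref{prop relation pseudo-holomorphic discs mixed case} is unnecessary and the hedging about whether $K = 0$ can simply be dropped.
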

\begin{proof}
    Let $u: \Delta \to \base$ be a $J$-holomorphic map contributing to the non-zero term.
    Let $\nu : \partial \Delta \to V$ be the unique lift of $u_{\partial \Delta}$ with values in $\La$.
    Choose $\ell_0, \ell_1, \dots, \ell_d \in \R_{\geq 0}$ such that
    \[[\ell_0] = \ell_{\La} (x_0), \quad [\ell_k] = \ell_{\La} (x_k) \text{ for } k \in \{1, \dots, d\}. \]
    Lemma \ref{lemma relation on boundary lift} gives
    \[ [c] = \left[ \int_{\Delta} u^* \omega \right] = \left[ \ell_0 - \sum_{k=1}^d \ell_k \right] = \ell_{\La} (x_0) - \sum_{k=1}^d \ell_{\La} (x_k). \]
\end{proof}

Lemma \ref{lemma grading on Fuk} allows us to introduce the following.

\begin{definition}\label{definition graded Fuk}
    We define a $\cAinf$-category $\cFuk^{\circ}(\pi_{|\La})$ enriched over $S^1$-graded $\Z[\R_{\geq 0}]$-modules as follows:
    \begin{enumerate}
        \item the objects are the same as in $\cFuk(\pi_{| \La})$,
        \item given two objects $\pi_{|\La_i}$ and $\pi_{|\La_j}$, $\hom_{\cFuk^{\circ}(\pi_{|\La})}(\pi_{|\La_i}, \pi_{|\La_j})$ is the tensor product (over $\Z$) of $\module$ and $R (\pi_{|\La_i}, \pi_{|\La_j})$ (where $\module = \bigoplus_{\theta \in S^1} t^{\theta}\Z[[t]]$, see Section \ref{section completion}), with $S^1$-grading
        \[\hom_{\cFuk^{\circ}(\pi_{|\La})}(\pi_{|\La_i}, \pi_{|\La_j})_{\theta} = \{ \sum_k \lambda_k t^{c_k} \otimes x_k \mid x_k \in R(\pi_{|\La_i}, \pi_{|\La_j}), \, [c_k] = \theta + \ell_{\La} (x_k) \} \]
        \item the operations are the same as in $\cFuk(\pi_{|\La})$ (these are well defined according to Lemma \ref{lemma grading on Fuk}).
    \end{enumerate}
\end{definition}

\begin{lemma}\label{lemma completion of graded Fuk recovers Fuk}
    We have $\cFuk(\pi_{|\La}) = \widehat{\cFuk^{\circ}(\pi_{|\La})} = \Nov \underset{\Z[\R_{\geq 0}]}{\otimes} \cFuk^{\circ}(\pi_{|\La})$ (see Lemma-Definition \ref{lemma-definition completion of category} for the notation).
\end{lemma}
\begin{proof}
    This follows from the computation $\widehat \module = \Nov$ done in Section \ref{section completion}.
\end{proof}

\begin{lemma}\label{lemma recovering Fuk from approximations}
    We have $\cFuk^{\circ}(\pi_{|\La}) = \overline{\cFuk^{\circ}(\pi_{|\La})_{| \vec \Q / \Z}}$ (see Definition \ref{definition from Q to R} for the notation).
\end{lemma}
\begin{proof}
    Recall that, by definition, $\cFuk^{\circ}(\pi_{|\La})$ is enriched over $S^1$-graded $\Z[\R_{\geq 0}]$-modules or, equivalently (see Lemma \ref{lem: poset module}), over $(\vec \R^{op} / \Z)$-modules.
    According to Lemma $\ref{lemma from Q to R}$, we need to show that $\cFuk^{\circ}(\pi_{|\La})$ is actually enriched over $\Per(\vec \R^{op} / \Z)$.
    For $\theta \in S^1$, let 
    \[L_{\theta} := \max \{ \ell \in [0, 1) \mid \exists x, \, [\ell] = \theta + \ell_{\La}(x) \} \in [0, 1). \]
    Definition of the $S^1$-grading on $\cFuk^{\circ}(\pi_{|\La})$ implies that, for every $\varepsilon \in [0, 1- L_{\theta})$, there is an isomorphism
    \[\hom_{\cFuk^{\circ}(\pi_{|\La})} (\pi_{|\La_i}, \pi_{|\La_j})_{\theta} \xrightarrow{\sim} \hom_{\cFuk^{\circ}(\pi_{|\La})} (\pi_{|\La_i}, \pi_{|\La_j})_{\theta + [\varepsilon]}, \quad t^c \otimes x \mapsto t^{c+\varepsilon} \otimes x. \]
    The result follows.
\end{proof}

We will be interested in the $(\vec{G} / \Z)$-category $\cFuk^{\circ}(\pi_{|\La})_{| \vec G / \Z} \# (\vec{G} / \Z)$, when $\Z \subset G \subset \R$ is a discrete subgroup (see Section \ref{section change of enrichement and reconstruction}).

\begin{lemma}\label{lemma recover from exhaustion}
    We have $\left( \varinjlim \cFuk^{\circ}(\pi_{|\La})_{| \frac{1}{n} \vec \Z / \Z} \# (\frac{1}{n} \vec \Z / \Z) \right) [\vec \Q / \Z] = \cFuk^{\circ}(\pi_{|\La})_{| \vec \Q / \Z}$.
\end{lemma}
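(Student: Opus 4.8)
\noindent The plan is to obtain this as a direct instance of the reconstruction statement Proposition \ref{prop reconstruction}, applied to the exhaustion of $\vec\Q$ by the sub-pogs $\frac1n\vec\Z$ together with the common normal subgroup $\Z$. To set it up, put $\cD := \cFuk^{\circ}(\pi_{|\La})_{| \vec\Q/\Z}$, which by Definition \ref{definition graded Fuk} — together with Corollary \ref{cor: sub-poset equivariance}, identifying $S^1$-graded $\Z[\R_{\geq 0}]$-modules with $\vec\R/\Z$-modules — is a $\cAinf$-category enriched over $\vec\Q/\Z-mod$, obtained from $\cFuk^{\circ}(\pi_{|\La})$ by restriction of enrichment along the pog inclusion $\vec\Q \hookrightarrow \vec\R$ (a legitimate change of enrichment, since restriction of modules along an inclusion of pogs is monoidal, as recalled in Section \ref{section change of enrichement and reconstruction}). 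Because restriction of enrichment is functorial in the inclusion of pogs — restricting modules along $\frac1n\vec\Z \hookrightarrow \vec\Q \hookrightarrow \vec\R$ may be done in one step or in two — one has the identification $\cD|_{\frac1n\vec\Z/\Z} = \cFuk^{\circ}(\pi_{|\La})_{| \frac1n\vec\Z/\Z}$.

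\noindent Next I would verify the hypotheses of Proposition \ref{prop reconstruction}: $\vec\Q$ is a pog; the sub-pogs $\frac1n\vec\Z$, ordered by divisibility of $n$ (so that $\frac1m\vec\Z \subset \frac1n\vec\Z$ when $m \mid n$), form an exhaustion with $\varinjlim_n \frac1n\vec\Z = \vec\Q$ (the Example of Section \ref{section change of enrichement and reconstruction}); and each $\frac1n\vec\Z$ contains the normal subgroup $\Z$. Applying the second displayed equivalence of Proposition \ref{prop reconstruction} to $\cD$ then reads
\[\left( \varinjlim \cD|_{\frac1n\vec\Z/\Z} \# \tfrac1n\vec\Z/\Z \right)[\vec\Q/\Z] \xrightarrow{\ \sim\ } \cD,\]
and substituting $\cD|_{\frac1n\vec\Z/\Z} = \cFuk^{\circ}(\pi_{|\La})_{| \frac1n\vec\Z/\Z}$ and $\cD = \cFuk^{\circ}(\pi_{|\La})_{| \vec\Q/\Z}$ yields exactly the asserted identification.

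\noindent The one genuine point to address — and the main, though mild, obstacle — is that Proposition \ref{prop reconstruction}, the orbit and un-orbit constructions $\cC \mapsto \cC[\vec P/P_0]$ and $\cD \mapsto \cD\#\vec P/P_0$ of Section \ref{section: orbit categories}, and the change-of-enrichment Lemma \ref{lemma change of enrichment} are all phrased for $\Z$-linear non-curved categories, whereas $\cFuk^{\circ}(\pi_{|\La})$ is a $\cAinf$-category. As flagged at the opening of Section \ref{section cAinf-categories}, these constructions only reorganize the morphism spaces — $G$-indexed direct sums, passage to graded pieces, $P_0$-fixed points — while the operations $\mu^d$ are transported componentwise, and the proof of Proposition \ref{prop reconstruction} is ``obvious from the explicit constructions''. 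Hence the required curved $\Ainf$ generalization is routine, and I would simply carry it out at the level of the morphism complexes; everything else is formal bookkeeping.
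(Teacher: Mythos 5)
Your proof is correct and is essentially the paper's own argument: the paper proves this lemma in one line by citing Proposition \ref{prop reconstruction}, exactly the reduction you carry out. You are right that the main caveat is the implicit passage from $\Z$-linear to curved $\Ainf$-categories, but the paper already waves at that globally at the start of Section \ref{section cAinf-categories}, so there is no disagreement.
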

\begin{proof}
    This follows directly from Proposition \ref{prop reconstruction}.
\end{proof}

In the case of embedded Lagrangians, the structure discussed above simplifies dramatically. 

\begin{prop}\label{prop embedded case}
    If $\pi_{| \La}$ is an embedding, then there is an isomorphism of categories enriched over $\vec \R^{op} / \Z$-modules
    \[\cFuk^{\circ}(\pi_{| \La}) \xrightarrow{\sim} \module \underset{\Z[[t]]}{\otimes} \cFuk^{\circ}(\pi_{| \La})_{| \vec \Z / \Z}, \quad t^c \underset{\Z}{\otimes} x \mapsto t^c \underset{\Z[[t]]}{\otimes} x. \]
    As a result, in this case, $\cFuk(\pi_{| \La}) = \Nov \underset{\Z[[t]]}{\otimes} \cFuk^{\circ}(\pi_{| \La})_{| \vec \Z / \Z}$.
\end{prop}
\begin{proof}
    Since $\ell_{\La} = 0$ (because $\pi_{| \La}$ is an embedding), the above map is a well defined isomorphism.
\end{proof}

\begin{remark}\label{rmk Fuk not determined by single approximation}
    One cannot expect $\cFuk(\pi_{| \La})$ and $\Nov \underset{\Z[[t]]}{\otimes} \cFuk^{\circ}(\pi_{| \La})_{| \vec \Z / \Z}$ to be isomorphic in general. 
    For example, consider the case where $V = S^1$ and $\La = \La_- \sqcup \La_+$ is the disjoint union of two points.
    Then $\La_-$ and $\La_+$ are isomorphic in $\cFuk(\pi_{| \La})$, but not in $\Nov \underset{\Z[[t]]}{\otimes} \cFuk^{\circ}(\pi_{| \La})_{| \vec \Z / \Z}$.

    Also observe that $\cFuk(\pi_{| \La})$ is in general not determined by $\cFuk^{\circ}(\pi_{| \La})_{| \vec \Z / \Z}$, as the following example shows.
    Consider the prequantization bundle $(\R / \Z) \times \C \xrightarrow{\pi} \C$ and, for $a \in (0,1)$, a connected Legendrian submanifold $\La_a$ such that 
    \begin{enumerate}
        \item $\La_a$ is a standard Legendrian unknot, i.e. $\pi_{|\La_a}$ is an ``$\infty$ sign" exact Lagrangian immersion (see for instance \cite[Example 13.12]{AJ10}),
        \item $\pi_{|\La_a}^* \lambda_\C$ admits a primitive $f_a$ valued in $(0, 1)$, and there are $x_\pm \in \La_a$ such that $\pi(x_-)=\pi(x_+)$ and $f_a(x_+)-f_a(x_-)=a$.
    \end{enumerate}
    Inspecting \cite[Example 13.12]{AJ10} (note that they are using shifted degrees), one can see that $\Fuk(\pi_{|\La_a})$ is generated over $\Nov$ by two cohomological generators $u$, $v$ of degree $0$, $1$ respectively, and two self-intersection generators $(x_-,x_+)$, $(x_+, x_-)$ of degree $2$, $-1$ respectively, and that 
    \[\mu_0=\mu_1(u)=\mu_1(x_-,x_+)=\mu_1(x_+,x_-)=0, \quad \mu_1(v)=t^a \otimes (x_-, x_+).\]
    We deduce that, if $0<a<b<1$, then 
    \[\cFuk(\pi_{|\La_a}) \ne \cFuk(\pi_{|\La_b}), \quad \cFuk^\circ(\pi_{|\La_a})_{| \vec \Z / \Z} = \cFuk^\circ(\pi_{|\La_b})_{| \vec \Z / \Z}. \]
    
\end{remark}

We finish this Section with a description of the category $\cFuk^{\circ}(\pi_{|\La})_{| \vec G / \Z} \# (\vec{G} / \Z)$.

\begin{lemma}\label{lemma Fukaya category of prequantization bundle}
    The $(\vec{G} / \Z)$-category $\cFuk^{\circ}(\pi_{|\La})_{| \vec G / \Z} \# (\vec{G} / \Z)$ can be explicitly described as follows:
    \begin{enumerate}
        \item its objects are the Legendrians $\La_i^a := \varphi_{\Reeb}^a (\La_i)$, where $a \in G / \Z$, and $\La_i$ is a connected component of $\La$,
        \item given two different connected components $\La_i$ and $\La_j$,
        \[\hom(\La_i^a, \La_j^b) = \left\{ \sum_k \lambda_k t^{c_k} \otimes x_k \mid x_k \in R(\pi_{|\La_i}, \pi_{|\La_j}), \, c_k \geq 0, \, \, [c_k] = \ell_{\La} (x_k) + [b-a] \right\} \]
        \item the operations are the restrictions of the operations in $\cFuk^{\circ}(\pi_{|\La})$.
    \end{enumerate}
\end{lemma}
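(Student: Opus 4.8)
The plan is to prove this lemma by directly unwinding the two formal constructions appearing on the left-hand side — restriction of enrichment along $\vec G / \Z \hookrightarrow \vec \R / \Z$, and the functor $\cD \mapsto \cD \# (\vec G / \Z)$ of Lemma-Definition \ref{lem: pog-quotient unorbit} in its $\cAinf$ incarnation — and matching the resulting bookkeeping against the geometry of $\alpha^{\circ}$-Reeb chords between Reeb-translated copies of $\La$.

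First I would read off the objects. By Definition \ref{def: unorbit}, in its $\vec P/P_0$ form, an object of $\cFuk^{\circ}(\pi_{|\La})_{| \vec G / \Z} \# (\vec{G} / \Z)$ is a pair $(\pi_{|\La_i}, a)$ with $a \in G / \Z$; since the $\alpha^{\circ}$-Reeb flow is $1$-periodic, $\varphi_{\Reeb}^a$ makes sense for $a \in G/\Z$, so I relabel this object $\La_i^a := \varphi_{\Reeb}^a(\La_i)$. At this stage the relabeling is purely notational; it is justified a posteriori by the morphism computation.

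Next, the morphisms. By construction of $\cD \# (\vec G/\Z)$ the space $\hom(\La_i^a, \La_j^b)$ is the $(b-a)$-graded summand of $\hom_{\cFuk^{\circ}(\pi_{|\La})_{| \vec G/\Z}}(\pi_{|\La_i}, \pi_{|\La_j})$, and restriction of enrichment has merely discarded the summands graded by $S^1 \setminus (G/\Z)$, so this equals the $[b-a]$-graded summand of $\hom_{\cFuk^{\circ}(\pi_{|\La})}(\pi_{|\La_i}, \pi_{|\La_j})$. Inserting Definition \ref{definition graded Fuk} of the $S^1$-grading yields the first displayed formula at once — the inequality $c_k \ge 0$ being automatic since the exponents occurring in $\module$ are nonnegative, and the $O_i$-summand of $R(\pi_{|\La_i}, \pi_{|\La_j})$ being absent because $\La_i \ne \La_j$. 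For the isomorphism with $\prod_{\gamma \in \cR(\La_i^a, \La_j^b)} \Z \cdot \gamma$, I would check that an element of $\cR(\La_i^a, \La_j^b)$ is precisely the datum of a triple $(x^0, x^1, c)$ with $x^0 \in \La_i$, $x^1 \in \La_j$, $\pi(x^0) = \pi(x^1)$, $c \ge 0$, and $[c] = \ell_{\La}(x^0, x^1) + [b-a]$: a chord of length $c$ running from $\varphi_{\Reeb}^a(x^0)$ to $\varphi_{\Reeb}^b(x^1)$ satisfies $\varphi_{\Reeb}^{c+a-b}(x^0) = x^1$, so $c + a - b$ reduces mod $\Z$ to the length of an $\alpha^{\circ}$-Reeb chord from $x^0$ to $x^1$, and conversely; two such chords with the same endpoints have lengths differing by an integer, so $c$ is pinned down mod $\Z$ by $(x^0,x^1)$ and $[b-a]$. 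Sending the monomial $t^c \otimes (x^0, x^1)$ to the corresponding chord gives the asserted identification, which is a product rather than a direct sum precisely because $\module = \bigoplus_{\theta \in S^1} t^{\theta} \Z[[t]]$ is $t$-adically complete.

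Finally, the $\cAinf$-operations. By the $\cAinf$ version of the $\#$-construction, $\mu^d$ on $\cFuk^{\circ}(\pi_{|\La})_{| \vec G/\Z} \# (\vec{G} / \Z)$ is by definition the restriction of $\mu^d_{\cFuk^{\circ}(\pi_{|\La})}$ to the relevant graded components, so the only thing to verify is that this restriction lands where it should: along a composable chain $\La_{i_0}^{a_0} \to \cdots \to \La_{i_d}^{a_d}$ the $k$-th input lies in the $[a_k - a_{k-1}]$-graded component, and Lemma \ref{lemma grading on Fuk} guarantees $\mu^d_{\cFuk^{\circ}(\pi_{|\La})}$ sends the product of these into the $\sum_{k} [a_k - a_{k-1}] = [a_d - a_0]$-graded component, which is exactly $\hom(\La_{i_0}^{a_0}, \La_{i_d}^{a_d})$. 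I do not expect a genuine obstacle here: the statement is in essence a translation exercise, and the only point requiring care is keeping the $S^1$-grading on $\cFuk^{\circ}(\pi_{|\La})$ and the lengths of Reeb chords between Reeb-translated copies of $\La$ rigidly aligned — which is what the computation with $\varphi_{\Reeb}^{c+a-b}$ above accomplishes.
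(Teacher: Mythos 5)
Your proof is correct and takes the same approach as the paper, whose proof is simply the one-liner ``This is just unwrapping Definition \ref{def: unorbit}.'' You have spelled out all of the bookkeeping — the $(b-a)$-grading convention from the $\#$-construction, the insertion of Definition \ref{definition graded Fuk}, the translation $\varphi_{\Reeb}^{c+a-b}(x^0)=x^1$ pinning down $[c]$, and the product structure coming from $t$-adic completeness of $\module$ — all of which is exactly the content implicit in the paper's terse statement.
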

\begin{proof}
    This is just unwrapping Definition \ref{def: unorbit}.
\end{proof}

\begin{rmk}\label{rmk filtration on Fuk}
    An object $c$ in $\vec G$ acts on objects by $\La_i^a \mapsto \La_i^{a+c}$, and the morphism in $\hom_{\vec G} (c_0, c_1)$ sends $t^c \otimes x_0$ to $t^{c+c_1-c_0} \otimes x_0$.
    In particular we have a $G_+$-filtration on $\hom(\La_i^a, \La_j^b)$ given by (see Remark \ref{rmk filtration})
    \begin{align*}
    \cF^{\geq c} \hom(\La_i^a, \La_j^b) & = \{ \sum_k \lambda_k t^{c+c_k} \otimes x_k \mid x_k \in R(\pi_{|\La_i}, \pi_{|\La_j}), \, c_k \geq 0, \, \, [c_k] = [b-c-a] + \ell_{\La} (x_k) \} \\
    & = \{ \sum_k \lambda_k t^{c_k} \otimes x_k \mid x_k \in R(\pi_{|\La_i}, \pi_{|\La_j}), \, c_k \geq c, \, \, [c_k] = [b-a] + \ell_{\La} (x_k) \}. \\
    \end{align*}
\end{rmk}

\section{Comparison}
\label{comparison section} 

In all this Section, we assume that $\La$ is an admissible Legendrian in an admissible prequantization bundle $V \xrightarrow{\pi} \base$ (see Definition \ref{definition admissible}).
Our goal is to compare $\cLFuk(S(G \cdot \La))$ (defined in Section \ref{section augmentation category}) and $\cFuk^{\circ}(\pi_{|\La})_{| \vec G / \Z}$ (defined in Section \ref{section precompleted Fukaya category}) when $G$ is a discrete subgroup of $\R$ containing $\Z$. 

\subsection{Functor from Lagrangian correspondence}
\label{section functor from lagrangian correspondence}

In the following, we use $\alpha^{\circ}$ to identify $(SV, \Omega)$ and $(\R_t \times V, d(e^t \alpha^{\circ}))$.
Observe that $\base$ is a symplectic reduction of $S V$ under the $S^1$-action with moment map $\mu : (t,x) \mapsto e^t$.
Explicitly, the map $(t,x) \mapsto \pi(x)$ induces an isomorphism $\mu^{-1} (1) / S^1 \xrightarrow{\sim} \base$.
Therefore we have a Lagrangian correspondence 
\[\Gamma := \left\{ \left( (0, x), \pi(x) \right) \mid x \in V \right\} \subset SV \times \base . \]

Let $G$ be a discrete subgroup of $\R$ containing $\Z$. 
We set
\[\cD := \cFuk^{\circ}(\pi_{|\La}) \text{ and } \cD_G := \cFuk^{\circ}(\pi_{|\La})_{| \vec G / \Z} \# (\vec{G} / \Z). \]
According to Section \ref{section definition augmentation category} (we use the same notations as there), there is a $\cAinf$-category $\cO_G$ with  
\[\Ob(\cO_G) = \{ \La_j^{b-\eta}(n) \mid b \in G / \Z, \, \La_j \in \pi_0(\La), \, n \in \Z_{\geq 0} \}, \]
and a set of $\cO_G$-morphisms $M_G$, such that $\cLFuk(S(G \cdot \La))$ is the full subcategory of $\cO_G \left[ M_G^{-1} \right]$ whose objects are the Legendrians $\La_j^{b-\eta}(0) = \varphi_{\Reeb}^{b-\eta} (\La_i)$.
The morphisms in $\cO_G$ are $\alpha$-Reeb chords, where $\alpha$ is a contact form on $(V, \xi)$ for which $\La$ is chord generic. 
Without loss of generality, we will assume the following:
\begin{enumerate}
    \item there is an increasing sequence $(0=\varepsilon_0, \varepsilon_1, \dots)$ converging to $\eta$ such that $\La_j^{b-\eta}(n) = \varphi_{\Reeb}^{b-\eta+\varepsilon_n} (\La_j)$,
    \item $\alpha = e^{\delta H \circ \pi} \alpha^{\circ}$ for some $\delta > 0$ and $H : \base \to \R$ such that $\Crit (H \circ \pi)_{| \La} = \Crit(H \circ \pi) \cap \La$ and the latter contains the double points of $\pi_{| \La}$.
\end{enumerate}

\begin{rmk}\label{rmk Reeb chords}
    The assumptions above imply that any $\alpha$-Reeb chord in $\cO_G$ is an $\alpha^{\circ}$-Reeb chord, and any $\alpha^{\circ}$-Reeb chord that is non-degenerate is an $\alpha$-Reeb chord. 
\end{rmk}

We will construct a bimodule (later to be proven to be a functor) by counting pseudoholomorphic quilts, as in \cite{MWW18, Mau, Ganatra-duality, Fuk17}.  
Especially similar considerations appeared in the construction of the K\"unneth functor in \cite{GPS2}. 

\begin{defin}\label{definition bimodule B} 
    We define a $(\cO_G^{op}, \cD^{op})$-bimodule $\cB$ as follows:
    \begin{enumerate}
    \item given local systems $M_0 \in \Prop(C_{-*}(\Omega \La_i))$ and $M_1 \in \Prop(C_{-*}(\Omega \La_j))$,
    \begin{align*}
        & \cB \left( \La_j^{b-\eta}(n), \pi_{|\La_i} \right) \\
        & := \left\{
        \begin{array}{ll}
            \module \underset{\Z}{\otimes} \left( \bigoplus\limits_{(x^0, x^1) \in \Gamma \underset{SV \times \base}{\times} ( S \La_j^{b-\eta}(n) \times \La_i )} \hom_{\Z}(M_0, M_1) \right) & \text{if } i \ne j \\
            \module \underset{\Z}{\otimes} \left( \hom_{\Mod(C_{-*}(\Omega \La_j))}(M_0, M_1) \bigoplus \bigoplus\limits_{\substack{(x^0, x^1) \in \Gamma \underset{SV \times \base}{\times} (S \La_j^{b-\eta}(n) \times \La_i) \\ x^0 \ne x^1}} \hom_{\Z}(M_0, M_1) \right) & \text{if } i=j.
        \end{array}
        \right. 
\end{align*}
    (where $\module = \bigoplus_{\theta \in S^1} t^{\theta}\Z[[t]]$ has been introduced in Section \ref{section completion})
    \item the operations $\mu_{\cB}$ are defined by counting pseudo-holomorphic quilted strips $(w_-, u_+)$ in $(SV, J_{\alpha}) \times (\base, J)$ as in Figure \ref{figure bimodule}, weighted by the area $c = \int w_-^* \Omega + \int u_+^* \omega$.
    \end{enumerate}
\end{defin}

\begin{figure}
    \centering
    \def\svgwidth{0.5\textwidth}
    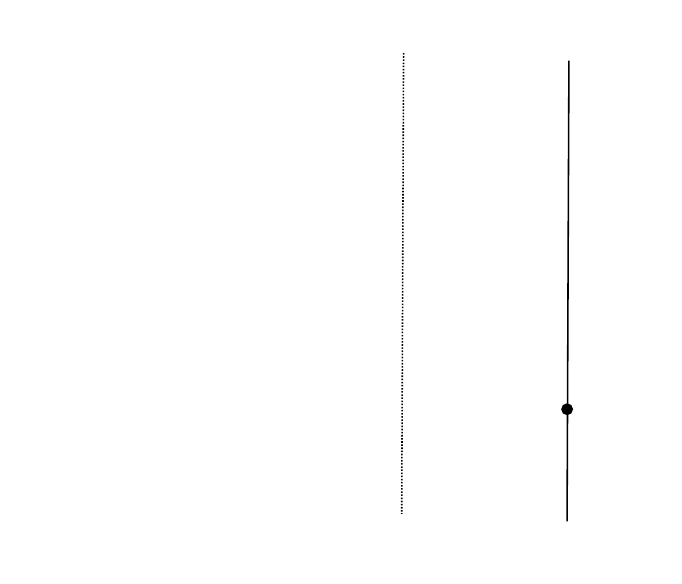		
    \caption{Quilted strip $(w_-, u_+)$ contributing to $\langle \mu_{\cB} (\gamma_p, \dots, \gamma_1, \inp, x_1, \dots, x_d), t^c \otimes \outp \rangle$, where $c = \int w_-^* \Omega + \int u_+^* \omega$.}	
    \label{figure bimodule}
\end{figure}

\begin{rmk}\label{rmk no reeb chord at + infinity}
    It is important (in order to define the bimodule $\cB$) that we always consider curves $w_-$ asymptotic to Reeb chords in the negative end of $SV$, so that $\int w_-^* \Omega$ is finite. 
\end{rmk}

\begin{rmk}\label{rmk compactness}
    Compactness of the moduli space used to define the bimodule $\cB$ (i.e. the moduli space of curves in Figure \ref{figure bimodule}) is guaranteed by Gromov compactness for the component in $\base$, and by compactness in symplectic field theory as in \cite{BEHW03} for the component in $SV$.  
\end{rmk}

\begin{rmk}\label{rmk intersection with Lagrangian correspondence}
    There is a diffeomorphism $\La_i \underset{\base}{\times} \La_j \xrightarrow{\sim} \Gamma \underset{SV \times \base}{\times} ( S \La_j^{b-\eta}(n) \times \La_i )$ given by the maps
    \[\left\{
    \begin{array}{ll}
    \La_i \underset{\base}{\times} \La_j \to \Gamma, & (x^0, x^1) \mapsto ((0, \varphi_{\Reeb}^{b-\eta+\varepsilon_n}(x^1)), \pi (x^1)) \\
    \La_i \underset{\base}{\times} \La_j \to S \La_j^{b-\eta}(n) \times \La_i, & (x^0, x^1) \mapsto ((0, \varphi_{\Reeb}^{b-\eta+\varepsilon_n}(x^1)), x^0).
    \end{array}
    \right.\]
    In particular, $\cB \left( \La_j^{b-\eta}(n), \pi_{|\La_i} \right) = \module \otimes R(\pi_{|\La_i}, \pi_{|\La_j})$ (see Section \ref{section precompleted Fukaya category}).
\end{rmk}

\begin{lemma}\label{lemma grading on B}
    If $\langle \mu_{\cB} (\gamma_p, \dots, \gamma_1, \inp, x_1, \dots, x_d), t^c \otimes \outp \rangle \ne 0$, then
    \[[c] = (\ell_{\La} (\outp) + [b_p+\varepsilon_{n_p}]) - (\ell_{\La} (\inp) + [b_0+\varepsilon_{n_0}]) - \sum_{k=1}^d \ell_{\La} (x_k). \]
    In particular we can consider the $S^1$-grading 
    \[\cB(\La_j^{b-\eta}(n), \pi_{|\La_i})_{\theta} := \{ \sum_k \lambda_k t^{c_k} \otimes x_k \mid x_k \in R(\pi_{|\La_i}, \pi_{|\La_j}), \, [c_k] = \theta + \ell_{\La} (x_k) + [b + \varepsilon_n] \}, \] 
    and $\cB$ gives rise to a $(\cO_G^{op}, \cD_G^{op})$-bimodule $\cB_G$ such that $\cB_G (\La_j^{b-\eta}(n), \La_i^a) = \cB(\La_j^{b-\eta}(n), \pi_{|\La_i})_{[-a]}$.
\end{lemma}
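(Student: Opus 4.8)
The plan is to read the grading formula off Stokes' theorem applied separately to the two components of the quilted strip, reusing the boundary-lift computations of Section~\ref{section pseudo-holomorphic discs}; once the formula is available, the construction of $\cB_G$ is purely formal.

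Write $\lambda := e^t\alpha^\circ$, so $\Omega = d\lambda$ on $SV = \R_t\times V$ while $\pi^*\omega = d\alpha^\circ$ on $V$. \emph{The $SV$-component.} The boundary of $w_- = (\sigma_-, v_-)$ consists of arcs mapping into the Lagrangian cylinders $\R\times\La_{j_l}^{b_l-\eta}(n_l)$, on which $\lambda = e^t\alpha^\circ$ vanishes (the $\La_{j_l}^{b_l-\eta}(n_l) = \varphi_{\Reeb}^{b_l-\eta+\varepsilon_{n_l}}(\La_{j_l})$ being Legendrian), together with the seam arc, along which $w_-$ maps into $\Gamma$, forcing $\sigma_-\equiv 0$ and hence $w_-^*\lambda = v_-^*\alpha^\circ$. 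The $\gamma_l$ appear as asymptotics whose bounding boundary arcs again lie on the Legendrian cylinders, so — running the truncated Stokes argument, which converges thanks to the exponential asymptotics of Section~\ref{section pseudo-holomorphic discs} — these ends contribute nothing and $\int_{\Delta_-} w_-^*\Omega = \int_{\mathrm{seam}} v_-^*\alpha^\circ$. \emph{The $\base$-component.} Lift $u_+|_{\partial\Delta_+}$ to a path $\nu$ in $V$: into $\La_{i_l}$ over $L_{i_l} = \pi(\La_{i_l})$, and equal to $v_-$ over the seam. This lift is continuous off the corners $x_1,\dots,x_d,\inp,\outp$, where it jumps by an $\alpha^\circ$-Reeb chord; by Remark~\ref{rmk intersection with Lagrangian correspondence} the jump at $x_k$ has $\alpha^\circ$-length $\equiv \ell_\La(x_k)$, and at $\outp$, $\inp$ it has $\alpha^\circ$-length $\equiv \ell_\La(\outp)+[b_p+\varepsilon_{n_p}]-[\eta]$, resp.\ $\ell_\La(\inp)+[b_0+\varepsilon_{n_0}]-[\eta]$ (the same $[\eta]$ in both). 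Applying the identity in the proof of Lemma~\ref{lemma relation on boundary lift} with $\outp$ as positive puncture and $x_1,\dots,x_d,\inp$ as negative punctures gives, modulo $\Z$,
\[
\Bigl[\int_{\Delta_+} u_+^*\omega\Bigr] = \Bigl[\int_{\partial\Delta_+}\nu^*\alpha^\circ\Bigr] + \ell_\La(\outp)+[b_p+\varepsilon_{n_p}] - \ell_\La(\inp)-[b_0+\varepsilon_{n_0}] - \sum_{k=1}^d\ell_\La(x_k),
\]
and $\int_{\partial\Delta_+}\nu^*\alpha^\circ = -\int_{\mathrm{seam}} v_-^*\alpha^\circ$ since $\alpha^\circ$ vanishes on the $\La_{i_l}$ while the seam carries the opposite orientation in $\partial\Delta_+$. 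Adding the two computations, the seam terms and the two copies of $[\eta]$ cancel, the integer $K$ of Lemma~\ref{lemma relation on boundary lift} and the $\Z$-ambiguity of all Reeb lengths disappear mod~$\Z$, and $[c] = [\int w_-^*\Omega + \int u_+^*\omega]$ is the claimed expression.

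Finally, this formula says exactly that $\mu_\cB$ is homogeneous of $S^1$-degree $0$ for the gradings declared in the statement: $\cO_G$ carries no $S^1$-grading (so the $\gamma_l$ never shift it), the $S^1$-grading on $\cD = \cFuk^\circ(\pi_{|\La})$ is that of Definition~\ref{definition graded Fuk}, and $t^c$ shifts degree by $[c]$. Hence $\cB$ descends along $\cD \mapsto \cD_G = \cFuk^\circ(\pi_{|\La})_{|\vec G/\Z}\#(\vec G/\Z)$: unwinding Definition~\ref{def: unorbit} (and Remark~\ref{rmk structure on opposite category} for the $\mathrm{op}$'s), exactly as in the proof of Lemma~\ref{lemma Fukaya category of prequantization bundle}, one sets $\cB_G(\La_j^{b-\eta}(n),\La_i^a)$ to be the $[-a]$-graded summand of $\cB(\La_j^{b-\eta}(n),\pi_{|\La_i})$, with operations restricted from $\cB$; the grading formula is precisely what makes these operations well defined.

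The main obstacle is bookkeeping — fixing signs and orientations in the Stokes computation (the two seam orientations, the sign of the Reeb gap at each of $\inp$, $\outp$, $x_k$, and which punctures count as positive) — together with the one genuinely analytic point, namely justifying the truncated-Stokes argument at the non-compact $\gamma_l$-ends of $w_-$, which relies on the exponential-convergence asymptotics recorded in Section~\ref{section pseudo-holomorphic discs}.
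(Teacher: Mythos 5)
Your argument follows the same route as the paper's proof: apply Stokes on the $SV$-component to reduce its symplectic area to $\int_{\cS_-} v_-^*\alpha^\circ$, then apply Lemma \ref{lemma relation on boundary lift} to $u_+$ with the lift $\nu$ that equals $v_-$ along the seam and takes values in $\La^{-\eta}$ on the other arcs, after which the seam contributions cancel and the construction of $\cB_G$ is formal. One small bookkeeping remark: with $\nu$ valued in $\La^{-\eta}$ on the $L$-arcs and in $\varphi_{\Reeb}^{b_p-\eta+\varepsilon_{n_p}}(\La_{j_p})$ on the seam, the jump at $\outp$ already equals $\ell_\La(\outp)+[b_p+\varepsilon_{n_p}]$ (the two $-\eta$'s cancel in that single jump, so the extra $-[\eta]$ you record is spurious), though as you note it disappears against its mate at $\inp$ and leaves the final formula unchanged.
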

\begin{proof}
    Let $(w_- = (\sigma_-, v_-): \Delta_- \to \R \times V, u_+ : \Delta_+ \to \base)$ be a pseudo-holomorphic quilt as in Figure \ref{figure bimodule}.
    Let 
    \[\partial \Delta_+ \supset \cS_+ \xrightarrow[\rho]{\sim} \cS_- \subset \partial \Delta_- \]
    be the seam of the quilt (observe that $\rho$ reverses the orientation). 
    Denote by $\nu : \partial \Delta_+ \to V$ the unique lift of $(u_+)_{|\partial \Delta_+}$ such that $\nu = v_- \circ \rho$ on $\cS_+$ and $\nu$ takes values in $\La^{-\eta}$ on $(\partial \Delta_+) \setminus \cS_+$.
    Choose $\ell_{\outp}, \ell_{\inp}, \ell_1, \dots, \ell_d \in \R_{\geq 0}$ such that
    \[[\ell_{\outp}] = \ell_{\La} (\outp), \quad [\ell_{\inp}] = \ell_{\La} (\inp), \quad [\ell_k] = \ell_{\La} (x_k) \text{ for } k \in \{1, \dots, d\}. \]
    Then we have
    \begin{align*}
    [c] & = \left[ \int_{\Delta_-} w_-^* \Omega + \int_{\Delta_+} u_+^* \omega \right] \\
        & = \left[ \int_{\Delta_-} (\sigma_-, v_-)^* d(e^t \alpha^{\circ}) + \int_{\Delta_+} u_+^* \omega \right] \\
        & = \left[ \int_{\cS_-} v_-^* \alpha^{\circ} + \int_{\Delta_+} u_+^* \omega \right] \\ 
        & = \left[ (\ell_{\outp} + b_p+\varepsilon_{n_p}) - (\ell_{\inp} + b_0+\varepsilon_{n_0}) - \sum_{k=1}^d \ell_k \right] \\ 
        & = (\ell_{\La} (\outp) + [b_p+\varepsilon_{n_p}]) - (\ell_{\La} (\inp) + [b_0+\varepsilon_{n_0}]) - \sum_{k=1}^d \ell_{\La} (x_k).
    \end{align*}
    where the penultimate equality follows from Lemma \ref{lemma relation on boundary lift}.
\end{proof}

\begin{definition}\label{definition main functor}
    By localizing $\cB_G$ at $M_G$, we get a $(\cO_G[M_G^{-1}]^{op}, \cD_G^{op})$-bimodule $_{M_G^{-1}} \cB_G$, and the latter is equivalent to the data of a $\cAinf$-functor
    \[\Psi : \cO_G[M_G^{-1}] \to \cMod (\cD_G), \quad \La_j^{b-\eta}(n) \mapsto \, _{M_G^{-1}} \cB_G \left( \La_j^{b-\eta}(n), - \right). \]
\end{definition}

\subsection{Representability}

Let $\La_j^{b-\eta}(n)$ be an object in $\cO_G$.
We want to prove that $\Psi(\La_j^{b-\eta}(n))$ is equivalent to $\La_j^b$ in $\cMod(\cD_G)$ (we identify an object in $\cD_G$ with its image in $\cMod(\cD_G)$ under the Yoneda embedding).

\begin{lemma}\label{lemma bijection moduli spaces for representability}
    If $\alpha = e^{\delta H \circ \pi} \alpha^{\circ}$ is close enough to $\alpha^{\circ}$ (i.e. if $\delta$ is sufficiently small), then there is a bijection between the moduli spaces of Figure \ref{figure bijection moduli spaces for representability}.
    Moreover, this bijection respects symplectic area.
\end{lemma}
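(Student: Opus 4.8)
The plan is to realise the bijection as an instance of the fold/unfold correspondence between pseudo-holomorphic curves in $\base$ and in the symplectization $SV$ set up in Section \ref{section pseudo-holomorphic discs}, run at the \emph{degenerate} contact form $\alpha^{\circ}$, together with an elementary deformation argument comparing the $\alpha$- and $\alpha^{\circ}$-moduli spaces when $\delta$ is small. In one direction, given a quilted strip $(w_- = (\sigma_-, v_-), u_+)$ of the type contributing to the $\cD_G$-module structure on $\cB_G(\La_j^{b-\eta}(n), -)$ (Figure \ref{figure bimodule} with no $\gamma$-inputs), the seam lies on $\Gamma = \{((0,x), \pi(x))\}$, so $\sigma_- \equiv 0$ and $\pi \circ v_- = u_+ \circ \rho$ there; gluing $\Delta_-$ to $\Delta_+$ along the seam and $\pi \circ w_-$ to $u_+$ produces a map $u : \Delta := \Delta_- \cup_{\cS} \Delta_+ \to \base$. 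Since $\pi \circ \varphi_{\Reeb}^c = \pi$, the free boundary of $w_-$ maps to $\pi(\La_j) = L_j$, so $u$ is a disc with boundary punctures $\inp, x_1, \dots, x_d, \outp$ and boundary arcs on $L_j, L_{i_0}, \dots, L_{i_d}$, with corners at $\inp, \outp$ in $\La_{i_0} \times_\base \La_j$, $\La_{i_d} \times_\base \La_j$ by Remark \ref{rmk intersection with Lagrangian correspondence}; as $\pi$ is pseudo-holomorphic and the two pieces match to first order along the seam, $u$ is $J$-holomorphic, i.e.\ a polygon of the type counted by $\mu_{\cD}^{d+1}$ with first input $\inp$. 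Conversely, from such a $J$-holomorphic polygon $u$, cut $\Delta$ along an arc $\cS$ separating the $L_j$-arc (with its two adjacent punctures $\inp, \outp$) from the rest, set $u_+ := u|_{\Delta_+}$, and reconstruct $w_-$ from $u|_{\Delta_-}$ using Proposition \ref{prop relation pseudo-holomorphic discs mixed case}, which produces, with no area constraint and uniquely once the symplectization translation is pinned by $\sigma_-|_{\cS} = 0$, a $J_{\alpha^{\circ}}$-holomorphic lift with Dirichlet condition the chosen lift of the $L_j$-arc into $\La_j^{b-\eta}(n)$, Neumann-type seam condition along $\cS$, and point asymptotics at the punctures adjacent to $\cS$. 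Unfolding and lifting are mutually inverse, which establishes the bijection of Figure \ref{figure bijection moduli spaces for representability} at the form $\alpha^{\circ}$.

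To transfer this to the non-degenerate $\alpha = e^{\delta H \circ \pi} \alpha^{\circ}$ that defines $\cO_G$, I would let $\delta \to 0$: then $J_{\alpha}$ is $C^{\infty}$-close to $J_{\alpha^{\circ}}$, the $\alpha$-Reeb chords occurring as asymptotics are precisely the non-degenerate $\alpha^{\circ}$-Reeb chords of Remark \ref{rmk Reeb chords}, and by Gromov compactness for the $\base$-component together with the SFT compactness of Remark \ref{rmk compactness} (using \cite{BEHW03}) for the $SV$-component, the moduli spaces converge. One then checks that no curve is lost or created in the limit: interior bubbling in $SV$ is excluded by the grading hypothesis $|\gamma| \geq 2$ on contractible Reeb orbits, level or boundary breaking is excluded for index reasons on the rigid moduli spaces after a generic choice, and escape to infinity in the symplectization direction is ruled out a priori by the explicit Poisson-equation control on $w_-$ underlying Proposition \ref{prop relation pseudo-holomorphic discs mixed case} (as in \cite[Theorem 7.7]{ENS02} and \cite[Lemma 7.1]{DR16}). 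Hence for $\delta$ small the $J_{\alpha}$-moduli space is identified with the $J_{\alpha^{\circ}}$-one, and composing with the unfolding gives the asserted bijection.

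That the bijection respects symplectic area is the Stokes computation already used for Lemma \ref{lemma grading on B}: since $\Omega = d(e^t \alpha^{\circ})$ and $\alpha^{\circ}$ vanishes on the Legendrian $\La$, one has $\int_{\Delta_-} w_-^*\Omega = \int_{\cS_-} v_-^*\alpha^{\circ} = \int_{\Delta_-}(\pi \circ w_-)^*\omega$, so the weight $c = \int_{\Delta_-} w_-^*\Omega + \int_{\Delta_+} u_+^*\omega$ equals $\int_{\Delta} u^*\omega$, the area of the unfolded curve. The step I expect to be the main obstacle is the compactness and transversality bookkeeping of the previous paragraph: matching the asymptotic types of the punctures under unfolding to the mixed boundary-value problem of Proposition \ref{prop relation pseudo-holomorphic discs mixed case}, and genuinely excluding the degenerations that could a priori appear as $\delta \to 0$. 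The remaining steps are formal consequences of the constructions of Sections \ref{section pseudo-holomorphic discs} and \ref{comparison section}.
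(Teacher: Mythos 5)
You have misread Figure~\ref{figure bijection moduli spaces for representability}: \emph{both} sides of that figure are quilted strips, with identical asymptotics and Lagrangian boundary conditions; the only difference is that the left-hand side uses $(SV, J_{\alpha})$ while the right-hand side uses $(SV, J_{\alpha^{\circ}})$. The fold/unfold correspondence between quilts and $J$-holomorphic discs in $\base$, which occupies your entire first paragraph (and invokes Proposition~\ref{prop relation pseudo-holomorphic discs mixed case} and the explicit Poisson-equation analysis), is the content of Lemma~\ref{lemma bijection moduli spaces for modules}, not of the lemma under consideration. Inserting it here is harmless but irrelevant, and it risks giving the impression that you are proving the wrong statement.

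Your second paragraph does contain the correct content, but in a more roundabout form than necessary. The paper's proof is a one-liner: because the quilts in Figure~\ref{figure bijection moduli spaces for representability} carry \emph{no Reeb chord asymptotics} --- only Lagrangian intersection points $\inp, x_1, \dots, x_d, \outp$ as marked points --- there is no Morse--Bott degeneracy to contend with, no chord breaking, and no escape to infinity; the moduli problem is a compact Fredholm problem whose data vary continuously in $\delta$, and for generic data the rigid solution spaces are identified by a standard transversality/cobordism argument once $\delta$ is small. You essentially arrive at this, but by listing exclusions (interior orbit bubbling via $|\gamma| \ge 2$, breaking by index, escape via Poisson-equation control) that are all automatic once one notes the absence of chord asymptotics. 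I would encourage you to lead with that single observation, as the paper does; it is the structural reason the lemma holds and also explains why the subsequent Lemmas~\ref{lemma bijection moduli spaces Morse-Bott} and~\ref{lemma curves contributing to graded map between modules}, which \emph{do} involve chord asymptotics, require the genuinely Morse--Bott analysis you invoke here prematurely. Your area computation at the end is correct and matches the one in the paper.
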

\begin{proof}
    This follows from transversality of the moduli spaces, since here we do not consider any Reeb chord asymptotic.
\end{proof}

\begin{figure}[h]
    \centering
    \def\svgwidth{1\textwidth}
    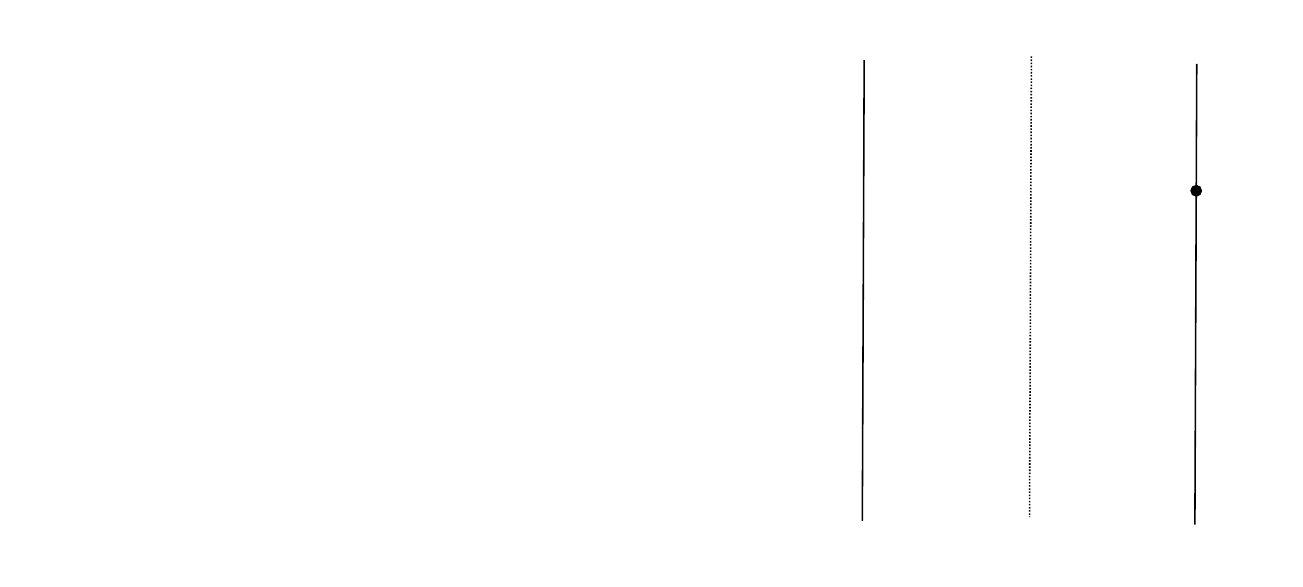		
    \caption{Identification of the curves used to define $\cB_G(S\La_j^{b-\eta}(n), -)$ (on the left) with some quilted strips (on the right)}
    \label{figure bijection moduli spaces for representability}
\end{figure}

\begin{lemma}\label{lemma bijection moduli spaces for modules}
    There is a bijection between the moduli spaces of curves of Figure \ref{figure bijection moduli spaces for modules}. Moreover, this bijection respects symplectic area.
\end{lemma}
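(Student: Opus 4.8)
The plan is to realize this bijection as the ``unfolding'' of the reduction correspondence $\Gamma$, using that $\pi$ is pseudo-holomorphic together with the lifting result of Proposition \ref{prop relation pseudo-holomorphic discs mixed case}; no new analysis is needed beyond Section \ref{section pseudo-holomorphic discs}. In one direction, starting from a quilted curve $(w_- = (\sigma_-, v_-), u_+)$ contributing to the left-hand moduli space --- with $w_- : \Delta_- \to (SV, J_{\alpha^{\circ}})$ having boundary on the cylindrical Lagrangians $S\La_j^{b-\eta}(n)$, with $u_+ : \Delta_+ \to (\base, J)$ having boundary on the $L_i$'s, and with the seam $\cS_- \xrightarrow{\sim} \cS_+$ mapped into $\Gamma$ --- I note that $\pi \circ v_-$ is $J$-holomorphic (Lemma \ref{lemma holomorphic condition}) and that the definition $\Gamma = \{((0,x), \pi(x))\}$ forces $\pi \circ v_- = u_+$, with matching one-jet, along the seam. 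Removal of the interior boundary $\cS$ then glues $\pi \circ v_-$ and $u_+$ into a single $J$-holomorphic disc $u : \Delta \to \base$ with boundary on $L_j$ and on the $L_i$'s and carrying the same punctures (which correspond on the $\base$ side to generators in $R(\pi_{|\La_i}, \pi_{|\La_j})$, exactly as in Lemma \ref{lemma grading on Fuk}); this $u$ is precisely a curve contributing to the right-hand moduli space, i.e.\ to the $\cD_G$-module structure on the Yoneda module of $\La_j^b$.

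Conversely, given such a disc $u$, I would split its boundary into the arc destined to lie over $\La^{-\eta}$ --- on which I take the unique lift $\nu$ with values in $\La^{-\eta}$, as in Lemma \ref{lemma grading on Fuk} --- and the complementary arc $\cS$, and apply Proposition \ref{prop relation pseudo-holomorphic discs mixed case} with $\cS$ in the role of $(\zeta_d, \zeta_0)$: this produces a unique $w_- = (\sigma_-, v_-)$ with $\pi \circ v_- = u$, $v_- = \nu$ off $\cS$, and $\sigma_-|_{\cS} = 0$, which is exactly the seam constraint defining $\Gamma$; setting $u_+$ to be the remaining part of $u$ recovers the quilt. The two constructions are mutually inverse by the uniqueness statement in Proposition \ref{prop relation pseudo-holomorphic discs mixed case}. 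Compactness of both moduli spaces is Remark \ref{rmk compactness}, and transversality --- so that the bijection matches the regular counts and the two spaces have the same dimension --- follows as in Lemma \ref{lemma bijection moduli spaces for representability}, there being no Reeb-orbit asymptotics involved.

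For the claim about symplectic area, the identity $\int w_-^* \Omega + \int u_+^* \omega = \int u^* \omega$ is the same Stokes computation as in the proof of Lemma \ref{lemma grading on B}: since $e^t \alpha^{\circ}$ vanishes in the Legendrian directions of the cylindrical Lagrangians $S\La_j^{b-\eta}(n)$ and $\sigma_-|_{\cS} = 0$, one gets $\int_{\Delta_-} w_-^* \Omega = \int_{\cS_-} v_-^* \alpha^{\circ}$ up to contributions at the punctures, and this combines with Lemma \ref{lemma relation on boundary lift} (applied to $u = \pi \circ v_-$ on the relevant subdomain) to give $\int w_-^* \Omega + \int u_+^* \omega = \int u^* \omega$, using that $\pi$ is pseudo-holomorphic so that $\int (\pi \circ v_-)^* \omega$ is the area of the $SV$-part of $u$.

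The main obstacle I anticipate is making the seam-gluing fully rigorous: checking that the projected $SV$-component and the $\base$-component fit together into a genuinely smooth $J$-holomorphic map across $\cS$, and dually verifying that Proposition \ref{prop relation pseudo-holomorphic discs mixed case} applies with precisely the boundary and asymptotic data arising here (in particular that the exponential-decay hypotheses and the identification of the arc $\cS$ hold). Once this folding/unfolding dictionary for $\Gamma$ is in place, everything else --- the area identity and the correspondence of asymptotics --- is bookkeeping already carried out in Lemmas \ref{lemma grading on B} and \ref{lemma grading on Fuk}.
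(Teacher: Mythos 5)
Your overall strategy matches the paper's: realize the bijection by ``folding/unfolding'' along the reduction correspondence $\Gamma$, with the forward direction coming from the fact that $\pi$ is pseudo-holomorphic (so $\pi\circ v_-$ and $u_+$ agree along the seam and glue), the backward direction coming from Proposition \ref{prop relation pseudo-holomorphic discs mixed case}, and the area identity from Stokes. However, your backward direction as written is inconsistent. You split $\partial\Delta$ into the $L_j$-arc and its complement, take the complement to be the Neumann arc $\cS$, and apply Proposition \ref{prop relation pseudo-holomorphic discs mixed case} to lift all of $u$; but then ``the remaining part of $u$'' to serve as $u_+$ is empty, so you have not actually produced a quilt. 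The correct decomposition (which the paper uses) is of the \emph{domain}, not the boundary: write $\Delta = S_- \cup S_+$ with $S_- = [-1,0]\times\R$, take $\cS$ to be the \emph{interior} arc $\{0\}\times\R$ that will become the quilt seam, lift only $u_- := u|_{S_-}$ with Dirichlet condition (value in $\La_j^{b-\eta}(n)$) on $\{-1\}\times\R$ and Neumann condition $\sigma_-|_{\cS}=0$ on $\{0\}\times\R$, and keep $u_+ := u|_{S_+}$ untouched. This is precisely why Proposition \ref{prop relation pseudo-holomorphic discs mixed case} has exactly one Neumann arc $(\zeta_d,\zeta_0)$: that arc is the seam, not the union of boundary arcs over the $L_i$'s.

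On the area statement your reasoning is correct but more indirect than needed: the paper simply uses $\pi^*\omega = d\alpha^\circ$, Stokes on $\Delta_-$ (the boundary term $\int_{\partial\Delta_-}(e^{\sigma_-}-1)\,v_-^*\alpha^\circ$ vanishes because $v_-^*\alpha^\circ=0$ on the cone $S\La_j^{b-\eta}(n)$ and $\sigma_-=0$ on the seam), to get $\int_{\Delta_-} w_-^*\Omega = \int_{\Delta_-}(\pi\circ v_-)^*\omega$ directly, without needing Lemma \ref{lemma relation on boundary lift}. Once you fix the domain decomposition, the rest of your sketch goes through.
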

\begin{proof}
    If $(w_-=(\sigma_-,v_-), u_+)$ is a quilt as on the left of Figure \ref{figure bijection moduli spaces for modules}, then $u_- := \pi \circ v_-$ and $u_+$ coincide on the seam (because there the pair $(w_-, u_+)$ is in $\Gamma$), so $u_-$ and $u_+$ can be glued together to give a curve $u$ as on the right of Figure \ref{figure bijection moduli spaces for modules}.
    Moreover we have 
    \[\int u^* \omega = \int (\pi \circ v_-)^* \omega + \int u_+^* \omega = \int v_-^* (d  \alpha^{\circ}) + \int u_+^* \omega = \int w_-^* \Omega + \int u_+^* \omega \]
    where the last equality follows because $w_-$ maps one boundary component to $S \La_j^{b-\eta}(n)$ and the other one to $\Pi_{SV}(\Gamma)$. 

    Now let $u : \Delta \to \base$ be a $J$-holomorphic disk as on the right of Figure \ref{figure bijection moduli spaces for modules}. Let $u_- : S_- \to \base$ be the restriction of $u$ to the left part $S_- = [-1,0]_x \times \R_y$ of the domain. 
    Let $\nu$ be the unique section of $u_-^*V$ over $\{ -1 \} \times \R$ with values in $\La_j^{b-\eta}$.
    According to Proposition \ref{prop relation pseudo-holomorphic discs mixed case}, there exists a unique $J_{\alpha^{\circ}}$-holomorphic map $w_- = (\sigma_-, v_-) : S_- \to \R \times V$ such that
    \[\pi \circ v_- = u_-, \quad v_-(-1, y) = \nu(-1, y), \quad \sigma_-(0,y) = 0, \]
    and we have $w_-(x,y) \underset{y \to \pm \infty}{\longrightarrow} (0, \underset{y \to \pm \infty}{\lim} \nu(-1, y))$.
\end{proof}

\begin{figure}[h]
    \centering
    \def\svgwidth{1\textwidth}
    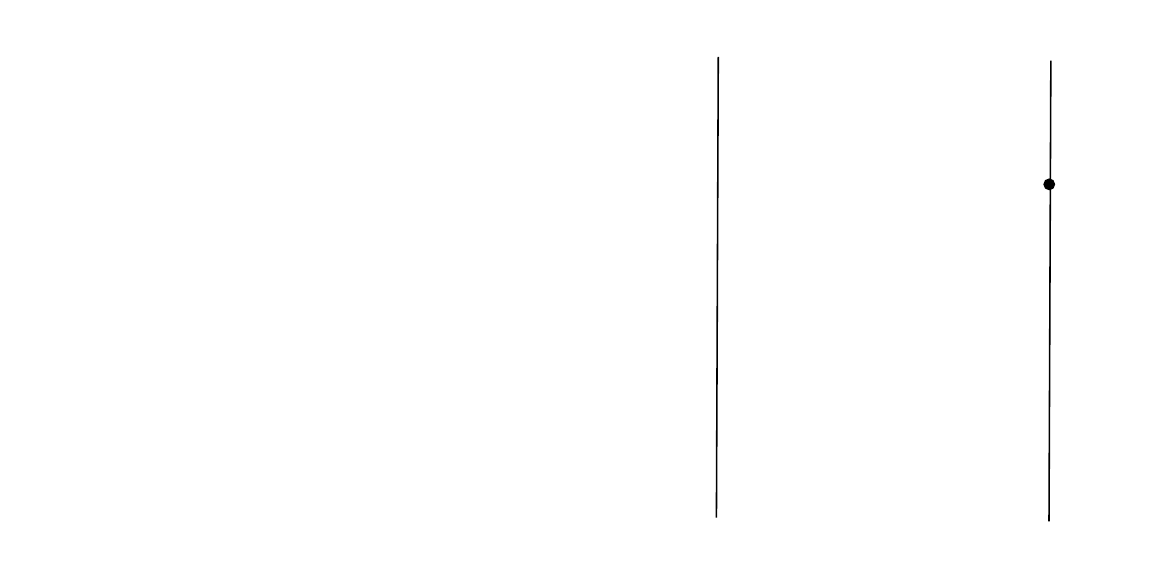		
    \caption{Identification of the curves used to define $\cD_G^{op}(\La_j^b, -) = \cD_G(- ,\La_j^b)$ (on the right) with some quilted strips (on the left)}
    \label{figure bijection moduli spaces for modules}
\end{figure}

\begin{lemma}\label{lemma bijection moduli spaces Morse-Bott}
    If $\alpha = e^{\delta H \circ \pi} \alpha^{\circ}$ is close enough to $\alpha^{\circ}$ (i.e. if $\delta$ is sufficiently small), then there is a bijection between the moduli spaces of Figure \ref{figure bijection moduli spaces Morse-Bott}, where $\gamma_j^n \in M_G \cap \cO_G(\La_j^{b-\eta}(n), \La_j^{b-\eta}(n+1))$ is the Reeb chord above the minimum of $(H \circ \pi)_{| \La_j}$.
    Moreover, this bijection respects symplectic area.
\end{lemma}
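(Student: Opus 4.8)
The plan is to run the quilted analogue of the Morse--Bott degeneration used in the proof of Lemma \ref{lemma minima}. The crucial structural observation is that the Reeb chord $\gamma_j^n$ enters only the $(SV, J_{\alpha})$ component of the quilts appearing in Figure \ref{figure bijection moduli spaces Morse-Bott}: since $\alpha = e^{\delta H \circ \pi} \alpha^{\circ}$ with $\delta$ small, $\gamma_j^n$ is one of the short $\alpha$-Reeb chords between the translates $\La_j^{b-\eta}(n)$ and $\La_j^{b-\eta}(n+1)$, and by construction it sits over the critical point of $(H \circ \pi)_{|\La_j}$ of minimal value.

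First I would spell out the two moduli spaces: on the left, rigid quilted strips $(w_-, u_+)$ with a corner asymptotic to $\gamma_j^n$ (so the $SV$-boundary arc jumps from $S\La_j^{b-\eta}(n)$ to $S\La_j^{b-\eta}(n+1)$ at that corner), and on the right, rigid quilted strips with that corner removed and the corresponding $SV$-boundary arc running along a single translate. Then, for $\delta$ small, I would invoke SFT compactness \cite{BEHW03} (as in Remark \ref{rmk compactness}) together with the Morse--Bott gluing theorem \cite[Theorem 3.6]{EES09}, applied to the $SV$ component exactly as in the proof of Lemma \ref{lemma minima}: this identifies a left configuration with the datum of a $J_{\alpha^{\circ}}$-holomorphic quilted strip carrying one fewer corner, together with a negative gradient trajectory of $(H \circ \pi)_{|\La_j}$ limiting to the critical point over which $\gamma_j^n$ lies, attached along its boundary. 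Since that critical point is the \emph{minimum}, the only rigid such trajectory is constant, so the limiting configuration is exactly a curve of the right-hand type; conversely, every such curve glues back, for $\delta$ small, to a unique curve on the left. The right-hand moduli spaces are cut out transversally (they carry no Reeb-chord asymptotics, as in the proof of Lemma \ref{lemma bijection moduli spaces for representability}), and the gluing map is a local diffeomorphism, so this yields an honest bijection of zero-dimensional moduli spaces.

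For the area claim, I would track how the symplectic area $\int w_-^* \Omega + \int u_+^* \omega$ of a left configuration behaves under the degeneration: removing the $\gamma_j^n$-corner peels off a trivial strip over $\gamma_j^n$, whose area is the length $\ell(\gamma_j^n) = \varepsilon_{n+1} - \varepsilon_n$, and this is exactly the discrepancy between the two boundary conditions $S\La_j^{b-\eta}(n)$ and $S\La_j^{b-\eta}(n+1)$ recorded by Lemmas \ref{lemma relation on boundary lift} and \ref{lemma grading on B}. Hence the bijection respects symplectic area.

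I expect the main obstacle to be the quilted version of the \cite{EES09} Morse--Bott analysis: one must verify that the degeneration near the $\gamma_j^n$-puncture is genuinely local, i.e.\ that the thin strip region which breaks off lies entirely in the $SV$ factor and never meets the seam along $\Gamma$, so that the $(\base, J)$ component $u_+$ is simply carried along unchanged and \cite[Theorem 3.6]{EES09} applies there verbatim. Making this localization precise — rather than the bookkeeping of punctures, areas, and boundary conditions, which is routine — is where the real content of the lemma lies.
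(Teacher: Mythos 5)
Your high-level plan — Morse--Bott degeneration à la \cite{EES09}, applied to the $SV$ component of the quilt — is the right tool, but you have misidentified what the right-hand moduli space of Figure~\ref{figure bijection moduli spaces Morse-Bott} actually is. The gradient trajectory is \emph{not} constant: the degenerated configuration is a $J_{\alpha^{\circ}}$-holomorphic quilted strip $(w_-,u_+)$ whose $SV$-piece $w_-$ lands at some point $\gamma$ in the Morse--Bott manifold of $\alpha^{\circ}$-Reeb chords, together with a genuine gradient trajectory from $\gamma$ to $\gamma_j^n$. (This is exactly how the lemma is subsequently used in Proposition~\ref{prop representability}, where the argument that $\gamma_j^n$ is a minimum is invoked again to show $w_-$ is a trivial cylinder — that conclusion is \emph{not} part of this lemma.) Your claim ``the only rigid such trajectory is constant'' conflates the lemma with its later application and is false as stated: since $\gamma_j^n$ is a minimum, its stable manifold is top-dimensional, so non-constant trajectories ending there are plentiful and do appear on the right-hand side.

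More importantly, you omit the ingredient that the paper's proof actually hinges on, namely Proposition~\ref{prop relation pseudo-holomorphic discs}. The Morse--Bott limit can in principle include extra $J_{\alpha^{\circ}}$-levels or bubbles in the $SV$ factor asymptotic to $\gamma_j^n$; these are not ruled out by ``$\gamma_j^n$ is a minimum'' alone, precisely because the prequantization setting differs from the $\R\times P$ case of \cite{ENS02,DR16,EES09} (the $\alpha^{\circ}$-Reeb chords here have positive length, so trivial levels carry nonzero area and cannot be dismissed for free). The paper rules them out by translating any such $J_{\alpha^{\circ}}$-piece asymptotic to $\gamma_j^n$ into a $J$-holomorphic disc in $\base$ carrying a free boundary marked point, via Proposition~\ref{prop relation pseudo-holomorphic discs}, and observing that such a disc can only contribute to a rigid configuration if it is constant. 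This step is what pins down the right-hand side as exactly the configuration in the figure, and it is the genuine content of the lemma that your proposal is missing; the localization worry you flag at the end is real but is addressed along the same route, not by applying \cite{EES09} verbatim.
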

\begin{proof}
    Morse-Bott results in symplectic field theory \cite{Bou03, EES09} allow us to describe, for $\delta > 0$ small enough, rigid curves in $(SV, J_{\alpha^{\delta}})$ in term of curves in $(SV, J_{\alpha^{\circ}})$ and gradient flow lines in the manifold of Reeb chords.
    Our claim that, in our situation, the only possible rigid Morse-Bott configuration is the one of Figure \ref{figure bijection moduli spaces Morse-Bott}, relies on two main facts
    \begin{enumerate}
        \item we consider curves asymptotic to the Reeb chord $\gamma_j^n$ above the minimum of $(H \circ \pi)_{| \La_j}$,
        \item curves in $(SV, J_{\alpha^{\circ}})$ correspond, according to Proposition \ref{prop relation pseudo-holomorphic discs}, to curves in $(\base, J)$. 
    \end{enumerate}   
    The latter imply that curves in $(SV, J_{\alpha^{\circ}})$ asymptotic to $\gamma_j^n$ correspond to curves in $(\base, J)$ with one free marked point on the boundary.
    However, the only possibility for such a curve to contribute to a rigid Morse-Bott configuration is that it is constant.
    The result follows.
\end{proof}

\begin{figure}
    \centering
    \def\svgwidth{1\textwidth}
    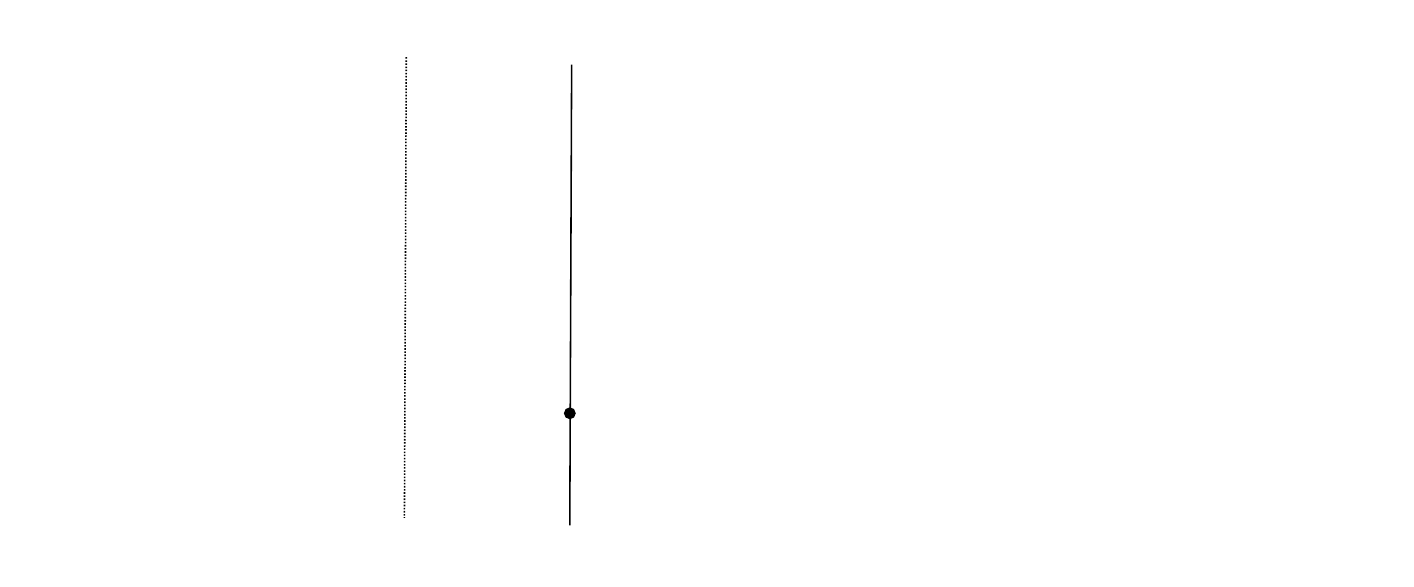		
    \caption{Morse-Bott degeneration (on the right) of the curves used to define the map $\mu_{\cB_G} (\gamma_j^n, -) : \cB_G(S\La_j^{b-\eta}(n), -) \to \cB_G(S\La_j^{b-\eta}(n+1), -)$ (on the left)}
    \label{figure bijection moduli spaces Morse-Bott}
\end{figure}

\begin{prop}\label{prop representability}
    We have the following commutative diagram
    \[\begin{tikzcd}
    \cD_G(-,\La_j^b) \ar[r, "\sim" above, "t^{\varepsilon_n}" below] \ar[d, equal] & \cB_G(\La_j^{b-\eta}(n), -) \arrow{d}{\mu_{\cB_G} (\gamma_j^n, -)} \\
    \cD_G(-,\La_j^b) \ar[r, "\sim" above, "t^{\varepsilon_{n+1}}" below] & \cB_G(\La_j^{b-\eta}(n+1), -).
    \end{tikzcd} \] 
    In particular, $\Psi(\La_j^{b-\eta}(n)) = \! _{ M_G^{-1}} \cB_G \left( \La_j^{b-\eta}(n), - \right)$ is equivalent to $\La_j^b$ in $\cMod(\cD_G)$.
\end{prop}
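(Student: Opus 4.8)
The plan is to assemble the square directly from the moduli‑space bijections of Lemmas \ref{lemma bijection moduli spaces for modules}, \ref{lemma bijection moduli spaces for representability} and \ref{lemma bijection moduli spaces Morse-Bott}, and then deduce representability of the localized module by the same colimit argument used in the proof of Lemma \ref{lemma computation hom in cAug}.

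First I would construct the two horizontal equivalences. Lemma \ref{lemma bijection moduli spaces for modules} identifies the $J$-holomorphic discs in $\base$ defining the Yoneda module $\cD_G(-,\La_j^b)$ with the quilted strips in $(SV,J_{\alpha^{\circ}})\times(\base,J)$ whose $SV$‑component lies on $S\La_j^{b-\eta}(n)$ and whose seam lies on $\Gamma$ — the $SV$‑cap being the unique one produced by Proposition \ref{prop relation pseudo-holomorphic discs mixed case} — while Lemma \ref{lemma bijection moduli spaces for representability} identifies these same quilts (which, carrying no Reeb‑chord asymptotics, may be taken $J_{\alpha^{\circ}}$‑holomorphic) with the quilts defining $\cB_G(\La_j^{b-\eta}(n),-)$. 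Both bijections preserve symplectic area and are visibly compatible with adjoining the extra $\cD_G$‑inputs $x_1,\dots,x_d$ along the boundary, so the composite bijection assembles into an isomorphism of $\cD_G^{op}$‑modules. Since $\int w_-^*\Omega$ equals the $\omega$‑area of its $\base$‑shadow (Stokes, as in the proof of Lemma \ref{lemma bijection moduli spaces for modules}) the structure constants literally coincide; the only discrepancy is that the $S^1$‑grading on $\cB_G$ is normalized relative to $\La_j^{b-\eta}(n)=\varphi_{\Reeb}^{b-\eta+\varepsilon_n}(\La_j)$ rather than to $\La_j^b$ (compare Lemma \ref{lemma grading on B}), a shift recorded by the symbol $t^{\varepsilon_n}$ on the top arrow and $t^{\varepsilon_{n+1}}$ on the bottom arrow.

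Next I would compute the vertical arrow $\mu_{\cB_G}(\gamma_j^n,-)\colon\cB_G(\La_j^{b-\eta}(n),-)\to\cB_G(\La_j^{b-\eta}(n+1),-)$, which counts the quilts on the left of Figure \ref{figure bijection moduli spaces Morse-Bott}. By Lemma \ref{lemma bijection moduli spaces Morse-Bott}, in the Morse--Bott limit the only rigid configuration carries a constant disc in $\base$, so this map produces no new Reeb chords and is pure multiplication by a power of $t$; tracking areas it is multiplication by $t^{\varepsilon_{n+1}-\varepsilon_n}$. Commutativity of the square is then immediate — both ways around equal $t^{\varepsilon_{n+1}}$ — and, since every arrow is natural in the $\cD_G$‑variable, this is a commutative square in $\cMod(\cD_G)$. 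Finally, $\Psi(\La_j^{b-\eta}(n))={}_{M_G^{-1}}\cB_G(\La_j^{b-\eta}(n),-)$ is, by the bar‑resolution description of a localized module exactly as in Lemma \ref{lemma computation hom in cAug}, equivalent on associated graded to $\colim_m\Gr\cB_G(\La_j^{b-\eta}(m),-)$ with connecting maps $\Gr\mu_{\cB_G}(\gamma_j^m,-)$. The square identifies this colimit system with the constant system on $\Gr\cD_G(-,\La_j^b)$: conjugating the connecting map $t^{\varepsilon_{m+1}-\varepsilon_m}$ by the isomorphisms $t^{\varepsilon_m}$ and $t^{\varepsilon_{m+1}}$ turns it into the identity. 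Hence $\Gr\Psi(\La_j^{b-\eta}(n))\simeq\Gr\cD_G(-,\La_j^b)$, i.e.\ $\Psi(\La_j^{b-\eta}(n))\simeq\La_j^b$ in $\cMod(\cD_G)$.

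I expect the main obstacle to be the first step: checking that the composite moduli bijection is compatible with the full $\cAinf$‑module structure — all operations, all arities, and the boundary degenerations that enforce the module relations — so that it yields an honest isomorphism of $\cD_G^{op}$‑modules and not merely an object‑by‑object quasi‑isomorphism of complexes, together with the attendant area and $S^1$‑grading bookkeeping that pins the shift down to exactly $t^{\varepsilon_n}$. Once that is in hand, the Morse--Bott computation is quoted verbatim from Lemma \ref{lemma bijection moduli spaces Morse-Bott} and the passage to the colimit is formal.
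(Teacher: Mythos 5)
Your proof is correct and takes essentially the same route as the paper's: the horizontal equivalences are assembled from Lemmas \ref{lemma bijection moduli spaces for modules} and \ref{lemma bijection moduli spaces for representability} (tracking the area/$t$-bookkeeping as $t^{\varepsilon_n}$, $t^{\varepsilon_{n+1}}$), the vertical arrow is computed via the Morse--Bott Lemma \ref{lemma bijection moduli spaces Morse-Bott} to be multiplication by $t^{\varepsilon_{n+1}-\varepsilon_n}$, and commutativity is then immediate. The only difference is that the paper stops after establishing the square, leaving the ``in particular'' implicit, while you spell it out via the colimit/bar-resolution description parallel to Lemma \ref{lemma computation hom in cAug}; an equivalent shortcut (and arguably what the paper intends) is to note that the square shows $\Gr\cB_G(\Cone(\gamma_j^m),-)$ is acyclic for each $\gamma_j^m\in M_G$, so Lemma \ref{lemma quasi-iso between a module and its localization} gives $\Gr\,{}_{M_G^{-1}}\cB_G(\La_j^{b-\eta}(n),-)\simeq\Gr\cB_G(\La_j^{b-\eta}(n),-)\simeq\Gr\cD_G(-,\La_j^b)$ directly, without forming the colimit.
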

\begin{proof}
    Using Lemmas \ref{lemma bijection moduli spaces for representability} and \ref{lemma bijection moduli spaces for modules}, we get that the following map is an equivalence
    \[\cD_G(-, \La_j^b) \to \cB_G(\La_j^{b-\eta}(n), -) \quad t^c \otimes x \mapsto t^{c + \varepsilon_n} \otimes x. \]
    It remains to show the commutativity of the diagram.
    In other words, we have to show that $\mu_{\cB_G} (\gamma_j^n, -) : \cB_G(S\La_j^{b-\eta}(n), -) \to \cB_G(S\La_j^{b-\eta}(n+1), -)$ is multiplication by $t^{\varepsilon_{n+1} - \varepsilon_n}$. 
    According to Lemma \ref{lemma bijection moduli spaces Morse-Bott}, the curves contributing to $\mu_{\cB_G} (\gamma_j^n, -)$ correspond to curves as on the right of Figure \ref{figure bijection moduli spaces Morse-Bott}.
    Such a curve consists of a quilted strip $(w_-, u_+)$ of area $d$, together with a gradient flow line from $\gamma$ (the Reeb chord to which $w_-$ converges) to $\gamma_j^n$. Because $\gamma_j^n$ is a Reeb chord above the minimum of the Morse function, the only possibility for this configuration to be rigid is that $u_+$ is constant, and $w_-$ consists of a trivial cylinder on $\gamma$. In particular we have $d = \ell(\gamma) = \ell(\gamma_j^n)= \varepsilon_{n+1} - \varepsilon_n$.
    This concludes the proof.
\end{proof}

\subsection{Fully faithfulness} 

Recall that $\cO_G$ and $\cD_G$ are $\vec G / \Z$-categories and, as such, are enriched over $G_+$-filtered $\Z[t]$-modules (see Remarks \ref{rmk filtration} and \ref{rmk filtration on Fuk}).
In the following, we assume that $G$ is big enough (i.e. $G = \frac{1}{N} \Z$ with $N$ big enough) so that $\inf(G_+) < \ell_0 := \inf (\cS_\La)$ where $\cS_\La$ is the submonoid of $\R_{>0}$ generated by the set of areas of non-constant punctured pseudo-holomorphic discs in $\base$ with boundary on $\pi_\La$.

\begin{remark}\label{rmk assumption on G}
    The latter assumption on $G$ implies that any non-constant pseudo-holomorphic disk in $\base$ with boundary on $\pi_\La$ strictly increases the filtration of $\cD_G$.  
\end{remark}

\begin{lemma}\label{lemma curves contributing to graded map between modules}
    The following holds when $\alpha = e^{\delta H \circ \pi} \alpha^{\circ}$ is close enough to $\alpha^{\circ}$ (i.e. when $\delta$ is sufficiently small).
    If $\gamma$ is an $\alpha$-Reeb chord in $\cO_G(\La_i^{a-\eta}(m), \La_j^{b-\eta}(n))$, then the curves contributing to 
    \[\left[ \mu_{\cB_G} (\gamma, -) \right] \in \Gr \hom_{\cMod(\cD_G)} \left( \cB(\La_i^{a-\eta}(m), -), \cB(\La_j^{b-\eta}(n), -) \right) \]
    are the ones on the left, respectively on the right, of Figure \ref{figure curves contributing to graded map between modules} if $\gamma$ is non-degenerate, respectively degenerate as an $\alpha^{\circ}$-Reeb chord.
\end{lemma}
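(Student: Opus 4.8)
The plan is to compute $[\mu_{\cB_G}(\gamma,-)]$ by combining the area/filtration estimate of Remark~\ref{rmk assumption on G} with the Morse--Bott degeneration already exploited in Lemma~\ref{lemma bijection moduli spaces Morse-Bott}, and then translating whatever configurations survive into the base $\base$ via Propositions~\ref{prop relation pseudo-holomorphic discs} and~\ref{prop relation pseudo-holomorphic discs mixed case}. Recall that $[\mu_{\cB_G}(\gamma,-)]$ is represented by the count of quilted strips $(w_-,u_+)$ in $(SV,J_\alpha)\times(\base,J)$ as in Figure~\ref{figure bimodule}, with $\gamma$ as the unique $\cO_G$-input, weighted by the total area $c=\int w_-^*\Omega+\int u_+^*\omega$, after passing to the associated graded. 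By Remark~\ref{rmk assumption on G} a non-constant $J$-holomorphic disk in $\base$ strictly raises the $G_+$-filtration of $\cD_G$; hence in $\Gr$ only the rigid configurations of minimal area in each filtration degree contribute, and in particular no non-constant $\base$-component of $u_+$ can appear beyond what the boundary and asymptotic data already force.

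Next I would run the Morse--Bott SFT package in $(SV,J_\alpha)$ for $\alpha=e^{\delta H\circ\pi}\alpha^\circ$ with $\delta$ small, exactly as in the proof of Lemma~\ref{lemma bijection moduli spaces Morse-Bott} (see \cite{Bou03, EES09, BCSW23, BCSW24II}): a rigid $J_\alpha$-curve negatively asymptotic to $\gamma$ is described, as $\delta\to0$, by a building whose main level is a $J_{\alpha^\circ}$-curve asymptotic to the Morse--Bott family of $\alpha^\circ$-Reeb chords containing $\gamma$, with a gradient flow line of the relevant Morse function on $\La$ attached on the boundary and converging to the critical point $\gamma$. Feeding the main $J_{\alpha^\circ}$-level (and the seam condition along $\Gamma$) into Propositions~\ref{prop relation pseudo-holomorphic discs} and~\ref{prop relation pseudo-holomorphic discs mixed case}, this level is equivalent to its $\pi$-projection, a $J$-holomorphic curve in $\base$, an asymptotic to a Reeb family turning into a free boundary marked point.

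It then remains to do the case split. If $\gamma$ is non-degenerate as an $\alpha^\circ$-Reeb chord, then by Remark~\ref{rmk Reeb chords} it is already a genuine $\alpha^\circ$-Reeb chord, sitting over a transverse double point of $\pi_{|\La}$; there is no Morse--Bott family to break, the puncture at $\gamma$ is rigid on the $SV$-side, and projecting $w_-$ gives a $J$-holomorphic disk in $\base$ carrying a boundary marked point \emph{pinned} at $\pi(\gamma)$. Rigidity together with the minimal-area constraint then forces the quilt to be the configuration drawn on the left of Figure~\ref{figure curves contributing to graded map between modules}. If instead $\gamma$ is degenerate as an $\alpha^\circ$-Reeb chord, it is the perturbation of a Morse--Bott family of $\alpha^\circ$-Reeb chords over the diagonal, hence lies over a critical point of $(H\circ\pi)_{|\La_j}$; the main level now projects to a $\base$-disk with a \emph{free} boundary marked point, and, exactly as in Lemma~\ref{lemma bijection moduli spaces Morse-Bott}, such a disk must be constant for the building to be rigid. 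The only remaining data is a gradient flow line to $\gamma$ glued to the quilt, i.e.\ the configuration on the right of Figure~\ref{figure curves contributing to graded map between modules}.

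The delicate point, as in Lemma~\ref{lemma bijection moduli spaces Morse-Bott}, is the Morse--Bott compactness-and-gluing bookkeeping: one must rule out all other buildings surviving in $\Gr$ — extra breakings or index-$0$ strips in $SV$, sphere or disk bubbling in $\base$, multiply covered components, and longer gradient flow trees. The filtration estimate of Remark~\ref{rmk assumption on G} disposes of the extra non-constant $\base$-disks, while the smallness of $\delta$ and the analytic correspondence of Section~\ref{section pseudo-holomorphic discs} control what can happen in the $SV$-factor; assembling these into a clean area-preserving bijection with the two pictured configurations is where the real work lies.
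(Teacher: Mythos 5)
Your non-degenerate case is fine (the paper simply invokes transversality directly, no Morse--Bott needed, and even obtains the count of $\mu_{\cB_G}(\gamma,-)$ itself, not just its class in $\Gr$), but your treatment of the degenerate case has a genuine gap. You claim that, as in Lemma~\ref{lemma bijection moduli spaces Morse-Bott}, the $\base$-disk with a free boundary marked point produced by the Morse--Bott degeneration ``must be constant for the building to be rigid,'' and that what remains is merely a gradient flow line. That constancy argument is specific to Lemma~\ref{lemma bijection moduli spaces Morse-Bott}, where the chord $\gamma_j^n$ sits over the \emph{minimum} of $(H\circ\pi)_{|\La_j}$; that is precisely what kills all non-trivial gradient flow lines and forces the disk to degenerate to a point. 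Here $\gamma$ is a general degenerate $\alpha$-Reeb chord, sitting over an \emph{arbitrary} critical point of $(H\circ\pi)_{|\La_j}$, and the configuration on the right of Figure~\ref{figure curves contributing to graded map between modules} is a genuine, generically non-constant quilted strip together with a gradient flow line to $\gamma$. Indeed, in Proposition~\ref{prop fully faithfulness} the resulting count is matched with $t^{k+(b-a)}\otimes\alpha$ where $\alpha$ is the Poincar\'e dual of the \emph{stable manifold} of $\pi(\gamma)$; if the disk were forced to be constant, only the minimum would ever arise, which is false.

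The argument you are missing is the one that makes the passage to the associated graded do real work. The paper does not claim that the right-hand configuration exhausts $\mu_{\cB_G}(\gamma,-)$; it only exhausts $[\mu_{\cB_G}(\gamma,-)]\in\Gr$. The top level of any \emph{other} Morse--Bott building contributing to $\mu_{\cB_G}(\gamma,-)$ is a quilt $(w_-,u_+)$ in which $w_-$ is asymptotic to some $\alpha^\circ$-Reeb chord $\gamma'$ with $\ell(\gamma')\geq\ell(\gamma)+\ell_0$, and then Stokes yields
\[
\int w_-^*\Omega + \int u_+^*\omega \;=\; \ell(\gamma') + \int(\pi\circ v_-)^*\omega + \int u_+^*\omega \;\geq\; \ell(\gamma)+\ell_0,
\]
so the filtration jump is strictly bigger than $\ell(\gamma)$, and the contribution vanishes in the associated graded. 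This is a quantitative action estimate, not a rigidity/constancy statement, and it is where the running hypothesis $\inf(G_+) < \ell_0 = \inf(\cS(\La)\cap\R_{>0})$ enters. Remark~\ref{rmk assumption on G} as you invoke it concerns disks in $\base$ strictly increasing the $\cD_G$-filtration and is not what controls the extra $SV$-breakings here; you need the chord-length gap $\ell_0$ explicitly.

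Finally, your suggested argument, if it were correct, would actually prove too much: it would identify all of $\mu_{\cB_G}(\gamma,-)$ and not just $[\mu_{\cB_G}(\gamma,-)]$ in the degenerate case, contradicting the paper's explicit caveat that ``in general, we expect that there are more curves contributing to $\mu_{\cB_G}(\gamma,-)$ than the ones depicted.''
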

\begin{proof}
    First assume $\gamma$ is non-degenerate as an $\alpha^{\circ}$-Reeb chord. In this case, it follows from transversality of the moduli spaces that the curves on the left of Figure \ref{figure curves contributing to graded map between modules} are exactly the curves contributing to $\mu_{\cB_G} (\gamma, -)$ (not only its class in the associated graded).

    Assume now that $\gamma$ is degenerate as an $\alpha^{\circ}$-Reeb chord (observe that this implies $\La_i = \La_j$). In this case, Morse-Bott results in symplectic field theory (see \cite{Bou03, EES09}) allow us to describe, for $\delta > 0$ small enough, curves in $(SV, J_{\alpha^{\delta}})$ contributing to $\mu_{\cB_G} (\gamma, -)$ in term of curves in $(SV, J_{\alpha^{\circ}})$ and gradient flow lines in the manifold of Reeb chords.
    In general, we expect that there are more curves contributing to $\mu_{\cB_G} (\gamma, -)$ than the ones depicted on the right of Figure \ref{figure curves contributing to graded map between modules}.
    However, we claim that the latter curves are the only ones contributing to $[\mu_{\cB_G} (\gamma, -)]$.
    Indeed, the top level of any other Morse-Bott configuration contributing to $\mu_{\cB_G} (\gamma, -)$ is a quilt $(w_-, u_+)$, where $w_- = (\sigma_-, v_-)$ is a curve in $(SV, J_{\alpha^{\circ}})$ asymptotic to a Reeb chord $\gamma'$ with $\ell(\gamma') \geq \ell(\gamma)+\ell_0$. 
    By definition, such a quilt increases the filtration by 
    \[\int w_-^* \Omega + \int u_+^* \omega = \ell(\gamma') + \int (\pi \circ v_-)^* \omega + \int u_+^* \omega \geq \ell(\gamma) + \ell_0, \]
    where the first equality follows from Stokes theorem.
    The result follows.
\end{proof}

\begin{figure}[h]
    \centering
    \def\svgwidth{1\textwidth}
    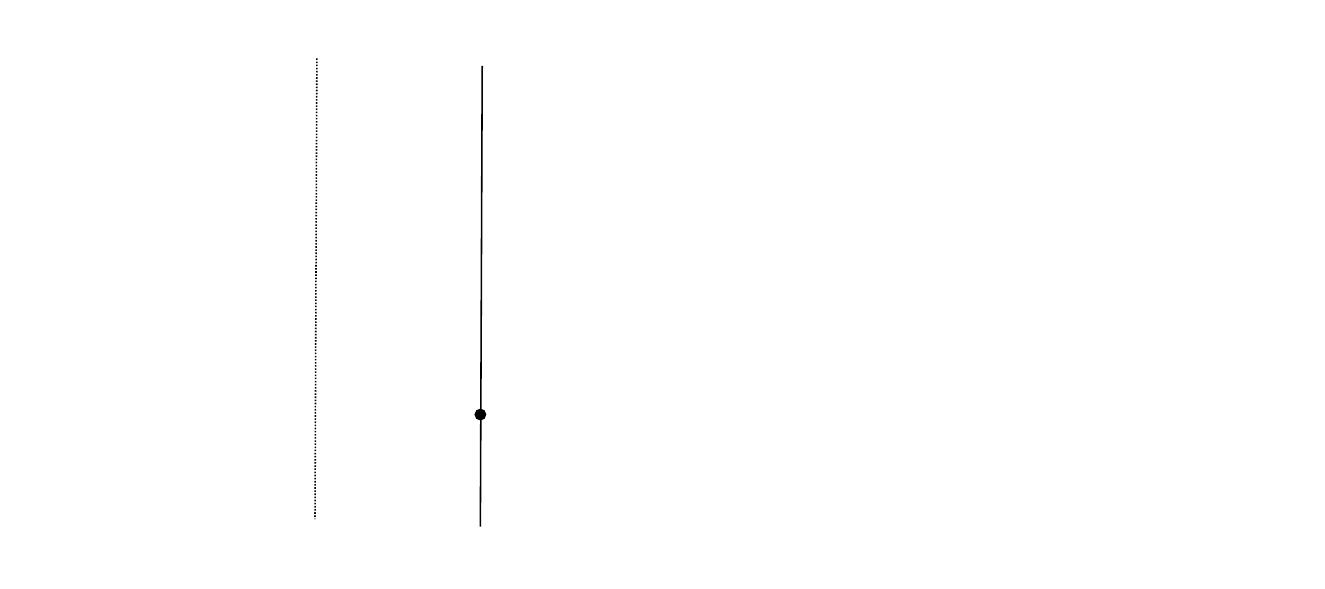		
    \caption{Curves contributing to $[\mu_{\cB_G} (\gamma, -)]$ if $\gamma$ is non-degenerate (on the left) or degenerate (on the right) as an $\alpha^{\circ}$-Reeb chord}
    \label{figure curves contributing to graded map between modules}
\end{figure}

\begin{lemma}\label{lemma bijection moduli spaces for map between modules}
    If $\gamma$ is an $\alpha$-Reeb chord in $\cO_G(\La_i^{a-\eta}(m), \La_j^{b-\eta}(n))$, then there is a bijection between the moduli spaces of curves depicted in Figure \ref{figure curves contributing to graded map between modules in the base} of area $c$ and the moduli space of curves depicted in Figure \ref{figure curves contributing to graded map between modules} of area $c + \ell(\gamma)$.
\end{lemma}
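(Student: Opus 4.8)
The plan is to follow the same pattern as the proofs of Lemmas \ref{lemma bijection moduli spaces for representability}, \ref{lemma bijection moduli spaces for modules} and \ref{lemma bijection moduli spaces Morse-Bott}: to pass from the moduli space of quilts in $SV \times \base$ (Figure \ref{figure curves contributing to graded map between modules}) to the moduli space of curves in $\base$ (Figure \ref{figure curves contributing to graded map between modules in the base}) one projects the $SV$-component of a quilt to $\base$ and glues it to the $\base$-component along the seam, and to go back one reconstructs the $SV$-component using the lifting results of Section \ref{section pseudo-holomorphic discs} (Propositions \ref{prop relation pseudo-holomorphic discs} and \ref{prop relation pseudo-holomorphic discs mixed case}).

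First I would recall from Lemma \ref{lemma curves contributing to graded map between modules} the precise shape of a curve contributing to $[\mu_{\cB_G}(\gamma, -)]$: it is a quilt $(w_- = (\sigma_-, v_-), u_+)$ whose $SV$-component $w_-$ is $J_{\alpha^{\circ}}$-holomorphic with a single positive puncture asymptotic to $\gamma$ (in the degenerate case this uses the Morse-Bott identification of the top level already carried out there), and whose seam boundary component maps into $\Pi_{SV}(\Gamma) = \{0\} \times V$, so that $\sigma_-$ vanishes along the seam. Setting $u_- := \pi \circ v_-$, the correspondence $\Gamma$ forces $u_-$ and $u_+$ to agree along the seam, so they glue to a single $J$-holomorphic curve $u : \Delta \to \base$ carrying one additional boundary marked point at the $\pi$-image of the endpoints of $\gamma$, with exactly the boundary conditions of Figure \ref{figure curves contributing to graded map between modules in the base}. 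For the area, since $w_-$ has boundary on the exact cylindrical Lagrangians and on $\{0\} \times V$ and a positive puncture of action $\ell(\gamma)$, Stokes' theorem (as in the displayed computation in the proof of Lemma \ref{lemma curves contributing to graded map between modules}) gives $\int w_-^* \Omega = \int u_-^* \omega + \ell(\gamma)$, hence $\int w_-^* \Omega + \int u_+^* \omega = \int u^* \omega + \ell(\gamma)$; this is the asserted shift of area by $\ell(\gamma)$.

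For the reverse direction, given $u$ as in Figure \ref{figure curves contributing to graded map between modules in the base} of area $c$, I would restrict it to the appropriate left part of its domain to obtain $u_-$ and to the right part to obtain $u_+$, keep $u_+$ as the $\base$-component, choose the boundary lift $\nu$ of $u_-$ with values in $S\La_i^{a-\eta}(m)$ and $S\La_j^{b-\eta}(n)$ as in the proof of Lemma \ref{lemma bijection moduli spaces for modules}, and apply Proposition \ref{prop relation pseudo-holomorphic discs mixed case} with the action at the puncture corresponding to $\gamma$ prescribed to be $\ell(\gamma)$: this yields a unique $J_{\alpha^{\circ}}$-holomorphic lift $w_- = (\sigma_-, v_-)$ of $u_-$ with $\sigma_-$ vanishing on the seam arc and with the prescribed asymptotics. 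Reassembling $(w_-, u_+)$ gives a curve as in Figure \ref{figure curves contributing to graded map between modules} of area $c + \ell(\gamma)$, and the two assignments are mutually inverse by the uniqueness clause of Proposition \ref{prop relation pseudo-holomorphic discs mixed case} (projecting then lifting recovers $w_-$, lifting then projecting is tautological).

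The step I expect to be the main obstacle is verifying that the lift $w_-$ obtained in the reverse direction is asymptotic to $\gamma$ itself, and not to some other chord with the same endpoints whose length differs from $\ell(\gamma)$ by a nonzero integer. This should be forced by the boundary conditions on $w_-$ together with the integrality statement in Lemma \ref{lemma relation on boundary lift} and Remark \ref{rmk Reeb chords}, and it is exactly here that the standing hypotheses---$\alpha = e^{\delta H \circ \pi} \alpha^{\circ}$ with $\delta$ small and $\La$ chord-generic---are used. A secondary point needing care is the degenerate-chord case, where one first invokes the Morse-Bott results already cited in the proof of Lemma \ref{lemma curves contributing to graded map between modules} to see that no additional configurations enter the associated graded before applying the bijection above.
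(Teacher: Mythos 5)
Your proposal is correct and takes essentially the same approach as the paper, whose proof simply cites Proposition \ref{prop relation pseudo-holomorphic discs mixed case} and Stokes' theorem; you have filled in exactly the lifting/projection correspondence and the area bookkeeping that this terse citation is standing in for. The ``main obstacle'' you flag at the end is actually resolved automatically: in Proposition \ref{prop relation pseudo-holomorphic discs mixed case} the lengths $\ell_k$ are \emph{prescribed input data}, so choosing $\ell_k = \ell(\gamma)$ at the relevant puncture pins down the asymptotic chord, and the Morse--Bott identification in the degenerate case has already been carried out in Lemma \ref{lemma curves contributing to graded map between modules} rather than needing to be revisited here.
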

\begin{proof}
    This follows from Proposition \ref{prop relation pseudo-holomorphic discs mixed case} and Stokes theorem.
\end{proof}

\begin{figure}[h]
    \centering
    \def\svgwidth{1\textwidth}
    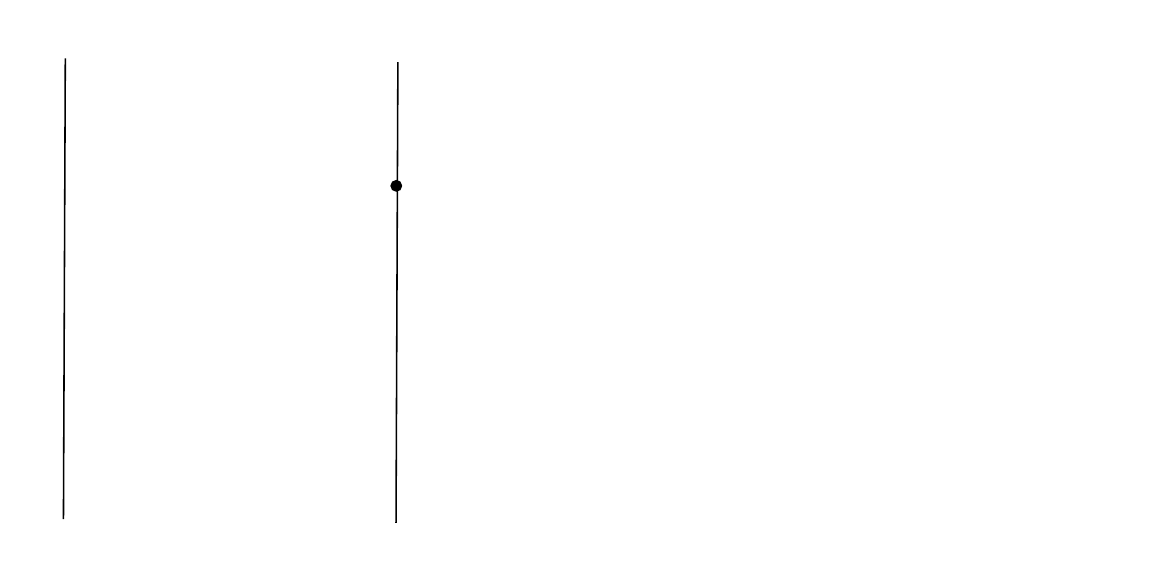		
    \caption{Curves in $(\base, J)$ corresponding to the curves of Figure \ref{figure curves contributing to graded map between modules}}
    \label{figure curves contributing to graded map between modules in the base}
\end{figure}

\begin{lemma}\label{lemma commutative diagram for fully faithfulness}
    If $m<n$ and $\gamma$ is an $\alpha$-Reeb chord in $\cO_G(\La_i^{a-\eta}(m), \La_j^{b-\eta}(n))$ of length $(k + (b-a) + (\varepsilon_n - \varepsilon_m))$, then we have the following commutative diagram in $\Gr (\cMod(\cD_G))$
    \[\begin{tikzcd}
    \cD_G(-,\La_i^a) \ar[r, "\sim" above, "t^{\varepsilon_m}" below] \ar{d}{\mu_{\cD_G}(-, t^{k+(b-a)} \otimes \pi(\gamma))} & \cB_G(\La_i^{a-\eta}(m), -) \arrow{d}{\mu_{\cB_G} (\gamma, -)} \\
    \cD_G(-,\La_j^b) \ar[r, "\sim" above, "t^{\varepsilon_n}" below] & \cB_G(\La_j^{b-\eta}(n), -).
    \end{tikzcd} \] 
\end{lemma}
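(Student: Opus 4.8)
The plan is to unwind both composites on a generator $t^c \otimes x$ and reduce them to counts of one and the same moduli space of quilted strips, weighted by area. First recall that the two horizontal arrows are the explicit isomorphisms $t^c \otimes x \mapsto t^{c+\varepsilon_m}\otimes x$ and $t^c\otimes x\mapsto t^{c+\varepsilon_n}\otimes x$ furnished by Proposition \ref{prop representability} (themselves built from the area-preserving moduli identifications of Lemmas \ref{lemma bijection moduli spaces for representability} and \ref{lemma bijection moduli spaces for modules}). Thus, on a generator $t^c \otimes x$ of $\cD_G(-,\La_i^a)$, the right-then-down composite equals $\mu_{\cB_G}(\gamma,\, t^{c+\varepsilon_m}\otimes x)$, while the down-then-right composite equals $t^{\varepsilon_n}\cdot \mu_{\cD_G}\!\big(t^c\otimes x,\, t^{k+(b-a)}\otimes\pi(\gamma)\big)$. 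So the claim reduces to the identity of module maps $\mu_{\cB_G}(\gamma,-) = t^{\varepsilon_n}\circ \mu_{\cD_G}\!\big(-,\,t^{k+(b-a)}\otimes\pi(\gamma)\big)\circ t^{-\varepsilon_m}$, which I verify term by term in the Novikov filtration.

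For this I would invoke Lemma \ref{lemma curves contributing to graded map between modules}: the curves contributing to $[\mu_{\cB_G}(\gamma,-)]$ are exactly the quilted strips of Figure \ref{figure curves contributing to graded map between modules} (in the case where $\gamma$ is degenerate as an $\alpha^{\circ}$-Reeb chord there are a priori additional Morse--Bott configurations, but, exactly as in the proof of that lemma, the top level of any such configuration is asymptotic to a chord $\gamma'$ with $\ell(\gamma')\geq \ell(\gamma)+\ell_0$, so by Remark \ref{rmk assumption on G} it strictly raises the filtration and cannot affect the associated-graded class). Then Lemma \ref{lemma bijection moduli spaces for map between modules} identifies these quilted strips of area $c'+\ell(\gamma)$, compatibly with area up to the shift by $\ell(\gamma)$, with the $J$-holomorphic disks in $\base$ of Figure \ref{figure curves contributing to graded map between modules in the base} of area $c'$ (via Proposition \ref{prop relation pseudo-holomorphic discs mixed case} for the splitting/gluing across the seam $\Gamma$, as in Lemmas \ref{lemma bijection moduli spaces for modules} and \ref{lemma bijection moduli spaces Morse-Bott}), and those disks are precisely the ones computing $\mu_{\cD_G}(-,\pi(\gamma))$. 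The bijections also match the incoming generators $x_1,\dots,x_d$, the boundary datum $x$, and the output, so the two sides agree coefficient-by-coefficient once the area bookkeeping checks out.

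The area bookkeeping is the routine but essential step: a base disk of area $c'$ contributes weight $t^{c'}$ to $\mu_{\cD_G}(t^c\otimes x,\, t^{k+(b-a)}\otimes\pi(\gamma))$, hence total weight $t^{c+c'+k+(b-a)}$, and post-composing with $t^{\varepsilon_n}$ gives $t^{c+c'+k+(b-a)+\varepsilon_n}$; the matching quilted strip has area $c'+\ell(\gamma) = c'+ k + (b-a) + (\varepsilon_n - \varepsilon_m)$ by the hypothesis on $\ell(\gamma)$, so it contributes to $\mu_{\cB_G}(\gamma,\, t^{c+\varepsilon_m}\otimes x)$ with weight $t^{(c+\varepsilon_m) + c' + \ell(\gamma)} = t^{c+c'+k+(b-a)+\varepsilon_n}$, which agrees. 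One also checks that $t^{k+(b-a)}\otimes\pi(\gamma)$ genuinely lies in $\hom_{\cD_G}(\La_i^a,\La_j^b)$ — i.e. in the correct $S^1$-graded, $G_+$-filtered piece — which is exactly the compatibility between $\ell(\gamma)$ and $\ell_{\La}(\pi(\gamma))$ recorded in Lemma \ref{lemma grading on B} together with Remark \ref{rmk Reeb chords}.

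\textbf{Main obstacle.} The arithmetic of areas is harmless; the delicate point is controlling the degenerate-$\gamma$ case, namely that in the associated graded the only curves affecting $[\mu_{\cB_G}(\gamma,-)]$ are the expected ones. This is precisely where the hypothesis that $G$ is large enough that $\inf(G_+) < \ell_0 = \inf(\cS(\La)\cap\R_{>0})$ enters, via the Stokes-theorem estimate on the area of the top Morse--Bott level. The remaining work — matching the two moduli problems across the Lagrangian correspondence seam and checking the identifications respect boundary conditions and outputs — is a direct reprise of the arguments already used in the proofs of Lemmas \ref{lemma bijection moduli spaces for modules}, \ref{lemma bijection moduli spaces Morse-Bott}, and \ref{lemma bijection moduli spaces for map between modules}.
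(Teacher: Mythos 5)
Your argument matches the paper's proof in substance: the paper's own proof is simply the one-line citation ``This follows from Lemmas \ref{lemma curves contributing to graded map between modules} and \ref{lemma bijection moduli spaces for map between modules},'' and your proposal is an unpacking of exactly that chain of reasoning — unwinding the horizontal isomorphisms from Proposition \ref{prop representability}, using Lemma \ref{lemma curves contributing to graded map between modules} to identify the contributing quilted strips, then Lemma \ref{lemma bijection moduli spaces for map between modules} to pass to disks in $\base$, and finishing with the area arithmetic. Your remark on the degenerate case and the role of $\inf(G_+) < \ell_0$ correctly pinpoints where the hypothesis on $G$ is used.

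One small caution worth flagging (it is really an ambiguity you inherit from the paper rather than an error you introduce): Lemma \ref{lemma curves contributing to graded map between modules} only controls $[\mu_{\cB_G}(\gamma,-)]$ in the \emph{associated graded}, so in the degenerate-$\gamma$ case the diagram is established only modulo higher filtration, not as a strict equality of maps. Since the diagram is used in Proposition \ref{prop fully faithfulness} at the level of $\Gr(-)$, this suffices for the downstream argument; but your phrase ``verify term by term in the Novikov filtration'' slightly overstates what the cited lemmas give when $\gamma$ is degenerate, and you should read the lemma's claimed commutativity as holding in $\Gr$.
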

\begin{proof}
    This follows from Lemmas \ref{lemma curves contributing to graded map between modules} and \ref{lemma bijection moduli spaces for map between modules}.
\end{proof}

\begin{prop}\label{prop fully faithfulness}
    If $m<n$, we have a commutative diagram of the following form (the right vertical arrow is induced by Proposition \ref{prop representability} and the bottom horizontal arrow is the Yoneda map)
    \[\begin{tikzcd}
    \hom_{\Gr(\cO_G)} \left( \La_i^{a-\eta}(m), \La_j^{b-\eta}(n) \right) \arrow{r} \arrow{d}{\sim} & \hom_{\Gr(\cMod(\cD_G))} \left( \cB_G(\La_i^{a-\eta}(m), -), \cB_G(\La_j^{b-\eta}(n), -) \right) \arrow{d}{\sim} \\
    \hom_{\Gr(\cD_G)} \left( \La_i^a, \La_j^b \right) \arrow{r}{\sim} & \hom_{\Gr(\cMod(\cD_G))} \left( \cD_G(-,\La_i^a), \cD_G(-,\La_j^b) \right).
    \end{tikzcd} \] 
    In particular, the functor $\Psi : \cO_G[M_G^{-1}] \to \cMod (\cD_G)$ is an embedding.
\end{prop}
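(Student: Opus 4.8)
The plan is to assemble the square from the per-chord commutative diagram of Lemma \ref{lemma commutative diagram for fully faithfulness} together with the representability isomorphism of Proposition \ref{prop representability}, and then to upgrade the resulting ``full faithfulness on the directed pieces'' to an embedding of $\cO_G[M_G^{-1}]$ by passing to the colimit as in Lemma \ref{lemma computation hom in cAug}. So the first task is to pin down the left vertical arrow, the second is commutativity, and the last is the reduction of ``$\Psi$ an embedding'' to the square.

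First I would make the left vertical arrow explicit. By Lemma \ref{lemma grading on B}, $\Gr \Psi$ indeed takes values in the $\Gr$-hom spaces appearing in the diagram. By Remark \ref{rmk Reeb chords} and the circle-invariance of $\alpha^{\circ}$, every $\alpha$-Reeb chord $\gamma$ generating $\hom_{\Gr(\cO_G)}(\La_i^{a-\eta}(m),\La_j^{b-\eta}(n))$ is an $\alpha^{\circ}$-Reeb chord, hence is determined by its endpoint datum $\pi(\gamma)\in R(\pi_{|\La_i},\pi_{|\La_j})$ together with its length, which is necessarily of the form $k+(b-a)+(\varepsilon_n-\varepsilon_m)$ with $[k]=\ell_{\La}(\pi(\gamma))$; conversely each such datum occurs. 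Comparing with the explicit description of $\cD_G$ in Lemma \ref{lemma Fukaya category of prequantization bundle} and the filtration of Remark \ref{rmk filtration on Fuk}, the assignment $\gamma\mapsto t^{k+(b-a)}\otimes\pi(\gamma)$ is then a bijection of generating sets, hence an isomorphism $\hom_{\Gr(\cO_G)}(\La_i^{a-\eta}(m),\La_j^{b-\eta}(n))\xrightarrow{\sim}\hom_{\Gr(\cD_G)}(\La_i^a,\La_j^b)$; it raises the filtration level by the constant $\varepsilon_n-\varepsilon_m$, which is exactly the shift carried by the horizontal isomorphisms $t^{\varepsilon_m}$, $t^{\varepsilon_n}$ of Proposition \ref{prop representability} and Lemma \ref{lemma commutative diagram for fully faithfulness}.

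Next I would check commutativity and conclude. The right vertical map is the identification $\cB_G(\La_i^{a-\eta}(m),-)\simeq\cD_G(-,\La_i^a)$ (and likewise for $j,b,n$) furnished by Proposition \ref{prop representability}; the top map is $\Gr\Psi$, that is $\gamma\mapsto[\mu_{\cB_G}(\gamma,-)]$; the bottom map is the Yoneda map on morphisms, $t^{k+(b-a)}\otimes\pi(\gamma)\mapsto\mu_{\cD_G}(-,t^{k+(b-a)}\otimes\pi(\gamma))$. For a single chord $\gamma$ of length $k+(b-a)+(\varepsilon_n-\varepsilon_m)$, Lemma \ref{lemma commutative diagram for fully faithfulness} is precisely the statement that, under the right vertical identification, $[\mu_{\cB_G}(\gamma,-)]$ corresponds to $\mu_{\cD_G}(-,t^{k+(b-a)}\otimes\pi(\gamma))$; since the chords $\gamma$ span the source and the left vertical arrow sends them to the generators $t^{k+(b-a)}\otimes\pi(\gamma)$, the square commutes by linearity. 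The bottom Yoneda map is a quasi-isomorphism (Yoneda is fully faithful and this survives passage to the associated graded), and the two verticals are isomorphisms, so commutativity forces $\Gr\Psi$ to be an isomorphism for every $m<n$. To obtain that $\Psi\colon\cO_G[M_G^{-1}]\to\cMod(\cD_G)$ is an embedding, pass to the colimit over $n$ along multiplication by the $\gamma_j^n$: by Lemma \ref{lemma computation hom in cAug} (applied to $\cO_G$) one has $\Gr\hom_{\cO_G[M_G^{-1}]}(\La_i^{a-\eta},\La_j^{b-\eta})\simeq\colim_n\Gr\hom_{\cO_G}(\La_i^{a-\eta},\La_j^{b-\eta}(n))$, while on the target side Proposition \ref{prop representability} identifies $\colim_n\Gr\hom_{\cMod(\cD_G)}(\cB_G(\La_i^{a-\eta},-),\cB_G(\La_j^{b-\eta}(n),-))$ with $\Gr\hom_{\cMod(\cD_G)}(\cD_G(-,\La_i^a),\cD_G(-,\La_j^b))$, and under these the isomorphisms just produced assemble into the desired one; reflecting quasi-isomorphisms of morphism complexes back up the filtration shows $\Psi$ is an $\cAinf$-embedding. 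I expect the main obstacle to be the bookkeeping in the second step: matching the set of $\alpha$-Reeb chords between the shifted perturbed Legendrians with the generators $t^{k+(b-a)}\otimes\pi(\gamma)$ of $\cD_G$ while keeping exact track of the filtration shift $\varepsilon_n-\varepsilon_m$ and of which winding numbers $k$ actually occur, so that the bijection of generating sets is genuinely exact, and then verifying that the colimit reduction really produces full faithfulness of $\Psi$ on all of $\cO_G[M_G^{-1}]$ and not merely on the directed subquotients. All the pseudo-holomorphic input---Lemmas \ref{lemma curves contributing to graded map between modules}, \ref{lemma bijection moduli spaces for map between modules} and \ref{lemma commutative diagram for fully faithfulness}---is already available.
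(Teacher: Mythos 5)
Your overall strategy — use Lemma~\ref{lemma commutative diagram for fully faithfulness} for commutativity, Proposition~\ref{prop representability} for the right vertical, the Yoneda map for the bottom, then deduce that $\Gr\Psi$ is a quasi-isomorphism and pass to the colimit — matches the paper. But the step where you justify that the left vertical arrow is a quasi-isomorphism has a genuine gap.

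You assert that $\gamma\mapsto t^{k+(b-a)}\otimes\pi(\gamma)$ is ``a bijection of generating sets, hence an isomorphism.'' This is only correct when $\La_i\neq\La_j$. When $\La_i=\La_j$, the source $\hom_{\Gr(\cO_G)}(\La_i^{a-\eta}(m),\La_i^{b-\eta}(n))$ is generated by the $\alpha$-Reeb chords sitting over $\Crit((H\circ\pi)_{|\La_i})$ (a Morse-type complex), whereas the target $\hom_{\Gr(\cD_G)}(\La_i^a,\La_i^b)$ involves $\module\otimes\Omega(\La_i\times_B\La_i)$, which contains the full de~Rham complex $\Omega(\La_i)$ through the self-intersection piece $O_i$. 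These generating sets are not in bijection; the map — which, as the paper notes, must send a degenerate chord to the Poincar\'e dual of the stable manifold of $\pi(\gamma)$ — is a quasi-isomorphism by the comparison between Morse and de~Rham complexes, not an isomorphism of graded modules. Even in the case $\La_i\neq\La_j$, a bijection on generators alone does not give a quasi-isomorphism unless one also controls the differentials; the paper's point is that under the standing assumption on $G$ (Remark~\ref{rmk assumption on G}) every nonconstant curve strictly increases the filtration, so both $\Gr$-differentials vanish and the bijection is tautologically a chain isomorphism. You need to supply these two case-by-case arguments to close the gap; the rest of your reduction (commutativity from Lemma~\ref{lemma commutative diagram for fully faithfulness}, then the colimit identification via Lemma~\ref{lemma computation hom in cAug} and Proposition~\ref{prop representability}) is in order.
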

\begin{proof}
    It follows from Lemma \ref{lemma commutative diagram for fully faithfulness} that the diagram in the Proposition is commutative if we consider the map 
    \[\hom_{\Gr(\cO_G)} \left( \La_i^{a-\eta}(m), \La_j^{b-\eta}(n) \right) \to \hom_{\Gr(\cD_G)} \left( \La_i^a, \La_j^b \right)  \]
    which sends a Reeb chord $\gamma$ of length $(k + (b-a) + (\varepsilon_n - \varepsilon_m))$ to $t^{k+(b-a)} \otimes \pi(\gamma)$ (more precisely, when $\gamma$ is degenerate, to $t^{k+(b-a)} \otimes \alpha$ where $\alpha$ is the singular cochain corresponding to the stable manifold of $\pi(\gamma)$).
    It remains to argue that the latter map is a quasi-isomorphism.
    First observe that this map is a bijection on generators.
    Then, if $\La_i \ne \La_j$, the differential on both sides is just $0$, because in this case any curve contributing to the differential strictly increases the filtration.
    Finally, if $\La_i=\La_j$, then the statement is the comparison between Morse and singular cochain complexes. 
    The result follows.
\end{proof}

\subsection{Equivariance}

Let $\vec{G} / \Z \to \Aut(\cO_G), \theta \mapsto \tau_{\theta}$ and $\vec{G} / \Z \to \Aut(\cD_G), \theta \mapsto \rho_{\theta}$ be the $(\vec{G} / \Z)$-structures described in Sections \ref{section G-structure on augmentation category} and \ref{section precompleted Fukaya category} respectively. 
Recall from Remark \ref{rmk structure on opposite category} that the latter induces a $(\vec{G} / \Z)$-structure on $\cMod(\cD_G)$, given by
\[\vec{G}/\Z \to \Aut(\cMod(\cD_G)), \quad \theta \mapsto \rho_{-\theta}^* := (- \circ \rho_{-\theta}). \] 
In the following, we identify $\cAinf$-functors from $\cO_G$ to $\cMod(\cD_G)$ and $(\cO_G^{op}, \cD_G^{op})$-bimodules.
We want to prove that the two functors 
\[\left( \Phi_0 : \vec{G}/\Z \to \Mod(\cO_G^{op}, \cD_G^{op}), \, \theta \mapsto \rho_{-\theta}^* \circ \Psi \right) \text{ and } \left( \Phi_1 : \vec{G}/\Z \to \Mod(\cO_G^{op}, \cD_G^{op}), \, \theta \mapsto \Psi \circ \tau_{\theta} \right). \] 
are equivalent in the DG-category of DG-functors from $\vec{G}/\Z$ to $\Mod(\cO_G^{op}, \cD_G^{op})$.

Given $\theta \in \vec{G} / \Z$, we need to define a morphism in $\Mod(\cO_G^{op}, \cD_G^{op})$ from $\Psi \circ \tau_{\theta}$ to $\rho_{-\theta}^* \circ \Psi$, i.e. a morphism from $\cB_G(-, \rho_{-\theta}(-))$ to $\cB_G(\tau_{\theta}(-),-)$. 
Recall that 
\[\left\{
\begin{array}{lll}
\cB_G(\La_j^{b-\eta}(n), \rho_{-\theta}(\La_i^a)) & = \cB_G(\La_j^{b-\eta}(n), \La_i^{a-\theta}) & \simeq \prod_{\gamma \in \cR(\La_i^{a-\theta}, \La_j^{b-\eta}(n))} \Z \cdot \gamma \\
\cB_G(\tau_{\theta}(\La_j^{b-\eta}(n)), \La_i^a) & = \cB_G(\La_j^{b+\theta-\eta}(n), \La_i^a) & \simeq \prod_{\gamma \in \cR(\La_i^a, \La_j^{b+\theta-\eta}(n))} \Z \cdot \gamma.
\end{array}
\right. \]
Now observe that the map 
\[SV \times \base \to SV \times \base, \quad ((t,x), q) \mapsto ((t,\varphi_{\Reeb}^{\theta}(x)), q) \]
preserves $\Gamma$, and is $(J_{\alpha} \oplus J)$-holomorphic because $\alpha = e^{H \circ \pi} \alpha^{\circ}$.
Therefore, $\varphi_{\Reeb}^{\theta}$ induces a strict morphism $f_{\theta} : \cB_G(-, \rho_{-\theta}(-)) \to \cB_G(\tau_{\theta}(-),-)$.

\begin{prop}\label{prop equivariance}
    The functor $\Psi : \cO_G[M_G^{-1}] \to \cMod (\cD_G)$ is $\vec{G} / \Z$-equivariant.
\end{prop}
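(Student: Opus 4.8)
The goal is to promote the pointwise strict isomorphisms $f_\theta$ constructed above into an equivalence between the two functors $\Phi_0, \Phi_1 : \vec G/\Z \to \Mod(\cO_G^{op}, \cD_G^{op})$; by definition this equivalence \emph{is} the statement that $\Psi$ is $(\vec G/\Z)$-equivariant. Recall the building block: for $\theta \in \vec G/\Z$ the symplectomorphism $\Sigma_\theta$ of $SV \times \base$ sending $((t,x),q) \mapsto ((t,\varphi_{\Reeb}^\theta(x)),q)$ preserves the correspondence $\Gamma$ and is $(J_\alpha \oplus J)$-holomorphic, hence identifies, bijectively and preserving symplectic area, all the moduli spaces of quilted strips entering the definition of $\cB_G$, and thereby induces a strict bimodule morphism $f_\theta$ relating $\cB_G(-,\rho_{-\theta}(-))$ and $\cB_G(\tau_\theta(-),-)$, i.e.\ relating the values of $\Phi_0$ and $\Phi_1$ at $\theta$. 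The plan has three steps: (i) show each $f_\theta$ is invertible; (ii) show the family $(f_\theta)_\theta$ is compatible with the morphisms of $\vec G/\Z$, so that it defines a morphism of DG-functors; (iii) conclude formally.

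For (i) I would use that $\Sigma_\theta^{-1} = \Sigma_{-\theta}$ is again holomorphic and preserves $\Gamma$, and that $\Sigma_\theta$ is compatible with the gluings defining the operations $\mu_{\cB}$; hence $f_{-\theta}$ is a strict two-sided inverse to $f_\theta$. When $\theta \in \Z$ the map $\Sigma_\theta$ becomes the identity after the tautological relabelling of objects, which is exactly why $f_\theta$ is well-defined over $\vec G/\Z$ rather than only over $\vec G$; this is already encoded in the normalization of the $S^1$-grading of $\cB_G$ in Lemma \ref{lemma grading on B}.

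For (ii): a morphism $\theta_0 \to \theta_1$ of $\vec G/\Z$ acts on $\Phi_1$ via the continuation transformation $\tau_{\theta_0} \Rightarrow \tau_{\theta_1}$ of $\cO_G$ post-composed with $\Psi$, and on $\Phi_0$ via the continuation transformation of $\cD_G$ applied to $\Psi$; by Sections \ref{section G-structure on augmentation category} and \ref{precompleted Fukaya category} both are governed by the \emph{positive} Reeb isotopy $(\varphi_{\Reeb}^t)_{\theta_0 \le t \le \theta_1}$. I would check that the resulting square of natural transformations commutes, together with the higher components needed for $(f_\theta)_\theta$ to be a genuine morphism of DG-functors. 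The key point is that $f_{\theta_0}$, $f_{\theta_1}$, and both continuation transformations are all counts of the same parametrized families of quilted strips, differing only by how far along the Reeb flow the $SV$-boundary datum has been transported; since $\theta \mapsto \Sigma_\theta$ is a one-parameter group and respects symplectic area, the assignment $\theta \mapsto f_\theta$ is strictly functorial, the square commutes on the nose, and there are no higher obstructions.

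For (iii): an invertible morphism of DG-functors is an equivalence, so $\Phi_0 \simeq \Phi_1$, which is the assertion that $\Psi$ is $(\vec G/\Z)$-equivariant; compatibility with the inclusions $G \subset G'$ of Theorem \ref{thm cFuk=cAug} is automatic since everything is built from the same Reeb-flow push-forwards. The step I expect to be the real obstacle is (ii): one must unwind the definitions of the two continuation transformations far enough to see them as the action of the Reeb flow on boundary data, so that the geometric symmetry $\Sigma_\theta$ visibly intertwines them and no auxiliary chain homotopies are required. Steps (i) and (iii) are then routine.
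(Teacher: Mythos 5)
Your plan to construct the family $(f_\theta)_\theta$ and verify its compatibility with the morphisms of $\vec G/\Z$ is the right skeleton, but step (ii) contains a genuine gap, and it is precisely the part you flag as ``the real obstacle.'' You assert that the two continuation transformations are ``counts of the same parametrized families of quilted strips, differing only by how far along the Reeb flow the $SV$-boundary datum has been transported,'' and conclude that the square commutes on the nose with no higher obstructions. That is not how $\Phi_0(t^c)$ and $\Phi_1(t^c)$ are defined, and they are not visibly intertwined by $\Sigma_\theta$. The transformation $\Phi_0(t^c)$ comes from the $\vec G/\Z$-structure on $\cD_G$, implemented by the degree shift $t^{c'}\otimes x \mapsto t^{c'+c}\otimes x$ inherited from the $S^1$-grading on $\cFuk^\circ$, and geometrically by inserting a unit-type input on the $\base$ side of the quilt. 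The transformation $\Phi_1(t^c)$ comes from the $\vec G/\Z$-structure on $\cO_G$, implemented by the continuation morphism $\cont_c \in \cO_G(\La_j^{b-\eta+\theta_0}, \La_j^{b-\eta+\theta_1})$ associated to a Legendrian isotopy, inserted on the $SV$ side. These are two different geometric constructions; the conjugation symmetry $\Sigma_\theta$ relates the bimodules $\cB_G(-,\rho_{-\theta}(-))$ and $\cB_G(\tau_\theta(-),-)$ but does not on its own carry one continuation gadget to the other.

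The paper's proof resolves this in two steps you do not mention. First, it invokes the pointwise reduction of \cite[Lemma A.1]{GPS2} to reduce checking that $f_{\theta_1}\circ\Phi_0(t^c)$ and $\Phi_1(t^c)\circ f_{\theta_0}$ agree in $H^0\Mod(\cO_G^{op},\cD_G^{op})$ to a chain-level statement for each fixed pair of objects; this is what makes the problem tractable. Second, it computes $\Phi_0(t^c)$ and $\Phi_1(t^c)$ pointwise by two separate Morse--Bott degeneration arguments: for $\Phi_0(t^c)$, a holomorphic quilted strip with a free boundary marked point in $\base$ can only be rigid if it is constant, forcing both $w_-$ and $u_+$ constant; for $\Phi_1(t^c)$, because $\gamma_c$ sits above the minimum of the Morse function, rigidity forces $u_+$ constant and $w_-$ a trivial cylinder, contributing area exactly $c$. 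Both computations independently yield multiplication by $t^c$, at which point homotopy-commutativity of the square follows. Neither of these steps is formal, and neither is implied by the group property of $\theta\mapsto\Sigma_\theta$; your proposal leaves them unaddressed. Steps (i) and (iii) of your outline are fine, but without the pointwise reduction and the two Morse--Bott computations, step (ii) is an unproved assertion rather than a proof.
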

\begin{proof}
    First observe that it is enough to show that $\Psi : \cO_G \to \cMod (\cD_G)$ is $\vec{G} / \Z$-equivariant. 
    Let $\theta_0, \theta_1 \in \vec{G} / \Z$ and $t^c \in \hom_{\vec{G} / \Z} (\theta_0, \theta_1)$. We want to prove that $f_{\theta_1} \circ \Phi_0(t^c)$ and $\Phi_1(t^c) \circ f_{\theta_0}$ are homotopic, i.e. that they are equivalent in $H^0 \Mod(\cO_G^{op}, \cD_G^{op})$.
    Using the same proof as \cite[Lemma A.1]{GPS2}, it is enough to prove the statement pointwise on $\cO_G \times \cD_G$. In other words, we want to prove that for every $(\La_j^{b-\eta}(n), \La_i^a) \in \cO_G \times \cD_G$, the two maps 
    \[f_{\theta_1} \circ \Phi_0(t^c), \, \Phi_1(t^c) \circ f_{\theta_0} : \cB_G(\La_j^{b-\eta}(n), \rho_{-\theta_0}(\La_i^a)) \to \cB_G(\tau_{\theta_1}(\La_j^{b-\eta}(n)),\La_i^a) \]
    are homotopic, i.e. equivalent in $H^0 \Ch$.

    Observe that $\Phi_0(t^c)$, respectively $\Phi_1(t^c)$, counts quilted strips as on the left, respectively as 
    on the right, of Figure \ref{figure equivariance} (on the latter, $\cont_c$ denotes the continuation morphism in $\cO_G(\La_j^{b-\eta+\theta_0}, \La_j^{b-\eta+\theta_1})$ induced by the Legendrian isotopy $(\La_j^{b-\eta+\theta_0+s})_{0 \leq s \leq c}$).
    When $\alpha = e^{ \delta H \circ \pi}$ is close enough to $\alpha^{\circ}$ (i.e. when $\delta$ is sufficiently small), then Morse-Bott results in symplectic field theory (see \cite{Bou03, EES09}) imply that the quilted strips of Figure \ref{figure equivariance} are in bijection with the quilted strips of Figure \ref{figure equivariance Morse-Bott} (see the proof of Lemma \ref{lemma bijection moduli spaces Morse-Bott}).
    
    Let us first examine the curves on the left of Figure \ref{figure equivariance Morse-Bott}. Since the projection $(SV, J_{\alpha^{\circ}}) \to (\base,J)$ is holomorphic, such a quilt $(w_-, u_+)$ (of area $d$) gives a pseudo-holomorphic disk in $(\base,J)$ with one free marked point on the boundary. The only possibility for this disk to be rigid is that it is constant. This implies that both $w_-$ and $u_+$ are constant and, in particular, that $d=0$. Therefore we get
    \[\Phi_0(t^c) : \cB_G(\La_j^{b-\eta}(n), \rho_{-\theta_0}(\La_i^a)) \to \cB_G(\La_j^{b-\eta}(n), \rho_{-\theta_1}(\La_i^a)), \quad t^{c'} \otimes x \mapsto t^{c'+c} \otimes x.\]
    Now let us look at the curves on the right of Figure \ref{figure equivariance Morse-Bott}. Such a curve consists of a quilted strip $(w_-, u_+)$ of area $d$, together with a gradient flow line from $\gamma$ (the Reeb chord to which $w_-$ converges) to $\gamma_c$, where $\gamma_c$ is the Reeb chord of length $c$ above the minimum of $(H \circ \pi)_{|\La_i}$. Because $\gamma_c$ is a Reeb chord above the minimum of the Morse function, the same argument as above implies that $u_+$ is constant, but this time $w_-$ consists of a trivial cylinder on $\gamma$. In particular we have $d = \ell(\gamma) = \ell(\gamma_c)= c$.
    Therefore we get 
    \[\Phi_1(t^c) : \cB_G(\tau_{\theta_0}(\La_j^{b-\eta}(n)),\La_i^a) \to \cB_G(\tau_{\theta_1}(\La_j^{b-\eta}(n)),\La_i^a), \quad t^{c'} \otimes x \mapsto t^{c'+c} \otimes x. \]
    This implies that the two maps 
    \[f_{\theta_1} \circ \Phi_0(t^c), \, \Phi_1(t^c) \circ f_{\theta_0} : \cB_G(\La_j^{b-\eta}(n), \rho_{-\theta_0}(\La_i^a)) \to \cB_G(\tau_{\theta_1}(\La_j^{b-\eta}(n)),\La_i^a) \]
    are homotopic.  
\end{proof}

\begin{figure}
    \centering
    \def\svgwidth{1\textwidth}
    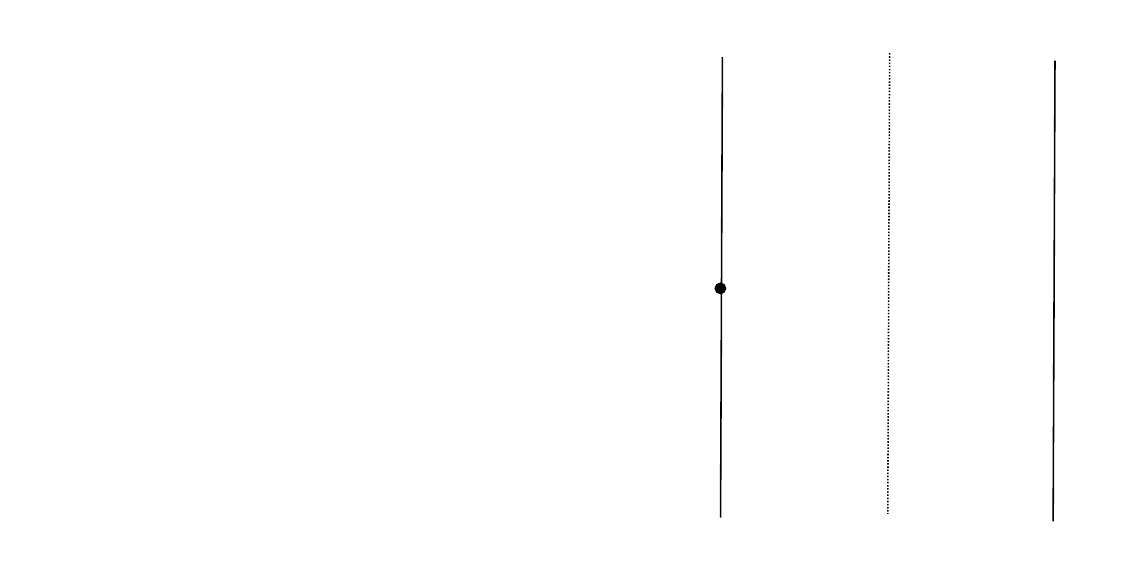		
    \caption{On the left: quilted strip $(w_-, u_+)$ contributing to $\langle \Phi_0(t^c)(\mathrm{in}), t^{c+d} \otimes \mathrm{out} \rangle$, where $d = \int w_-^* \Omega + \int u_+^* \omega$. 
    On the right: quilted strip $(w_-, u_+)$ contributing to the coefficient $\langle \Phi_1(t^c)(\mathrm{in}), t^d \otimes \mathrm{out} \rangle$, where $d = \int w_-^* \Omega + \int u_+^* \omega$.}	
    \label{figure equivariance}
\end{figure}

\begin{figure}
    \centering
    \def\svgwidth{1\textwidth}
    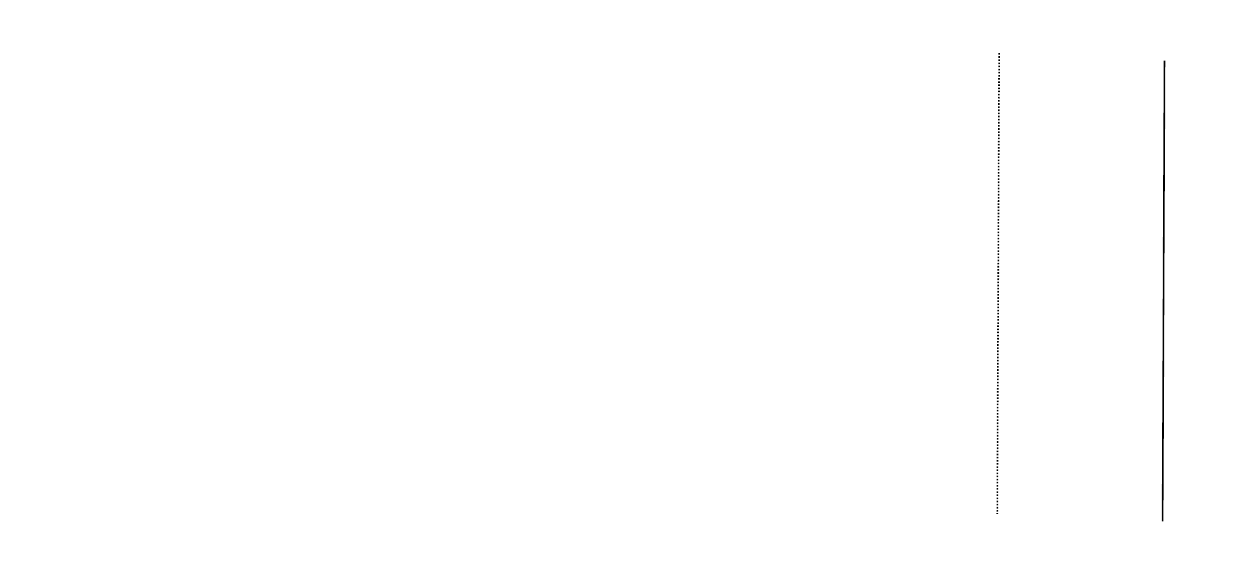		
    \caption{Morse-Bott degeneration of the quilted strips in Figure \ref{figure equivariance}.}	
    \label{figure equivariance Morse-Bott}
\end{figure}

\subsection{Main results}

We are now ready to state and prove the main result of this Section.

\begin{thm}\label{thm cFuk=cAug}
    Assume that $\La$ is an admissible Legendrian in an admissible prequantization bundle $V \xrightarrow{\pi} \base$ (see Definition \ref{definition admissible}). 
    There are $\vec G$-equivariant $\cAinf$-equivalences 
    \[\cLFuk(S(G \cdot \La)) \xrightarrow{\sim} \cFuk^{\circ}(\pi_{|\La})_{| \vec G / \Z} \# (\vec{G} / \Z) \]
    for every discrete subgroup $G$ of $\R$ containing $\Z$.
    Moreover, these are compatible with the functors 
    \[\cLFuk(S(G \cdot \La)) \hookrightarrow \cLFuk(S(G' \cdot \La)), \quad \cFuk^{\circ}(\pi_{|\La})_{| \vec G / \Z} \# (\vec{G} / \Z) \hookrightarrow \cFuk^{\circ}(\pi_{|\La})_{| G' / \Z} \# (\vec{G'} / \Z) \]
    when $G \subset G'$ (the leftmost arrow is the one of Proposition \ref{prop stop removal map for Aug}, while the rightmost arrow is the one of Lemma \ref{lemma change of enrichment}). 
\end{thm}
\begin{proof}
    Recall from subsection \ref{section functor from lagrangian correspondence} that we denoted
    \[\cLFuk(S(G \cdot \La)) = \cO_G[M_G^{-1}] \text{ and } \cFuk^{\circ}(\pi_{|\La})_{| \vec G / \Z} \# (\vec{G} / \Z) = \cD_G, \]
    and we introduced a functor 
    \[\Psi : \cO_G[M_G^{-1}] \to \cMod (\cD_G), \quad \La_j^{b-\eta}(n) \mapsto \, _{M_G^{-1}} \cB_G \left( \La_j^{b-\eta}(n), - \right) \]
    in Definition \ref{definition main functor}.
    The fact that $\Psi$ defines a $\vec G$-equivariant $\cAinf$-equivalence follows from Propositions \ref{prop representability}, \ref{prop fully faithfulness} and \ref{prop equivariance}.
    The commutativity of the diagram follows from the fact that both vertical arrows are just inclusions at the level of objects and morphisms.
    This concludes the proof.
\end{proof}

\begin{corollary}\label{corollary curved version main result}
    There is a $\cAinf$-equivalence
    \[\Nov \otimes_{\Z[\R_{\ge 0}]}\overline{(\varinjlim  \cLFuk(S(G \cdot \La)))[\vec \Q/\Z]} \xrightarrow{\sim} \cFuk(\pi_{|\La}). \]
\end{corollary}
\begin{proof}
    This follows from Theorem \ref{thm cFuk=cAug} and Lemmas \ref{lemma completion of graded Fuk recovers Fuk}, \ref{lemma recovering Fuk from approximations}, \ref{lemma recover from exhaustion}.
\end{proof}

\begin{corollary}\label{corollary conjecture Akaho-Joyce}
    The result \cite[Conjecture 3.19]{AJ10} holds when the prequantization bundle is admissible (see Definition \ref{definition admissible}).
\end{corollary}
\begin{proof}
    In \cite[Section 13.5]{AJ10}, the authors consider a prequantization bundle $V \xrightarrow{\pi} \base$ and associate a $\cAinf$-category $(\cH_{\La} \otimes \Z[[t]], \mathfrak{n}_{\La})$ to any Legendrian $\La \subset V$ such that the multiple points of $\pi_{| \La}$ are double and transverse.
    The result \cite[Conjecture 3.19]{AJ10} then asserts that the corresponding category of bounding cochains is invariant under Legendrian isotopy.
    When $V \xrightarrow{\pi} \base$ and the Legendrians (at the ends of the Legendrian isotopy) are admissible, this result follows from the case $G = \Z$ of Theorem \ref{thm cFuk=cAug}, since 
    \begin{enumerate}
        \item $(\cH_{\La} \otimes \Z[[t]], \mathfrak{n}_{\La}) = \cFuk^{\circ}(\pi_{|\La})_{| \vec \Z / \Z}$ (this is a direct observation),
        \item $\cLFuk(S\La)$, and its category of bounding cochains $\LFuk(S\La)$ (see Proposition \ref{proposition boundig cochains on cAug = Aug}), are invariant under Legendrian isotopy according to Proposition \ref{prop independence on choices}.
    \end{enumerate}
\end{proof}

\begin{corollary}\label{corollary embedded case}
    In the case where $\pi_{| \La}$ is an embedding, there is a $\cAinf$-equivalence
    \[\Nov \otimes_{\Z[[t]]} \cLFuk(S\La) \simeq \cFuk(\pi_{|\La}). \]
\end{corollary}
\begin{proof}
    This follows from Theorem \ref{thm cFuk=cAug} and Proposition \ref{prop embedded case}.
\end{proof}

\subsection{Proofs of Theorem \ref{intro thm: fukaya from augmentations} and Corollary \ref{intro corollary: fukaya from microsheaves}} \label{proofs of intro theorems}

\begin{proof}[Proof of Theorem \ref{intro thm: fukaya from augmentations}]
    According to Proposition \ref{proposition boundig cochains on cAug = Aug}, taking bounding cochains on both sides of the equivalence from Corollary \ref{corollary curved version main result} gives an equivalence 
    \[\Nov \otimes_{\Z[\R_{\ge 0}]}\overline{(\varinjlim  \LFuk(S(G \cdot \La)))[\vec \Q/\Z]} \xrightarrow{\sim} \Fuk(\pi_{|\La}). \]
    The result follows since, according to Proposition \ref{prop aug computes augmentations of the algebra}, there are equivalences $\LFuk(S(G \cdot \La)) \simeq \Prop(\cA_{G \cdot \La})$.
\end{proof}

\begin{proof}[Proof of Corollary \ref{intro corollary: fukaya from microsheaves}]
    This follows directly from Theorem \ref{intro thm: fukaya from augmentations}, Proposition \ref{prop equivariance of Aug in Mod}, and \cite[Theorem 1.4]{GPS3} (and compatibility of \cite{GPS3} with respect to contact isotopy for the equivariance).
\end{proof}

\section{Sheaf preliminaries}\label{section sheaf preliminaries}

\subsection{Positively-microsupported microsheaves}
Let $\R$ be the real line with the Euclidean topology. We denote the discrete additive group of the real numbers by $\R^\delta$. Let $G$ be a subgroup of $\R^\delta$. The group $G$ acts on $\R$ by the translation. We denote the category of $G$-equivariant $\Z$-module sheaves on $\R$ by $\sh(\R/G)$.
\begin{example}
    If $G\cong \Z$, we have $\sh(\R/G)\cong \sh(S^1)$.
\end{example}

Using the canonical orientation of $\R$, we define the positive part $T^*_+\R$ and the non-positive part $T^*_{\leq 0}\R$ of $T^*\R$. We set
\begin{equation}
    \sh^+(\R/G):=\sh(\R/G)/\left\{ \cE\in \sh(\R/G)| ss(\cE)\subset T^*_{\leq 0}S^1\right\}
\end{equation}
where $ss$ for $\sh(R/G)$ is defined via the forgetful functor $\sh(\R/G)\to \sh(\R)$.

Since $G$ is a subgroup of $\R^\delta$, the subset $G_{\geq 0}:=\R_{\geq 0}\cap G$ forms a monoid. We denote the corresponding polynomial ring by $\Z[\G_{\geq 0}]$. We set
\begin{equation*}
    \Nov^G:=\lim_{G\ni c\rightarrow+\infty}\Z[G_{\geq 0}]/T^c\Z[G_{\geq 0}]
\end{equation*}
where $T^c$ is the indeterminate corresponding to $c\in G$.
\begin{example}
    \begin{enumerate}
        \item When $G=\Z$, we have $\Nov\cong \Z[[T]]$ the formal power series ring. Note also that the identification $\sh(\R/\Z)\cong \sh(S^1)$ maps $1_{\mu,\Z}$ to $p_{S^1!}\Z_{[0, \infty)}$ where $p_{S^1}\colon \R\rightarrow S^1=\R/\Z$ is the projection.
        \item When $G=\R$, we set $\Nov:=\Nov^\R$, which is the universal Novikov ring (without Maslov variable $e$) of \cite{FOOO}.
    \end{enumerate}
\end{example}

 \begin{lemma}[\cite{Kuwagaki-WKB, Kuwagaki-Almost, Kuwagaki-Zhang}]
     $\sh^+(\R/G)$ is equivalent to the category of $\R/G$-graded almost $\Nov^G$-modules.
\end{lemma}
For objects $\cE_1, \cE_2$ in $\sh(\R/G)$, we can consider the convolution product defined by
\begin{equation}
    \cE_1\star \cE_2:=m_!(\cE_1\boxtimes \cE_2)
\end{equation}
where $m$ is the group operation of $\R$. The operation induces a monoidal structure on $\sh^+(\R/G)$, which is also denoted by $\star$. This is equivalent to the tensor on the almost modules.

We turn to more general exact symplectic manifolds.  In this case we have the microsheaf category $\mu sh(W)$ defined by \cite{Shende-microlocal,Nadler-Shende}.  We may generalize the above discussion by forming $\mu sh(W\times T^*\R)/\mu sh_{\leq 0}(W\times T^*\R)$ where $\mu sh_{\leq 0}(W\times T^*\R)$ is the full subcategory of $\mu sh(W\times T^*\R)$ spanned by the objects supported in the non-positive part $\left\{\tau\leq 0\right\}$.  However, because we do not know various categorical properties of $\mu sh$, we do not e.g. presently know how to construct a $\sh^+(\R)$ action on this category.\footnote{The current version of \cite{Ike-Kuwagaki-Novikov} contains some errors due to this point.}

The definition for the cotangent bundle case~\cite{Tamarkin} uses the convolution product which is defined through some six operations on sheaves. One can expect that a generalization of six operations on the level of microsheaves is given by microsheaf quantizations of Lagrangian correspondences. However, the composition of such correspondences is presently developed only under some isotropicity hypotheses \cite{Li-Nadler-Shende}.  Thus we must restrict to the subcategory $\mu sh_c(W\times T^*\R)\subset \mu sh(W\times T^*\R)$ of objects with `universally sufficiently Lagrangian supports'. We define $\mu sh_c^+(W\times T^*\R):=\mu sh_c(W\times T^*\R)/\mu sh_{c, \leq 0}(W\times T^*\R)$ where $\mu sh_{c, \leq 0}(W\times T^*\R)$ is defined in the same way as above.
\begin{lemma}[{\cite[Lemma C.1.1]{Li-Nadler-Shende}}]
    There exists an action of $\sh^+(\R)$ on $\mu sh_c^+(W\times \R)$ generalizing that on $\sh^+(M\times \R_t)$.
\end{lemma}
By taking the $G$-invariants, we conclude the following:

\begin{corollary}
    There exists an action of $\sh^+(\R/G)$ by $ \mu sh_c^{+}(W\times T^*\R/G)$.
\end{corollary}    
We denote this action by $\star$.

\subsection{Generation}
Let $\La$ be a Legendrian in $W\times T^\infty_+S^1$. For any object $\cE\in \mu sh^+(W\times T^*S^1)= \mu sh^+(W\times T^*\R/\Z)$, $ss(\cE)$ is defined via the lifting along $\mu sh_{c}(W\times T^*S^1)\to \mu sh^+_{c}(W\times T^*S^1)$ up to non-positive part. We consider the full subcategory $\mu sh^+_{\La}(W\times T^*S^1)$ (resp. $\mu sh_{\La}(W\times T^*S^1)$ ) of $\mu sh^+_{c}(W\times T^*S^1)$ (resp. $\mu sh_{c}(W\times T^*S^1)$ ) spanned by the objects satisfying $ss(\cE)\subset \R_{>0}\cdot \Lambda\cup \mathrm{Core}(W)\times T^*_{S^1}S^1$ where $\mathrm{Core}(W)$ is the core of $W$. Note that the projector functor gives the inclusion $\mu sh^+_\La(W\times T^*S^1)\subset \mu sh_\La(W\times T^*S^1)$.

\begin{lemma}\label{positive sheaf generation}
We have the following: 
    \begin{enumerate}
        \item The microstalks at $\Lambda$ in $\mu sh_{\Lambda}(W\times T^*S^1)$ are contained in $\mu sh^+_{\Lambda}(W\times T^*S^1)$.
        \item The microstalks generate $\mu sh^+_{\Lambda}(W\times T^*S^1)$.
    \end{enumerate}
\end{lemma}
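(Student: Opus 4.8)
The plan is to adapt the standard generation argument for microsheaf categories on Weinstein manifolds (as in \cite{Nadler-Shende, GPS3}) to the partially-wrapped, positively-microsupported setting at hand, using the product structure $W \times T^*S^1$ and the module structure over $\sh^+(\R/G)$ recorded above. For the first claim, I would argue that any microstalk functor at a component of $\Lambda$ is corepresented by an object whose microsupport is contained in the positive cone $\R_{>0} \cdot \Lambda \cup \mathrm{Core}(W) \times T^*_{S^1} S^1$: the relevant corepresenting object is a (co)linking disk to $\Lambda$ pushed slightly in the positive Reeb direction, and by construction its microsupport meets the zero section only along the core of $W$ and otherwise lies over the positive-cotangent part of $S^1$. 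Since such an object has no nontrivial maps from sheaves with nonpositive microsupport in the $\R$-direction, it lies in the $+$-subcategory $\mu sh^+_\Lambda(W \times T^*S^1) \subset \mu sh_\Lambda(W\times T^*S^1)$; concretely one convolves with the unit $1_{\mu, \Z} = \bigoplus_{c} \Z_{[c,\infty)}$ and checks this does not change the object, using the previous lemma.

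For the second claim — that the microstalks \emph{generate} $\mu sh^+_\Lambda(W\times T^*S^1)$ — I would follow the sheaf-theoretic cone/cosheaf argument: given $\cE \in \mu sh^+_\Lambda$, if $\cE$ has zero microstalk at every component of $\Lambda$ then I claim $\cE$ is supported (microlocally) on the core $\mathrm{Core}(W) \times T^*_{S^1} S^1$ only, i.e. it comes from $\mu sh(W) \otimes \sh^+_{T^*_{S^1}S^1}(\R/\Z)$, which is the part with no $S^1$-cotangent direction beyond the zero-section; but on that locus positivity forces the object to vanish (as in the stop-removal fact that $\Fuk_+(W)$ is trivial, already invoked in the proof of Proposition \ref{prop Koszul dual}). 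Hence the microstalks are conservative on $\mu sh^+_\Lambda$, and conservativity of a family of corepresentable functors in a presentable stable category is equivalent to split-generation by the corepresenting objects; combined with idempotent-completeness this gives generation. I would phrase the support-reduction step using the triangle relating $\cE$ to its images under the microlocalization/section functors along $\Lambda$, so that the cone lands in the core-supported subcategory.

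The main obstacle I anticipate is the support-reduction (conservativity) step: one must rule out nonzero objects of $\mu sh^+_\Lambda(W\times T^*S^1)$ that have vanishing microstalk along all of $\Lambda$. This is where the hypothesis that $W$ is a Weinstein manifold (so $\mu sh(W)$ is itself generated by cocores/microstalks along $\mathrm{Core}(W)$) and the positivity constraint both get used: without positivity the full category $\mu sh_\Lambda$ would contain extra objects (e.g. constant-type sheaves) that are invisible to the microstalks at $\Lambda$, and the point of the $+$-condition is precisely to kill those. I would handle this by first reducing to the local model near $\Lambda$ via a cover (so the statement becomes the known generation result for $\sh^+$ of a Legendrian in a one-jet space, i.e. \cite[Cor. 6.15]{GPS20} combined with \cite{GPS3}), then globalizing by a Mayer–Vietoris / descent argument for microsheaf categories, observing at each stage that the $+$-subcategories glue compatibly because convolution with $1_{\mu,\Z}$ is local. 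The remaining checks — that microstalks are well-defined functors on $\mu sh^+_\Lambda$, and that the local-to-global gluing respects the positive microsupport condition — are routine given the framework already set up in this section and in \cite{Nadler-Shende}.
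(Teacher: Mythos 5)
Your part~(2) matches the paper's argument almost exactly: orthogonality to microstalks forces $ss(\cE)\subset\mathrm{Core}(W)\times T^*_{S^1}S^1$, hence $\cE\in\mu sh_{\mathrm{Core}(W)}(W)\otimes \Sh_{T^*_{S^1}S^1}(S^1)$, and the second tensor factor is locally constant and so annihilated by $\star\,1_{\mu,\Z}$, which forces $\cE=0$. The local-model-plus-Mayer--Vietoris detour you propose as a backup is unnecessary here: the microsupport reduction is already a pointwise statement (vanishing microstalk at $p$ is by definition $p\notin ss(\cE)$), and the ``positivity kills it'' step is a single convolution, not something that needs to be glued.

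The gap is in part~(1). You correctly identify that the content is showing $\cE_p\star 1_{\mu,\Z}\cong\cE_p$, but the justification you give --- ``the linking disk pushed slightly positive has microsupport in the positive cone, so it receives no maps from nonpositively microsupported objects'' --- does not actually deliver this. Microsupport of $\cE_p$ lying in $\R_{>0}\cdot\Lambda\cup\mathrm{Core}(W)\times T^*_{S^1}S^1$ does not by itself imply that $\cE_p$ is in the left orthogonal of $\sh_{\leq 0}$ (the zero-section part of the microsupport is nonpositive, and Kashiwara--Schapira-type non-characteristicity bounds on $\Hom$ are not the same as vanishing). The paper instead proves $\cE_p\star 1_{\mu,\Z}\cong\cE_p$ purely categorically: by the $(\star\,,\cH om^\star)$ adjunction, $\cE_p\star 1_{\mu,\Z}$ corepresents $\cE\mapsto \Hom(\cE_p,\cH om^\star(1_{\mu,\Z},\cE))$; but $\cH om^\star(1_{\mu,\Z},\cE)$ agrees with $\cE$ in the quotient $\mu sh^+$, hence has the same microstalk at $p$, so this functor coincides with $\Hom(\cE_p,-)$, and Yoneda gives the isomorphism. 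Your proof should replace the geometric heuristic with this adjunction/Yoneda argument (or some equally rigorous substitute) --- as written, part~(1) is asserted rather than proved.
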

\begin{proof}
Let $\cE_p$ be an object representing microstalk at $p\in \Lambda$. It is enough to show that $\cE_p\star 1_{\mu, \Z}\cong \cE_p$. For any $\cE\in \mu sh(W\times T^*S^1)$, we have $\Hom_{\mu sh(W\times T^*S^1)}(\cE_p\star 1_{\mu, \Z}, \cE)\cong\Hom_{\mu sh(W\times T^*S^1)}(\cE_p,\cH om^\star(1_{\mu, \Z},\cE))$ where $\cH om^\star$ is the adjunction introduced in \cite{Tamarkin, Kuwagaki-WKB}. Note that $\cH om^\star(1_{\mu, \Z}, \cE)$ and $\cE$ are isomorphic in the quotient $\mu sh^+(W\times T^*S^1)$, hence have the same microstalks at $p\in \Lambda$. Then $\Hom_{\mu sh(W\times T^*S^1)}(\cE_p\star 1_{\mu, \Z}, \cE)\cong \Hom_{\mu sh(W\times T^*S^1)}(\cE_p, \cE)$. By Yoneda, we conclude the first statement.

    If an object $\cE\in \mu sh_\Lambda^+$ is orthogonal to the microstalks, the support is contained in $\mathrm{Core}(W)\times T^*_{S^1}S^1$. Since such an object is in $\mu sh_{\mathrm{Core}(W)\times T^*_{S^1}S^1}(W\times T^*S^1)=\mu sh_{\mathrm{Core}(W)}(W)\otimes \Sh_{T^*_{S^1}S_1}(S^1)$, it is killed by $\star 1_{\mu,\Z}$, hence we conclude the generation.
\end{proof}

\subsection{Pog-action in sheaf theory}

\begin{construction}
    Let $G$ be a  subgroup of $\R^\delta$ containing $\Z$.  We describe 
    an action of $\vec{G}/\Z$ on $\mu sh^+_{c}(W \times T^*S^1)$ in two ways. 

    The first is as follows.  For $[g]\in G/\Z$, the convolution $T_{[g]}:=(-)\star p_{S^1!}\Z_{[g,\infty)}$ gives an autoequivalence of $\sh^+(S^1)$, since the inverse is given by $T_{[-g]}$. Also, for a morphism $(g_1\leq g_2)\in \vec G$, we have a natural transformation $T_{[g_1]}\rightarrow T_{[g_2]}$ induced by the canonical morphism $\Z_{[g,\infty)}\rightarrow \Z_{[g_2,\infty)}$. As a result, we obtain $\vec{G}/\Z\rightarrow \Aut(\mu sh^+_{c}(W\times T^*S^1))$. 

    A second description arises from considering the circle Reeb flow on $W\times S^1$. The \cite{GKS} sheaf quantization gives rise to an action on the sheaf category.  The time-$g$ flow gives $T_{[g]}$. The continuation maps give $\vec G/\Z\rightarrow \Aut(\mu sh^+_{c}(W\times T^*S^1))$ obtained in the above.
\end{construction}

Now we can consider $\mu sh^+_{c}(W\times T^*S^1)[\vec{G}/\Z]$.

\begin{lemma} \label{sheaf pog quotient}
    There is a  functor
   \begin{eqnarray*}
       \mu sh^+_{c}(W\times T^*S^1)[\vec{G}/\Z]&\rightarrow& \mu sh^+_{c}(W\times T^*S^1/(G/\Z)) =  \mu sh^+_{c}(W\times T^*\R/G)\\
       \cE &\mapsto &\bigoplus_{[g]\in G/\Z} T_{[g]} \cE
   \end{eqnarray*}
   where $\bigoplus_{[g]\in G/\Z} T_{[g]} \cE$ is equipped with an obvious $(G/\Z)$-equivariant structure. 
   
   The functor is fully faithful if $G/\Z$ is finite.  Otherwise, the functor becomes almost fully faithful after the completion of Lemma-Definition \ref{lemma-definition completion of category}.
\end{lemma}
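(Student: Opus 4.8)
The plan is to follow the same route used to establish the algebraic statements of Section \ref{section pogs} and the comparison Theorem \ref{thm cFuk=cAug}, now transported into sheaf theory. First I would observe that the construction $\cE \mapsto \bigoplus_{[g] \in G/\Z} T_{[g]}\cE$ is exactly the sheaf-theoretic incarnation of the un-orbit functor of Definition \ref{def: unorbit}: the category $\mu sh^+(W \times T^*S^1)$ carries a $\vec G/\Z$-action by the Construction preceding the lemma, and the target $\mu sh^+(W \times T^*\R/G)$ is built by gluing together the translates $T_{[g]}\cE$ with their tautological $(G/\Z)$-equivariant structure — which is precisely how $\#(\vec G/\Z)$ produces objects. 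So the functor is well defined on objects, and on morphisms it is the obvious map assembling the graded pieces $\Hom(\cE, T_{[g]}\cE')$ into $\Hom\big(\bigoplus_{[g]} T_{[g]}\cE, \bigoplus_{[h]} T_{[h]}\cE'\big)^{G/\Z}$.

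The content is the statement about hom spaces. Fix objects $\cE, \cE'$ in $\mu sh^+(W \times T^*S^1)$. The source hom space, by Definition \ref{def: orbit category} applied to $\mu sh^+(W\times T^*S^1)[\vec G/\Z]$ and then passing to the $\#(\vec G/\Z)$ picture, is $\bigoplus_{[g] \in G/\Z} \Hom_{\mu sh^+(W\times T^*S^1)}(\cE, T_{[g]}\cE')$, enriched over $(G/\Z)$-graded $\Z[G_+]$-modules where the $\Z[G_+]$-action comes from the continuation natural transformations $T_{[g_1]} \Rightarrow T_{[g_2]}$ (Lemma-Definition \ref{lem: pog unorbit}). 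On the target side, $\Hom$ in $\mu sh^+(W\times T^*\R/G)$ between the assembled objects is the $(G/\Z)$-equivariant hom, which by adjunction (using that $\sh^+(\R/G)$ is a module category over itself with unit $1_{\mu,G} = \bigoplus_{c \in G}\Z_{[c,\infty)}$, Lemma and its proof) unwinds to the same $G/\Z$-indexed sum of hom spaces in $\mu sh^+(W\times T^*S^1)$. When $G/\Z$ is finite, this direct sum is finite and the identification $\Hom\big(\bigoplus_{[g]}T_{[g]}\cE, \bigoplus_{[h]}T_{[h]}\cE'\big) = \bigoplus_{[g],[h]} \Hom(T_{[g]}\cE, T_{[h]}\cE') = \bigoplus_{[k]} \Hom(\cE, T_{[k]}\cE')^{\oplus |G/\Z|}$ together with the equivariance constraint gives a genuine isomorphism of hom complexes, hence full faithfulness. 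This is the direct analogue of Proposition \ref{prop reconstruction} in the finite-index case.

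When $G/\Z$ is infinite, the mismatch is precisely the one discussed in Section \ref{section completion}: the target hom space is a completion of the source hom space for the $\Z[\Z_{\geq 0}]$-filtration (the $\Z[(P_0)_+]$-filtration with $P_0 = \Z$), because the infinite direct sum $\bigoplus_{[g]} T_{[g]}\cE$ inside the genuinely translation-equivariant sheaf category sees convergent sums $\sum_{[g]} f_{[g]}$ with $f_{[g]}$ of increasing filtration level, which need not lie in the algebraic direct sum. So after applying the completion functor $\widehat{(-)}$ of Lemma-Definition \ref{lemma-definition completion of category} to the source, the two hom spaces become identified up to the failure of the completed object to be genuinely equivariant rather than ``almost'' so — the Hom spaces agree modulo $\Z[\Z_{\geq 0}]$-torsion, i.e. modulo the maximal ideal of $\Nov = \widehat{\Z[\Z_{\geq 0}]}$ in the sense of Section \ref{section completion}, giving almost full faithfulness over $\Nov$ by Lemma \ref{lemma Tatsuki's category is linearover power series ring} and Remark \ref{remark:Homotopyfullyfaithful}. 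I expect the main obstacle to be bookkeeping: one must carefully match the $(G/\Z)$-equivariant-hom adjunction in $\mu sh^+(W\times T^*\R/G)$ with the $\#$-construction's graded-hom formula, and in the infinite case verify that the extra elements in the target are exactly the Cauchy limits for the $T^c$-adic filtration — no more, no less — so that the discrepancy is genuinely almost-zero and not merely a proper subquotient. Here I would lean on the proof of Lemma \ref{lemma Tatsuki's category is linearover power series ring}, which already computes $\End_{\sh^+(\R/G)}(1_{\mu,G}) \cong \Nov^G$ as such a completion, as the template for the estimate.
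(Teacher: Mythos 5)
Your proposal correctly identifies the skeleton of the argument and agrees with the paper's at the structural level: the source hom space is $\bigoplus_{[g]\in G/\Z}\Hom(\cE_1, T_{[g]}\cE_2)$ by the orbit-category formula, the target unwinds (via the equivariant-hom adjunction) to $\Hom(\cE_1, \bigoplus_{[g]}T_g\cE_2)$, these agree tautologically when $G/\Z$ is finite, and the discrepancy in the infinite case is exactly the non-compactness of $\cE_1$, controlled by completion. The paper also begins by reducing to $W=\mathrm{pt}$ via $\otimes\,\mu sh(W)$, which you omit but which is inessential.

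The gap is that you defer the hard step. You write that you ``expect the main obstacle to be bookkeeping'' and that ``the discrepancy is genuinely almost-zero and not merely a proper subquotient,'' but you do not supply the argument, and it is not a formal consequence of the completion formalism of Section \ref{section completion}. In the paper's proof, the actual work is: (i) reduce further to the generating objects $p_{S^1!}\Z_{[a,\infty)}$; (ii) compute the source hom explicitly as $\bigoplus_{[g]}\Z[[T]]\,T^{a_2-a_1+g}$, a degree-$0$ complex whose completion agrees with the $H^0$ of the target by \cite[Lemma 2.26]{Kuwagaki-WKB}; and (iii) — crucially — observe that the target hom in $\sh^+(\R/G)$ may have nontrivial higher cohomology (from derived limits that do not appear on the source side), and invoke \cite[Lemma 5.2]{Kuwagaki-Almost} to show this higher cohomology is almost zero. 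Step (iii) is a genuine external input, not a consequence of $\End(1_{\mu,G})\cong\Nov^G$ or of the pog-completion machinery; without it one has only a map that is an isomorphism on $H^0$ after completion, which is strictly weaker than almost full faithfulness. Your plan to ``lean on'' Lemma \ref{lemma Tatsuki's category is linearover power series ring} as a template gets you step (ii) but not step (iii).
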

\begin{proof}
Since the microsheaf category is not known to be cocomplete,
we first need to show the existence of $\bigoplus_{[g]\in G/\Z} T_{[g]} \cE$. It is enough to show for $\cE\in \mu sh_c(W\times T^*S^1)$. By definition, for any $p\in ss(\cE)$, one can take a relatively compact neighborhood $U$ of $p$. The union $\bigcup_{[g]\in G/\Z}T_{[g]}U$ is still relatively compact. By the microlocal Bertini theorem, there exists $r>0$ such that the restriction of $\cE$ to the ball $B_r(p)$ of radius $r$ (with respect to some fixed metric) centered at $p\in \bigcup_{[g]\in G/\Z}T_{[g]}U$ always admit a sheaf representative. Then $(T_{[g]}\cE)|_{B_r(p)}$ has a sheaf representative for any $g$. The direct sum of there representatives give $\bigoplus_{[g]\in G/\Z} T_{[g]} \cE$.

    It is enough to consider the case when $W$ is a point, since we can deduce the general result by tensoring $\mu sh(W)$. We first note that $\cE_1, \cE_2\in \sh^+(S^1)$, we have 
    \begin{equation*}
    \begin{split}
        \Hom_{\sh^+(S^1)[\vec G/\Z]}(\cE_1, \cE_2)&=\bigoplus_{[g]\in G/\Z}\Hom_{\sh^+(S^1)}(\cE_1, T_{[g]}\cE_2).
    \end{split}
    \end{equation*}
    On the other hand,
    \begin{eqnarray*}
        \Hom_{\sh^+(M\times S^1/(G/\Z))}(\bigoplus_{[g]\in G/\Z}T_{[g]}\cE_1, \bigoplus_{[g]\in G/\Z}T_{[g]}\cE_2)\cong \Hom_{\sh^+(M\times S^1)}(\cE_1, \bigoplus_{[g]\in G/\Z}T_g\cE_2).
    \end{eqnarray*}
    Of course these match if $G/\Z$ is finite.  However, in the  general case, they fail to match on account of the fact that $\cE_1$ is not a compact object. 
    
    Let us compute the both sides directly. Note that the category on the left is generated by $\{ p_{S^1!}\Z_{[a, \infty)}\}_{a\in R/\Z}$. Let us examine the hom-spaces for these objects:
      \begin{equation*}
    \begin{split}
        \Hom_{\sh^+(S^1)[\vec G/\Z]}(p_{S^1!}\Z_{[a_1, \infty)}, p_{S^1!}\Z_{[a_2, \infty)})\cong \bigoplus_{[g]\in G/\Z, g=\min\{g'|a_2-a_1+g'\geq 0,[g']=[g] \}}\Z[[T]]T^{a_2-a_1+g}
    \end{split}
    \end{equation*}
    where $T^{a_2-a_1+g+n}$ for $n\in \Z_{\geq 0}$ is induced by the canonical map $\Z_{[a_1,\infty)}\rightarrow \Z_{[a_2+g+n)}$. From the computation of the right hand side given in \cite[Lemma 2.26]{Kuwagaki-WKB}, it is clear that the morphism between hom-spaces is isomorphic on $H^0$ after the completion. On the other hand, the higher cohomology of the right hand side is almost zero as proved in \cite[Lemma 5.2]{Kuwagaki-Almost}. 
\end{proof}

Recall also the completion operation of Lemma \ref{lemma from Q to R}.  

\begin{lemma}\label{lemma from Q to R for sheaves}
    We have: $\overline{\mu sh^+_{c}(W\times T^*S^1)[\vec{\Q}/\Z]} = 
    \mu sh^+_{c}(W\times T^*S^1)[\vec{\R}/\Z]$.
\end{lemma}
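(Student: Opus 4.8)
The plan is to recognize that this statement is an instance of Definition \ref{definition from Q to R} applied to the enriched category $\mu sh^+(W\times T^*S^1)[\vec{\Q}/\Z]$, where the relevant limit-dense inclusion of pogs is $\vec{\Q}/\Z \hookrightarrow \vec{\R}/\Z$. So the first task is to check the hypotheses of Lemma \ref{lemma from Q to R}: namely that $\vec{\Q}\to\vec{\R}$ is limit-dense (this is stated in Section \ref{section from Q to R}), and that the inclusion descends sensibly to the quotients $\vec{\Q}/\Z\to\vec{\R}/\Z$ of the respective pogs by the common normal subgroup $\Z$. Concretely, a $\vec{G}/\Z$-structure records morphism spaces graded over $G/\Z$ together with a $\Z[G_+]$-action, and we need the restriction along $\vec{\Q}/\Z\subset\vec{\R}/\Z$ to be exactly the functor to which the overline completion of Definition \ref{definition from Q to R} is inverse. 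I would spell this out by noting that $\mu sh^+(W\times T^*S^1)[\vec{\Q}/\Z]$ is, by construction, enriched over $\Per(\vec{\Q}^{op}/\Z)$ (persistence modules): its hom spaces are built from the convolution functors $T_{[g]}$, and the natural transformations $T_{[g_1]}\to T_{[g_2]}$ induced by $\Z_{[g_1,\infty)}\to\Z_{[g_2,\infty)}$ exhibit each hom space as a filtered-colimit-preserving functor on $\vec{\R}^{op}/\Z$ restricted from $\vec{\R}$.

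The key computational input is the explicit description of hom spaces already obtained in the proof of Lemma \ref{sheaf pog quotient}: for generators,
\[\Hom_{\sh^+(S^1)[\vec G/\Z]}(p_{S^1!}\Z_{[a_1,\infty)},\, p_{S^1!}\Z_{[a_2,\infty)}) \cong \bigoplus_{\substack{[g]\in G/\Z \\ g=\min\{g'\mid a_2-a_1+g'\geq 0,\,[g']=[g]\}}} \Z[[T]]\,T^{a_2-a_1+g}.\]
From this formula one reads off directly that, as a persistence module over $\vec{G}^{op}/\Z$, the hom space is locally constant away from finitely many jump values of the grading parameter, and its transition maps between nearby gradings (within a ``no-new-chord'' window) are isomorphisms — exactly the property used in Lemma \ref{lemma recovering Fuk from approximations}. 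Hence the hom spaces lie in $\Per(\vec{\R}^{op}/\Z)$ when $G=\R$ and in $\Per(\vec{\Q}^{op}/\Z)$ when $G=\Q$, and the restriction functor between these persistence-module categories is the adjoint equivalence of Lemma \ref{lemma from Q to R} (applied to the quotient pogs). Therefore applying the overline operation of Definition \ref{definition from Q to R} to $\mu sh^+(W\times T^*S^1)[\vec{\Q}/\Z]$ recovers $\mu sh^+(W\times T^*S^1)[\vec{\R}/\Z]$ hom-space by hom-space, and since both categories have the same objects (the objects of $\mu sh^+(W\times T^*S^1)$) and the $\mu^d$-operations on both sides are restrictions of the same operations, the identification is an equivalence of $\cAinf$-categories.

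The main obstacle I anticipate is not conceptual but bookkeeping: one must verify that the $\vec{\R}/\Z$-action constructed directly on $\mu sh^+(W\times T^*S^1)$ via the convolutions $T_{[g]}$, $g\in\R$, agrees under the overline operation with the action reconstructed from the $\vec{\Q}/\Z$-action — i.e. that completing the graded hom spaces along $\vec{\Q}/\Z$ and then re-imposing the $\vec{\R}/\Z$-grading returns the originally defined $\R$-graded pieces $\Hom_{\sh^+(S^1)}(\cE_1, T_{[g]}\cE_2)$ for $g$ irrational. This amounts to checking that $\overline{G}(a)=\varprojlim_{b\in\Q,\,a\leq b} G(b)$ is computed by the ``stabilized'' value of the persistence module, which holds because (by the displayed formula above, or Lemma \ref{lemma recovering Fuk from approximations}'s argument) $G$ is eventually constant as $b\searrow a$ through rationals. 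I would handle this by the same $L_\theta$-window argument as in Lemma \ref{lemma recovering Fuk from approximations}: for each grading $\theta$ there is $\varepsilon>0$ so that the transition maps $G(\theta)\to G(\theta+[\delta])$ are isomorphisms for $0\le\delta<\varepsilon$, whence the inverse limit over rationals approaching $\theta$ from above is just $G(\theta)$ itself. With that, the lemma follows directly from Lemma \ref{lemma from Q to R} and Definition \ref{definition from Q to R}, with no further holomorphic-curve or sheaf-theoretic input needed.
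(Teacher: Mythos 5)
Your proposal correctly identifies the framework (Lemma~\ref{lemma from Q to R}, Definition~\ref{definition from Q to R}, the overline) and the shape of what must be checked, but the central step — that the hom spaces satisfy a stabilization property — is not justified, and in fact the argument you supply for it does not extend beyond generators and is not what the paper uses.

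You reduce the lemma to the claim that the hom spaces of $\mu sh^+(W\times T^*S^1)[\vec{G}/\Z]$ lie in $\Per$, and you prove this by reading off the explicit formula between the generators $p_{S^1!}\Z_{[a,\infty)}$ together with an ``$L_\theta$-window'' argument asserting that the transition maps $G(\theta)\to G(\theta+[\delta])$ are isomorphisms for $\delta$ small. That window argument works in the Floer setting of Lemma~\ref{lemma recovering Fuk from approximations} precisely because the morphism spaces are finitely generated by Reeb chords of quantized lengths, so the grading parameter can move in a small window without crossing any chord. For arbitrary objects $\cE,\cF$ of the microsheaf category there is no such finiteness: $b\mapsto \Hom(\cE,T_b\cF)$ need not be eventually constant as $b\searrow a$, so the transition maps need not become isomorphisms on any window. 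Passing from generators to general objects does not preserve this property, since the hom spaces of the whole category are built by (co)limits that do not stay inside the class of ``eventually constant'' modules. In short, the premise that $\Hom_{\vec\R/\Z}(\cE,\cF)\in\Per(\vec\R^{op}/\Z)$ is exactly the nontrivial content of the lemma, and the window argument does not establish it.

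The paper's proof does not rely on any stabilization. Essential surjectivity is immediate (the overline is a right adjoint and the canonical comparison functor is the identity on objects). For fully faithfulness, the completed hom in grading $[a]$ is computed as
\[\overline{\Hom_{\vec\Q/\Z}(\cE,\cF)}_{[a]} = \varprojlim_{\substack{b\in\Q\\ a\le b}} \Hom(\cE, T_b\cF) = \Hom\bigl(\cE, \varprojlim_{\substack{b\in\Q\\ a\le b}} T_b\cF\bigr),\]
and the crux is the identity $T_a\cF \xrightarrow{\sim} \varprojlim_{b>a,\,b\in\Q} T_b\cF$. This is established by observing that $T^\varepsilon$ vanishes on the cone for every $\varepsilon>0$, and then invoking a sheaf-theoretic vanishing result for such ``torsion'' objects (cited to Asano). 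This is a genuinely different mechanism: it is a statement internal to the Tamarkin-style category, requires no finiteness of morphism spaces, and does \emph{not} assert that $b\mapsto\Hom(\cE,T_b\cF)$ is locally constant — only that the inverse limit of the object $T_b\cF$ is the expected one. To repair your proposal you would need to replace the window argument with this (or an equivalent) vanishing statement; the persistence-module framework alone does not supply it.
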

\begin{proof}
    Since the completion $\overline{(\cdot)}$ is right adjoint, we have a canonical functor 
    $$\overline{\mu sh^+_{c}(W\times T^*S^1)[\vec{\Q}/\Z]} \to \mu sh^+_{c}(W\times T^*S^1)[\vec{\R}/\Z],$$ 
    which is the identity on the set of objects. In particular, this functor is essentially surjective. 

    For $\cE, \cF\in \mu sh^+_{c}(W\times T^*S^1)[\vec{\Q}/\Z]$ and $[a]\in \R/\Z$, we have
    \begin{eqnarray*}
        \overline{\Hom_{\mu sh^+_{c}(W\times T^*S^1)[\vec{\Q}/\Z]}(\cE, \cF)}_{[a]}&=\underset{\substack{b \in Q \\ a \leq b}}{\varprojlim} \, {\Hom_{\mu sh^+_{c}(W\times T^*S^1)[\vec{\Q}/\Z]}(\cE, \cF)}_{[b]}\\
        &=\underset{\substack{b \in \Q \\ a \leq b}}{\varprojlim} \, {\Hom_{\mu sh^+_{c}(W\times T^*S^1)}(\cE, T_b\cF)}\\
        &= {\Hom_{\mu sh^+_{c}(W\times T^*S^1)}(\cE, \underset{\substack{b \in \Q \\ a \leq b}}{\varprojlim} T_b\cF)}.
    \end{eqnarray*}
    On the cone of the canonical morphism $T_a\cF\to \underset{b \in \Q, \, b \geq a}{\varprojlim} T_b\cF$, $T^a$ is zero for any $a>0$. Such an object is known to be zero (e.g. \cite[Lemma 5.16]{Asano_2024}). Hence we also obtain the fully faithfulness. This completes the proof of the equivalence.
\end{proof}

Let us mention that some ideas related to the previous lemma have appeared also in \cite[Section 1.4]{kashiwara-schapira-persistent} and  \cite[Lemma 4.7]{kuo-shende-zhang}.

\section{Sheaf quantization of rational Lagrangians}

\begin{theorem} \label{intro thm: R-equivariant sheaves comparison}
    Let $W$ be a Weinstein manifold with $c_1(W)=0$ and let $\La$ be a Legendrian in the prequantization bundle $W \times (\R / \hbar \Z) \xrightarrow{\pi} W$ for some $\hbar > 0$, with image $L$.
    Assume that $\pi_{| \La}$ has double and transverse multiple points, and that the action spectrum of $\La$ is contained in $\hbar \Z \cup (\R \setminus \hbar \Q)$.
    Then there is an almost fully faithful embedding
    $$ \Fuk(\pi_{|\La}) \hookrightarrow \mu sh_{\pi^{-1}(L)}^+(W\times T^*\R/\R^\delta).$$
    The image is characterized by the finite rank microstalks.
\end{theorem} 
\begin{proof}
    Consider a Weinstein manifold $(W, \lambda)$ with $c_1(TW)=0$, and the prequantization bundle $(W \times S^1, d \theta + \lambda) \xrightarrow{\pi} (W, d \lambda)$.
    Observe that the latter is admissible, and fix an admissible Legendrian $\La$ in $V$. 

    In the category of $\frac{1}{n} \vec \Z / \Z$-categories, we get 
    \begin{align*}
        \Fuk^{\circ}(\pi_{| \La})_{| \frac{1}{n} \vec \Z / \Z} \# (\tfrac{1}{n} \vec \Z / \Z) & = \LFuk(S(\tfrac{1}{n} \Z \cdot \La)) \\
        & = \Prop \left( \WFuk^+(W\times T^*S^1, \tfrac{1}{n} \Z \cdot \La)^{op} \right) \\
        & = \mu sh_{\frac{1}{n} \Z \cdot \La}^+ (W\times T^*S^1).
    \end{align*}
    The first equality follows from Theorem \ref{thm cFuk=cAug}.
    The second equality follows from Proposition \ref{prop Koszul dual} (and Proposition \ref{prop equivariance of Aug in Mod} for equivariance), where the relevant topological simpleness condition is not satisfied but the injection $\pi_1(V)=\pi_1(W\times S^1)\hookrightarrow \pi_1(W\times T^*S^1)$ is enough because $\partial_{\infty} (W \times T^* S^1)$ is obtained by gluing $V \sqcup V$ and $(\partial_{\infty} W) \times (T^* S^1)$ along convex hypersurfaces, so no new disks appear on the RHS.
    Last equality follows from \cite[Theorem 1.4]{GPS3} and Lemma \ref{positive sheaf generation} (and compatibility of \cite{GPS3} with respect to contact isotopy for the equivariance). 

    Moreover, if $(\cC_n)_n$ is one of the family of categories that appear above and if $m$ divides $n$, then there is an embedding $\cC_m \hookrightarrow \cC_n$ 
    described as follows.
    If $\cC_n = \Fuk^{\circ}(\pi_{| \La})_{| \frac{1}{n} \vec \Z / \Z} \# (\tfrac{1}{n} \vec \Z / \Z)$, this is the natural functor of Lemma \ref{lemma change of enrichment}.
    When $\cC_n = \LFuk(S(\tfrac{1}{n} \Z \cdot \La))$, this is the functor of Proposition \ref{prop stop removal map for Aug}.
    When $\cC_n$ is the category of modules over the partially wrapped Fukaya category, this is the pullback along stop removal.
    Finally, when $\cC_n$ is the category of sheaves, this is the tautological inclusion.
    The constructions of the identifications are compatible with these embeddings.

    Using Lemma \ref{lemma recover from exhaustion}, we get an embedding of categories enriched over $(\vec \Q / \Z)$-modules
    \[\Fuk^{\circ}(\pi_{| \La})_{| \vec \Q / \Z} \hookrightarrow \mu sh_{\Q \cdot \La}^+ (W\times T^*S^1) [\vec \Q / \Z]. \]
    According to Lemmas \ref{lemma recovering Fuk from approximations} 
    and \ref{lemma from Q to R for sheaves}, we obtain
    \[\Fuk^{\circ}(\pi_{| \La}) \hookrightarrow \mu sh_{\R \cdot \La}^+ (W\times T^*S^1) [\vec \R / \Z]. \]
    Now Lemmas \ref{lemma completion of graded Fuk recovers Fuk} and \ref{sheaf pog quotient} give an almost fully faithful embedding
    \[\Fuk(\pi_{| \La}) \hookrightarrow \mu sh_{\R \cdot \La}^+(W\times T^*S^1/ (\R^\delta/\Z))=\mu sh_{\R \cdot \La}^+(W\times T^*\R/\R^\delta). \]
    This completes the proof.
\end{proof}

\begin{remark}
    Since $H^0(\Fuk(\pi|_{\Lambda}))^a=H^0(\Fuk(\pi|_{\Lambda}))$, our functor induces an actual fully faithful functor $H^0(\Fuk(\pi|_{\Lambda}))\hookrightarrow H^0(\sh_{\R \cdot \La}^+(S^1 \times M / (\R/\Z)_{disc}))^a$ by Remark~\ref{remark:Homotopyfullyfaithful}. 
\end{remark}

\begin{remark}
    Our theorem states that the Fukaya category is equivalent to a subcategory of the sheaf category as enriched categories over the almost modules. Since the hom-spaces of Fukaya categories are free, we can recover it from its base-change to the almost modules.
\end{remark}

\begin{corollary}\label{intro corollary: sheaf quantization}
    Let $L$ be a rational Lagrangian submanifold of $W$. If $L$ has a bounding cochain, there exists a sheaf quantization of $L$.
\end{corollary}
\begin{proof}
    By the construction, the image of $\pi_{| \La}$ in the intermediate wrapped Fukaya category intersects once with the linking disk. This is the microlocal rank one condition in the definition of sheaf quantization (see \cite{Ike-Kuwagaki-Novikov} for the definition of microstalk in the equivariant setup).
\end{proof}

\section{Quantum cohomology of $\C \P^1$}
\label{section quantum cohomology}
 
Consider the standard quadric surface $\base := \left\{ [z_0 : \cdots : z_3] \in \C \P^3 \mid z_0^2 + \cdots + z_3^2 = 0 \right\} \subset \C \P^3$.
The prequantization bundle over $\base$ is
\[\pi : U T S^3 \longrightarrow \base, \quad (x, y) \mapsto [x_0 +i y_0 : \cdots : x_3 + i y_3] \]
(here we view $S^3$ as the standard unit sphere in $\R^{d+1}$ with coordinates $(x_0, \dots, x_3)$).
Under the identification $\base = \C \P^1 \times \C \P^1$, the lift in $UTS^3$ of the anti-diagonal in $\C \P^1 \times \C \P^1$ is Legendrian isotopic to the conormal at a point (see \cite[Example 2.6]{DRG19}).

Take a point $a_0\in S^3$. 
Let $\Lambda$ be the cosphere Legendrian at $a_0$.
We want to compute $\Fuk(\pi_{|\La})$ using our results.
Since the minimal Chern number of $B$ is $2$, the algebraic structures we consider are cohomologically $\Z / 4 \Z$-graded. 

We first compute the endomorphism of the linking disk at a point on $\Lambda$. By \cite{GPS3}, this is the same as the endomorphism ring of the microstalk at a point on $\Lambda$ in $\Sh_{\Lambda}(S^3)$. By perturbing a little, we put $\Lambda$ in a position $S^*S^3$ such that
\begin{enumerate}
    \item The restriction $\Lambda\rightarrow S^3$ of the projection $S^*S^3\rightarrow S^3$ is embedding of $S^2$ into $S^3$.
    \item The projected image of $\Lambda$ in $S^3$ is the boundary of a small open ball neighborhood $D$ of $a_0$, and $\Lambda$ is the positive conormal of the boundary $\partial D$.
\end{enumerate}

We first prepare some hom-space computations. We set the image of $\id_{\Z_{S^3\backslash D}}$ under the adjunction isomorphism
$$
\Z \cdot \id_{\Z_{S^3\backslash D}}= \Hom(\Z_{S^3\backslash D}, \Z_{S^3\backslash D})\cong \Hom(\Z_{S^3}, \Z_{S^3\backslash D})
$$
by $r_{S^3\backslash D}$. We also set the image of $\id_{\Z_{D}}$ under the adjunction isomorphism
$$
\Z\cdot\id_{\Z_D}=\Hom(\Z_{D}, \Z_{D})\cong \Hom(\Z_{D}, \Z_{S^3})
$$
by $i_D$.

We fix an orientation of $S^3$ once and for all. Then we have the morphism $f_{S^3}\colon \Z_{S^3}\rightarrow \Z_{S^3}$ corresponding to the fundamental class. 

\begin{lemma}
   We have  $$
\Hom^i(\Z_{S^3}, \Z_D)\cong \begin{cases}
        \Z \text{ if $i=3$}\\
        0 \text{ otherwise.}
    \end{cases}
$$
and $\Hom^3(\Z_{S^3}, \Z_D)=\Z\cdot f_D$ which is the unique morphism satisfying $i_D\circ f_D=f_{S^3}$.
\end{lemma}
\begin{proof}
    This directly follows from the following exact triangle
    $$
    \Hom(\Z_{S^3}, \Z_D)\rightarrow \Hom(\Z_{S^3}, \Z_{S^3})\rightarrow \Hom(\Z_{S^3}, \Z_{S^3\backslash D})\xrightarrow{+1}
    $$.
\end{proof}

Next, we set 

\begin{lemma}\label{lem:sheafcohomologycomputation1}We have
$$
    \Hom^i(\Z_{S^3\backslash D},\Z_D)\cong \begin{cases}
        \Z \text{ if $i=1,3$}\\
        0 \text{ otherwise.}
    \end{cases}
    $$
Moreover, the precomposition of $r_{S^3\backslash D}$ induces $\Hom^3(\Z_{S^3\backslash D}, \Z_D)\xrightarrow{\cong}\Hom^3(\Z_{S^3}, \Z_D)=\Z f_D$.
We denote the preimage of $f_D$ by $g_D$. The degree 1 morphism space is spanned by $h_D$ which is defined by the exact sequence 
$$
0\rightarrow \Z_{D}\rightarrow \Z_{S^3}\rightarrow \Z_{S^3\backslash D}\rightarrow 0.
$$
\end{lemma}
\begin{proof}
    We have an exact triangle
    $$
    \Hom(\Z_{S^3\backslash D}, \Z_{D})\rightarrow\Hom(\Z_{S^3}, \Z_{{D}})_{3}\rightarrow\Hom(\Z_{D}, \Z_{D})_{0}\xrightarrow{[1]}.
    $$
    where the subscripts show the degree of non-zero dimensional spaces (all the nontrivial spaces are 1-dimensional).
    This shows the desired result.
\end{proof}

Consider the cone $E:=\Cone(\Z_{S^3\backslash D}\xrightarrow{g_D} \Z_D[3])$. We denote the defining morphism of the cone by $i_E\colon \Z_{D}[3]\rightarrow E$ and $p_E\colon E\rightarrow \Z_{S^3\backslash D}[1]$.

\begin{lemma}\label{lemma microstalk}
\begin{enumerate}
    \item     $E$ is a microstalk at a point in $\Lambda$. 
    \item $$
    \End^i(E)\cong \begin{cases}
        \Z \text{ if $i=0,-1$}\\
        0 \text{ otherwise.}
    \end{cases}
    $$
\end{enumerate}
\end{lemma}
\begin{proof}
We first give a sketch of the proof of the first claim. We first note that $E$ is fit into the following triangle:
$$
    \Z_{D}[1]\rightarrow \Cone(\Z_{S^3}\rightarrow \Z_{D}[3])\rightarrow E\xrightarrow{[1]}.
$$
Take a small open ball $D'$ centered at a point on $\partial D$. We can easily show that there is an isomorphism of exact triangles
    \[\begin{tikzcd}
     \Hom(E,F)\ar[r] \ar[d, "\cong"]&\Hom(\Cone(\Z_{S^3}\rightarrow \Z_{D}[3]), F)\ar[d,"\cong"]\ar[r] & \Hom(\Z_D[1], F)\ar[d,"\cong"]\xrightarrow{[1]}\\
    \Hom(\Z_{D'\backslash D}[1], F) \ar[r] & \Hom(\Z_{D'}[1], F) \ar[r]&\Hom(\Z_{D'\cap D}[1], F)\xrightarrow{[1]}.
    \end{tikzcd}\]
for any $F\in \Sh_{\Lambda}(S^3)$. Since $\Hom(\Z_{D'\backslash D}[1], F)$ is a microstalk functor, we get the desired result.

We next compute the endomorphisms of $E$. We first note the following triangles:
$$
\Hom(E,\Z_{S^3\backslash D})\rightarrow \Hom(\Z_{D}[3],\Z_{S^3\backslash D})_{\varnothing}\rightarrow \Hom(\Z_{S^3\backslash D},\Z_{S^3\backslash D})_{0}\xrightarrow{[1]}
$$
$$
\Hom(E,\Z_{D}[3])\rightarrow \Hom(\Z_{D}[3],\Z_{D}[3])_{0}\rightarrow \Hom(\Z_{S^3\backslash D},\Z_{D}[3])_{-2,0}\xrightarrow{[1]},
$$
where we used Lemma~\ref{lem:sheafcohomologycomputation1} on the leftmost term in the second line. Here the subscript $\varnothing$ means that the space is zero. Each triangle shows $\Hom(E,\Z_{S^3\backslash D})_{1}$ and $\Hom(E,\Z_{D}[3])_{-1}$ respectively. Hence we have
$$
\Hom(E,\Z_{S^3\backslash D})_{1}\rightarrow \Hom(E,\Z_{D}[3])_{-1}\rightarrow \Hom(E,E)\xrightarrow{[1]}.
$$
This shows $\Hom(E,E)_{-1,0}$.
\end{proof}
The space $\Hom(E,E)$ has the following basis: For degree $0$, it is spanned by $\id_E$. For degree $-1$, it is spanned by $s=i_E\circ h_D\circ p_{E}$. Hence the endoalgebra $\End(E)$ is isomorphic to $\Z[s]/s^2$ with $\deg s=-1$. 
The Koszul dual is $\bigoplus_{i=0}^1\Z[[x^2]]x^i=\Z[[x]]$ with $\deg x=2$.
\begin{lemma}\label{lemma t action on Koszul dual}
    As a $\Z[[t]]$-algebra, the Koszul dual of $\End(E)$ is $\Z[[t]][x]/(x^2-t)$ with $|x|=2$.
\end{lemma}
\begin{proof}
    We first describe the $t$-action. The wrapping-once functor is given by the integral transform $K\circ (-)$ with the Guillermou--Kashiwara--Schapira kernel $K$. For our case (i.e., the sphere case), an explicit description of the kernel $K$ is given by
    \begin{equation*}
        K=\mathrm{Cocone}(\mathrm{Cocone} (\Z_{S^3\times S^3}[3]\rightarrow \Z_{S^3\times S^3}[6])\rightarrow \Z_\Delta[5]).
    \end{equation*}
    All the morphisms are induced by the diagonal class. This is proved in \cite{Arai} based on \cite{GKS}.
    From this expression, we readily obtain the canonical isomorphism
    \begin{equation}
        K\circ E\cong \Z_{\Delta}[4]\circ E\cong E[4].
    \end{equation}
    Then the continuation map $c\colon \Z_D[2]\rightarrow K\circ \Z_D[2]$ gives a morphism 
    \begin{equation*}
        \Hom(E,\Z_D[2])\rightarrow \Hom(E, K\circ \Z_D[2])\cong \Hom(K^{-1}\circ E, \Z_D[2])\cong \Hom(E[-4], \Z_D[2]),
    \end{equation*}
    which is our $t$.
    We can compute the image of $\Z_D[2]$ as
    \begin{equation*}
        K\circ \Z_D[2]\cong \mathrm{Cocone}(\mathrm{Cocone} (\Z_{S^3}[2]\rightarrow \Z_{S^3}[5])\rightarrow \Z_D[7]).
    \end{equation*}
    Under this expression, the continuation map $c\colon \Z_D[2]\rightarrow K\circ \Z_D[2]$ is the unique lift of $i_D\colon \Z_D[2]\rightarrow \Z_{S^3}[2]$. Hence $c$ can be decomposed into 
    \begin{equation*}\label{eqn:continuationdecomposed}
        \Z_D[2]\xrightarrow{i_1} \mathrm{Cocone} (\Z_{S^3}[2]\xrightarrow{} \Z_{D}[5])\xrightarrow{i_2} K\circ \Z_D[2]
    \end{equation*}
    where the $i_1$ is again the unique lift of $i_D$, and $i_2$ is the unique lift of the cocone of the morphism
    \[\begin{tikzcd}
    \Z_{S^3}[2]\ar[r,"\id"]\ar[d]& \Z_{S^3}[2]\ar[d]\\
   \Z_{D}[5] \ar[r, "i_D"]& \Z_{S^3}[5].
    \end{tikzcd} \] 
    Now we apply $\Hom(E,-)$ to $i_1$, then we obtain a canonical morphism
    \begin{equation}\label{eqn:ki1}
        \Hom(E, \Z_D[2])\rightarrow \Hom(E, \mathrm{Cocone} (\Z_{S^3}[2]\xrightarrow{} \Z_{D}[5])).
    \end{equation}
    We also apply $\Hom(E,-)$ to $i_2$, then we obtain the following isomorphism between morphisms
    \[\begin{tikzcd}
    \Hom(E, \mathrm{Cocone} (\Z_{S^3}[2]\xrightarrow{} \Z_{D}[5]))\ar[r,"K\circ (i_2)"]& \Hom(E, K\circ \Z_D[2])\\
   \Hom(E, \Z_D[4])\ar[r]\ar[u, "\cong"]& \Hom(E, \mathrm{Cocone}(\Z_{S^3}[4]\rightarrow \Z_D[7]))\ar[u, "\cong"],
    \end{tikzcd} \] 
    since the components $\Z_{S^3}[2]$ are canceled out. The vertical isomorphisms are implied by the vanishing of $\Hom(E, \Z_{S^3})$. Now the lower arrow is precisely $K\circ (i_1)[2]$ from (\ref{eqn:ki1}). Moreover lower right object is canonically isomorphic to $\Hom(E, \Z_{D}[6])$ by the same vanishing as above.
    Hence what we obtain by applying $\Hom(E,-)$ to (\ref{eqn:continuationdecomposed}) is an expression of $t$ as
    \begin{equation*}
        \Hom(E, \Z_D[2])\xrightarrow{K\circ(i_1)} \Hom(E,\Z_D[4])\xrightarrow{K\circ(i_1)[2]} \Hom(E, \Z_{D}[6]).
    \end{equation*}
    Name $x=K\circ(i_1)$, we complete the proof.
\end{proof}

\begin{corollary}
    The quantum cohomology over $\Nov$ of $\C \P^1$ is given by 
    \[QH^*(\C \P^1) = \Nov \underset{\Z[[t]]}{\otimes} \Z[[t]][x]/(x^2-t). \]
\end{corollary}
\begin{proof}
    The quantum cohomology over $\Nov$ of $\C \P^1$ is the Fukaya algebra over $\Nov$ of the anti-diagonal in $\C \P^1 \times \C \P^1$, i.e. $\Fuk(\pi_{| \La})$.
    We have $\Fuk(\pi_{| \La}) = \Nov \underset{\Z[[t]]}{\otimes} \Fuk^{\circ}(\pi_{| \La})_{\vec \Z / \Z}$ according to Proposition \ref{prop embedded case}.
    Using Theorem \ref{thm cFuk=cAug}, Proposition \ref{prop Koszul dual} and \cite{GPS3}, we know that $\Fuk^{\circ}(\pi_{| \La})_{\vec \Z / \Z}$ is the Koszul dual of $\End(E)$, where $E$ is a representative of the microstalk functor at a point in $\La$. 
    Now Lemma \ref{lemma t action on Koszul dual} says that the Koszul dual of $\End(E)$ is $\Z[[t]][x]/(x^2-t)$.
    This gives the desired result.
\end{proof}

\appendix

\section{Pseudo-holomorphic discs in the symplectization of a prequantization}
\label{appendix pseudo-holomorphic discs}

Let $V \xrightarrow{\pi} \base$ be a prequantization bundle.
The goal of this Appendix is to relate pseudo-holomorphic discs in $\base$ and in $SV$. 
The necessary analytic results appeared already in \cite{ENS02} and \cite{DR16}; however, the authors of those articles were considering the case where $V = \R \times P$, with $P$ Liouville.  Here we adapt their results to the case of a circle bundle. 
Observe that this problem has also been addressed in \cite[Section 2.2]{BCSW24II}.

\vspace{2mm}

Let $J$ be an almost complex structure on $\base$ compatible with $\omega$.
If $\alpha$ is a contact form on $(V, \xi)$, we denote by $J_{\alpha}$ the almost complex structure on $SV$ such that $J_{\alpha} = \pi^* J$ on $\xi$ (observe that $\pi : V \to \base$ induces an isomorphism $\xi \simeq \pi^*T \base$) and $J_{\alpha} \partial_t = R_{\alpha}$, where we use $\alpha^{\circ}$ to identify $SV$ and $\R_t \times V$.
With these choices, the projection $(\R \times V,J_{\alpha^{\circ}}) \to (\base, J)$ is pseudo-holomorphic.

In the following, we fix a Riemann disk $(D, i)$ with $(d+1)$ cyclically ordered marked points $(\zeta_0, \zeta_1, \dots, \zeta_d)$ on $\partial D$.
We set $\Delta := D \setminus \{ \zeta_0, \dots, \zeta_d \}$, and we choose strip-like ends 
\[\epsilon_0 : \R_{\geq 0} \times [0,1] \to \Delta, \quad \epsilon_k : \R_{\leq 0} \times [0,1] \to \Delta \text{ for } k \in \{1, \dots, d \}. \]

We consider a smooth map $u : \Delta \to \base$ which extends to a continuous map from $D$ to $\base$, together with a lift $\nu : \partial \Delta \to V$ of $u_{| \partial \Delta}$ such that the following limits exist for $m \in \{0, 1\}$:
\[\nu_{0,m} := \underset{s \to +\infty}{\lim} (\nu \circ \epsilon_0) (s, m), \quad \nu_{k,m} := \underset{s \to -\infty}{\lim} (\nu \circ \epsilon_k) (s, m) \text{ for } k \in \{1, \dots, d \}. \]
Finally, we fix $\ell_0, \ell_1, \dots, \ell_d \in \R_{\geq 0}$ such that 
\[\varphi_{\Reeb}^{\ell_0} (\nu_{0,0}) = \nu_{0,1}, \quad \varphi_{\Reeb}^{\ell_k} (\nu_{k,0}) = \nu_{k,1} \text{ for } k \in \{1, \dots, d \}. \]

\begin{lemma}\label{lemma relation on boundary lift}
    There exists an integer $K$ such that $\ell_0 -\sum_{k=1}^d \ell_k = \int_{\Delta} u^* \omega - \int_{\partial \Delta} \nu^* \alpha^{\circ} + K$.
\end{lemma}
\begin{proof}
    Consider a trivialization $(u^*V, u^* \nabla_{\! \alpha^{\circ}}) \simeq (\Delta \times S^1, \nabla_{\! A})$, where $A = p_{S^1}^* d \theta + p_{\Delta}^* a$ for some primitive $a$ of $u^* \omega$.
    The lift $\nu : \partial \Delta \to V$ of $u_{| \partial \Delta}$ induces, via the latter trivialization, a section $f_0$ of $p_{\Delta}$ over $\partial \Delta$.
    Choose a lift $h_0 : \partial \Delta \to \R$ of $p_{S_1} \circ f_0$, and set 
    \[l_{0,m} = \underset{s \to +\infty}{\lim} (h_0 \circ \epsilon_0) (s, m), \quad l_{k,m} = \underset{s \to -\infty}{\lim} (h_0 \circ \epsilon_k) (s, m) \text{ for } k \in \{1, \dots, d \}. \]
    Since $\varphi_{\Reeb}^{\ell_0} (\nu_{0,0}) = \nu_{0,1}$ and $\varphi_{\Reeb}^{\ell_k} (\nu_{k,0}) = \nu_{k,1}$ for $k \in \{1, \dots, d \}$, we know that $\ell_k - (l_{k,1} - l_{k,0})$ is an integer for every $k \in \{0, \dots, d \}$.
    Therefore, there is an integer $K$ such that 
    \[\deg (p_{S^1} \circ f_0) = \sum_{k=1}^d  (l_{k,1} - l_{k,0}) - (l_{0,1} - l_{0,0}) = \sum_{k=1}^d \ell_k - \ell_0 + K. \]
    The result follows since moreover
    \[\deg (p_{S^1} \circ f_0) = \int_{\partial \Delta} f_0^* p_{S^1}^* d \theta = \int_{\partial \Delta} f_0^* A - \int_{ \partial \Delta} a = \int_{\partial \Delta} \nu^* \alpha^{\circ} - \int_{\Delta} u^* \omega. \]
\end{proof}

\begin{lemma}\label{lemma holomorphic condition}
    Let $v : \Delta \to V$ be a lift of $u$.
    Then a map $w = (\sigma, v) : \Delta \to \R \times V$ is $J_{\alpha^{\circ}}$-holomorphic if and only if $u$ is $J$-holomorphic and $d \sigma = (v^* \alpha^{\circ}) \circ i$.
\end{lemma}
\begin{proof}
    This is a straightforward verification using that the projection $(\R \times V,J_{\alpha^{\circ}}) \to (\base, J)$ is pseudo-holomorphic.
\end{proof}

In the following, we assume that the derivatives of $u \circ \epsilon_0$, $\nu \circ \epsilon_0$, $u \circ \epsilon_k$ and $\nu \circ \epsilon_k$ for $k \in \{1, \dots, d\}$, decay exponentially (for some choice of metrics on $\base$ and $V$).

\begin{prop}\label{prop relation pseudo-holomorphic discs}
    Assume that $\nu$ takes values in a Legendrian $\La \subset V$.
    There exists a pseudo-holomorphic map $w = (\sigma, v) : (\Delta, i) \to (\R \times V, J_{\alpha^{\circ}})$ such that $\pi \circ v = u$, $v_{| \partial \Delta} = \nu$ and satisfying the asymptotic conditions
    \[\left\{
    \begin{array}{ll}
    (v \circ \epsilon_0) (s, t) \underset{s \to +\infty}{\longrightarrow} \varphi_{\Reeb}^{\ell_0 t} (\nu_{0,0}), & (\sigma \circ \epsilon_0) (s, t) \underset{s \to +\infty}{\sim} \ell_0 s \\
    (v \circ \epsilon_k) (s, t) \underset{s \to -\infty}{\longrightarrow} \varphi_{\Reeb}^{\ell_k t} (\nu_{k,0}), & (\sigma \circ \epsilon_k) (s, t) \underset{s \to -\infty}{\sim} \ell_k s \text{ for } k \in \{1, \dots, d \}
    \end{array}
    \right.\]
    if and only if $\ell_0 -\sum_{k=1}^d \ell_k = \int_{\Delta} u^* \omega$.
    In this case, such a map $w$ is unique up to translation in the symplectization coordinate.
\end{prop}
\begin{proof}
    If such a map $w$ exists, then the relation follows from a standard application of Stokes theorem.
    Now assume that the relation holds.
    According to Lemma \ref{lemma holomorphic condition}, we have to find a section $v$ of $u^*V \to \Delta$ such that 
    \[\left\{
    \begin{array}{rll}
    d ( ((u^* \nabla_{\! \alpha^{\circ}}) v) \circ i ) & = 0 & \text{on } \Delta \\
    v & = \nu & \text{on } \partial \Delta
    \end{array}
    \right. \]
    where $\nabla_{\! \alpha^{\circ}} v = v^* \alpha^{\circ}$.
    Consider a trivialization $(u^*V, u^* \nabla_{\! \alpha^{\circ}}) \simeq (\Delta \times S^1, \nabla_{\! A})$ with $A = p_{S^1}^* d \theta + p_{\Delta}^* a$ for some primitive $a$ of $u^* \omega$.
    The lift $\nu : \partial \Delta \to V$ of $u_{| \partial \Delta}$ induces, via the latter trivialization, a section $f_0$ of $p_{\Delta}$ over $\partial \Delta$.
    Since $\ell_0 -\sum_{k=1}^d \ell_k = \int_{\Delta} u^* \omega = \deg(p_{S_1} \circ f_0)$, we can choose a lift $h_0 : \partial \Delta \to \R$ of $p_{S_1} \circ f_0$ so that, if we set
    \[l_{0,m} = \underset{s \to +\infty}{\lim} (h_0 \circ \epsilon_0) (s, m), \quad l_{k,m} = \underset{s \to -\infty}{\lim} (h_0 \circ \epsilon_k) (s, m) \text{ for } k \in \{1, \dots, d \}, \] 
    then $l_{0,1} -  l_{0,0} = \ell_0$ and $l_{k,1} -  l_{k,0} = \ell_k$ for $k \in \{1, \dots, d \}$.
    Let $h : \Delta \to \R$ be the unique solution to the following Poisson equation with Dirichlet boundary conditions
    \[\left\{
    \begin{array}{rll}
    d ( (dh + a) \circ i ) & = 0 & \text{on } \Delta \\
    h & = h_0 & \text{on } \partial \Delta
    \end{array}
    \right. \]
    and let $g$ be a primitive of $(dh + a) \circ i$.
    The analysis done in the proof of \cite[Theorem 7.7]{ENS02} or \cite[Lemma 7.1]{DR16} shows that 
    \[\left\{
    \begin{array}{ll}
    (h \circ \epsilon_0) (s, t) \underset{s \to +\infty}{\longrightarrow} l_{0,0} + \ell_0 t, & (g \circ \epsilon_0) (s, t) \underset{s \to +\infty}{\sim} \ell_0 s \\
    (h \circ \epsilon_k) (s, t) \underset{s \to -\infty}{\longrightarrow} l_{k,0} + \ell_k t, & (g \circ \epsilon_k) (s, t) \underset{s \to -\infty}{\sim} \ell_k s \text{ for } k \in \{1, \dots, d \}
    \end{array}
    \right.\]
    The map $v :\Delta \to V$ is then obtained, via the trivialization $(u^*V, u^* \nabla_{\! \alpha^{\circ}}) \simeq (\Delta \times S^1, \nabla_{\! A})$, from the section $f$ of $p_{\Delta}$ satisfying $(p_{S^1} \circ f)(z) = [h(z)]$, and the map $\sigma : \Delta \to \R$ is a primitive of $(\nabla_{\! \alpha^{\circ}} v) \circ i$.
\end{proof}

In order to state the next result, we denote by $\cS$ the interval $(\zeta_d, \zeta_0)$ in $\partial \Delta$.

\begin{prop}\label{prop relation pseudo-holomorphic discs mixed case}
    There exists a unique pseudo-holomorphic map $w = (\sigma, v) : (\Delta, i) \to (\R \times V, J_{\alpha^{\circ}})$ such that $\pi \circ v = u$, $v_{| \partial \Delta \setminus \cS} = \nu_{| \partial \Delta \setminus \cS}$, $\sigma_{| \cS} = 0$, and satisfying the asymptotic conditions 
    \[\left\{
    \begin{array}{ll}
    (v \circ \epsilon_0) (s, t) \underset{s \to +\infty}{\longrightarrow} \nu_{0,1}, & (\sigma \circ \epsilon_0) (s, t) \underset{s \to +\infty}{\rightarrow} 0 \\
    (v \circ \epsilon_d) (s, t) \underset{s \to -\infty}{\longrightarrow} \nu_{d,1}, & (\sigma \circ \epsilon_0) (s, t) \underset{s \to -\infty}{\rightarrow} 0 \\
    (v \circ \epsilon_k) (s, t) \underset{s \to -\infty}{\longrightarrow} \varphi_{\Reeb}^{\ell_k t} (\nu_{k,0}), & (\sigma \circ \epsilon_k) (s, t) \underset{s \to -\infty}{\sim} \ell_k s \text{ for } k \in \{1, \dots, d-1 \}
    \end{array}
    \right.\]
\end{prop}
\begin{proof}
    According to Lemma \ref{lemma holomorphic condition}, we need to find a section $v$ of $u^*V \to \Delta$ such that 
    \[\left\{
    \begin{array}{rll}
    d ( (\nabla_{\! \alpha^{\circ}} v) \circ i ) & = 0 & \text{on } \Delta \\
    v & = \nu & \text{on } \partial \Delta \setminus \cS \\
    (\nabla_{\! \alpha^{\circ}} v) (n_{\cS}) & = 0 & \text{on } \cS
    \end{array}
    \right. \]
    where $\nabla_{\! \alpha^{\circ}} v = v^* \alpha^{\circ}$ and $n_{\cS}$ is the outward normal vector to $\cS$.
    As above, consider a trivialization $(u^*V, u^* \nabla_{\! \alpha^{\circ}}) \simeq (\Delta \times S^1, \nabla_{\! A})$ with $A = p_{S^1}^* d \theta + p_{\Delta}^* a$ for some primitive $a$ of $u^* \omega$.
    The lift $\nu_{|\partial \Delta \setminus \cS}$ of $u_{| \partial \Delta \setminus \cS}$ induces, via the latter trivialization, a section $f_0$ of $p_{\Delta}$ over $\partial \Delta \setminus \cS$.
    Let $h_0 : \partial \Delta \setminus \cS \to \R$ be a lift of $p_{S^1} \circ f_0$ so that, if we set $l_{k,m} := \underset{s \to -\infty}{\lim} (h_0 \circ \epsilon_k) (s, m)$ for $k \in \{1, \dots, d-1 \}$, then $l_{k,1} -  l_{k,0} = \ell_k$.
    Let $h : \Delta \to \R$ be the unique solution to the following Poisson equation with mixed boundary conditions (of Dirichlet type on $\partial \Delta \setminus \cS$ and of Neumann type on $\cS$)
    \[\left\{
    \begin{array}{rll}
    d ( (dh + a) \circ i ) & = 0 & \text{on } \Delta \\
    h & = h_0 & \text{on } \partial \Delta \setminus \cS \\
    (dh + a) (n_{\cS}) & = 0 & \text{on } \cS
    \end{array}
    \right. \]
    and let $g$ be a primitive of $(dh + a) \circ i$.
    Analysis of this equation shows that
    \[\left\{
    \begin{array}{ll}
    \underset{s \to + \infty}{\lim} (h \circ \epsilon_0) (s, t) =  \underset{s \to + \infty}{\lim} (h_0 \circ \epsilon_0) (s, 1), & \underset{s \to +\infty}{\lim} (g \circ \epsilon_0) (s, t) = 0 \\
    \underset{s \to - \infty}{\lim} (h \circ \epsilon_d) (s, t) =  \underset{s \to - \infty}{\lim} (h_0 \circ \epsilon_d) (s, 1), & \underset{s \to - \infty}{\lim} (g \circ \epsilon_d) (s, t) = 0 \\
    (h \circ \epsilon_k) (s, t) \underset{s \to -\infty}{\longrightarrow} l_{k,0} + \ell_k t, & (g \circ \epsilon_k) (s, t) \underset{s \to -\infty}{\sim} \ell_k s \text{ for } k \in \{1, \dots, d-1 \}
    \end{array}
    \right.\]
    The map $v :\Delta \to V$ is then obtained, via the trivialization $(u^*V, u^* \nabla_{\! \alpha^{\circ}}) \simeq (\Delta \times S^1, \nabla_{\! A})$, from the section $f$ of $p_{\Delta}$ satisfying $(p_{S^1} \circ f)(z) = [h(z)]$, and the map $\sigma : \Delta \to \R$ is a primitive of $(\nabla_{\! \alpha^{\circ}} v) \circ i$.
\end{proof}

\bibliographystyle{plain}
\bibliography{bibli.bib}

\end{document}